\documentclass[12pt,a4, reqno]{amsart}
\usepackage[a4paper, left=26mm, right=26mm, top=28mm, bottom=34mm]{geometry}
\usepackage[all]{xy}
\usepackage{amsmath}
\usepackage{amscd}
\usepackage{amssymb}
\usepackage{amsthm}
\usepackage{url}
\usepackage{mathrsfs}
\usepackage{comment}
\usepackage{bbm}

\newtheorem{thm}{Theorem}[subsection]
\newtheorem{lem}[thm]{Lemma}
\newtheorem{cor}[thm]{Corollary}
\newtheorem{prop}[thm]{Proposition}
\newtheorem{conj}[thm]{Conjecture}

\theoremstyle{definition}
\newtheorem{ass}[thm]{Assumption}
\newtheorem{rem}[thm]{Remark}
\newtheorem{defn}[thm]{Definition}

\newtheorem{ex}[thm]{Example}

\def\F{{\mathbb F}}
\def\G{{\mathbb G}}
\def\O{{\mathcal O}}
\def\Q{{\mathbb Q}}

\def\Z{{\mathbb Z}}

\def\L{{\mathbb L}}

\def\Art{\mathop{\mathrm{Art}}\nolimits}

\def\Frob{\mathop{\mathrm{Frob}}\nolimits}

\def\Fil{\mathop{\mathrm{Fil}}\nolimits}

\def\Gal{\mathop{\mathrm{Gal}}\nolimits}
\def\Lie{\mathop{\mathrm{Lie}}\nolimits}
\def\Hom{\mathop{\mathrm{Hom}}\nolimits}

\def\Ker{\mathop{\mathrm{Ker}}\nolimits}
\def\id{\mathop{\mathrm{id}}\nolimits}

\def\GL{\mathop{\mathrm{GL}}\nolimits}

\def\Rep{\mathrm{Rep}}
\def\Spa{\mathop{\rm Spa}}
\def\Spec{\mathop{\rm Spec}}
\def\Spf{\mathop{\rm Spf}}

\def\D{\mathbb{D}}

\def\diag{\mathop{\mathrm{diag}}}

\def\KS{\mathop{\mathrm{KS}}\nolimits}

\def\Fil{\mathop{\mathrm{Fil}}\nolimits}
\def\dR{\mathop{\mathrm{dR}}}

\newcommand{\et}{\mathrm{\acute{e}t}}

\def\Perf{\mathop{\mathrm{Perf}}\nolimits}

\def\arc{\mathop{\mathrm{arc}}\nolimits}

\def\Vect{\mathop{\mathrm{Vect}}\nolimits}
\def\alg{\mathop{\mathrm{alg}}\nolimits}

\def\op{\mathop{\mathrm{op}}\nolimits}
\def\Def{\mathop{\mathrm{Def}}\nolimits}
\def\Per{\mathop{\mathrm{Per}}\nolimits}
\def\Lift{\mathop{\mathrm{Lift}}\nolimits}

\def\univ{\mathop{\mathrm{univ}}\nolimits}
\def\ev{\mathop{\mathrm{ev}}\nolimits}

\def\alg{\mathop{\mathrm{alg}}\nolimits}
\def\Spd{\mathop{\mathrm{Spd}}\nolimits}
\def\sht{\mathop{\mathrm{sht}}\nolimits}
\def\red{\mathop{\mathrm{red}}\nolimits}
\def\int{\mathop{\mathrm{int}}\nolimits}

\usepackage{relsize} 
\usepackage[bbgreekl]{mathbbol} 
\usepackage{amsfonts} 
\usepackage{color}
\DeclareSymbolFontAlphabet{\mathbb}{AMSb} 
\DeclareSymbolFontAlphabet{\mathbbl}{bbold} 
\newcommand{\Prism}{{\mathlarger{\mathbbl{\Delta}}}}

\numberwithin{equation}{section}

\usepackage[T1]{fontenc}
\makeatletter
\@namedef{subjclassname@2020}{\textup{2020} Mathematics Subject Classification}
\makeatother

\begin{document}

\title[Deformation theory for prismatic $G$-displays]{Deformation theory for prismatic $G$-displays}

\author{Kazuhiro Ito}
\address{Mathematical Institute, Tohoku University, 6-3, Aoba, Aramaki, Aoba-Ku, Sendai 980-8578, Japan}
\email{kazuhiro.ito.c3@tohoku.ac.jp}


\subjclass[2020]{Primary 14F30; Secondary 14G45, 11G18, 14L05}
\keywords{prism, display, local Shimura variety, $p$-divisible group, prismatic $F$-gauge}


\begin{abstract}
For a smooth affine group scheme $G$ over the ring of $p$-adic integers and a cocharacter $\mu$ of $G$, we develop the deformation theory for $G$-$\mu$-displays over the prismatic site of Bhatt--Scholze, and discuss how our deformation theory can be interpreted in terms of prismatic $F$-gauges introduced by Drinfeld and Bhatt--Lurie.
As an application,
we prove the local representability and the formal smoothness of integral local Shimura varieties with hyperspecial level structure.
We also revisit and extend some classification results of $p$-divisible groups.
\end{abstract}

\maketitle

\setcounter{tocdepth}{1}
\tableofcontents

\section{Introduction} \label{Section:Introduction}

Let $p$ be a prime number.
Let
$G$
be
a smooth affine group scheme over the ring of $p$-adic integers $\Z_p$.
Let
$
\mu \colon \G_m \to G_{W(k)}:=G \times_{\Spec \Z_p} \Spec W(k)
$
be a cocharacter defined over $W(k)$,
where $k$ is a perfect field of characteristic $p$ and $W(k)$ is the ring of $p$-typical Witt vectors of $k$.
The theory of \textit{$G$-$\mu$-displays} was originally introduced by B\"ultel \cite{Bultel} and B\"ultel--Pappas \cite{Bultel-Pappas} to study integral models of Shimura varieties and Rapoport--Zink spaces.
In \cite{Ito-K23},
we studied
the notion of $G$-$\mu$-displays over the prismatic site of Bhatt--Scholze \cite{BS}.
The purpose of this paper is to develop the deformation theory for $G$-$\mu$-displays over the prismatic site, which has applications in the theory of integral models of Shimura varieties and local Shimura varieties.
Our deformation theory for $G=\GL_N$ provides new perspectives on some classification results of $p$-divisible groups.
We also discuss how our results can be interpreted in terms of \textit{prismatic $F$-gauges} introduced by Drinfeld and Bhatt--Lurie.

\subsection{Main results}\label{Main results introduction}

Let $(A, I)$ be a bounded prism in the sense of \cite{BS}, such that $A$ is a $W(k)$-algebra.
We study the groupoid of $G$-$\mu$-displays over $(A, I)$
\[
G\mathchar`-\mathrm{Disp}_\mu(A, I)
\]
which is introduced in \cite{Ito-K23} following the theory of $G$-$\mu$-displays over higher frames developed by Lau \cite{Lau21} and the work of Bartling \cite{Bartling}.
The groupoid $G\mathchar`-\mathrm{Disp}_\mu(A, I)$ is equivalent to the groupoid of \textit{$G$-Breuil--Kisin modules of type $\mu$} over $(A, I)$, which may be more familiar to the reader.
See Section \ref{Subsection:G-mu-diplays} for details.

Let us summarize the main results of this paper.
In the remainder of this introduction, we assume that $\mu$ is \textit{1-bounded}, that is, the weights of the action of $\G_m$ on
the Lie algebra of $G_{W(k)}$ induced by $g \mapsto \mu(t)^{-1}g\mu(t)$ are $\leq 1$ (\cite[Definition 6.3.1]{Lau21}).
If $G$ is reductive, then $\mu$ is 1-bounded if and only if $\mu$ is minuscule.

\begin{defn}
    For a $p$-adically complete ring $R$ over $W(k)$,
the groupoid of \textit{prismatic $G$-$\mu$-displays} over $R$ is defined to be
\[
G\mathchar`-\mathrm{Disp}_\mu((R)_{\Prism}):= {2-\varprojlim}_{(A, I) \in (R)_{\Prism}} G\mathchar`-\mathrm{Disp}_\mu(A, I).
\]
Here $(R)_{\Prism}$ is the absolute prismatic site of $R$, whose objects are the bounded prisms $(A, I)$ equipped with a homomorphism $g \colon R \to A/I$.
For a prismatic $G$-$\mu$-display $\mathfrak{Q}$ over $R$ and an object
$((A, I), g \colon R \to A/I) \in (R)_{\Prism}$,
the image of $\mathfrak{Q}$ in $G\mathchar`-\mathrm{Disp}_\mu(A, I)$ is denoted by 
$\mathfrak{Q}_{(A, I)}$ or $\mathfrak{Q}_{g}$.
\end{defn}

Let $\mathcal{C}_{W(k)}$ be the category of complete regular local rings $R$ over $W(k)$ with residue field $k$.
Let $R \in \mathcal{C}_{W(k)}$.
The prism
$(W(k), (p))$ 
with the natural homomorphism $R \to k$ is an object of $(R)_{\Prism}$, which we consider as ``the base point''.
Let $\mathcal{Q}$ be a $G$-$\mu$-display over $(W(k), (p))$ (or equivalently a prismatic $G$-$\mu$-display over $k$).

\begin{defn}\label{Definition:deformations introduction}
A \textit{deformation} of $\mathcal{Q}$ over $R$ is
a prismatic $G$-$\mu$-display $\mathfrak{Q}$ over $R$
together with an isomorphism
$\mathfrak{Q}_{(W(k), (p))} \overset{\sim}{\to} \mathcal{Q}$
of $G$-$\mu$-displays over $(W(k), (p))$.
We say that 
$\mathfrak{Q}$
is a \textit{universal deformation} of $\mathcal{Q}$ if, for any $R' \in \mathcal{C}_{W(k)}$ and
any deformation $\mathfrak{Q}'$ of $\mathcal{Q}$ over $R'$,
there exists a unique local homomorphism
$
h \colon R \to R'
$
over $W(k)$ such that
the base change of $\mathfrak{Q}$
along $h$ is isomorphic to
$\mathfrak{Q}'$
as a deformation of $\mathcal{Q}$ over $R'$.
\end{defn}

Our first main result concerns the existence of a universal deformation of $\mathcal{Q}$.
Let
$U^{-}_{\mu} \subset G_{W(k)}$
be the closed subgroup scheme
consisting of elements $g \in G_{W(k)}$
such that $\lim_{t \to 0} \mu(t)^{-1}g\mu(t)=1$.
Let
$R_{G, \mu}$
be the completed local ring of $U^{-}_{\mu}$ at the identity element $1 \in U^{-}_{\mu}$.
Since $U^{-}_{\mu}$ is smooth,
there exists an isomorphism over $W(k)$
\[
R_{G, \mu} \simeq W(k)[[t_1, \dotsc, t_r]],
\]
and in particular $R_{G, \mu} \in \mathcal{C}_{W(k)}$.

\begin{thm}[Theorem \ref{Theorem:Existence of universal deformation}]\label{Theorem:existence of universal deformation introduction}
    Let $\mathcal{Q}$ be a $G$-$\mu$-display over $(W(k), (p))$.
    There exists a universal deformation
    $\mathfrak{Q}^{\mathrm{univ}}$
of $\mathcal{Q}$ over
$R_{G, \mu}$.
\end{thm}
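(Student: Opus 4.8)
The plan is to realize $R_{G,\mu}$ as the ring pro-representing the functor of deformations of $\mathcal{Q}$, by exhibiting an explicit candidate for the universal object and then checking universality through a ``normal form'' argument in the style of Zink and Lau, adapted to the prismatic site. First I would record that, since $W(k)$ is local, the $G$-torsor underlying $\mathcal{Q}$ is trivial; after fixing a trivialization, $\mathcal{Q}$ is banal and is described (in the language of $G$-Breuil--Kisin modules, cf. Section~\ref{Subsection:G-mu-diplays}) by an element $X_0\in G(W(k))$ together with the $\mu$-twisted Frobenius over $(W(k),(p))$. By construction $R_{G,\mu}=\widehat{\mathcal{O}}_{U^-_\mu,1}$, so the identity map of $R_{G,\mu}$ corresponds to a tautological $R_{G,\mu}$-point $u^{\mathrm{univ}}$ of $U^-_\mu$ reducing to $1$ modulo $\mathfrak{m}_{R_{G,\mu}}$. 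Fixing a prism $(\mathfrak{A},\mathfrak{I})\in(R_{G,\mu})_{\Prism}$ large enough to compute prismatic $G$-$\mu$-displays over $R_{G,\mu}$ (e.g. a Breuil--Kisin-type prism for the regular ring $R_{G,\mu}$, together with descent), lifting $u^{\mathrm{univ}}$ to a point of $U^-_\mu(\mathfrak{A})$ along $\mathfrak{A}\twoheadrightarrow\mathfrak{A}/\mathfrak{I}$ (possible since $U^-_\mu$ is smooth), and twisting the pullback of $X_0$ by this lift, I would obtain a prismatic $G$-$\mu$-display $\mathfrak{Q}^{\mathrm{univ}}$ over $R_{G,\mu}$; since $u^{\mathrm{univ}}\equiv 1$, it is a deformation of $\mathcal{Q}$, and using that $G$-$\mu$-displays form a $2$-sheaf on the prismatic site one checks that its isomorphism class as a deformation is independent of the choices.

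The heart of the argument is a normal-form statement: for every $R'\in\mathcal{C}_{W(k)}$ and every deformation $\mathfrak{Q}'$ of $\mathcal{Q}$ over $R'$, there is a unique element $u'\in U^-_\mu(R')$ with $u'\equiv 1\ (\mathrm{mod}\ \mathfrak{m}_{R'})$ such that $\mathfrak{Q}'$ is isomorphic as a deformation to the display obtained from $X_0$ by twisting by $u'$ as above. I would prove this by successive approximation along the Artinian quotients $R'/\mathfrak{m}_{R'}^{\,n}$: given a deformation over $R'/\mathfrak{m}_{R'}^{\,n+1}$ whose reduction over $R'/\mathfrak{m}_{R'}^{\,n}$ is already in normal form, one conjugates it into normal form over $R'/\mathfrak{m}_{R'}^{\,n+1}$ by an element of $G(R'/\mathfrak{m}_{R'}^{\,n+1})$ congruent to $1$, and shows the normal form is rigid. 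The mechanism is the weight decomposition $\Lie G_{W(k)}=\mathfrak{g}_{<0}\oplus\mathfrak{g}_{0}\oplus\mathfrak{g}_{\ge 1}$ for $\mathrm{Ad}\circ\mu$, where $\mathfrak{g}_{<0}=\Lie U^-_\mu$: because $\mu$ is $1$-bounded, at each square-zero step $R'/\mathfrak{m}_{R'}^{\,n+1}\to R'/\mathfrak{m}_{R'}^{\,n}$ the operator ``identity minus the $\mu$-twisted Frobenius'' is invertible on the $\mathfrak{g}_{0}\oplus\mathfrak{g}_{\ge 1}$-directions (the twisted Frobenius being topologically nilpotent there relative to the $\mathfrak{m}_{R'}$-adic filtration), so the $\mathfrak{g}_{\ge 0}$-part of the conjugating element is forced and only the $\mathfrak{g}_{<0}$-direction survives as the free parameter; this yields $u'$ and its uniqueness at each finite level. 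Passing to the limit over $n$ — using that $R'$ is complete, that $U^-_\mu$ is affine, and that prismatic $G$-$\mu$-displays over $R'$ are compatible with the limit $R'=\varprojlim_n R'/\mathfrak{m}_{R'}^{\,n}$ — produces $u'\in U^-_\mu(R')$ with the stated property.

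Granting the normal form, I would finish as follows: since $u'$ reduces to $1$, it is classified by a unique local $W(k)$-homomorphism $h\colon R_{G,\mu}=\widehat{\mathcal{O}}_{U^-_\mu,1}\to R'$; by construction $h^{*}\mathfrak{Q}^{\mathrm{univ}}\cong\mathfrak{Q}'$ as deformations, and the uniqueness of $h$ follows from the uniqueness of the normal form (the normal form of $h_i^{*}\mathfrak{Q}^{\mathrm{univ}}$ is the $h_i$-image of $u^{\mathrm{univ}}$). Hence $\mathfrak{Q}^{\mathrm{univ}}$ is a universal deformation. The main obstacle is exactly the normal-form step — equivalently, the statement that the deformation functor is formally smooth with tangent space $\Lie U^-_\mu$; this is the prismatic counterpart of the structure theory of ($G$-)displays and of the unobstructedness in Grothendieck--Messing deformation theory, and it is the place where the hypothesis that $\mu$ is $1$-bounded, which forces the ``Hodge filtration'' into degrees $[0,1]$ and makes the relevant twisted Frobenius contracting on $\mathfrak{g}_{\ge 0}$, is indispensable; without it one cannot expect the deformation functor to be pro-represented by a formally smooth ring.
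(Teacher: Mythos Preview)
Your candidate $\mathfrak{Q}^{\mathrm{univ}}$ is essentially the paper's (see Theorem~\ref{Theorem:existence of deformations with (Perfd-lin)}), and the inductive ``normal form'' philosophy is close to the paper's Grothendieck--Messing theory (Theorem~\ref{Theorem:GM deformation}). But there is a real gap at the step ``by construction $h^{*}\mathfrak{Q}^{\mathrm{univ}}\cong\mathfrak{Q}'$''. Your $\mathfrak{Q}^{\mathrm{univ}}$ is specified by its value on one Breuil--Kisin prism $(\mathfrak{S}^{\mathrm{univ}},(\mathcal{E}^{\mathrm{univ}}))$ for $R_{G,\mu}$, via Theorem~\ref{Theorem:main result on G displays over complete regular local rings}. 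For another $R'\in\mathcal{C}_{W(k)}$ with its own prism $(\mathfrak{S}',(\mathcal{E}'))$, computing $(h^{*}\mathfrak{Q}^{\mathrm{univ}})_{(\mathfrak{S}',(\mathcal{E}'))}=\mathfrak{Q}^{\mathrm{univ}}_{h}$ would require a map of $\delta$-rings $(\mathfrak{S}^{\mathrm{univ}},(\mathcal{E}^{\mathrm{univ}}))\to(\mathfrak{S}',(\mathcal{E}'))$ lifting $h$, and such a $\phi$-equivariant lift need not exist. So even if your normal form gives a bijection between $U^{-}_{\mu}(R')_{1}$ and deformations over $(\mathfrak{S}',(\mathcal{E}'))$, you have not identified this bijection with $\ev_{\mathfrak{Q}^{\mathrm{univ}}}$; equivalently, you have not shown that $h\mapsto(\text{twist by }h(u^{\mathrm{univ}}))$ is functorial in the prismatic sense. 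The paper's whole Section~\ref{Subsection:Linearity of period maps}--\ref{Subsection:Constructions of universal deformations} is devoted to closing this gap: over perfectoid targets one \emph{can} lift $h$ to a map of prisms (the ``primitive maps'' of Lemma~\ref{Lemma:lifting lemma I}--\ref{Lemma:lifting lemma II}), compute $\mathfrak{Q}^{\mathrm{univ}}_{h}$ explicitly, and prove the evaluation map is an $S/a$-\emph{linear} isomorphism; linearity then transfers to Breuil--Kisin prisms via Corollary~\ref{Corollary:map from regular local ring to valuation ring} and Proposition~\ref{Proposition:(Perfd-lin) implies (BK-lin)}.

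A secondary point: your mechanism for the inductive step (``identity minus the $\mu$-twisted Frobenius is invertible because topologically nilpotent on $\mathfrak{g}_{\ge 0}$'') is the Witt-vector picture of B\"ultel--Pappas, which in general needs an adjoint-nilpotence hypothesis (cf.\ Remark~\ref{Remark:comparison with Bultel Pappas work}). In the prismatic setting the correct and much sharper fact is that for the thickenings $(\mathfrak{S}_{m+1},(\mathcal{E}))\to(\mathfrak{S}_{m},(\mathcal{E}))$ one has $\phi(J)=0$ (Example~\ref{Example:special nilpotent thickening}), so the twisted Frobenius is identically zero on the kernel and the underlying $G$-$\phi$-modules of any two deformations are \emph{canonically} isomorphic (Proposition~\ref{Proposition:canonical isomorphism of deformations of phi G torsors}). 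This is what makes the period map well-defined and replaces your contraction argument; it also explains why no nilpotence hypothesis on $\mathcal{Q}$ is needed here.
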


We will show that $\mathfrak{Q}^{\mathrm{univ}}$ possesses a similar universal property for a larger class of deformations of $\mathcal{Q}$.
To explain this, we need to introduce some notation.

We consider a prism \textit{of Breuil--Kisin type}
\[
(\mathfrak{S}, (\mathcal{E})):=(W(k)[[t_1, \dotsc, t_n]], (\mathcal{E}))
\]
where $\mathcal{E} \in W(k)[[t_1, \dotsc, t_n]]$ is a formal power series whose constant term is $p$
and the Frobenius $\phi$ of $W(k)[[t_1, \dotsc, t_n]]$ is such that $\phi(t_i)=t^p_i$ for $1 \leq i \leq n$.
Here $n$ could be any nonnegative integer.
We set
$
R:=\mathfrak{S}/\mathcal{E},
$
which belongs to the category $\mathcal{C}_{W(k)}$.

The following fact plays a crucial role in our work.

\begin{rem}\label{Remark:Breuil-Kisin prism and deformation introduction}
Let 
$\mathfrak{Q}$
be a deformation of $\mathcal{Q}$ over $R$.
The associated $G$-$\mu$-display
$\mathfrak{Q}_{(\mathfrak{S}, (\mathcal{E}))}$
over $(\mathfrak{S}, (\mathcal{E}))$
is a deformation of $\mathcal{Q}$, i.e.\ it is equipped with an isomorphism between $\mathcal{Q}$ and the base change of $\mathfrak{Q}_{(\mathfrak{S}, (\mathcal{E}))}$ along the map 
$(\mathfrak{S}, (\mathcal{E})) \to (W(k), (p))$ defined by $t_i \mapsto 0$.
The construction
$\mathfrak{Q} \mapsto \mathfrak{Q}_{(\mathfrak{S}, (\mathcal{E}))}$
induces an equivalence from the category of deformations of $\mathcal{Q}$ over $R$
to that of deformations of $\mathcal{Q}$ over $(\mathfrak{S}, (\mathcal{E}))$.
This is a consequence of \cite[Theorem 1.2.1]{Ito-K23} (see also Theorem \ref{Theorem:main result on G displays over complete regular local rings}).
\end{rem}

Let 
$\mathfrak{Q}$
be a deformation of
$\mathcal{Q}$ over $R_{G, \mu}$.
Let
$\Hom(R_{G, \mu}, R)_e$
be the set of local homomorphisms $R_{G, \mu} \to R$ over $W(k)$.
Given a homomorphism $g \in \Hom(R_{G, \mu}, R)_e$,
we can regard $(\mathfrak{S}, (\mathcal{E}))$ as an object of $(R_{G, \mu})_{\Prism}$.
The construction
$g \mapsto \mathfrak{Q}_g$ induces a map of sets
\[
\ev_{\mathfrak{Q}} \colon 
\Hom(R_{G, \mu}, R)_e
\to
\left\{
\begin{tabular}{c}
      isomorphism classes of \\
     deformations of $\mathcal{Q}$ over $(\mathfrak{S}, (\mathcal{E}))$
\end{tabular}
\right\},
\]
which we call the \textit{evaluation map}.
If $\mathfrak{Q}$ is a universal deformation,
then the map $\ev_{\mathfrak{Q}}$ is bijective by Remark \ref{Remark:Breuil-Kisin prism and deformation introduction}.
(In fact, $\mathfrak{Q}$ is a universal deformation if and only if for every prism
$
(\mathfrak{S}, (\mathcal{E}))
$
of Breuil--Kisin type, the map $\ev_{\mathfrak{Q}}$ is bijective.)

We also consider the following classes of deformations:

\begin{defn}\label{Definition:Breuil-Kisin type intro}
    For every integer $m \geq 1$, we set
    \[
    (\mathfrak{S}_m, (\mathcal{E})):=(W(k)[[t_1, \dotsc, t_n]]/(t_1, \dotsc, t_n)^m, (\mathcal{E})),
    \]
    which is naturally a bounded prism.
    Let
    $\Hom(R_{G, \mu}, R/\mathfrak{m}^{m}_R)_e$
    be the set of local homomorphisms $R_{G, \mu} \to R/\mathfrak{m}^{m}_R$ over $W(k)$, where $\mathfrak{m}_R \subset R$ is the maximal ideal.
    We have the following map defined by $g \mapsto \mathfrak{Q}_g$:
    \begin{equation}\label{equation:evaluation map BK intro}
    \ev_{\mathfrak{Q}} \colon
    \Hom(R_{G, \mu}, R/\mathfrak{m}^{m}_R)_e
    \to 
    \left\{
    \begin{tabular}{c}
      isomorphism classes of \\
     deformations of $\mathcal{Q}$ over $(\mathfrak{S}_m, (\mathcal{E}))$
    \end{tabular}
    \right\}.
    \end{equation}
\end{defn}

\begin{defn}\label{Definition:perfectoid type intro}
Let $S$ be a perfectoid ring over $W(k)$ (\cite[Definition 3.5]{BMS}).
Let $a^\flat \in S^\flat$ be an element such that $a:=\theta([a^\flat]) \in S$ is a nonzerodivisor and $p=0$ in $S/a$.
(See Section \ref{Subsection:OE prisms} for the notation used here.)
    For every integer $m \geq 1$,
    the pair
    \[
    (W(S^\flat)/[a^\flat]^m, I_S)
    \]
    is naturally a bounded prism, where $I_S$ is the kernel of the natural homomorphism
    $\theta \colon W(S^\flat)/[a^\flat]^m \to S/a^m$.
    Let
    $\Hom(R_{G, \mu}, S/a^m)_{e}$
    be
    the set of homomorphisms $R_{G, \mu} \to S/a^m$ over $W(k)$ lifting
    the composition $R_{G, \mu} \to k \to S/a$.
    We regard $\mathcal{Q}$ as a $G$-$\mu$-display over $(W(S^\flat)/[a^\flat], I_S)$
    by base change.
    We have the following map defined by $g \mapsto \mathfrak{Q}_g$:
    \begin{equation}\label{equation:evaluation map perfectoid intro}
        \ev_{\mathfrak{Q}} \colon \Hom(R_{G, \mu}, S/a^m)_{e}
    \to
    \left\{
    \begin{tabular}{c}
      isomorphism classes of \\
     deformations of $\mathcal{Q}$ over $(W(S^\flat)/[a^\flat]^m, I_S)$
    \end{tabular}
    \right\}.
    \end{equation}
\end{defn}

We can now state our second main result:

\begin{thm}[Theorem \ref{Theorem:Existence of universal deformation}]\label{Theorem:universal deformation (BK) and (Perfd) introduction}
If $\mathfrak{Q}$ is a universal deformation of $\mathcal{Q}$ over
$R_{G, \mu}$,
then the maps $(\ref{equation:evaluation map BK intro})$ and $(\ref{equation:evaluation map perfectoid intro})$
are bijective.
\end{thm}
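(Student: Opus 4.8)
The plan is as follows. Since for two universal deformations $\mathfrak{Q}_1,\mathfrak{Q}_2$ of $\mathcal{Q}$ over $R_{G,\mu}$ the maps $\ev_{\mathfrak{Q}_1}$ and $\ev_{\mathfrak{Q}_2}$ differ by precomposition with an automorphism of $R_{G,\mu}$ (and by a compatible identification of targets), we may assume $\mathfrak{Q}=\mathfrak{Q}^{\mathrm{univ}}$ is the universal deformation of Theorem~\ref{Theorem:existence of universal deformation introduction}. In each of the two cases let $(\mathcal{B},J)$ be the relevant prism, $(\mathfrak{S}_m,(\mathcal{E}))$ or $(W(S^\flat)/[a^\flat]^m,I_S)$; put $B_0:=\mathcal{B}/J$, so $B_0=R/\mathfrak{m}_R^m$ resp.\ $B_0=S/a^m$; and let $\bar B$ be the quotient of $B_0$ by its evident nilpotent ideal ($\mathfrak{m}_R/\mathfrak{m}_R^m$ resp.\ $aS/a^m$), so $\bar B=k$ resp.\ $\bar B=S/a$ is an $\F_p$-algebra receiving a map from $k$. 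The structure map makes $(\mathcal{B},J)$ a canonical object of $(B_0)_{\Prism}$, and, unwinding Definitions~\ref{Definition:Breuil-Kisin type intro} and~\ref{Definition:perfectoid type intro}, the map $\ev_{\mathfrak{Q}}$ factors as
\[
\Hom(R_{G,\mu},B_0)_e\ \xrightarrow{\ \alpha\ }\ \bigl\{\text{deformations of }\mathcal{Q}\text{ over }B_0\bigr\}/{\cong}\ \xrightarrow{\ \beta\ }\ \bigl\{\text{deformations of }\mathcal{Q}\text{ over }(\mathcal{B},J)\bigr\}/{\cong},
\]
where $\alpha$ sends $g$ to the base change $g^{*}\mathfrak{Q}^{\mathrm{univ}}$ (a deformation of $\mathcal{Q}$ over $B_0$ precisely because $g$ lifts the structure map $R_{G,\mu}\to k\to\bar B$) and $\beta$ is evaluation at $(\mathcal{B},J)$; indeed $g\mapsto\mathfrak{Q}_g$ is base change along $g\colon R_{G,\mu}\to B_0$ followed by evaluation at $(\mathcal{B},J)\in(B_0)_{\Prism}$. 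It thus suffices to show that $\alpha$ and $\beta$ are bijective.

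Bijectivity of $\beta$ is the ``thickened'' form of Remark~\ref{Remark:Breuil-Kisin prism and deformation introduction}: evaluation at $(\mathcal{B},J)$ is an equivalence from the groupoid of prismatic $G$-$\mu$-displays over $B_0$ onto the groupoid of $G$-$\mu$-displays over $(\mathcal{B},J)$, which one restricts to deformations of $\mathcal{Q}$. For Breuil--Kisin prisms over a complete regular local ring this is Theorem~\ref{Theorem:main result on G displays over complete regular local rings}; for the truncations $(\mathfrak{S}_m,(\mathcal{E}))$ over $R/\mathfrak{m}_R^m$ and for the perfectoid-type prisms $(W(S^\flat)/[a^\flat]^m,I_S)$ over $S/a^m$ one uses the analogous prismatic descent, as in \cite{Ito-K23}; here $(\mathcal{B},J)$ is a cover of the final object of $(B_0)_{\Prism}$ with explicitly describable descent data.

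Bijectivity of $\alpha$ is the assertion that pulling back $\mathfrak{Q}^{\mathrm{univ}}$ identifies the set of $W(k)$-homomorphisms $g\colon R_{G,\mu}\to B_0$ lifting $R_{G,\mu}\to k\to\bar B$ with the set of isomorphism classes of deformations of $\mathcal{Q}$ over $B_0$. In the Breuil--Kisin case $B_0=R/\mathfrak{m}_R^m$ is Artinian local with residue field $k$, and this is the pro-representability of the deformation functor on such rings, which is part of the proof of Theorem~\ref{Theorem:existence of universal deformation introduction}. In the perfectoid case $B_0=S/a^m$ need not be Noetherian, so we argue by induction on $m$ along the chain of square-zero thickenings $S/a^m\twoheadrightarrow S/a^{m-1}\twoheadrightarrow\dots\twoheadrightarrow S/a$ (the same argument re-proves the Breuil--Kisin case with $S/a^j$ replaced by $R/\mathfrak{m}_R^j$). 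The case $m=1$ is trivial, both sides being singletons. For a square-zero step $B'\twoheadrightarrow B$ with kernel $V$ — a $\bar B$-module, hence a $k$-vector space since $p=0$ in $\bar B$ — and a homomorphism $g\colon R_{G,\mu}\to B$ with associated deformation $\mathfrak{Q}_g$, the set of lifts of $g$ to $B'$ is nonempty (as $R_{G,\mu}$ is formally smooth over $W(k)$) and is a torsor under $\mathrm{Der}_{W(k)}(R_{G,\mu},V)\simeq\Hom_{R_{G,\mu}}(\Omega^1_{R_{G,\mu}/W(k)},V)$, while by the infinitesimal deformation theory of $1$-bounded $G$-$\mu$-displays developed for Theorem~\ref{Theorem:existence of universal deformation introduction} the set of isomorphism classes of deformations over $B'$ restricting to $\mathfrak{Q}_g$ is nonempty (the obstruction vanishes as $\mu$ is $1$-bounded) and is a torsor under $\Lie(U^{-}_{\mu})\otimes_{W(k)}V$. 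The canonical identifications
\[
\Omega^1_{R_{G,\mu}/W(k)}\otimes_{R_{G,\mu}}k\ \simeq\ \mathfrak{m}_{R_{G,\mu}}/\mathfrak{m}_{R_{G,\mu}}^{2}\ \simeq\ \Lie(U^{-}_{\mu})^{\vee}
\]
(the second because $R_{G,\mu}$ is the completed local ring of $U^{-}_{\mu}$ at the identity), together with the universality of $\mathfrak{Q}^{\mathrm{univ}}$ — which forces the relevant Kodaira--Spencer maps to be isomorphisms — identify these torsor structures compatibly with $\alpha$. Hence $\alpha$ bijective over $B$ implies $\alpha$ bijective over $B'$, so by induction $\alpha$ is bijective over $B_0$; therefore $\ev_{\mathfrak{Q}}=\beta\circ\alpha$ is bijective, i.e.\ the maps $(\ref{equation:evaluation map BK intro})$ and $(\ref{equation:evaluation map perfectoid intro})$ are bijective.

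The step that will demand the most care is the last one for non-Noetherian $B_0$, i.e.\ the perfectoid case of the bijectivity of $\alpha$: one must run the infinitesimal deformation theory of $1$-bounded $G$-$\mu$-displays with an arbitrary coefficient ring (so that the obstruction vanishing and the identification of the torsor of square-zero lifts with $\Lie(U^{-}_{\mu})\otimes V$ persist over $S/a^m$), and one must check that under $\alpha$ this torsor matches, via the Kodaira--Spencer identification, the torsor of lifts of the ring homomorphism $R_{G,\mu}\to S/a^m$. Over Artinian bases this computation is precisely what underlies Theorem~\ref{Theorem:existence of universal deformation introduction}, so the real task is to verify that it goes through verbatim. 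A secondary point is to have the prismatic-descent input for $\beta$ on record not only for Breuil--Kisin prisms over complete regular local rings but also for the truncations $(\mathfrak{S}_m,(\mathcal{E}))$ and the perfectoid-type prisms $(W(S^\flat)/[a^\flat]^m,I_S)$.
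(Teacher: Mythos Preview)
Your factorization $\ev_{\mathfrak{Q}}=\beta\circ\alpha$ through prismatic $G$-$\mu$-displays over $B_0$ is not the route the paper takes, and your claim that $\beta$ is an equivalence is a genuine gap. The only evaluation-at-a-prism equivalence available is Theorem~\ref{Theorem:main result on G displays over complete regular local rings}, which is specific to complete regular local rings; the extension to the truncations $R/\mathfrak{m}_R^m$ (which are not even quasisyntomic when $\dim R\geq 2$) and to $S/a^m$ is precisely what the paper does \emph{not} know how to prove directly --- see Conjecture~\ref{Conjecture:evaluation prismatic G-F-gauge} and the surrounding discussion, where the analogous statement for $G$-$F$-gauges is left as a conjecture (resolved only later in \cite{Gardner-Madapusi}). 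Your $\alpha$-step has a parallel problem: the Grothendieck--Messing theory in Section~\ref{Section:The Grothendieck--Messing deformation theory for G displays} is developed for $G$-$\mu$-displays over \emph{prisms}, not for prismatic $G$-$\mu$-displays over rings like $S/a^m$, so the torsor structure on $\Def$ you invoke at the $(B_0)_\Prism$-level is not actually in hand. Finally, the appeal to ``universality forces the Kodaira--Spencer maps to be isomorphisms'' is circular in the perfectoid case: universality in the sense of Definition~\ref{Definition:universal deformation} only controls maps to $R'\in\mathcal{C}_{W(k)}$, not to $S/a^m$.

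The paper's argument avoids $B_0$ entirely and works directly with the prisms. It first constructs an \emph{explicit} candidate deformation $\mathfrak{Q}^{\univ}$ over $R_{G,\mu}$ (Theorem~\ref{Theorem:existence of deformations with (Perfd-lin)}) and proves it satisfies a sharpened property (Perfd-lin): for each $g\in\Hom(R_{G,\mu},S/a^m)_e$ and each lift $h$ of $g$, the composition $\Per^{\beta}_{\mathfrak{Q}_h}\circ\ev_{\mathfrak{Q}}\circ\mathrm{Diff}_h^{-1}$ is an $S/a$-\emph{linear isomorphism}. This is shown by (i) reducing via Andr\'e's flatness lemma to perfectoids with surjective $p$-th power, (ii) lifting $g$ to a \emph{primitive} map of prisms $(\mathfrak{S}_\O^{\univ},(\mathcal{E}))\to(W(S^\flat)/[a^\flat]^m,I_S)$ (Lemma~\ref{Lemma:lifting lemma I}) and lifting $g'$ compatibly (Lemma~\ref{Lemma:lifting lemma II}), and (iii) an explicit computation of the period map using the element $U^{\univ}\in G(\mathfrak{S}_\O^{\univ})$. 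Then (Perfd-lin) is transferred to (BK-lin) in Proposition~\ref{Proposition:(Perfd-lin) implies (BK-lin)} by \emph{probing with valuation rings}: Corollary~\ref{Corollary:map from regular local ring to valuation ring} produces enough primitive maps into $(W(V^\flat)/[a^\flat]^{m+1},I_V)$ to detect both additivity and injectivity over $\mathfrak{m}_{\widetilde R}^m/\mathfrak{m}_{\widetilde R}^{m+1}$. The properties (Perfd) and (BK) then follow (Lemma~\ref{Lemma:(BK-lin) implies (BK), (Perfd-lin) implies (Perfd)}), (BK) gives universality (Corollary~\ref{Corollary:(BK) implies universality}), and any other universal deformation inherits (Perfd) and (BK) by the uniqueness of universal deformations. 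The linearity bootstrap and the primitive-map lifting lemmas are the substantive ideas that replace your $\beta$-step.
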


Using the prism $(W(k)[[t]]/t^2, (p))$, we have the following characterization of universal deformations of $\mathcal{Q}$.

\begin{thm}[Theorem \ref{Theorem:characterization of universal family}]\label{Theorem:characterization of universal deformation introduction}
Let $\mathfrak{Q}$ be a deformation of $\mathcal{Q}$ over $R_{G, \mu}$.
If the map
\[
    \ev_{\mathfrak{Q}} \colon \Hom(R_{G, \mu}, k[[t]]/t^2)_{e} \to 
    \left\{
    \begin{tabular}{c}
      isomorphism classes of \\
     deformations of $\mathcal{Q}$ over $(W(k)[[t]]/t^2, (p))$
    \end{tabular}
    \right\}
\]
is surjective, then $\mathfrak{Q}$ is a universal deformation of $\mathcal{Q}$.
\end{thm}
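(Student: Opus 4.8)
The plan is to reduce universality of $\mathfrak{Q}$ to an infinitesimal condition on the map classifying $\mathfrak{Q}$ inside the universal deformation, and to exploit that $R_{G,\mu}$ is \emph{regular}, hence formally smooth over $W(k)$, so that a tangent-level surjectivity statement already detects an isomorphism.

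By Theorem~\ref{Theorem:existence of universal deformation introduction} there is a universal deformation $\mathfrak{Q}^{\mathrm{univ}}$ of $\mathcal{Q}$ over $R_{G,\mu}$. Applying Theorem~\ref{Theorem:universal deformation (BK) and (Perfd) introduction} to $\mathfrak{Q}^{\mathrm{univ}}$ and to the prism of Breuil--Kisin type $(\mathfrak{S}_2,(\mathcal{E}))=(W(k)[[t]]/t^2,(p))$ (that is, with $n=1$, $m=2$ and $\mathcal{E}=p$ in Definition~\ref{Definition:Breuil-Kisin type intro}), the evaluation map $\ev_{\mathfrak{Q}^{\mathrm{univ}}}$ on $\Hom(R_{G,\mu},k[[t]]/t^2)_{e}$, with values in isomorphism classes of deformations of $\mathcal{Q}$ over $(W(k)[[t]]/t^2,(p))$, is bijective. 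Since $R_{G,\mu}\in\mathcal{C}_{W(k)}$, the universal property of $\mathfrak{Q}^{\mathrm{univ}}$ supplies a unique local homomorphism $h\colon R_{G,\mu}\to R_{G,\mu}$ over $W(k)$ such that the base change of $\mathfrak{Q}^{\mathrm{univ}}$ along $h$ is isomorphic to $\mathfrak{Q}$ as a deformation of $\mathcal{Q}$. First I would record the compatibility $\ev_{\mathfrak{Q}}=\ev_{\mathfrak{Q}^{\mathrm{univ}}}\circ(-\circ h)$, where $-\circ h$ denotes precomposition with $h$ on $\Hom(R_{G,\mu},k[[t]]/t^2)_{e}$; this is immediate from the functoriality of base change of prismatic $G$-$\mu$-displays and the definition of the evaluation map, since for $g\colon R_{G,\mu}\to k[[t]]/t^2$ over $W(k)$ the value of the base-changed display at the object $((W(k)[[t]]/t^2,(p)),g)$ of $(R_{G,\mu})_{\Prism}$ is the value of $\mathfrak{Q}^{\mathrm{univ}}$ at $((W(k)[[t]]/t^2,(p)),g\circ h)$. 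As $\ev_{\mathfrak{Q}^{\mathrm{univ}}}$ is bijective, the hypothesis that $\ev_{\mathfrak{Q}}$ is surjective forces the set map $-\circ h$ to be surjective.

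The next step promotes this to an automorphism of $R_{G,\mu}$. Under the canonical identification of $\Hom(R_{G,\mu},k[[t]]/t^2)_{e}$ with the relative tangent space $\Hom_{k}(\mathfrak{m}/(\mathfrak{m}^{2}+pR_{G,\mu}),k)$, where $\mathfrak{m}$ is the maximal ideal, the map $-\circ h$ is the $k$-linear transpose of the differential $\overline{dh}\colon \mathfrak{m}/(\mathfrak{m}^{2}+pR_{G,\mu})\to\mathfrak{m}/(\mathfrak{m}^{2}+pR_{G,\mu})$ of $h$. Source and target are $k$-vector spaces of the same (finite) dimension $r$, so a surjective $-\circ h$ is bijective, whence $\overline{dh}$ is bijective. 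Since $h$ is a $W(k)$-algebra map it fixes $p$, so $\overline{dh}$ bijective implies that $h$ is surjective on $\mathfrak{m}/\mathfrak{m}^{2}$; by the complete version of Nakayama's lemma $h$ is surjective, and a surjective endomorphism of a Noetherian ring is an isomorphism, so $h$ is an automorphism of $R_{G,\mu}$. (Alternatively, one argues directly that a local $W(k)$-endomorphism of $W(k)[[t_{1},\dots,t_{r}]]$ whose linear part modulo $(t_{1},\dots,t_{r})^{2}+p$ is invertible is an automorphism, by successive approximation.)

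Finally, with $h$ an automorphism and $\mathfrak{Q}$ isomorphic to the base change of $\mathfrak{Q}^{\mathrm{univ}}$ along $h$, one sees that $\mathfrak{Q}^{\mathrm{univ}}$ is the base change of $\mathfrak{Q}$ along $h^{-1}$; transporting the universal property of $\mathfrak{Q}^{\mathrm{univ}}$ across $h$ then shows that $\mathfrak{Q}$ is a universal deformation: for any $R'\in\mathcal{C}_{W(k)}$ and any deformation $\mathfrak{Q}'$ of $\mathcal{Q}$ over $R'$, if $h'\colon R_{G,\mu}\to R'$ is the unique local $W(k)$-homomorphism with the base change of $\mathfrak{Q}^{\mathrm{univ}}$ along $h'$ isomorphic to $\mathfrak{Q}'$, then $h'\circ h^{-1}$ is the unique one carrying $\mathfrak{Q}$ to $\mathfrak{Q}'$. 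The only genuinely nontrivial point is the bookkeeping behind the identity $\ev_{\mathfrak{Q}}=\ev_{\mathfrak{Q}^{\mathrm{univ}}}\circ(-\circ h)$ and its matching with the transpose of $\overline{dh}$; the remaining input is standard complete-local commutative algebra, and conceptually everything rests on the regularity of $R_{G,\mu}$, which is what makes an infinitesimal surjectivity hypothesis enough.
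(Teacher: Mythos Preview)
Your proof is correct and follows essentially the same approach as the paper: use the existence of $\mathfrak{Q}^{\mathrm{univ}}$ (Theorem~\ref{Theorem:existence of universal deformation introduction}) to obtain a classifying homomorphism $h\colon R_{G,\mu}\to R_{G,\mu}$, compare the evaluation maps via the factorization $\ev_{\mathfrak{Q}}=\ev_{\mathfrak{Q}^{\mathrm{univ}}}\circ(-\circ h)$, and deduce from the surjectivity hypothesis that the induced map on tangent spaces is an isomorphism, hence $h$ itself is an isomorphism. The only cosmetic difference is that you conclude $h$ is an isomorphism via ``surjective endomorphism of a Noetherian ring is injective,'' whereas the paper (in the slightly more general setting of Theorem~\ref{Theorem:characterization of universal family}) compares $\dim R$ with $\dim R_{G,\mu}$; both arguments are standard and equivalent here.
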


The map $\ev_{\mathfrak{Q}}$ as in Theorem \ref{Theorem:characterization of universal deformation introduction} is called the \textit{Kodaira--Spencer map} in this paper.

\begin{rem}
In fact, by using \textit{$\O_E$-prisms} as introduced in \cite{Marks} and \cite{Ito-K23},
we will formulate and prove our results for a smooth affine group scheme $G$ over the ring of integers $\O_E$ of any finite extension $E$ of $\Q_p$.
(In \cite{Marks}, $\O_E$-prisms are called $E$-typical prisms.)
The argument for general $\O_E$ is the same as that for the case where $\O_E=\Z_p$.
For simplicity, the reader may assume that $\O_E=\Z_p$ on a first reading.
\end{rem}

Let $R$ be a quasisyntomic ring over $W(k)$ in the sense of \cite[Definition 4.10]{BMS2}.
In \cite[Section 8.2]{Ito-K23},
we defined the groupoid
$
G\mathchar`-F\mathchar`-\mathrm{Gauge}_\mu(R)
$
of \textit{prismatic $G$-$F$-gauges of type $\mu$} over $R$ and constructed a fully faithful functor
    \[
    G\mathchar`-F\mathchar`-\mathrm{Gauge}_\mu(R) \to G\mathchar`-\mathrm{Disp}_\mu((R)_{\Prism}),
    \]
following the theory of prismatic $F$-gauges introduced by Drinfeld and Bhatt--Lurie (cf.\ \cite{Drinfeld22}, \cite{BL}, \cite{BL2}, \cite{BhattGauge}).
If $R$ belongs to $\mathcal{C}_{W(k)}$, then the above functor is an equivalence (Proposition \ref{Proposition:G-F-gauges to G-displays}).
Thus Theorem \ref{Theorem:existence of universal deformation introduction} can be rephrased as follows:

\begin{thm}[Theorem \ref{Theorem:deformation theory for prismatic G-F-gauges of type mu}]\label{Theorem:prismatic G-F-gauge of type mu intro}
Let
$
\mathscr{Q} \in G\mathchar`-F\mathchar`-\mathrm{Gauge}_\mu(k)
$
be a prismatic $G$-$F$-gauge of type $\mu$ over $k$.
There exists a deformation $\mathscr{Q}^{\mathrm{univ}} \in G\mathchar`-F\mathchar`-\mathrm{Gauge}_\mu(R_{G, \mu})$ of $\mathscr{Q}$ over $R_{G, \mu}$ which is universal among deformations of $\mathscr{Q}$ over $R$ with $R \in \mathcal{C}_{W(k)}$.
\end{thm}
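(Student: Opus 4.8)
The plan is to deduce this statement from Theorem~\ref{Theorem:existence of universal deformation introduction} (the existence of a universal deformation $\mathfrak{Q}^{\mathrm{univ}} \in G\mathchar`-\mathrm{Disp}_\mu((R_{G,\mu})_{\Prism})$) together with the fact, recorded just before the statement (Proposition~\ref{Proposition:G-F-gauges to G-displays}), that for $R \in \mathcal{C}_{W(k)}$ the comparison functor $G\mathchar`-F\mathchar`-\mathrm{Gauge}_\mu(R) \to G\mathchar`-\mathrm{Disp}_\mu((R)_{\Prism})$ is an \emph{equivalence} of groupoids. The point is that ``deformation of $\mathscr{Q}$ over $R$'' and ``deformation of $\mathcal{Q}$ over $R$'' are matched under this equivalence, compatibly with base change along local homomorphisms $R \to R'$ in $\mathcal{C}_{W(k)}$, so the universal property transports verbatim.

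First I would fix notation: let $\mathcal{Q} \in G\mathchar`-\mathrm{Disp}_\mu(W(k),(p))$ be the $G$-$\mu$-display corresponding to $\mathscr{Q} \in G\mathchar`-F\mathchar`-\mathrm{Gauge}_\mu(k)$ under the equivalence for $R = k$ (here $k \in \mathcal{C}_{W(k)}$, and a prismatic $G$-$\mu$-display over $k$ is the same as a $G$-$\mu$-display over $(W(k),(p))$ since $(W(k),(p))$ is the final object of $(k)_{\Prism}$). Apply Theorem~\ref{Theorem:existence of universal deformation introduction} to get a universal deformation $\mathfrak{Q}^{\mathrm{univ}}$ of $\mathcal{Q}$ over $R_{G,\mu}$. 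Since $R_{G,\mu} \in \mathcal{C}_{W(k)}$, the equivalence $G\mathchar`-F\mathchar`-\mathrm{Gauge}_\mu(R_{G,\mu}) \xrightarrow{\sim} G\mathchar`-\mathrm{Disp}_\mu((R_{G,\mu})_{\Prism})$ produces an object $\mathscr{Q}^{\mathrm{univ}} \in G\mathchar`-F\mathchar`-\mathrm{Gauge}_\mu(R_{G,\mu})$ with $\mathscr{Q}^{\mathrm{univ}} \mapsto \mathfrak{Q}^{\mathrm{univ}}$. One must check that the base-change functor $G\mathchar`-F\mathchar`-\mathrm{Gauge}_\mu(R_{G,\mu}) \to G\mathchar`-F\mathchar`-\mathrm{Gauge}_\mu(k)$ corresponds under the equivalences to the functor $\mathfrak{Q} \mapsto \mathfrak{Q}_{(W(k),(p))}$ on displays; this is just the naturality of the comparison functor in $R$, which is part of its construction in \cite[Section~8.2]{Ito-K23}. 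Hence the chosen isomorphism $\mathfrak{Q}^{\mathrm{univ}}_{(W(k),(p))} \xrightarrow{\sim} \mathcal{Q}$ transports to an isomorphism of $\mathscr{Q}^{\mathrm{univ}}$ restricted to $k$ with $\mathscr{Q}$, making $\mathscr{Q}^{\mathrm{univ}}$ a deformation of $\mathscr{Q}$ over $R_{G,\mu}$.

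For the universal property: given $R' \in \mathcal{C}_{W(k)}$ and a deformation $\mathscr{Q}' \in G\mathchar`-F\mathchar`-\mathrm{Gauge}_\mu(R')$ of $\mathscr{Q}$, apply the equivalence at $R'$ to obtain a deformation $\mathfrak{Q}'$ of $\mathcal{Q}$ over $R'$ (the residue-field compatibility just discussed shows the restriction to $k$ is again $\mathcal{Q}$ with the transported isomorphism). By the universal property of $\mathfrak{Q}^{\mathrm{univ}}$ there is a unique local $W(k)$-homomorphism $h \colon R_{G,\mu} \to R'$ with $h^* \mathfrak{Q}^{\mathrm{univ}} \cong \mathfrak{Q}'$ as deformations. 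Applying the (fully faithful) equivalence at $R'$ in reverse turns this into an isomorphism $h^*\mathscr{Q}^{\mathrm{univ}} \cong \mathscr{Q}'$ of deformations, using once more naturality of the comparison functor to identify $h^*\mathscr{Q}^{\mathrm{univ}}$ with the base change. Uniqueness of $h$ is inherited directly, since the display-level statement already gives uniqueness and the passage between the two sides is an equivalence. This establishes that $\mathscr{Q}^{\mathrm{univ}}$ is universal among deformations over objects of $\mathcal{C}_{W(k)}$.

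I do not expect any serious obstacle here — the statement is essentially a restatement of Theorem~\ref{Theorem:existence of universal deformation introduction} through the equivalence of Proposition~\ref{Proposition:G-F-gauges to G-displays}. The one point requiring genuine care, rather than bookkeeping, is the compatibility of the comparison functor $G\mathchar`-F\mathchar`-\mathrm{Gauge}_\mu(-) \to G\mathchar`-\mathrm{Disp}_\mu((-)_{\Prism})$ with base change along local homomorphisms in $\mathcal{C}_{W(k)}$, and in particular along the structure map $R \to k$; but this is built into the construction and only needs to be invoked. Accordingly the ``proof'' will largely consist of citing Theorem~\ref{Theorem:existence of universal deformation introduction}, invoking Proposition~\ref{Proposition:G-F-gauges to G-displays}, and spelling out the transport of the universal property along the equivalence as above.
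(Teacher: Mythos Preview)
Your proposal is correct and matches the paper's approach exactly: the paper's proof of Theorem~\ref{Theorem:deformation theory for prismatic G-F-gauges of type mu} is the single sentence ``This follows from Theorem~\ref{Theorem:Existence of universal deformation} and Proposition~\ref{Proposition:G-F-gauges to G-displays},'' which is precisely the transport-along-the-equivalence argument you spell out in detail.
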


\begin{rem}\label{Remark:prismatic F-gauge intro}
    In light of Theorem \ref{Theorem:universal deformation (BK) and (Perfd) introduction}, one can expect that
    $\mathscr{Q}^{\mathrm{univ}}$ is also universal among deformations of $\mathscr{Q}$ over $R/\mathfrak{m}^{m}_R$ with $R \in \mathcal{C}_{W(k)}$ such that $\dim R =1$
    and over $S/a^m$ as in Definition \ref{Definition:perfectoid type intro}.
    We note that $R/\mathfrak{m}^{m}_R$ (with $\dim R =1$) and $S/a^m$ are quasisyntomic.
    In principle, it should be possible to develop the deformation theory for prismatic $G$-$F$-gauges of type $\mu$ in a more conceptual (but rather abstract) way, and the results presented in this paper should provide a practical way to understand it by using $G$-$\mu$-displays over prisms.
    We will only make some conjectures in Section \ref{Section:Consequences on prismatic F-gauges}, and will not pursue this direction here.
    See also Remark \ref{Remark:GMM}.
\end{rem}

\begin{rem}\label{Remark:comparison with Bultel Pappas work}
Assume that $G$ is reductive.
In \cite{Bultel-Pappas},
B\"ultel--Pappas
introduced 
the notion of
$G$-$\mu$-displays over $\Spec R$ for any $W(k)$-algebra $R$, by using the ring $W(R)$ of $p$-typical Witt vectors.
(In \cite{Bultel-Pappas}, $G$-$\mu$-displays are called $(G, \mu)$-displays.)
The groupoid of $G$-$\mu$-displays over $\Spec k$ in their sense is equivalent to 
$G\mathchar`-\mathrm{Disp}_\mu(W(k), (p))$.
Let
$\mathcal{Q}$
be a (banal) $G$-$\mu$-display over $\Spec k$.
Under the assumption that
$\mathcal{Q}$ is \textit{adjoint nilpotent} in the sense of \cite[Definition 3.4.2]{Bultel-Pappas},
they proved that the deformation functor of $\mathcal{Q}$
defined on the category $\Art_{W(k)}$ of artinian local rings over $W(k)$ with residue field $k$ is pro-representable by $R_{G, \mu}$.
See \cite[3.5.9]{Bultel-Pappas} for details.
The deformation theory developed in this paper applies to complete regular local rings $R \in \mathcal{C}_{W(k)}$ and their quotients $R/\mathfrak{m}^m_R$ without imposing any additional conditions on $G$-$\mu$-displays, such as the adjoint nilpotent condition.
\end{rem}

\begin{rem}\label{Remark:GMM}
    After this work was completed, and during the refereeing process,
    Gardner--Madapusi \cite{Gardner-Madapusi} developed the deformation theory for prismatic $G$-$F$-gauges of type $\mu$ using the stacky approach of Drinfeld and Bhatt--Lurie, from which the conjectures stated in Section \ref{Section:Consequences on prismatic F-gauges} will follow.
    More precisely, using their result, we can extend the definition of prismatic $G$-$F$-gauges of type $\mu$ to (not necessarily quasisyntomic) $p$-adically complete rings and then generalize our results to arbitrary artinian local rings in $\Art_{W(k)}$.
    Compared to \cite{Gardner-Madapusi}, the method developed in this paper can not be applied to arbitrary artinian local rings, but it does not require the use of some powerful results on derived algebraic geometry, such as Lurie's generalization of Artin's representability theorem.
    See also \cite{IKY} for the relation between our results and those in \cite{Gardner-Madapusi}.
\end{rem}

\subsection{Applications}\label{Subsection:Applications intro}

We give two applications of our results.
Assume that $G$ is a connected reductive group scheme over $\Z_p$ (or over $\O_E$).
The first application concerns the local representability 
and
the formal smoothness
of the \textit{integral local Shimura variety}
$\mathcal{M}^{\int}_{G, b, \mu}$
with hyperspecial level structure, which is introduced in \cite{Scholze-Weinstein} as a $v$-sheaf on the category of perfectoid spaces of characteristic $p$.
More precisely, we prove in Theorem \ref{Theorem:conjecture of Pappas-Rapoport} that the formal completions
$\widehat{\mathcal{M}^{\int}_{G, b, \mu}}_{/ x}$
of $\mathcal{M}^{\int}_{G, b, \mu}$ introduced in \cite{Gleason} are representable by the formal scheme $\Spf R_{G, \mu}$
(or rather $\Spf R_{G, \mu^{-1}}$, depending on the sign convention) by relating $\widehat{\mathcal{M}^{\int}_{G, b, \mu}}_{/ x}$ to universal deformations of prismatic $G$-$\mu$-displays.

\begin{rem}\label{Remark:Pappas-Rapoport conjecture intro}
The above result (Theorem \ref{Theorem:conjecture of Pappas-Rapoport}) implies that a conjecture of Pappas--Rapoport \cite[Conjecture 3.3.5]{PappasRapoport21}
in the hyperspecial case, which was originally proposed by Gleason \cite[Conjecture 1]{Gleasonthesis}, holds true.
In \cite{PappasRapoport22}, Pappas--Rapoport proved their conjecture (in the more general case where $G$ is parahoric) under the assumption that $p \geq 3$ and $\mathcal{M}^{\int}_{G, b, \mu}$ is of abelian type.
If $p=2$, the same result was obtained under the additional assumption that $G_{\Q_p}$ is of type $A$ or $C$.
Our approach is completely different from that of \cite{PappasRapoport22},
and we do not need to impose these assumptions.
Our result also generalizes a result of Bartling \cite{Bartling}, in which the adjoint nilpotent condition of \cite{Bultel-Pappas} is imposed.
See Section \ref{Subsection:Moduli spaces of $G$-shtukas} for details.
\end{rem}

\begin{rem}
    In \cite{IKY},
    our universal deformations of prismatic $G$-$\mu$-displays are used to give a prismatic characterization of integral canonical models of (global) Shimura varieties of abelian type with hyperspecial level structure, which is inspired by \cite[Theorem 4.2.4]{PappasRapoport21}.
\end{rem}

As a second application,
we revisit and extend some classification results of $p$-divisible groups.

\begin{rem}\label{Remark:the part which can be deduced from Anschutz-LeBras}
Let $R$ be a quasisyntomic ring.
In \cite{Anschutz-LeBras},
Ansch\"utz--Le Bras constructed a contravariant functor from the category of $p$-divisible groups over $\Spec R$
to the category 
of admissible prismatic Dieudonn\'e crystals over $R$ (\cite[Definition 4.5]{Anschutz-LeBras}), which is called the \textit{prismatic Dieudonn\'e functor}.
They proved that the prismatic Dieudonn\'e functor is an anti-equivalence; see \cite[Theorem 4.74]{Anschutz-LeBras}.
For any $R \in \mathcal{C}_{W(k)}$,
our prismatic $\GL_N$-$\mu$-displays over $R$ can be viewed as
admissible prismatic Dieudonn\'e crystals over $R$ (with some additional conditions).
Then Theorem \ref{Theorem:existence of universal deformation introduction}
and Theorem \ref{Theorem:characterization of universal deformation introduction}
follow immediately from \cite[Theorem 4.74]{Anschutz-LeBras}, together with the deformation theory for $p$-divisible groups.
On the other hand,
Theorem \ref{Theorem:universal deformation (BK) and (Perfd) introduction}
is a new result as far as we know.
The proofs of 
Theorem \ref{Theorem:existence of universal deformation introduction}, Theorem \ref{Theorem:universal deformation (BK) and (Perfd) introduction}, and Theorem \ref{Theorem:characterization of universal deformation introduction} presented in this paper do not rely on $p$-divisible groups.
\end{rem}

By using our results, we can give an alternative proof
of the classification result of Ansch\"utz--Le Bras mentioned above for any $R \in \mathcal{C}_{W(k)}$.
In fact,
Theorem \ref{Theorem:universal deformation (BK) and (Perfd) introduction}
enables us to prove the following results:
\begin{enumerate}
    \item 
    (Theorem \ref{Theorem:classification of p-divisible group:torsion regular local ring}).
    Let the notation be as in Definition \ref{Definition:Breuil-Kisin type intro}.
    For every $m \geq 1$, the prismatic Dieudonn\'e functor induces
an anti-equivalence
\[
\left.
\left\{
\begin{tabular}{c}
     $p$-divisible groups \\ over $\Spec R/\mathfrak{m}^m_R$
\end{tabular}
\right\}
\right.
\overset{\sim}{\to}
\left\{
\begin{tabular}{c}
     minuscule Breuil--Kisin modules \\
     over \ $(\mathfrak{S}_m, (\mathcal{E}))$
\end{tabular}
\right\}.
\]
    \item (Theorem \ref{Theorem:classification of p-divisible group:torsion valuation ring}). Let $\O_C$ be a $p$-adically complete valuation ring of rank $1$ with algebraically closed fraction field $C$.
    Let $\varpi \in \O_C$ be a pseudo-uniformizer.
    For every $m \geq 1$, the prismatic Dieudonn\'e functor induces
    an anti-equivalence
    \[
\left.
\left\{
\begin{tabular}{c}
     $p$-divisible groups \\ over $\Spec \O_C/\varpi^m$
\end{tabular}
\right\}
\right.
\overset{\sim}{\to}
\left\{
\begin{tabular}{c}
     minuscule Breuil--Kisin modules \\
     over \ $(W(\O_{C^\flat})/[\varpi^\flat]^{m}, I_{\O_C})$
\end{tabular}
\right\}.
\]
\end{enumerate}
See Example \ref{Example:GLn displays} for the definition of minuscule Breuil--Kisin modules.

\begin{rem}
    An equivalence as in (1) was previously obtained by Lau \cite{Lau14} using
Dieudonn\'e displays
and crystalline Dieudonn\'e theory, where special efforts were made in the case of $p=2$.
(This result plays an essential role in the construction of $2$-adic integral canonical models of Shimura varieties of abelian type with hyperspecial level structure in \cite{Kim-MadapusiPera}.)

By combining (2) with a theorem of Fargues \cite[Theorem 14.1.1]{Scholze-Weinstein},
we obtain an alternative proof of a result of Scholze--Weinstein \cite[Theorem B]{ScholzeWeinsteinModuli},
which says that, in the case where $C$ is of characteristic $0$, there exists an equivalence between the category of $p$-divisible groups over $\Spec \O_C$
and the category of free $\Z_p$-modules $T$ of finite rank together with a $C$-subspace of $T \otimes_{\Z_p} C$.
See Section \ref{Subsection:Classifications of p-divisible groups} for details.
\end{rem}

\subsection{Strategy of the proof}\label{Subsection:Strategy of the proof}

We remark that if there exists
a deformation
$\mathfrak{Q}$ of $\mathcal{Q}$ over $R_{G, \mu}$
such that
the maps $(\ref{equation:evaluation map BK intro})$ and $(\ref{equation:evaluation map perfectoid intro})$
for $\mathfrak{Q}$
are bijective,
then $\mathfrak{Q}$ is a universal deformation, and moreover Theorem \ref{Theorem:characterization of universal deformation introduction} follows immediately.
We briefly explain how to construct such a deformation $\mathfrak{Q}$.
We choose a certain prism
$(\mathfrak{S}^{\univ}, (\mathcal{E}^{\univ}))$
of Breuil--Kisin type with an isomorphism
$R_{G, \mu} \simeq \mathfrak{S}^{\univ}/\mathcal{E}^{\univ}$ over $W(k)$.
By Remark \ref{Remark:Breuil-Kisin prism and deformation introduction}, giving a deformation of $\mathcal{Q}$ over $R_{G, \mu}$ is equivalent to giving a deformation of $\mathcal{Q}$ over  $(\mathfrak{S}^{\univ}, (\mathcal{E}^{\univ}))$.
In practice the latter is much easier.
We will construct a deformation $\mathscr{Q}$ of $\mathcal{Q}$ over $(\mathfrak{S}^{\univ}, (\mathcal{E}^{\univ}))$ such that 
the maps (\ref{equation:evaluation map BK intro})
and
(\ref{equation:evaluation map perfectoid intro}) for the corresponding deformation $\mathfrak{Q}$ over $R_{G, \mu}$ are bijective.

The key ingredient is the \textit{Grothendieck--Messing deformation theory} for $G$-$\mu$-displays developed in Section \ref{Section:The Grothendieck--Messing deformation theory for G displays}.
Let
$\mathcal{Q}'$
be a $G$-$\mu$-display over
$(\mathfrak{S}_m, (\mathcal{E}))$
(resp.\ $(W(S^\flat)/[a^\flat]^m, I_S)$).
Then the Grothendieck--Messing deformation theory says that
deformations of $\mathcal{Q}'$ over 
$(\mathfrak{S}_{m+1}, (\mathcal{E}))$
(resp.\ $(W(S^\flat)/[a^\flat]^{m+1}, I_S)$)
are classified by lifts of the Hodge filtration of $\mathcal{Q}'$; see Theorem \ref{Theorem:GM deformation} for the precise statement.
Here the 1-boundedness of $\mu$ is essential.
In view of this result, it is not difficult to find a candidate for $\mathscr{Q}$.
In fact, our construction is an analogue of the construction of universal $p$-divisible groups given by Faltings \cite[Section 7]{Faltings99}.

The proof of the bijectivity of the maps (\ref{equation:evaluation map BK intro}) and (\ref{equation:evaluation map perfectoid intro}) for $\mathfrak{Q}$ goes as follows.
We fix a homomorphism
$g \in \Hom(R_{G, \mu}, S/a^{m})_{e}$
and let
$\Hom(R_{G, \mu}, S/a^{m+1})_{g}$
be the set of homomorphisms $h \colon R_{G, \mu} \to S/a^{m+1}$ over $W(k)$
which are lifts of $g$.
To prove that the map $(\ref{equation:evaluation map perfectoid intro})$ is bijective, it suffices to show that (for all $m$ and $g$) the map
\begin{equation}\label{equation:evaluation map linear intro}
    \ev_{\mathfrak{Q}} \colon \Hom(R_{G, \mu}, S/a^{m+1})_{g}
    \to
    \left\{
    \begin{tabular}{c}
      isomorphism classes of \\
     deformations of $\mathfrak{Q}_g$ over $(W(S^\flat)/[a^\flat]^{m+1}, I_S)$
    \end{tabular}
    \right\}
\end{equation}
defined by $h \mapsto \mathfrak{Q}_h$ is bijective.
After choosing a lift $h \in \Hom(R_{G, \mu}, S/a^{m+1})_{g}$, we can endow
the set $\Hom(R_{G, \mu}, S/a^{m+1})_{g}$
with the structure of an $S/a$-module such that
$h$ is the zero element.
The target of the map (\ref{equation:evaluation map linear intro})
can be also regarded as an $S/a$-module by the Grothendieck--Messing deformation theory.
In fact, we will show that the map
(\ref{equation:evaluation map linear intro})
is not only a bijection, but also an \textit{$S/a$-linear isomorphism} (cf.\ Theorem \ref{Theorem:existence of deformations with (Perfd-lin)}).
For this, we may assume that any element of $S$ admits a $p$-power root by Andr\'e's flatness lemma, and then we can prove the assertion by explicit computation.
From this more precise statement, we can deduce that the map $(\ref{equation:evaluation map BK intro})$ is bijective (cf.\ Proposition \ref{Proposition:(Perfd-lin) implies (BK-lin)}).

\subsection{The structure of this paper}\label{Subsection:The structure of this paper}

This paper is organized as follows.
In Section \ref{Section:Review of prismatic G-mu-displays}, we review the theory of $\O_E$-prisms and $G$-$\mu$-displays over (bounded) $\O_E$-prisms.
In Section \ref{Section:The Grothendieck--Messing deformation theory for G displays}, we prove some basic results on deformations of $G$-$\mu$-displays,
and establish the Grothendieck--Messing deformation theory for $G$-$\mu$-displays.
In Section \ref{Section:Universal deformations}, we state and prove the main results of this paper.

In Section \ref{Section:Integral models of local Shimura varieties} and Section \ref{Section:Comparison with universal deformations of $p$-divisible groups}, we give applications of our results to integral local Shimura varieties and $p$-divisible groups, respectively.
Finally, in Section \ref{Section:Consequences on prismatic F-gauges}, we discuss some consequences of our deformation theory on prismatic $F$-gauges, or more precisely, on prismatic $G$-$F$-gauges of type $\mu$.

\subsection*{Notation} \label{Subsection:Notation}
This paper is a continuation of our previous work \cite{Ito-K23}.
Unless explicitly stated otherwise, we use definitions and notation from \cite{Ito-K23}.

We recall some notation from \cite{Ito-K23}.
All rings are commutative and unital.
For a scheme $X$ over $\Spec R$ and a ring homomorphism
$f \colon R \to R'$,
the base change $X \times_{\Spec R} \Spec R'$ is denoted by
$X_{R'}$ or $f^*X$.
We use similar notation for the base change of group schemes,
$p$-divisible groups, etc.
All actions of groups will be right actions, unless otherwise stated.
A groupoid is a category whose morphisms are all invertible.
For a category $\mathcal{C}$,
let
$\mathcal{C}^{\simeq}$
be the largest groupoid contained in $\mathcal{C}$.

\section{Review of prismatic $G$-$\mu$-displays}\label{Section:Review of prismatic G-mu-displays}

Throughout this paper, we fix a prime number $p$.
Let $E$ be a finite extension of $\Q_p$ with ring of integers $\O_E$ and residue field $\F_q$.
Here $\F_q$ is a finite field with $q$ elements.
We fix a uniformizer $\pi \in \O_E$ for simplicity.
Let $k$ be a perfect field containing $\F_q$ and
we set
$
\O := W(k) \otimes_{W(\F_q)} \O_E.
$
Let $G$ be a smooth affine group scheme over $\O_E$
and
let
$
\mu \colon \G_m \to G_{\O}:=G \times_{\Spec \O_E} \Spec \O
$
be a cocharacter.

In this section, we first recall the notion of $\O_E$-prisms as introduced in \cite{Marks} and \cite{Ito-K23}.
We also provide some preliminary results on certain specific $\O_E$-prisms which will be used in this paper.
Then we review the definition and basic properties of $G$-$\mu$-displays over bounded $\O_E$-prisms developed in \cite{Ito-K23}.
More details can be found in \cite{Ito-K23}.

\subsection{$\O_E$-prisms}\label{Subsection:OE prisms}

Let $A$ be an $\O_E$-algebra.
    A \textit{$\delta_E$-structure} on $A$ is a map $\delta_E \colon A \to A$ of sets with the following properties:
\begin{itemize}
    \item $\delta_E(xy)=x^q\delta_E(y)+y^q\delta_E(x)+\pi\delta_E(x)\delta_E(y)$.
    \item $\delta_E(x+y)=\delta_E(x)+\delta_E(y)+(x^q+y^q-(x+y)^q)/\pi$.
    \item For an element $x \in \O_E$, we have $\delta_E(x)=(x-x^q)/\pi$.
\end{itemize}
A \textit{$\delta_E$-ring} is
an $\O_E$-algebra $A$ equipped with a $\delta_E$-structure.
We define
\[
\phi_A \colon A \to A, \quad x \mapsto x^q+\pi\delta_E(x).
\]
Then $\phi_A$ is a homomorphism of $\O_E$-algebras and is a lift of
the $q$-th power Frobenius $A/\pi \to A/\pi$, $x \mapsto x^q$.
The homomorphism $\phi_A$ is called the \textit{Frobenius} of the $\delta_E$-ring $A$.
When there is no ambiguity, we omit the subscript and simply write $\phi=\phi_A$.

An \textit{$\O_E$-prism} is a pair $(A, I)$ of a $\delta_E$-ring $A$ and a Cartier divisor $I \subset A$
such that $A$ is derived $(\pi, I)$-adically complete and $\pi \in I + \phi(I)A$.
We say that $(A, I)$ is
\textit{bounded}
if $A/I$ has bounded $p^\infty$-torsion, i.e.\ $(A/I)[p^n]=(A/I)[p^\infty]$ for some integer $n \geq 1$.
In this case $A$ is $(\pi, I)$-adically complete (\cite[Remark 2.3.2]{Ito-K23}).
If $\O_E=\Z_p$, then bounded $\O_E$-prisms are the same as bounded prisms in the sense of \cite{BS}.
We say that $(A, I)$ is \textit{orientable}
if $I$ is principal.
An $\O_E$-prism $(A, I)$ with a homomorphism $\O \to A$ of $\delta_E$-rings is called an $\O_E$-prism \textit{over} $\O$.
We refer to \cite{Marks} and \cite{Ito-K23} for details.

We give some examples of $\O_E$-prisms, which play a central role in this paper.
We set
\[
\mathfrak{S}_\O:=\O[[t_1, \dotsc, t_n]]
\]
for $n \geq 0$, which admits a unique $\delta_E$-structure such that
the Frobenius
$\phi \colon \mathfrak{S}_\O \to \mathfrak{S}_\O$ 
is given by $\phi(t_i)=t^q_i$ ($1 \leq i \leq n$).
For every integer $m \geq 1$,
the quotient
\[
\mathfrak{S}_{\O, m}:=\O[[t_1, \dotsc, t_n]]/(t_1, \dotsc, t_n)^m
\]
admits a unique $\delta_E$-structure that is compatible with the one on $\mathfrak{S}_\O$.

\begin{ex}\label{Example:Breuil-Kisin type frame}
Let $\mathcal{E} \in \mathfrak{S}_\O$ be a formal power series whose constant term is a uniformizer of $\O$.
The pair
$
(\mathfrak{S}_\O, (\mathcal{E}))
$
is a bounded $\O_E$-prism over $\O$ (\cite[Proposition 2.3.8]{Ito-K23}),
which we call an $\O_E$-prism \textit{of Breuil--Kisin type} in this paper.
Here $n$ could be any nonnegative integer.
For any $m \geq 1$,
the pair
$
(\mathfrak{S}_{\O, m}, (\mathcal{E}))
$
is also a bounded $\O_E$-prism over $\O$.
Here we denote the image of $\mathcal{E}$ in $\mathfrak{S}_{\O, m}$ by the same symbol.
\end{ex}

Our next example is related to (integral) perfectoid rings.
We refer to \cite[Section 3]{BMS} and \cite[Section 2]{CS} for the definition and basic properties of perfectoid rings.

Let $S$ be a perfectoid ring over $\O$
(i.e.\ $S$ is a perfectoid ring with a ring homomorphism $\O \to S$).
Let
$
S^\flat:=\varprojlim_{x \mapsto x^p} S/p
$
be the \textit{tilt} of $S$,
which is a perfect $k$-algebra.
Then
\[
W_{\O_E}(S^\flat):=W(S^\flat) \otimes_{W(\F_q)} \O_E
\]
is naturally an $\O$-algebra.
Let 
$\phi \colon W_{\O_E}(S^\flat) \to W_{\O_E}(S^\flat)$
be
the base change of the $q$-th power Frobenius of $W(S^\flat)$.
Since $W_{\O_E}(S^\flat)$ is $\pi$-torsion free,
we obtain the corresponding $\delta_E$-structure on $W_{\O_E}(S^\flat)$.
Moreover, for any element $x \in S^\flat$,
the quotient
$W_{\O_E}(S^\flat)/[x]$
admits a unique $\delta_E$-structure that is compatible with the one on $W_{\O_E}(S^\flat)$. Here $[-]$ denotes the Teichm\"uller lift.

We note the following fact.

\begin{lem}\label{Lemma:pi torsion free of witt vectors}
For any element $x \in S^\flat$,
the quotient $W_{\O_E}(S^\flat)/[x]$ is $\pi$-torsion free (or equivalently, $p$-torsion free).
\end{lem}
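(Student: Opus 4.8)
The plan is to reduce the claim to a statement about the perfectoid ring $S$ and its tilt, and then invoke the structure theory of perfectoid rings. Since $W_{\O_E}(S^\flat) = W(S^\flat) \otimes_{W(\F_q)} \O_E$ and $\O_E$ is a finite free $W(\F_q)$-module (indeed $\O_E = W(\F_q)[\pi]$ since $\F_q$ is the residue field), the ring $W_{\O_E}(S^\flat)$ is a finite free $W(S^\flat)$-module, and moreover $\pi$-torsion freeness over $W_{\O_E}(S^\flat)$ is equivalent to $p$-torsion freeness: $\pi$ divides a power of $p$ up to a unit and vice versa in $\O_E$, so $\pi$ and $p$ generate the same radical ideal, hence $[x]$-torsion aside, multiplication by $\pi$ is injective on $W_{\O_E}(S^\flat)/[x]$ iff multiplication by $p$ is. This justifies the parenthetical ``equivalently'' and lets me work with $p$ throughout.

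Next I would recall that for a perfectoid ring $S$, the tilt $S^\flat$ is a perfect $\F_q$-algebra (in our normalization, a perfect $k$-algebra), and the element $x \in S^\flat$ can be written as $x = (x^{1/q^m})_m$ with compatible $q$-power roots; in particular $x$ admits $q$-power roots in $S^\flat$. The key input is that $W(S^\flat)$ is $p$-torsion free (as $S^\flat$ is reduced, indeed perfect, so $W(S^\flat)$ is $p$-torsion free), and that for any $x \in S^\flat$ the sequence $(p, [x])$ behaves well. Concretely, I would show that $[x]$ is a nonzerodivisor on $W(S^\flat)/p = S^\flat$ — which holds because $S^\flat$ is a perfect, hence reduced, $\F_q$-algebra and the image of $[x]$ is $x$, but $x$ need not be a nonzerodivisor in $S^\flat$ in general. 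So that naive approach fails, and instead I should argue the other way: show $p$ is a nonzerodivisor on $W(S^\flat)/[x]$.

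The cleanest route is: $W(S^\flat)/[x]$ is $p$-torsion free if and only if $p \colon W(S^\flat)/[x] \to W(S^\flat)/[x]$ is injective, i.e.\ if $pa \in [x]W(S^\flat)$ implies $a \in [x]W(S^\flat)$. Writing everything in terms of Witt components (Teichm\"uller expansions) $a = \sum_{i \geq 0} [a_i] p^i$, one uses that $[x] \sum [b_i]p^i = \sum [x b_i'] p^i$ after carrying, and that $S^\flat$ is perfect so $q$-power roots exist, to match coefficients. This is precisely the kind of computation that is standard in the theory (e.g.\ it underlies $p$-torsion freeness of $A_{\inf}/[x]$ in the work of Bhatt--Morrow--Scholze), so I would cite the analogue for $W(S^\flat) = A_{\inf}$ and then base change along the finite flat extension $W(\F_q) \to \O_E$ to get the statement for $W_{\O_E}(S^\flat)/[x]$; flatness of $\O_E$ over $W(\F_q)$ ensures $W_{\O_E}(S^\flat)/[x] = (W(S^\flat)/[x]) \otimes_{W(\F_q)} \O_E$ is $p$-torsion free whenever $W(S^\flat)/[x]$ is.

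The main obstacle I anticipate is handling the case where $x$ is \emph{not} a nonzerodivisor in $S^\flat$ — for instance $x$ could be nilpotent-looking or a zerodivisor — so one cannot simply say ``$[x]$ is a nonzerodivisor.'' The resolution is that $p$-torsion freeness of $W(S^\flat)/[x]$ does \emph{not} require $[x]$ to be a nonzerodivisor; it follows from the explicit Witt-vector bookkeeping together with the perfectness of $S^\flat$, which guarantees that solving for Witt coordinates modulo $[x]$ never obstructs. So the real content is making that bookkeeping precise (or citing it), and the base change to $\O_E$ is then formal.
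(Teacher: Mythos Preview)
Your proposal is correct and follows essentially the same approach as the paper: reduce to $W(S^\flat)/[x]$ via flatness of $W(\F_q) \to \O_E$, then show $p$ is a nonzerodivisor by a Witt-coordinate computation using perfectness of $S^\flat$. The paper carries out that computation explicitly in three lines using Witt components $(y_0,y_1,\dots)$ rather than Teichm\"uller expansions---from $py=[x]z$ one reads off $y_i^p = x^{p^{i+1}}z_{i+1}$, takes $p$-th roots, and gets $y=[x]\cdot(z_1^{1/p},z_2^{1/p},\dots)$---so no carrying or citation is needed.
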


\begin{proof}
Since $W(\F_q) \to \O_E$ is flat,
it is enough to show that $W(S^\flat)/[x]$ is $p$-torsion free.
Let
$y \in W(S^\flat)$ be an element such that
$py=[x]z$ for some $z \in W(S^\flat)$.
We want to show that $y \in ([x])$.
For the Witt vector expansions
$y=(y_0, y_1, \dotsc)$ and
$z=(z_0, z_1, \dotsc)$,
the equality
$py=[x]z$
implies that
$y^p_i=x^{p^{i+1}}z_{i+1}$
for every $i \geq 0$.
Since $S^\flat$ is perfect, we have $y_i=x^{p^{i}}z^{1/p}_{i+1}$
for every $i \geq 0$, and thus
$
y=(y_0, y_1, \dotsc)=[x]\cdot(z^{1/p}_{1}, z^{1/p}_{2}, \dotsc) \in ([x]).
$
\end{proof}

Let 
$
\theta \colon W(S^\flat) \to S
$
be the unique ring homomorphism whose reduction modulo $p$ is the projection map
$S^\flat \to S/p$, $(x_0, x_1, \dotsc) \mapsto x_0$.
Let $\theta_{\O_E} \colon W_{\O_E}(S^\flat) \to S$ be
the homomorphism induced from $\theta$.
We write
$I_S:=\Ker \theta_{\O_E}$ for the kernel of $\theta_{\O_E}$.


\begin{prop}\label{Proposition:perfectoid type}
Let $S$ be a perfectoid ring over $\O$.
\begin{enumerate}
    \item 
    The pair
    $
    (W_{\O_E}(S^\flat), I_S)
    $
    is an orientable and bounded $\O_E$-prism over $\O$.
    \item 
    Let $a^\flat \in S^\flat$ be an element such that
    $a:=\theta([a^\flat]) \in S$ is a nonzerodivisor and we have $\pi \in (a)$ in $S$.
    Let $m \geq 1$ be an integer.
    Then
    \[
    (W_{\O_E}(S^\flat)/[a^\flat]^m, I_S)
    \]
    is an orientable and bounded $\O_E$-prism over $\O$.
    Here we denote the image of $I_S$ in $W_{\O_E}(S^\flat)/[a^\flat]^m$ by the same notation.
\end{enumerate}
\end{prop}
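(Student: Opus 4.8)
The plan is to reduce both assertions to the corresponding statements for the perfectoid prism $(W_{\O_E}(S^\flat), I_S)$ (part (1)) and then to handle the Teichmüller quotient. For part (1): one checks that $W_{\O_E}(S^\flat)$ is a $\delta_E$-ring (already noted in the text, since it is $\pi$-torsion free and carries the Frobenius coming from $W(S^\flat)$). The pair $(W(S^\flat), \Ker\theta)$ is the standard perfect prism attached to $S$ (it is a prism by the theory of \cite{BS}, since $S$ is perfectoid and $\Ker\theta$ is generated by a distinguished element, e.g.\ $\xi = p - [\varpi^\flat]$ for a suitable pseudo-uniformizer); orientability is exactly the principality of $\Ker\theta$, and boundedness follows because $A/I = S$ and perfectoid rings have bounded $p^\infty$-torsion (in fact $S$ is $p$-torsion free when $p$ is a nonzerodivisor, and in general one uses \cite[Lemma 3.10]{BMS} or the discussion in \cite[Section 2]{CS}). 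Base changing along the flat map $W(\F_q) \to \O_E$ preserves all of this: $I_S = \Ker\theta_{\O_E}$ is still principal, $W_{\O_E}(S^\flat)$ is $(\pi, I_S)$-adically complete (it is $\pi$-adically complete and $I_S$-adically complete since $I_S$ is generated by a nonzerodivisor that is a unit multiple of $\xi$), and $\pi \in I_S + \phi(I_S)W_{\O_E}(S^\flat)$ because $p \in \Ker\theta + \phi(\Ker\theta)W(S^\flat)$ already holds in the $\Z_p$-case and $\pi/p$ is a unit times an element controlled by the Eisenstein relation for $\pi$ over $\Q_p$; concretely one checks this using the explicit generator. This is routine bookkeeping once the $\Z_p$-case is granted.

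For part (2) the key observation is that, by Lemma \ref{Lemma:pi torsion free of witt vectors}, $A := W_{\O_E}(S^\flat)/[a^\flat]^m$ is $\pi$-torsion free (the lemma is stated for $[x]$, and $[a^\flat]^m = [(a^\flat)^m]$ is again a Teichmüller lift since $S^\flat$ is perfect), so it inherits a $\delta_E$-structure from $W_{\O_E}(S^\flat)$; this is already recorded in the text. What must be shown is that $(A, I_S A)$ is again a bounded orientable $\O_E$-prism. Orientability is immediate: $I_S$ is principal upstairs, hence so is its image. For the prism axioms: $A$ is $(\pi, I_S)$-adically complete because it is a quotient of the complete ring $W_{\O_E}(S^\flat)$ by the closed ideal $([a^\flat]^m)$ (here one uses that $[a^\flat]$, hence $[a^\flat]^m$, is topologically nilpotent modulo $(\pi, I_S)$ — indeed $a^\flat$ maps to $a \bmod \varpi$ which is nilpotent in $S/\varpi$ by the hypothesis $p \in (a)$ rescaled — combined with the fact that $[a^\flat]^m$ generates a closed ideal, so the quotient of a complete ring is complete). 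The relation $\pi \in I_S + \phi(I_S)A$ is inherited from $W_{\O_E}(S^\flat)$ by reduction. The real point is \emph{boundedness} of $A/I_S A = S/a^m$: here one needs that $S/a^m$ has bounded $p^\infty$-torsion. Since $p \in (a)$, we have $p^m \in (a^m)$, so $S/a^m$ is killed by a power of $p$ and hence trivially has bounded $p^\infty$-torsion; alternatively, $S/a$ is an $\F_p$-algebra and one filters $S/a^m$ by the $(a)$-adic filtration with subquotients $a^i S / a^{i+1} S$, each an $S/a$-module. Either way boundedness is clear.

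The main obstacle I anticipate is not any single deep step but rather the careful verification that $A = W_{\O_E}(S^\flat)/[a^\flat]^m$ is genuinely $(\pi, I_S)$-adically complete and \emph{derived} complete in the sense required by the definition of an $\O_E$-prism — one must be slightly careful that quotienting by the ideal $([a^\flat]^m)$, which is not finitely generated by an $I_S$-regular element, behaves well with the completion; the clean way is to note $W_{\O_E}(S^\flat)$ is $(\pi)$-adically complete and $\pi$-torsion free, $[a^\flat]^m$ is a nonzerodivisor (again by Lemma \ref{Lemma:pi torsion free of witt vectors}-type reasoning, since $S^\flat$ is a perfect ring and multiplication by a Teichmüller lift of a nonzerodivisor in $S^\flat$ is injective on Witt vectors — and $a^\flat$ is a nonzerodivisor in $S^\flat$ because $a$ is a nonzerodivisor in $S$), so $A$ sits in a short exact sequence $0 \to W_{\O_E}(S^\flat) \xrightarrow{[a^\flat]^m} W_{\O_E}(S^\flat) \to A \to 0$ of $\pi$-adically complete modules, whence $A$ is $\pi$-adically (and derived $\pi$-adically) complete; then $I_S$-adic completeness of $A$ follows since $I_S A$ is generated by one element and $A/I_S A = S/a^m$ is already $\pi$-adically complete (being $p$-power torsion). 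Assembling these pieces gives the claim; everything else is a direct transcription of the proof of \cite[Proposition 2.3.8]{Ito-K23} and the standard perfectoid prism construction to the quotiented setting.
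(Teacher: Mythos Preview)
Your proof of part (2) has a genuine gap: you never verify that the image of $I_S$ in $A_m := W_{\O_E}(S^\flat)/[a^\flat]^m$ is a \emph{Cartier divisor}. You write ``Orientability is immediate: $I_S$ is principal upstairs, hence so is its image'' --- but orientability (principality) is not the issue; for $(A_m, I_S A_m)$ to be an $\O_E$-prism at all, the generator $\xi \in I_S$ must remain a \emph{nonzerodivisor} in $A_m$, and this does not follow formally from principality upstairs. This is precisely where the hypothesis that $a = \theta([a^\flat])$ is a nonzerodivisor in $S$ enters (you invoke it only to argue $a^\flat$ is a nonzerodivisor in $S^\flat$, which is fine but not the crux). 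The paper argues directly: if $\xi x = [a^\flat]^m y$ in $W_{\O_E}(S^\flat)$, applying $\theta_{\O_E}$ gives $a^m \theta_{\O_E}(y) = 0$, hence $\theta_{\O_E}(y) = 0$ since $a$ is a nonzerodivisor, so $y \in (\xi)$; writing $y = \xi y'$ and cancelling $\xi$ upstairs gives $x \in ([a^\flat]^m)$.

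A secondary issue: your argument for $I_S$-adic derived completeness of $A_m$ is muddled --- the $\pi$-adic completeness of $A_m/I_S A_m = S/a^m$ is not what gives $I_S$-adic completeness of $A_m$. The clean fix (which you essentially already have via your short exact sequence) is simply that $A_m$ is a cokernel of a map between derived $(\pi, \xi)$-complete modules, hence is itself derived $(\pi, \xi)$-complete; this does not even require $[a^\flat]^m$ to be a nonzerodivisor. Once you supply the Cartier-divisor check above, the rest of your outline (boundedness via $p^m = 0$ in $S/a^m$, the inherited relation $\pi \in (\xi, \phi(\xi))$, and the treatment of part (1)) is correct and matches the paper's approach.
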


\begin{proof}
(1) See \cite[Proposition 2.4.3]{Ito-K23}.

(2) We write $A_m:=W_{\O_E}(S^\flat)/[a^\flat]^m$.
Let $\xi \in I_S$ be a generator.
We shall show that $\xi$ is a nonzerodivisor in $A_m$.
Let $x, y \in W_{\O_E}(S^\flat)$ be elements satisfying $\xi x=[a^\flat]^m y$.
Since $a \in S$ is a nonzerodivisor, we have $y \in I_S=(\xi)$.
Since $\xi$ is a nonzerodivisor in $W_{\O_E}(S^\flat)$, we obtain $x \in ([a^\flat]^m)$.
This proves that $\xi$ is a nonzerodivisor in $A_m$.
Moreover, since $\pi \in (\xi, \phi(\xi))$ in $W_{\O_E}(S^\flat)$ by (1), we have $\pi \in (\xi, \phi(\xi))$ in $A_m$ as well.

The above argument, together with the fact that $W_{\O_E}(S^\flat)$ is $\xi$-adically complete, implies that $W_{\O_E}(S^\flat)$ is $[a^\flat]$-torsion free.
Since $W_{\O_E}(S^\flat)$ is derived $(\pi, \xi)$-adically complete, so is $A_m$.
It is clear that
$A_m/\xi=S/a^m$ has bounded $p^\infty$-torsion
(since $p^m$=0 in $S/a^m$).
In conclusion, $(A_m, I_S)$ is an orientable and bounded $\O_E$-prism.
\end{proof}


For the purposes of this paper, it will be convenient to introduce the following (slightly nonstandard) definition:

\begin{defn}\label{Definition:perfectoid pair}
A pair
$(S, a^\flat)$
consisting of a perfectoid ring $S$ over $\O$ and
an element $a^\flat \in S^\flat$ is called a \textit{perfectoid pair} over $\O$ if $a:=\theta([a^\flat]) \in S$ is a nonzerodivisor and we have $\pi \in (a)$ in $S$.
\end{defn}

\subsection{Prismatic sites}\label{Subsection:Prismatic sites}

We say that
a map
$(A, I) \to (A', I')$ of bounded $\O_E$-prisms
is a \textit{flat map}
(resp.\ a \textit{faithfully flat map})
if $A \to A'$ is 
$(\pi,I)$-completely flat
(resp.\
$(\pi,I)$-completely faithfully flat) in the sense of \cite[Notation 1.2]{BS}.

Let $R$ be a $\pi$-adically complete $\O_E$-algebra.
As in \cite[Definition 2.5.3]{Ito-K23}, let
\[
(R)_{\Prism, \O_E}
\]
be the category of bounded $\O_E$-prisms $(A, I)$ together with a homomorphism $R \to A/I$ of $\O_E$-algebras.
We endow
the opposite category
$
(R)^{\op}_{\Prism, \O_E}
$
with the flat topology, that is, the topology generated by the faithfully flat maps.
It follows from \cite[Remark 2.5.4]{Ito-K23}
that
$
(R)^{\op}_{\Prism, \O_E}
$
is a site.
If $\O_E=\Z_p$, then $(R)_{\Prism, \O_E}$ is the same as the category $(R)_{\Prism}$ introduced in \cite[Remark 4.7]{BS}.

\begin{rem}\label{Remark:structure sheaf}
The functors
\begin{align*}
    \O_{\Prism} &\colon (R)_{\Prism, \O_E} \to \mathrm{Set}, \quad (A, I) \mapsto A, \\
    \O_{\overline{\Prism}} &\colon
(R)_{\Prism, \O_E} \to \mathrm{Set}, \quad (A, I) \mapsto A/I
\end{align*}
form sheaves with respect to the flat topology.
Here $\mathrm{Set}$ is the category of sets.
\end{rem}

Let $(A, I)$ be a bounded $\O_E$-prism.
We write
\[
(A, I)_\et
\]
for the category of $(\pi, I)$-completely \'etale $A$-algebras (in the sense of \cite[Notation 1.2]{BS}).
We endow
$
(A, I)^{\op}_\et
$
with the $(\pi, I)$-completely \'etale topology, that is, the topology generated by the $(\pi, I)$-completely \'etale coverings $B \to B'$ (i.e.\ $B \to B'$ is $(\pi, I)$-completely \'etale and $(\pi, I)$-completely faithfully flat).
Every $B \in (A, I)_\et$ admits a unique $\delta_E$-structure compatible with that on $A$, and the pair $(B, IB)$ is a bounded $\O_E$-prism by \cite[Lemma 2.5.10]{Ito-K23}.

We recall the following fact (see \cite[Example 2.5.11]{Ito-K23} and the references therein):

\begin{ex}\label{Example:perfectoid ring etale morphism}
Let $S$ be a perfectoid ring over $\O_E$
and let $S \to S'$ be a $\pi$-completely \'etale homomorphism.
Then $S'$ is a perfectoid ring, and the induced homomorphism
$W_{\O_E}(S^\flat) \to W_{\O_E}(S'^\flat)$ is $(\pi, I_S)$-completely \'etale.
\end{ex}


\begin{lem}\label{Lemma:perfectoid ring etale morphism:torsion case}
Let 
$(S, a^\flat)$
be a perfectoid pair over $\O$.
\begin{enumerate}
    \item For a $\pi$-completely flat $S$-algebra $S'$, the element $a$ is a nonzerodivisor in $S'$.
    In particular, if $S'$ is a perfectoid ring, then $(S', a^\flat)$ is a perfectoid pair over $\O$.
    \item 
    For a $\pi$-completely \'etale $S$-algebra $S'$, the homomorphism
    \[
    W_{\O_E}(S^\flat)/[a^\flat]^m \to W_{\O_E}(S'^\flat)/[a^\flat]^m
    \]
    is $(\pi, I_S)$-completely \'etale for every $m \geq 1$.
    Conversely, any $(\pi, I_S)$-completely \'etale $W_{\O_E}(S^\flat)/[a^\flat]^m$-algebra is of the form
    $W_{\O_E}(S'^\flat)/[a^\flat]^m$ for some
    $\pi$-completely \'etale $S$-algebra $S'$.
\end{enumerate}
\end{lem}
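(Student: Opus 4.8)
The plan is to reduce both statements to the non-torsion case already recorded in Example \ref{Example:perfectoid ring etale morphism}, using the $[a^\flat]$-adic filtration and a standard d\'evissage. For part (1), let $S'$ be a $\pi$-completely flat $S$-algebra. First I would note that $a \mid \pi$ in $S$ by the definition of a perfectoid pair, so $\pi$-complete flatness gives $(\pi)$-complete, hence $(a)$-complete, flatness of $S \to S'$; in particular $S' \otimes^{\mathbb{L}}_S S/a$ is concentrated in degree $0$ and equals $S'/a$. Since multiplication by $a$ on $S$ is injective, the Tor-vanishing $\Tor_1^S(S/a, S') = 0$ forces multiplication by $a$ on $S'$ to be injective as well, i.e.\ $a$ is a nonzerodivisor in $S'$. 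If moreover $S'$ is perfectoid, then $a$ being a nonzerodivisor and $\pi \in (a)$ (inherited from $S$, as $\pi$ maps to a multiple of $a$) shows $(S', a^\flat)$ is a perfectoid pair; here one uses that the tilt of $S \to S'$ sends $a^\flat$ to the element of $S'^\flat$ whose $\theta$-image is $a \in S'$, which is automatic from functoriality of tilting and Teichm\"uller lifts.

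For part (2), fix $m \geq 1$ and a $\pi$-completely \'etale $S$-algebra $S'$. By Example \ref{Example:perfectoid ring etale morphism}, $S'$ is perfectoid and $W_{\O_E}(S^\flat) \to W_{\O_E}(S'^\flat)$ is $(\pi, I_S)$-completely \'etale. I would then base change this morphism along the quotient $W_{\O_E}(S^\flat) \to W_{\O_E}(S^\flat)/[a^\flat]^m = A_m$: since $(\pi,I_S)$-complete \'etaleness is stable under base change of bounded prisms (and $(A_m, I_S)$ is a bounded $\O_E$-prism by Proposition \ref{Proposition:perfectoid type}(2)), and since $W_{\O_E}(S'^\flat) \otimes^{\mathbb{L}}_{W_{\O_E}(S^\flat)} A_m$ is computed by killing $[a^\flat]^m$ (using that $W_{\O_E}(S'^\flat)$ is $[a^\flat]$-torsion free, which follows from Lemma \ref{Lemma:pi torsion free of witt vectors} applied to $S'$ together with part (1) ensuring $(S', a^\flat)$ is a perfectoid pair), this base change is exactly $W_{\O_E}(S^\flat)/[a^\flat]^m \to W_{\O_E}(S'^\flat)/[a^\flat]^m$. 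Hence this map is $(\pi, I_S)$-completely \'etale.

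For the converse in (2), let $B$ be a $(\pi, I_S)$-completely \'etale $A_m$-algebra. The strategy is to lift along the nilpotent-type thickenings in the $[a^\flat]$-direction. Writing $A_j := W_{\O_E}(S^\flat)/[a^\flat]^j$, one has surjections $A_{j+1} \twoheadrightarrow A_j$ with kernel $([a^\flat]^j)/([a^\flat]^{j+1})$, a square-zero-type ideal after suitable completion; by the topological invariance of the $(\pi, I_S)$-completely \'etale site (\cite[Lemma 2.5.10]{Ito-K23} and its proof, or the general rigidity of complete \'etale algebras), $B$ lifts uniquely to a compatible system of $(\pi, I_S)$-completely \'etale $A_j$-algebras $B_j$, and in the limit to a $(\pi, I_S)$-completely \'etale $W_{\O_E}(S^\flat)$-algebra $\widetilde B$ with $\widetilde B/[a^\flat]^m \simeq B$. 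By the classification in the perfectoid case (the converse direction of Example \ref{Example:perfectoid ring etale morphism}), $\widetilde B = W_{\O_E}(S'^\flat)$ for a $\pi$-completely \'etale $S$-algebra $S'$, and then $B \simeq W_{\O_E}(S'^\flat)/[a^\flat]^m$ as desired. The main obstacle I anticipate is precisely this last d\'evissage: one must check that the $[a^\flat]$-adic thickenings $A_{j+1} \to A_j$ are covered by the invariance statement — i.e.\ that the relevant ideals are nilpotent modulo the topology in play, or that $W_{\O_E}(S^\flat) \to A_m$ induces an equivalence on completely \'etale sites — which requires being careful that $A_m$ is $\pi$-adically (equivalently $(\pi, I_S)$-adically) complete and that the ideal $([a^\flat])$ is contained in the radical of the relevant completed rings; all of this follows from $\pi \in (a)$ and boundedness, but it is the point where the argument is least formal.
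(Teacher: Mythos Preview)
Your arguments for part~(1) and for the forward direction of part~(2) are correct and coincide with the paper's approach; indeed the paper's entire proof of~(1) is the single observation that $S'\otimes^{\mathbb L}_S S/a$ is concentrated in degree~$0$ because $\pi=0$ in $S/a$, and for~(2) it only says ``this immediately follows from~(1); see also the proof of \cite[Lemma~2.5.9]{Ito-K23}''.

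For the converse in~(2), however, your d\'evissage has a real gap. You propose to lift a $(\pi,I_S)$-completely \'etale $A_m$-algebra through the tower $A_m\leftarrow A_{m+1}\leftarrow\cdots$ to a $(\pi,I_S)$-completely \'etale $W_{\O_E}(S^\flat)$-algebra, and then invoke the perfectoid case. For this you need $W_{\O_E}(S^\flat)=\varprojlim_j A_j$, i.e.\ that $W_{\O_E}(S^\flat)$ is $[a^\flat]$-adically complete, or at least that $W_{\O_E}(S^\flat)\to A_m$ induces an equivalence on completely \'etale sites. Neither is true in general. Take $S=\O_C\times\O_C$ for a perfectoid valuation ring $\O_C$ and $a^\flat=(\pi^\flat,1)$: then $a=(\pi,1)$ is a nonzerodivisor with $\pi\in(a)$, so $(S,a^\flat)$ is a perfectoid pair, yet $A_m\cong W_{\O_E}(\O_{C^\flat})/[\pi^\flat]^m$ (the second factor is killed) and $\varprojlim_j A_j\cong W_{\O_E}(\O_{C^\flat})\neq W_{\O_E}(S^\flat)$. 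The same example shows the completely \'etale sites genuinely differ: over $W_{\O_E}(S^\flat)$ one sees both factors, over $A_m$ only the first. Essential surjectivity of the reduction functor still holds in this example (extend by the trivial cover on the second factor), but that is precisely the content of the converse and your lifting argument does not supply it. Your claim that ``all of this follows from $\pi\in(a)$ and boundedness'' is therefore not justified---the inclusion $(\pi)\subset(a)$ goes the wrong way to force $[a^\flat]$-adic completeness from $(\pi,I_S)$-adic completeness.

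The paper avoids the tower altogether: it appeals to the companion paper's Lemma~2.5.9, which identifies $(A,I)_{\et}$ directly with the $\pi$-completely \'etale site of $A/I$ for any bounded $\O_E$-prism. Applying this to $(A_m,I_S)$ and $(W_{\O_E}(S^\flat),I_S)$ reduces the question to the level of $S/a^m$ and $S$, where part~(1) and the structure of \'etale algebras finish the argument without ever climbing back up to $W_{\O_E}(S^\flat)$.
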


\begin{proof}
(1) Since $\pi=0$ in $S/a$, we see that $S' \otimes^{\L}_S S/a$ is concentrated in degree $0$.
This means that $a$ is a nonzerodivisor in $S'$.

(2) This immediately follows from (1). See also the proof of \cite[Lemma 2.5.9]{Ito-K23}.
\end{proof}

\subsection{Display groups}\label{Subsection:Display groups}

Let $A$ be an $\O$-algebra
and $I \subset A$ an ideal generated by a nonzerodivisor $d \in A$.
We assume that $A$ is $I$-adically complete.
We set $A[1/I]:=A[1/d]$.
As in \cite[Section 4]{Ito-K23}, the \textit{display group}
$G_\mu(A, I)$
is defined by
\[
G_\mu(A, I):=\{ \, g \in G(A) \, \vert \, \mu(d)g\mu(d)^{-1} \, \, \text{lies in} \, \, G(A) \subset G(A[1/I]) \, \}.
\]
We shall recall a structural result about $G_\mu(A, I)$.

Let
$P_\mu, U^{-}_{\mu} \subset G_\O$
be the closed subgroup schemes defined by, for every $\O$-algebra $R$,
\begin{align*}
    P_\mu(R)&=\{ \, g \in G(R) \, \vert \, \lim_{t \to 0} \mu(t)g\mu(t)^{-1} \, \text{exists} \, \},\\
    U^{-}_{\mu}(R)&=\{ \, g \in G(R) \, \vert \, \lim_{t \to 0} \mu(t)^{-1}g\mu(t)=1 \, \}.
\end{align*}
The group schemes $P_\mu$ and $U^{-}_{\mu}$ are smooth over $\O$.

\begin{defn}[{\cite[Definition 6.3.1]{Lau21}}]\label{Definition:1-bounded}
The cocharacter
$\mu \colon \G_m \to G_{\O}$
is called \textit{1-bounded} if the weights of the action of $\G_m$ on the Lie algebra
$\Lie(G_\O)$
induced by $g \mapsto \mu(t)^{-1}g\mu(t)$
are $\leq 1$.
\end{defn}

\begin{rem}
    If $G$ is a reductive group scheme over $\O_E$, then $\mu$ is 1-bounded if and only if $\mu$ is minuscule.
\end{rem}

\begin{prop}\label{Proposition:BB isomorphism}
\ 
\begin{enumerate}
    \item We have $P_\mu(A) \subset G_\mu(A, I)$, and the image of $G_\mu(A, I)$ under the the projection $G(A) \to G(A/I)$ is equal to $P_\mu(A/I)$.
    \item The multiplication map
\[
    (U^{-}_{\mu}(A) \cap G_\mu(A, I)) \times P_\mu(A) \to G_\mu(A, I)
\]
is bijiective.
    \item Assume that $\mu$ is 1-bounded.
Then $G_\mu(A, I)$ coincides with the inverse image of $P_\mu(A/I)$ in $G(A)$ under the projection $G(A) \to G(A/I)$.
Moreover, we have the following bijection:
\[
G(A)/G_\mu(A, I) \overset{\sim}{\to} G(A/I)/P_\mu(A/I).
\]
\end{enumerate}
\end{prop}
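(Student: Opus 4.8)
The plan is to establish the three assertions in sequence, leveraging the Bialynicki-Birula type decomposition of $G_\O$ associated with the cocharacter $\mu$. Recall that, since $G_\O$ is smooth affine, the multiplication map $U^{-}_{\mu} \times P_\mu \to G_\O$ is an open immersion onto the ``big cell''; this is the geometric input underlying all three parts. For part (1), the inclusion $P_\mu(A) \subset G_\mu(A, I)$ is a direct computation: if $g \in P_\mu(A)$, then $\lim_{t\to 0}\mu(t)g\mu(t)^{-1}$ exists, so writing $d$ for the generator of $I$, the element $\mu(d)g\mu(d)^{-1}$ is obtained by specializing a morphism $\A^1 \to G_\O$ at $t = d$, hence lies in $G(A)$. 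For the image statement, one inclusion follows by functoriality from the previous sentence (the image of $P_\mu(A)$ is $P_\mu(A/I)$ since $P_\mu$ is smooth, so $P_\mu(A) \to P_\mu(A/I)$ is surjective). The reverse inclusion, that the image of $G_\mu(A, I)$ lands in $P_\mu(A/I)$, is the heart of the matter: given $g \in G(A)$ with $\mu(d)g\mu(d)^{-1} \in G(A)$, one must show $\bar g \in P_\mu(A/I)$. Here I would argue by reducing to the big cell: after multiplying by an element of $P_\mu(A)$ on the right (which lies in $G_\mu(A,I)$ by the first part), one may assume $\bar g$ lies in the open cell, write $g = u \cdot p$ with $u \in U^{-}_{\mu}$ over $A[1/I]$ after conjugating, and track the pole orders in $d$ to force $u \equiv 1 \pmod{I}$.

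For part (2), injectivity is immediate from the open immersion $U^{-}_{\mu} \times P_\mu \hookrightarrow G_\O$ (already over $A$, the decomposition of an element as a product $u p$ with $u \in U^{-}_{\mu}(A)$, $p \in P_\mu(A)$ is unique when it exists). For surjectivity, take $g \in G_\mu(A, I)$. By part (1), its image in $G(A/I)$ lies in $P_\mu(A/I)$; lift this to some $p_0 \in P_\mu(A)$ (using smoothness of $P_\mu$), so that $g p_0^{-1} \in G_\mu(A, I)$ and its image in $G(A/I)$ is trivial, i.e.\ $g p_0^{-1} \equiv 1 \pmod I$. An element of $G(A)$ congruent to $1$ modulo $I$ lies in the big cell (since $1$ does and the big cell is open, hence its preimage under $G(A) \to G(A/I)$ — no, more carefully: $g p_0^{-1}$ reduces to $1 \in$ big cell, and one uses $I$-adic completeness of $A$ to lift the big-cell decomposition of the reduction to $A$ itself by successive approximation). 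So $g p_0^{-1} = u p_1$ with $u \in U^{-}_{\mu}(A)$, $p_1 \in P_\mu(A)$, and both $u, p_1$ are $\equiv 1 \pmod I$. Then $u = (g p_0^{-1}) p_1^{-1} \in G_\mu(A, I)$ since $G_\mu(A, I)$ is a subgroup containing $P_\mu(A)$, hence $u \in U^{-}_{\mu}(A) \cap G_\mu(A, I)$, and $g = u (p_1 p_0) \in (U^{-}_{\mu}(A) \cap G_\mu(A, I)) \cdot P_\mu(A)$ as desired.

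For part (3), assume $\mu$ is $1$-bounded. The containment $G_\mu(A, I) \subset \{g \in G(A) : \bar g \in P_\mu(A/I)\}$ is part (1). For the reverse, suppose $\bar g \in P_\mu(A/I)$; lift to $p_0 \in P_\mu(A) \subset G_\mu(A, I)$ and reduce as above to the case $g \equiv 1 \pmod I$, so $g = u p_1$ with $u, p_1 \equiv 1 \pmod I$. Since $p_1 \in P_\mu(A) \subset G_\mu(A, I)$ it suffices to show $u \in G_\mu(A, I)$, i.e.\ that $\mu(d) u \mu(d)^{-1} \in G(A)$. Writing $u \in U^{-}_{\mu}(A)$ with $u \equiv 1 \pmod I$, the conjugate $\mu(d) u \mu(d)^{-1}$ has, in each matrix coordinate (for a faithful representation of $G_\O$ adapted to the weight decomposition of $\Lie G_\O$), entries of the form $d^{-w} \cdot (\text{entry of } u - 1) \cdot d^{w'}$ where $w$ runs over weights that are $\le 1$ by $1$-boundedness and the entry of $u - 1$ is divisible by $d$; the $1$-boundedness ensures the negative power of $d$ introduced is at most $d^{-1}$, which is cancelled by the factor $d$ from $u \equiv 1 \pmod I$. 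Hence $\mu(d) u \mu(d)^{-1} \in G(A)$ and $u \in G_\mu(A, I)$. Finally the bijection $G(A)/G_\mu(A, I) \xrightarrow{\sim} G(A/I)/P_\mu(A/I)$: the map is induced by reduction and is well-defined by the description of $G_\mu(A, I)$ just established; surjectivity follows from smoothness of $G_\O$ (so $G(A) \to G(A/I)$ is surjective); injectivity is exactly the statement that $G_\mu(A,I)$ is the full preimage of $P_\mu(A/I)$.

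\emph{The main obstacle} I anticipate is the careful bookkeeping of pole orders in part (1)'s reverse inclusion and in part (3) — specifically, making precise the claim that $1$-boundedness is exactly what is needed so that conjugation by $\mu(d)$ introduces poles of order at most $1$, which are then absorbed by the congruence $u \equiv 1 \pmod I$. This requires choosing a faithful representation of $G_\O$ and decomposing it into $\G_m$-weight spaces under $\mu$, then arguing entry-by-entry; the subtlety is that $U^{-}_{\mu}$ involves only strictly negative weights on $\Lie G_\O$ while the ambient representation may have weights of both signs, so one must be attentive to which blocks the relevant matrix entries of $u - 1$ occupy. The $I$-adic completeness of $A$, used to lift big-cell decompositions from $A/I$ to $A$ by successive approximation, is routine but should be invoked explicitly.
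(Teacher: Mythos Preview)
The paper itself does not prove this proposition; it simply cites \cite{Ito-K23} for each part. So you are supplying strictly more detail than the paper does, and your overall architecture (big-cell decomposition plus weight bookkeeping) is indeed the right one. Part (2) is essentially correct as written. There are, however, two genuine gaps.

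\textbf{Part (1), reverse inclusion.} The step ``after multiplying by an element of $P_\mu(A)$ on the right, one may assume $\bar g$ lies in the open cell'' is circular: the big cell is $U^{-}_\mu P_\mu$, so right multiplication by $P_\mu$ does not move an element into it, and in any case $\bar g \in P_\mu(A/I)$ (which is what you want to prove) already implies $\bar g$ lies in the big cell. The clean fix is intrinsic: the conjugation action $t\cdot g = \mu(t)g\mu(t)^{-1}$ gives a grading $\O[G] = \bigoplus_n \O[G]_n$, and $P_\mu$ is cut out by the vanishing of $\bigoplus_{n<0}\O[G]_n$. For $f \in \O[G]_n$ one has $f(\mu(d)g\mu(d)^{-1}) = d^n f(g)$; if this lies in $A$ and $n<0$ then $f(g) \in d^{-n}A \subset I$, so $f(\bar g)=0$. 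This gives $\bar g \in P_\mu(A/I)$ directly, without ever needing to place $g$ in the big cell. (Equivalently one can embed in $\GL(V)$ and read off the block condition, using $P_\mu^G = G \cap P_\mu^{\GL(V)}$.)

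\textbf{Part (3), the pole computation.} Your matrix-entry argument conflates weights on $\Lie G_\O$ (bounded by $1$) with weights appearing in a faithful representation $V$ (which can be arbitrarily spread out). For $u \in U^{-}_\mu(A)$ the block $u_{m,n}$ can be nonzero whenever $n>m$, not only when $n-m=1$; conjugation by $\mu(d)$ then introduces $d^{-(n-m)}$, and $u \equiv 1 \pmod I$ only guarantees divisibility by $d$, not $d^{n-m}$. The argument that actually uses $1$-boundedness is this: $1$-boundedness forces $\Lie U^{-}_\mu$ to sit in a single weight, so $U^{-}_\mu \cong \mathbb{G}_a^r$ with the $\mathbb{G}_m$-action $t\cdot u = \mu(t)u\mu(t)^{-1}$ given by scalar multiplication by $t^{-1}$. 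Then $u \equiv 1 \pmod I$ means the $\mathbb{G}_a^r$-coordinates of $u$ lie in $(d)$, so $\mu(d)u\mu(d)^{-1}$ has coordinates in $A$, hence lies in $U^{-}_\mu(A) \subset G(A)$. You flagged exactly this hazard in your ``main obstacle'' paragraph; the resolution is to abandon matrix entries in $V$ and work with the intrinsic coordinates on $U^{-}_\mu$.
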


\begin{proof}
    The assertion (1) follows from \cite[Lemma 4.2.2]{Ito-K23}.
    The assertion (2) is \cite[Proposition 4.2.8]{Ito-K23}.
    For the assertion (3), see \cite[Proposition 4.2.9]{Ito-K23}.
\end{proof}

\subsection{$G$-$\mu$-displays}\label{Subsection:G-mu-diplays}

Let
$(A, I)$
be a bounded $\O_E$-prism over $\O$.
We recall the definition of $G$-$\mu$-displays over $(A, I)$.

We assume for a moment that $(A, I)$ is orientable.
The results in Section \ref{Subsection:Display groups} apply to $(A, I)$.
In particular, we have the display group $G_\mu(A, I)$.
For each generator $d \in I$, we define the following homomorphism:
\[
    \sigma_{\mu, d} \colon G_\mu(A, I) \to G(A), \quad g \mapsto \phi(\mu(d)g\mu(d)^{-1}).
\]
Let $G(A)_d$ be the set
$G(A)$ together with the following action of $G_\mu(A, I)$:
\[
    G(A) \times G_\mu(A, I) \to G(A), \quad (X, g) \mapsto g^{-1}X\sigma_{\mu, d}(g).
\]
For another generator $d' \in I$, we have $d=ud'$ for a unique $u \in A^\times$.
The map $G(A)_d \to G(A)_{d'}$ defined by $X \mapsto X\phi(\mu(u))$ is $G_\mu(A, I)$-equivariant.
Thus we can define
\[
G(A)_I := {\varprojlim}_{d} G(A)_d
\]
where $d$ runs over the set of generators $d \in I$.
The set $G(A)_I$ carries a natural action of $G_\mu(A, I)$.
The projection map $G(A)_I \to G(A)_d$ is a 
$G_\mu(A, I)$-equivariant bijection.
For an element $X \in G(A)_I$, let
\begin{equation}\label{equation:d-component}
    X_d \in G(A)_d
\end{equation}
denote the image of $X$.

Let
$G_{\mu, A, I}$
and
$G_{\Prism, A, I}$
be the sheaves on $(A, I)^{\op}_\et$ defined by
\[
G_{\mu, A, I}(B):=G_\mu(B, IB) \quad \text{and} \quad
G_{\Prism, A, I}(B):=G(B)_{IB},
\]
respectively.
The sheaf $G_{\Prism, A, I}$ is equipped with a natural action of the group sheaf $G_{\mu, A, I}$.
In fact, we can extend these definitions to (not necessarily orientable) bounded $\O_E$-prisms $(A, I)$ over $\O$; see \cite[Section 4.3]{Ito-K23} for details.

\begin{defn}[{$G$-$\mu$-display}]\label{Definition:G mu display over oriented prisms}
Let
$(A, I)$
be a bounded $\O_E$-prism over $\O$.
A \textit{$G$-$\mu$-display} over
    $(A, I)$ is a pair
    \[
    (\mathcal{Q}, \alpha_\mathcal{Q})
    \]
    where $\mathcal{Q}$ is a $G_{\mu, A, I}$-torsor and
    $
    \alpha_\mathcal{Q} \colon \mathcal{Q} \to G_{\Prism, A, I}
    $
    is a $G_{\mu, A, I}$-equivariant map of sheaves on $(A, I)^{\op}_\et$.
    When there is no possibility of confusion, we write $\mathcal{Q}$ instead of $(\mathcal{Q}, \alpha_\mathcal{Q})$.
    We say that $(\mathcal{Q}, \alpha_\mathcal{Q})$ is \textit{banal} if $\mathcal{Q}$ is a trivial $G_{\mu, A, I}$-torsor.
\end{defn}

Isomorphisms between $G$-$\mu$-displays over $(A, I)$ are defined in the obvious way.
We write
\[
G\mathchar`-\mathrm{Disp}_\mu(A, I) \quad \text{and} \quad G\mathchar`-\mathrm{Disp}_\mu(A, I)_{\mathrm{banal}}
\]
for the groupoid of $G$-$\mu$-displays over $(A, I)$ and the groupoid of banal $G$-$\mu$-displays over $(A, I)$, respectively.

\begin{rem}\label{Remark:quotient groupoid banal G-displays}
Assume that $(A, I)$ is orientable.
Let
$
[G(A)_I/G_\mu(A, I)]
$
be the groupoid whose objects are the elements $X \in G(A)_I$ and whose morphisms are defined by
\[
\Hom(X, X')=\{\, g \in G_\mu(A, I) \, \vert \, X'\cdot g=X  \, \}.
\]
Here $(-)\cdot g$ denotes the action of $g \in G_\mu(A, I)$.
To each
$X \in G(A)_I$,
we attach a banal $G$-$\mu$-display
\[
\mathcal{Q}_X:=(G_{\mu, A, I}, \alpha_X)
\]
over 
$(A, I)$
where
$\alpha_X \colon G_{\mu, A, I} \to G_{\Prism, A, I}$ is given by $1 \mapsto X$.
This construction gives an equivalence
$[G(A)_I/G_\mu(A, I)]
\overset{\sim}{\to}
G\mathchar`-\mathrm{Disp}_\mu(A, I)_{\mathrm{banal}}$
of groupoids.
\end{rem}

For a map
$f \colon (A, I) \to (A', I')$
of bounded $\O_E$-prisms over $\O$,
we have a base change functor
\[
f^* \colon G\mathchar`-\mathrm{Disp}_\mu(A, I) \to G\mathchar`-\mathrm{Disp}_\mu(A', I').
\]
More precisely, we have 
a fibered category
\[
(A, I) \mapsto G\mathchar`-\mathrm{Disp}_\mu(A, I)
\]
over $(\O)^{\op}_{\Prism, \O_E}$.

\begin{rem}\label{Remark:base change of G mu displays}
Assume that $(A, I)$ is orientable.
For an element $X \in G(A)_I$, let
$f(X) \in G(A')_{I'}$
be the unique element such that, for any generator $d \in I$, we have
$f(X)_{f(d)}=f(X_d)$ in $G(A')$.
Then we have $f^*(\mathcal{Q}_X)=\mathcal{Q}_{f(X)}$ for every $X \in G(A)_I$.
\end{rem}

In the following, we recall more concrete descriptions of $G$-$\mu$-displays.
Let $(A, I)$ be a bounded $\O_E$-prism over $\O$.

\begin{defn}\label{Definition:G-BK module of type mu}
A \textit{$G$-Breuil--Kisin module} over $(A, I)$ is a $G$-torsor $\mathcal{P}$ over $\Spec A$
(with respect to the \'etale topology)
with 
an isomorphism
$
F_\mathcal{P} \colon (\phi^*\mathcal{P})[1/I] \overset{\sim}{\to} \mathcal{P}[1/I]
$
of $G$-torsors over $\Spec A[1/I]$.
We call $F_\mathcal{P}$ the Frobenius of $\mathcal{P}$.
We say that $\mathcal{P}$ is \textit{of type $\mu$} if $(\pi, I)$-completely \'etale locally on $A$,
there exists some trivialization
$\mathcal{P} \simeq G_A$ under which the isomorphism $F_\mathcal{P}$ is given by $g \mapsto Yg$ for an element $Y$ in the double coset
\[
G(A)\mu(d)G(A) \subset G(A[1/I]),
\]
where $d \in I$ is a generator.
\end{defn}

\begin{prop}\label{Proposition:G-displays to G-BK}
    There is an equivalence
    $\mathcal{Q} \mapsto \mathcal{Q}_{\mathrm{BK}}$
    from the groupoid
$G\mathchar`-\mathrm{Disp}_\mu(A, I)$ to the groupoid of $G$-Breuil--Kisin modules of type $\mu$ over $(A, I)$.
This equivalence is compatible with base change along any map $(A, I) \to (A', I')$.
\end{prop}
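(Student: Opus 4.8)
The plan is to construct the equivalence explicitly on banal objects and then descend. First I would recall from Remark \ref{Remark:quotient groupoid banal G-displays} that, when $(A,I)$ is orientable, the groupoid $G\mathchar`-\mathrm{Disp}_\mu(A,I)_{\mathrm{banal}}$ is equivalent to the quotient groupoid $[G(A)_I/G_\mu(A,I)]$, and I would set up a parallel description of banal $G$-Breuil--Kisin modules of type $\mu$: a banal such object is determined by an element $Y \in G(A)\mu(d)G(A) \subset G(A[1/I])$ for a fixed generator $d \in I$, with morphisms $Y \to Y'$ given by pairs $(g_1, g_2) \in G(A)\times G(A)$ satisfying $g_2^{-1} Y' \phi(g_1) = Y$ — but since $F_\mathcal{P}$ is an isomorphism of $G$-torsors over $\Spec A$, the relevant automorphisms are $g \in G(A)$ acting by $Y \mapsto g^{-1}Y\phi(g)$. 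So the target groupoid on banal objects is the quotient groupoid $[\,(G(A)\mu(d)G(A))/G(A)\,]$ with this twisted conjugation action.

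The key step is then a bijection relating $G(A)_I$ (or rather $G(A)_d$) to $G(A)\mu(d)G(A)$: given $X \in G(A)_d = G(A)$, I would send it to $Y := X\mu(d)$, which lies in $G(A)\mu(d)$ and hence in the double coset. Conversely, writing any element of $G(A)\mu(d)G(A)$ after multiplying on the right by $\mu(d)^{-1}$ — this is where the display-group condition enters: the point is that $Y\mu(d)^{-1} \in G(A)$ precisely encodes the shape of $G_\mu(A,I)$. I would need to check that this construction intertwines the $G_\mu(A,I)$-action on $G(A)_d$ (namely $(X,g)\mapsto g^{-1}X\sigma_{\mu,d}(g)$ with $\sigma_{\mu,d}(g)=\phi(\mu(d)g\mu(d)^{-1})$) with the twisted conjugation action $(Y,h)\mapsto h^{-1}Y\phi(h)$ on the double coset — substituting $Y = X\mu(d)$ and $h$ equal to the image of $g$ in $G(A)$ under $G_\mu(A,I)\hookrightarrow G(A)$, one computes $h^{-1}(X\mu(d))\phi(h) = g^{-1}X\mu(d)\phi(g)\mu(d)^{-1}\cdot\mu(d) = g^{-1}X\sigma_{\mu,d}(g)\mu(d)$ by the definition of $\sigma_{\mu,d}$. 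This shows the two quotient groupoids agree. Independence of the chosen generator $d$ is handled by the transition maps $X \mapsto X\phi(\mu(u))$ used to define $G(A)_I$, matching the corresponding change $Y \mapsto Y\mu(u)^{-1}\cdot\text{(stuff)}$ on double cosets; so the construction glues to a generator-free equivalence.

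Finally I would globalize. Both sides are stacks for the $(\pi,I)$-completely \'etale topology on $(A,I)_\et^{\op}$: $G$-$\mu$-displays by definition (they are $G_{\mu,A,I}$-torsors with an equivariant map), and $G$-Breuil--Kisin modules of type $\mu$ because the torsor and the Frobenius datum both satisfy \'etale descent and the "type $\mu$" condition is \'etale-local by Definition \ref{Definition:G-BK module of type mu}. Since the equivalence on banal objects is canonical and compatible with the $G_{\mu,A,I}$-action and with restriction along $(\pi,I)$-completely \'etale maps $A \to B$ (using Remark \ref{Remark:base change of G mu displays} to track the action), it sheafifies to an equivalence of the associated stacks, hence an equivalence $G\mathchar`-\mathrm{Disp}_\mu(A,I) \overset{\sim}{\to} \{G\text{-Breuil--Kisin modules of type }\mu\text{ over }(A,I)\}$. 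Compatibility with base change along an arbitrary map $(A,I)\to(A',I')$ of bounded $\O_E$-prisms follows since the banal-level construction $X \mapsto X\mu(d)$ visibly commutes with applying $f$, again by Remark \ref{Remark:base change of G mu displays}. The main obstacle I anticipate is the non-orientable case — making the generator-free gluing and the descent argument precise requires carefully invoking the constructions of \cite[Section 4.3]{Ito-K23} rather than arguing with a single $d$; but conceptually everything reduces to the orientable banal computation above.
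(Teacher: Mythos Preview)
The paper's own proof is simply a citation to \cite[Proposition~5.3.8]{Ito-K23}, so there is nothing to compare against in this paper; your outline (explicit equivalence on banal objects, then descent along the $(\pi,I)$-completely \'etale topology) is indeed the strategy used in that reference.

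However, your banal-level computation contains a genuine error. You send $X \mapsto Y:=X\mu(d)$ and $g \mapsto g$, and then assert
\[
\mu(d)\,\phi(g)\,\mu(d)^{-1}=\sigma_{\mu,d}(g).
\]
But by definition $\sigma_{\mu,d}(g)=\phi\bigl(\mu(d)g\mu(d)^{-1}\bigr)=\phi(\mu(d))\,\phi(g)\,\phi(\mu(d))^{-1}$, and $\phi$ does not fix $\mu(d)$ in general (indeed $\phi(d)\neq d$). So the two sides differ, and with your assignments the map $X\mapsto X\mu(d)$, $g\mapsto g$ is \emph{not} a functor between the quotient groupoids: the relation $(X\cdot g)\mu(d)=(X\mu(d))\cdot g$ fails.

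The fix is visible in the paper's Example~\ref{Example:G-displays to G-BK}: the correct object-level assignment is $X\mapsto Y:=\mu(d)X_d$, not $X\mu(d)$. On morphisms one sends $g\in G_\mu(A,I)$ to $h:=\mu(d)g\mu(d)^{-1}\in G(A)$; then
\[
h\cdot\bigl(\mu(d)X\bigr)\cdot\phi(h)^{-1}
=\mu(d)\,g\,X\,\phi\bigl(\mu(d)g\mu(d)^{-1}\bigr)^{-1}
=\mu(d)\bigl(gX\sigma_{\mu,d}(g)^{-1}\bigr),
\]
which is exactly $\mu(d)X'$ for $X'=X\cdot g^{-1}$. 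Full faithfulness follows because if $h\in G(A)$ satisfies $h\,\mu(d)X\,\phi(h)^{-1}=\mu(d)X'$ with $X,X'\in G(A)$, then $\mu(d)^{-1}h\mu(d)\in G(A)$, i.e.\ $g:=\mu(d)^{-1}h\mu(d)$ lies in $G_\mu(A,I)$. Essential surjectivity is as you say: any $Y=g_1\mu(d)g_2$ is isomorphic to $\mu(d)g_2\phi(g_1)\in\mu(d)G(A)$. With this correction the rest of your descent argument goes through.
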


\begin{proof}
    See \cite[Proposition 5.3.8]{Ito-K23}.
    For the construction of the functor $\mathcal{Q} \mapsto \mathcal{Q}_{\mathrm{BK}}$, see \cite[Definition 5.3.6]{Ito-K23}.
\end{proof}

\begin{ex}\label{Example:G-displays to G-BK}
    Assume that $(A, I)$ is orientable.
    Let $d \in I$ be a generator.
    For an element $X \in G(A)_I$,
    the trivial $G$-torsor $G_A$ with the isomorphism
    \[
    (\phi^*G_A)[1/I]=G_A[1/I] \overset{\sim}{\to} G_A[1/I], \quad g \mapsto (\mu(d)X_d)g
    \]
    is a $G$-Breuil--Kisin module of type $\mu$ over $(A, I)$, and this is isomorphic to the one $(\mathcal{Q}_X)_{\mathrm{BK}}$
    associated with $\mathcal{Q}_X \in G\mathchar`-\mathrm{Disp}_\mu(A, I)$.
\end{ex}


\begin{ex}[{Minuscule Breuil--Kisin module}]\label{Example:GLn displays}
We assume that $G=\GL_N$.
Let
$\mu \colon \G_m \to \GL_{N}$
be the cocharacter
defined by
\[
t \mapsto \diag{(\underbrace{t, \dotsc, t}_s, \underbrace{1, \dotsc, 1}_{N-s})},
\]
which is 1-bounded.
A \textit{minuscule Breuil--Kisin module} over $(A, I)$ is a finite projective $A$-module $M$ together with an $A$-linear homomorphism
\[
F_M \colon \phi^*M=A \otimes_{\phi, A} M \to M
\]
whose cokernel is killed by $I$.
We set
$
\Fil^1(\phi^*M):= \{ \, x \in \phi^*M \, \vert \, F_M(x) \in IM  \, \}
$
and let
$
P^1 \subset (\phi^*M)/I(\phi^*M)
$
be the image of $\Fil^1(\phi^*M)$.
By \cite[Proposition 3.1.6]{Ito-K23},
we see that $P^1$ is a direct summand of $(\phi^*M)/I(\phi^*M)$.
We say that $M$ is \textit{of type} $\mu$ if the rank of 
$M$
(resp.\ $P^1$) is constant and equal to 
$N$
(resp.\ $s$).
Let
$
\mathrm{BK}_\mu(A, I)^{\simeq}
$
be the groupoid of minuscule Breuil--Kisin modules over $(A, I)$ of type $\mu$.
In \cite[Example 5.3.3, Corollary 5.3.11]{Ito-K23},
we constructed a natural equivalence of groupoids:
\[
\mathrm{BK}_\mu(A, I)^{\simeq} \overset{\sim}{\to} \GL_N\mathchar`-\mathrm{Disp}_\mu(A, I), \quad M \mapsto \mathcal{Q}(M).
\]
We set
$\Fil^1_\mu:= A^s \oplus I^{N-s} \subset A^N$.
Then the underlying $G_{\mu, A, I}$-torsor of $\mathcal{Q}(M)$
is the functor
\[
\underline{\mathrm{Isom}}_{\Fil}(A^N, \phi^*M)
\colon (A, I)_\et \to \mathrm{Set}
\]
defined by sending $B \in (A, I)_\et$ to the set of isomorphisms
$B^N \overset{\sim}{\to} (\phi^*M) \otimes_A B$ under which $\Fil^1_\mu$ agrees with $\Fil^1(\phi^*M)$.
\end{ex}

\subsection{Hodge filtrations and $G$-$\phi$-modules}\label{Subsection:Hodge filtrations and G-torsors}

Let
$(A, I)$
be an orientable and bounded $\O_E$-prism over $\O$.
Here we recall the definitions of the Hodge filtration
$P(\mathcal{Q})_{A/I}$
and
the underlying $G$-$\phi$-module
$\mathcal{Q}_\phi$
of a $G$-$\mu$-display $\mathcal{Q}$ over $(A, I)$ introduced in \cite{Ito-K23}.

Let
$G_{\Prism, A}$,
$G_{\overline{\Prism}, A}$,
and $(P_{\mu})_{\overline{\Prism}, A}$
be the sheaves on $(A, I)^{\op}_\et$ defined by
\[
G_{\Prism, A}(B):=G(B),\quad 
G_{\overline{\Prism}, A}(B):=G(B/IB), \quad \text{and} \quad
(P_{\mu})_{\overline{\Prism}, A}(B):=P_\mu(B/IB),
\]
respectively.
Let
$\tau \colon G_{\mu, A, I} \hookrightarrow G_{\Prism, A}$
be the inclusion.
The composition of $\tau$ with the projection $G_{\Prism, A} \to G_{\overline{\Prism}, A}$
is denoted by $\overline{\tau}$, which factors through
a homomorphism
$\overline{\tau}_P \colon G_{\mu, A, I} \to (P_{\mu})_{\overline{\Prism}, A}$ by Proposition \ref{Proposition:BB isomorphism}.
We have a commutative diagram
\[
\xymatrix{
G_{\mu, A, I} \ar^-{\tau}[r]  \ar[d]_-{\overline{\tau}_P} \ar^-{\overline{\tau}}[rd] &  G_{\Prism, A}   \ar[d]_-{} \\
(P_{\mu})_{\overline{\Prism}, A} \ar[r]^-{} & G_{\overline{\Prism}, A}.
}
\]

\begin{rem}\label{Remark:schematic torsors}
    For a $G$-torsor $\mathcal{P}$ over $\Spec A$ (with respect to the \'etale topology),
the sheaf on
$(A, I)^{\op}_\et$ defined by $B \mapsto \mathcal{P}(B)$
is a $G_{\Prism, A}$-torsor.
By \cite[Proposition 4.3.1]{Ito-K23},
this construction
induces an equivalence of categories
\[
\{ \, G\text{-torsors over} \ \Spec A \, \} \overset{\sim}{\to} \{ \, G_{\Prism, A}\text{-torsors} \, \}.
\]
We will make no distinction between a $G$-torsor over $\Spec A$ and the corresponding $G_{\Prism, A}$-torsor.
Similarly, the category of
$G$-torsors over $\Spec A/I$
(resp.\ $P_\mu$-torsors over $\Spec A/I$)
is equivalent to that of
$G_{\overline{\Prism}, A}$-torsors
(resp.\ $(P_{\mu})_{\overline{\Prism}, A}$-torsors).
\end{rem}

Let
$(\mathcal{Q}, \alpha_{\mathcal{Q}}) \in G\mathchar`-\mathrm{Disp}_\mu(A, I)$.
We write
\[
\mathcal{Q}_{A}:=\mathcal{Q}^{\tau} \quad
(\text{resp.}\ 
\mathcal{Q}_{A/I}:=\mathcal{Q}^{\overline{\tau}},\, \text{resp.}\ P(\mathcal{Q})_{A/I}:=\mathcal{Q}^{\overline{\tau}_P})
\]
for the pushout of $\mathcal{Q}$ along $\tau$
(resp.\ $\overline{\tau}$, resp.\ $\overline{\tau}_P$).

\begin{defn}\label{Definition:Hodge filtration of G-displays}
We regard $P(\mathcal{Q})_{A/I}$ as a $P_\mu$-torsor over $\Spec A/I$.
We call
$P(\mathcal{Q})_{A/I}$
(or the morphism $P(\mathcal{Q})_{A/I} \to \mathcal{Q}_{A/I}$)
the \textit{Hodge filtration} of $\mathcal{Q}_{A/I}$.
We also say that $P(\mathcal{Q})_{A/I}$ is the Hodge filtration of $\mathcal{Q}$.
\end{defn}

We recall the following useful fact:

\begin{prop}\label{Proposition:G display with trivial Hodge filtration is banal}
A $G$-$\mu$-display
$\mathcal{Q}$
over $(A, I)$ is banal if and only if the Hodge filtration
$P(\mathcal{Q})_{A/I}$
is a trivial $P_{\mu}$-torsor over $\Spec A/I$.
\end{prop}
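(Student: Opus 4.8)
The plan is to reduce the statement to the corresponding assertion about the Hodge filtration (a $P_\mu$-torsor over $\Spec A/I$) being trivial, and then to exploit the structural result Proposition \ref{Proposition:BB isomorphism} together with the fact that a $G$-torsor over an affine scheme is trivial if and only if it admits a section after passing to a suitable cover. The "only if" direction is immediate: if $\mathcal{Q}$ is banal, i.e.\ $\mathcal{Q} \simeq G_{\mu, A, I}$ as a $G_{\mu, A, I}$-torsor, then its pushout $P(\mathcal{Q})_{A/I} = \mathcal{Q}^{\overline{\tau}_P}$ along $\overline{\tau}_P \colon G_{\mu, A, I} \to (P_\mu)_{\overline{\Prism}, A}$ is the trivial $(P_\mu)_{\overline{\Prism}, A}$-torsor, hence corresponds to the trivial $P_\mu$-torsor over $\Spec A/I$ by Remark \ref{Remark:schematic torsors}.

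For the converse, suppose $P(\mathcal{Q})_{A/I}$ is trivial, so it has a global section over $\Spec A/I$. The key point is that the homomorphism $\overline{\tau}_P \colon G_{\mu, A, I} \to (P_\mu)_{\overline{\Prism}, A}$ is surjective as a map of sheaves on $(A, I)^{\op}_\et$: indeed, by Proposition \ref{Proposition:BB isomorphism}(1) the image of $G_\mu(B, IB)$ in $G(B/IB)$ is exactly $P_\mu(B/IB)$ for every $B \in (A, I)_\et$ (one may even take $B = A$ directly, so no further cover is needed). A section of $P(\mathcal{Q})_{A/I}$ therefore lifts, $(\pi, I)$-completely étale locally — in fact already over $A$ itself, given the surjectivity just noted — to a section of $\mathcal{Q}$ over the same cover; here one uses that $\mathcal{Q}$ is a $G_{\mu, A, I}$-torsor, so the obstruction to lifting along the surjection $\mathcal{Q} \to \mathcal{Q}^{\overline{\tau}_P}$ of torsors under the surjection of groups $G_{\mu, A, I} \twoheadrightarrow (P_\mu)_{\overline{\Prism}, A}$ vanishes once a section of the target exists (the fiber is a torsor under $\Ker \overline{\tau}_P$, which is a sheaf of groups sitting inside $G_{\mu, A, I}$ and can be handled by the torsor property). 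Thus $\mathcal{Q}$ has a global section over $A$, i.e.\ $\mathcal{Q}$ is a trivial $G_{\mu, A, I}$-torsor, which is exactly the definition of banal.

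The one subtlety to be careful about — and the step I expect to be the main obstacle — is justifying that existence of a section of the pushout torsor $\mathcal{Q}^{\overline{\tau}_P}$ implies existence of a section of $\mathcal{Q}$. This is not formal for an arbitrary surjection of sheaves of groups, but it works here because $\Ker(\overline{\tau}_P)$ is "large" in the right sense: by Proposition \ref{Proposition:BB isomorphism}(2), $G_\mu(A, I) = (U^{-}_\mu(A) \cap G_\mu(A, I)) \cdot P_\mu(A)$, and the kernel of $G_\mu(A, I) \to P_\mu(A/I)$ contains $U^{-}_\mu(A) \cap G_\mu(A, I)$ together with the kernel of $P_\mu(A) \to P_\mu(A/I)$; in particular the preimage of $1 \in (P_\mu)_{\overline{\Prism}, A}$ is a torsor under a sheaf of groups admitting enough sections, so nonemptiness of the target fiber forces nonemptiness of the source fiber. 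I would phrase this cleanly by observing that $\overline{\tau}_P$ is a surjection of sheaves in the $(\pi,I)$-completely étale topology with the property that every $(P_\mu)_{\overline{\Prism},A}$-torsor arising as a pushout along $\overline{\tau}_P$ is, locally, a quotient of a local $G_{\mu,A,I}$-torsor; alternatively, one can argue directly with the explicit banal description of Remark \ref{Remark:quotient groupoid banal G-displays} after reducing to the orientable case (which suffices, since being banal and being trivial as a torsor are both étale-local and the general case follows by descent). I would then simply cite the relevant statements from \cite{Ito-K23} rather than reprove the torsor-theoretic lemma in full.
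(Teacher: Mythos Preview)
The paper's own proof is simply a citation to \cite[Proposition 5.4.5]{Ito-K23}, so there is no in-paper argument to compare against; your closing suggestion to ``cite the relevant statements from \cite{Ito-K23}'' is exactly what the paper does.

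Your attempted direct argument, however, has a real gap at precisely the point you flag as subtle. You correctly reduce the ``if'' direction to lifting a global section of $P(\mathcal{Q})_{A/I} = \mathcal{Q}^{\overline{\tau}_P}$ to one of $\mathcal{Q}$, and you correctly note that $\overline{\tau}_P$ is surjective on $B$-points for every $B$. But surjectivity of the group map on sections does \emph{not} give surjectivity of $\mathcal{Q}(A) \to P(\mathcal{Q})_{A/I}(A)$: the source could be empty, which is exactly what you are trying to rule out. The fiber over a chosen global section is a torsor under $K := \Ker \overline{\tau}_P$, and what is needed is that this $K$-torsor has a global section. Saying $K$ ``admits enough sections'' or that it is ``large'' is a description of $K(A)$, not a statement about $H^1$ with coefficients in $K$; it does not establish triviality of a $K$-torsor. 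Your alternative of invoking Remark~\ref{Remark:quotient groupoid banal G-displays} does not help either, since that remark describes banal displays once you already know they are banal.

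To close the gap one must either prove the requisite vanishing for $K$-torsors (e.g.\ by filtering $K$ by sheaves, coming from the weight decomposition, whose \'etale $H^1$ vanishes on affines), or bypass $K$. A clean route in the $1$-bounded case: the pushout $\mathcal{Q}_A$ is a trivial $G$-torsor over $\Spec A$ (its reduction $\mathcal{Q}_{A/I}$ is trivial since $P(\mathcal{Q})_{A/I}$ is, and a section lifts by smoothness of $G$ and $I$-adic completeness of $A$); then Proposition~\ref{Proposition:BB isomorphism}(3) says $G_\mu(A,I)$ is the full preimage of $P_\mu(A/I)$ in $G(A)$, so any section of $\mathcal{Q}_A(A)$, after translation by a lift of a suitable element of $G(A/I)$, lands in $\mathcal{Q}(A)$. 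Without $1$-boundedness this Cartesian-square step fails and one genuinely needs the finer analysis carried out in \cite{Ito-K23}.
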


\begin{proof}
See \cite[Proposition 5.4.5]{Ito-K23}.
\end{proof}

A \textit{$G$-$\phi$-module} over $(A, I)$
is a $G$-torsor $\mathcal{P}$ over $\Spec A$
with an isomorphism
\[
\phi_\mathcal{P} \colon (\phi^*\mathcal{P}) \times_{\Spec A} \Spec A[1/\phi(I)] \overset{\sim}{\to} \mathcal{P} \times_{\Spec A} \Spec A[1/\phi(I)]
\]
of $G$-torsors over $\Spec A[1/\phi(I)]$.
Here we set $A[1/\phi(I)]:=A[1/\phi(d)]$ for a generator $d \in I$.
We call $\phi_\mathcal{P}$ the Frobenius of $\mathcal{P}$.

In \cite[Section 5.5]{Ito-K23},
we constructed
a functor
\[
    G\mathchar`-\mathrm{Disp}_\mu(A, I)
    \to \{ \, G\mathchar`-\phi\mathchar`-\text{modules over} \ (A, I) \, \}, \quad \mathcal{Q} \mapsto \mathcal{Q}_\phi:=(\mathcal{Q}_A, \phi_{\mathcal{Q}_A}).
\]
If $\mathcal{Q}=\mathcal{Q}_X$ is the banal $G$-$\mu$-display over $(A, I)$ associated with an element $X \in G(A)_I$ (see Remark \ref{Remark:quotient groupoid banal G-displays}), then the isomorphism $\phi_{\mathcal{Q}_A}$ is defined as follows.
We set
\begin{equation}\label{equation:Xphi}
    X_\phi := X_d\phi(\mu(d)) \in G(A[1/\phi(I)]),
\end{equation}
which is independent of the choice of a generator $d \in I$.
(See (\ref{equation:d-component}) for the notation $X_d$.)
We have $\mathcal{Q}_A=G_A$, and hence
$\phi^*(\mathcal{Q}_A)=G_A$.
The isomorphism $\phi_{\mathcal{Q}_A}$ is then given by
$
g \mapsto X_\phi g.
$

\begin{defn}\label{Definition:underlying phi G torsor}
The $G$-$\phi$-module 
$
\mathcal{Q}_{\phi}=(\mathcal{Q}_A, \phi_{\mathcal{Q}_A})
$
over $(A, I)$ is called the \textit{underlying $G$-$\phi$-module} of $\mathcal{Q}$.
\end{defn}

\begin{ex}\label{Example:phi GLn torsor and Hodge filtration}
We retain the notation of Example \ref{Example:GLn displays}.
Let
$\mathcal{Q}:=\mathcal{Q}(M)$
be
the $\GL_N$-$\mu$-display over $(A, I)$
associated with
a
minuscule Breuil--Kisin module $M$ over $(A, I)$ of type $\mu$.
The underlying 
$\GL_N$-$\phi$-module
$\mathcal{Q}_{\phi}$
is the one naturally associated with 
the pair
\[
(\phi^*M, \phi^*(F_M)).
\]
For the Hodge filtration $P(\mathcal{Q})_{A/I}$, we have
\[
P(\mathcal{Q})_{A/I} \simeq \underline{\mathrm{Isom}}_{\Fil}((A/I)^N, (\phi^*M)/I(\phi^*M))
\]
where the right hand side is the scheme of isomorphisms
$f \colon (A/I)^N \overset{\sim}{\to} (\phi^*M)/I(\phi^*M)$
such that $f((A/I)^s \oplus 0)=P^1$.
\end{ex}

\subsection{Prismatic $G$-$\mu$-displays over complete regular local rings}\label{Subsection:G-displays over complete regular local rings}

In this subsection, we recall the main result of \cite{Ito-K23}.

\begin{defn}\label{Definition:G displays over prismatic sites}
Let $R$ be a $\pi$-adically complete $\O$-algebra.
We define the following groupoid:
\[
G\mathchar`-\mathrm{Disp}_\mu((R)_{\Prism, \O_E}):= {2-\varprojlim}_{(A, I) \in (R)_{\Prism, \O_E}} G\mathchar`-\mathrm{Disp}_\mu(A, I).
\]
A \textit{prismatic $G$-$\mu$-display} over $R$ is an object of $G\mathchar`-\mathrm{Disp}_\mu((R)_{\Prism, \O_E})$.
\end{defn}

For a homomorphism
$h \colon R \to R'$ of $\pi$-adically complete $\O$-algebras, we have a base change functor
\[
h^* \colon G\mathchar`-\mathrm{Disp}_\mu((R)_{\Prism, \O_E}) \to G\mathchar`-\mathrm{Disp}_\mu((R')_{\Prism, \O_E}).
\]

\begin{rem}\label{Remark:alternative definition of G displays over prismatic sites}
Giving a prismatic $G$-$\mu$-display $\mathfrak{Q}$ over $R$
is equivalent to giving a section
of the fibered category
$
(A, I) \mapsto G\mathchar`-\mathrm{Disp}_\mu(A, I)
$
over $(R)^{\op}_{\Prism, \O_E}$.
We thus have the associated $G$-$\mu$-display $\mathfrak{Q}_{(A, I)}$ over $(A, I)$ for each $(A, I) \in (R)_{\Prism, \O_E}$
and a natural isomorphism
\[
\gamma_f \colon f^*(\mathfrak{Q}_{(A, I)}) \overset{\sim}{\to} \mathfrak{Q}_{(A', I')}
\]
for each morphism $f \colon (A, I) \to (A', I')$ in $(R)_{\Prism, \O_E}$.
We call $\mathfrak{Q}_{(A, I)}$ the \textit{value} of $\mathfrak{Q}$ at $(A, I) \in (R)_{\Prism, \O_E}$.
Recall that an object of $(R)_{\Prism, \O_E}$
is given by a bounded $\O_E$-prism $(A, I)$ over $\O$ together with a homomorphism $g \colon R \to A/I$ over $\O$.
We also write
$
\mathfrak{Q}_{g}:=\mathfrak{Q}_{(A, I)}
$
in order to emphasize that it depends on the homomorphism $g$.
For a homomorphism
$h \colon R \to R'$ and
an object
$((A, I), g \colon R' \to A/I) \in (R')_{\Prism, \O_E}$,
we have
\[
(h^*\mathfrak{Q})_g=\mathfrak{Q}_{g \circ h}.
\]
\end{rem}

\begin{ex}\label{Example:prismatic display over k}
    The functor
    \[
    G\mathchar`-\mathrm{Disp}_{\mu}((k)_{\Prism, \O_E}) \overset{\sim}{\to} G\mathchar`-\mathrm{Disp}_\mu(\O, (\pi)),  \quad \mathfrak{Q} \mapsto \mathfrak{Q}_{(\O, (\pi))}
    \]
    is an equivalence
    since $(\O, (\pi)) \in (k)_{\Prism, \O_E}$ is an initial object (\cite[Example 2.5.8]{Ito-K23}).
\end{ex}

Let $\mathcal{C}_\O$ be the category of complete regular local rings $R$ with a local homomorphism $\O \to R$ which induces an isomorphism on the residue fields.
The morphisms in $\mathcal{C}_\O$ are the local homomorphisms over $\O$.

An important feature of the category $\mathcal{C}_\O$ is the following:

\begin{rem}\label{Remark:regular local ring and Breuil-Kisin module}
    Let $(\mathfrak{S}_\O, (\mathcal{E}))$ be an $\O_E$-prism of Breuil--Kisin type over $\O$.
    Then the quotient $\mathfrak{S}_\O/\mathcal{E}$ belongs to $\mathcal{C}_\O$.
    Conversely, any $R \in \mathcal{C}_\O$ is of the form
    $
    R \simeq \mathfrak{S}_\O/\mathcal{E}
    $
    for some $(\mathfrak{S}_\O, (\mathcal{E}))$
    of Breuil--Kisin type.
    (See for example \cite[Section 3.3]{ChengChuangxun}.)
\end{rem}

Let $R \in \mathcal{C}_\O$.
We choose
an $\O_E$-prism
$(\mathfrak{S}_\O, (\mathcal{E}))$
of Breuil--Kisin type
with an isomorphism
$
R \simeq \mathfrak{S}_\O/\mathcal{E}
$
over $\O$.
The following result is a key ingredient in our deformation theory:

\begin{thm}\label{Theorem:main result on G displays over complete regular local rings}
The functor
    \[
 G\mathchar`-\mathrm{Disp}_\mu((R)_{\Prism, \O_E}) \to G\mathchar`-\mathrm{Disp}_\mu(\mathfrak{S}_\O, (\mathcal{E})), \quad \mathfrak{Q} \mapsto \mathfrak{Q}_{(\mathfrak{S}_\O, (\mathcal{E}))}
 \]
is an equivalence if the cocharacter $\mu$ is 1-bounded.
\end{thm}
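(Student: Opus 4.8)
The plan is to prove the theorem by descent along the cover $(\mathfrak{S}_\O, (\mathcal{E})) \to (R)_{\Prism, \O_E}$, reducing the statement to a comparison of the two sides after forming \v{C}ech nerves. First I would recall from \cite{Ito-K23} that, since $R \in \mathcal{C}_\O$, the prism $(\mathfrak{S}_\O, (\mathcal{E}))$ together with the quotient map $R \simeq \mathfrak{S}_\O/\mathcal{E}$ is a \emph{weakly final} (covering) object of the site $(R)^{\op}_{\Prism, \O_E}$: every bounded $\O_E$-prism over $R$ receives, flat-locally, a map from $(\mathfrak{S}_\O, (\mathcal{E}))$. (This is exactly the prismatic-site analogue of the statement that $\mathfrak{S}_\O \to R$ is a prismatic cover; it follows from the fact that $\mathfrak{S}_\O$ is a regular, hence smooth-over-$\O$, choice of lift.) Consequently the self-products of $(\mathfrak{S}_\O, (\mathcal{E}))$ in $(R)_{\Prism, \O_E}$ exist and form a cosimplicial bounded $\O_E$-prism $(\mathfrak{S}_\O^{\bullet}, (\mathcal{E}))$, and by the general formalism recalled around Definition \ref{Definition:G displays over prismatic sites} one has
\[
G\mathchar`-\mathrm{Disp}_\mu((R)_{\Prism, \O_E}) \simeq \varprojlim_{[\bullet] \in \Delta} G\mathchar`-\mathrm{Disp}_\mu(\mathfrak{S}_\O^{\bullet}, (\mathcal{E})),
\]
because the fibered category $(A,I) \mapsto G\mathchar`-\mathrm{Disp}_\mu(A,I)$ satisfies flat descent (this descent statement is part of \cite{Ito-K23}, being built into the definition of $G_{\mu,A,I}$-torsors and the sheaves $G_{\mu,A,I}$, $G_{\Prism,A,I}$ on the flat site).

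The heart of the matter is then to show that the augmentation
\[
G\mathchar`-\mathrm{Disp}_\mu(\mathfrak{S}_\O, (\mathcal{E})) \to \varprojlim_{[\bullet]} G\mathchar`-\mathrm{Disp}_\mu(\mathfrak{S}_\O^{\bullet}, (\mathcal{E}))
\]
is an equivalence; equivalently, that a $G$-$\mu$-display over $(\mathfrak{S}_\O, (\mathcal{E}))$ equipped with a descent datum relative to $\mathfrak{S}_\O^{1} = \mathfrak{S}_\O \widehat{\otimes}_R \mathfrak{S}_\O$ (the relevant completed tensor product in prisms) descends uniquely to $R$. Here I would use the concrete description via $G$-Breuil--Kisin modules of type $\mu$ (Proposition \ref{Proposition:G-displays to G-BK}), which is compatible with base change, so it suffices to establish the analogous descent for $G$-Breuil--Kisin modules. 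The key geometric input is the structure of $\mathfrak{S}_\O^{1}$: one identifies it (following the Breuil--Kisin-prism computations of Bhatt--Scholze and their $\O_E$-analogues in \cite{Marks}) and, crucially, verifies that $\mathfrak{S}_\O \to \mathfrak{S}_\O^{1}$ is flat and that the map $\mathfrak{S}_\O^{1}/\mathcal{E} \to $ (the coproduct of $R$ with itself over $R$ in a suitable completed sense) is an isomorphism, i.e.\ $\mathfrak{S}_\O \to R$ is a \emph{transitive} cover. Descent of torsors along a flat covering is then formal, and the $1$-boundedness hypothesis is precisely what guarantees (via Proposition \ref{Proposition:BB isomorphism}(3)) that the display group $G_\mu$ behaves well, namely $G_\mu(A,I)$ is the preimage of $P_\mu(A/I)$, so that $G_{\mu,A,I}$-torsors can be controlled in terms of $G$-torsors and their Hodge filtrations uniformly in $(A,I)$.

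I expect the main obstacle to be the identification and flatness of the self-product $\mathfrak{S}_\O^{1}$ in the category of bounded $\O_E$-prisms over $R$ — that is, showing both that this coproduct exists as a bounded $\O_E$-prism (one must take the prismatic envelope of $\mathfrak{S}_\O \widehat{\otimes}_\O \mathfrak{S}_\O$ along the diagonal ideal and check boundedness) and that the resulting maps are faithfully flat, so that the \v{C}ech complex actually computes the descent. Once the cover is under control, the 1-bounded case of descent for $G$-$\mu$-displays should reduce, via Remark \ref{Remark:quotient groupoid banal G-displays} and Proposition \ref{Proposition:G display with trivial Hodge filtration is banal}, to descent for $G$-torsors over affine schemes (effective, since $G$ is affine) together with descent for the Frobenius structure (automatic, being given by an isomorphism after inverting $\mathcal{E}$). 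A secondary, more bookkeeping-type obstacle is to check that the equivalence is natural in $R$, i.e.\ compatible with the base-change functors, but this follows from the base-change compatibility already recorded in Proposition \ref{Proposition:G-displays to G-BK} together with functoriality of the chosen presentations $R \simeq \mathfrak{S}_\O/\mathcal{E}$ up to the standard comparison between different choices of $\mathcal{E}$.
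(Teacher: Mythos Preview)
The paper does not prove this theorem here; it simply cites \cite[Theorem 6.1.3]{Ito-K23}, since Section~\ref{Section:Review of prismatic G-mu-displays} is a review of that earlier paper. So there is no in-paper argument to compare against beyond the reference.

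Your descent framework is the correct starting point and matches the strategy of \cite{Ito-K23}: one does show that $(\mathfrak{S}_\O,(\mathcal{E}))$ covers the final object of $(R)_{\Prism,\O_E}$ and reduces to its \v{C}ech nerve. But there is a genuine gap in how you describe the remaining step. After the descent identification
\[
G\mathchar`-\mathrm{Disp}_\mu((R)_{\Prism,\O_E})\;\simeq\;\varprojlim_{[\bullet]}\, G\mathchar`-\mathrm{Disp}_\mu(\mathfrak{S}_\O^{\bullet},(\mathcal{E})),
\]
the content of the theorem is \emph{not} that ``a display over $\mathfrak{S}_\O$ equipped with a descent datum descends to $R$'' --- that is exactly what flat descent already says, and it holds for affine $G$ with no hypothesis on $\mu$. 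The content is the opposite implication: every $G$-$\mu$-display $\mathcal{Q}$ over $(\mathfrak{S}_\O,(\mathcal{E}))$ carries a \emph{unique} descent datum automatically; that is, for the two maps $p_0,p_1\colon(\mathfrak{S}_\O,(\mathcal{E}))\to(\mathfrak{S}_\O^{(1)},I^{(1)})$ one must produce a canonical isomorphism $p_0^*\mathcal{Q}\simeq p_1^*\mathcal{Q}$ satisfying the cocycle condition. This is a crystal-type rigidity statement, and it is here that $1$-boundedness actually enters: the Frobenius is topologically nilpotent on the kernel of the diagonal $\mathfrak{S}_\O^{(1)}\to\mathfrak{S}_\O$, and one runs a convergence argument in the display group (using the structure of $G_\mu$ from Proposition~\ref{Proposition:BB isomorphism}(3)) to build the isomorphism. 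Your final sentence, that the problem ``reduces to descent for $G$-torsors \ldots\ together with descent for the Frobenius structure (automatic)'', has the logic inverted: the Frobenius structure is what \emph{produces} the canonical descent datum, it is not itself the thing being descended.

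Separately, the passage about ``$\mathfrak{S}_\O^{1}/\mathcal{E}\to$ (the coproduct of $R$ with itself over $R$)'' does not parse: the coproduct of $R$ with itself over $R$ is just $R$, whereas $\mathfrak{S}_\O^{(1)}/I^{(1)}$ is a genuine infinitesimal thickening of $R$. The self-product prism is not ``transitive'' in the way you suggest, and indeed the whole point is that it is nontrivially thicker than $\mathfrak{S}_\O$.
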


\begin{proof}
    See \cite[Theorem 6.1.3]{Ito-K23}.
\end{proof}

\section{The Grothendieck--Messing deformation theory for $G$-$\mu$-displays}\label{Section:The Grothendieck--Messing deformation theory for G displays}

In this section,
we establish an analogue of the Grothendieck--Messing deformation theory for $G$-$\mu$-displays.
The contents of this section can be summarized as follows.
In Section \ref{Subsection:Deformations}, we collect some basic results on deformations of $G$-$\mu$-displays.
In Section \ref{Subsection:special nilpotent thickening}, we introduce a class of maps
$(A', I') \to (A, I)$
of orientable and bounded $\O_E$-prisms over $\O$, called \textit{special nilpotent thickenings}, for which we formulate the analogue of the Grothendieck--Messing deformation theory.
This class includes the maps
$(\mathfrak{S}_{\O, m+1}, (\mathcal{E})) \to (\mathfrak{S}_{\O, m}, (\mathcal{E}))$
and
$(W_{\O_E}(S^\flat)/[a^\flat]^{m+1}, I_S) \to (W_{\O_E}(S^\flat)/[a^\flat]^m, I_S)$.
For a $G$-$\mu$-display $\mathcal{Q}$
over $(A, I)$ and its deformation $\mathscr{Q}$ over $(A', I')$,
we construct a \textit{period map}
$\Per_{\mathscr{Q}}$
from
the set
$\Def(\mathcal{Q})_{(A', I')}$
of isomorphism classes of deformations of $\mathcal{Q}$ over $(A', I')$
to the set of isomorphism classes of lifts of the Hodge filtration $P(\mathcal{Q})_{A/I}$ in $\mathscr{Q}_{A'/I'}$, and prove that $\Per_{\mathscr{Q}}$ is bijective when $\mu$ is 1-bounded; see Section \ref{Subsection:Period map} for details.
In Section \ref{Subsection:Normalized period map}, under a mild additional assumption on $(A', I') \to (A, I)$,
we show that
the set
$\Def(\mathcal{Q})_{(A', I')}$
has a natural structure of a torsor under an $A/I$-module when $\mu$ is 1-bounded.
This structure plays an important role in Section \ref{Section:Universal deformations}.

\subsection{Deformations}\label{Subsection:Deformations}

Let $f \colon (A', I') \to (A, I)$ be a map of orientable and bounded $\O_E$-prisms over $\O$
such that $f \colon A' \to A$ is surjective.

We can define a deformation of a $G$-$\mu$-display in the usual way:

\begin{defn}\label{Definition:deformation}
Let
$\mathcal{Q}$
be a $G$-$\mu$-display over $(A, I)$.
\begin{enumerate}
    \item A \textit{deformation}
of
$\mathcal{Q}$
over $(A', I')$
is a pair
$
(\mathscr{Q}, h)
$
consisting of a $G$-$\mu$-display
$\mathscr{Q}$
over $(A', I')$ and an isomorphism
$
h \colon f^*\mathscr{Q} \overset{\sim}{\to} \mathcal{Q}
$
of $G$-$\mu$-displays over $(A, I)$.
If there is no ambiguity, we simply write $\mathscr{Q}$ instead of $(\mathscr{Q}, h)$.
    \item An isomorphism
    $
    g \colon (\mathscr{Q}, h) \overset{\sim}{\to} (\mathscr{R}, i)
    $
    of deformations of $\mathcal{Q}$ over $(A', I')$ is an isomorphism
    $g \colon \mathscr{Q} \overset{\sim}{\to} \mathscr{R}$
    of $G$-$\mu$-display over $(A', I')$ such that
    $i \circ (f^*g)=h$.
\end{enumerate}
The set of isomorphism classes of deformations of $\mathcal{Q}$
over $(A', I')$ is denoted by
\[
\Def(\mathcal{Q})_{(A', I')}.
\]
The groupoid of deformations of $\mathcal{Q}$
over $(A', I')$ is denoted by
$
\mathbf{Def}(\mathcal{Q})_{(A', I')},
$
using boldface letters.
\end{defn}

We collect some preliminary results on deformations which will be used in the sequel.
Let $J \subset A'$ (resp.\ $K \subset A'/I'$) be the kernel of
$A' \to A$ (resp.\ $A'/I' \to A/I$).

We first note the following fact:

\begin{lem}\label{Lemma:deformation of banal is banal}
Assume that $A'/I'$ is $K$-adically complete.
Let $\mathcal{Q}$ be a banal $G$-$\mu$-display over $(A, I)$.
Then any deformation $\mathscr{Q}$ of $\mathcal{Q}$ over $(A', I')$ is banal. 
\end{lem}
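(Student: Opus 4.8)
The plan is to reduce the statement to a concrete cocycle-type computation using the quotient-groupoid description of banal $G$-$\mu$-displays. First I would invoke Proposition \ref{Proposition:G display with trivial Hodge filtration is banal}: a $G$-$\mu$-display $\mathscr{Q}$ over $(A', I')$ is banal if and only if its Hodge filtration $P(\mathscr{Q})_{A'/I'}$ is a trivial $P_\mu$-torsor over $\Spec A'/I'$. Since $\mathscr{Q}$ is a deformation of $\mathcal{Q}$, there is an isomorphism $f^*\mathscr{Q} \overset{\sim}{\to} \mathcal{Q}$, and base change is compatible with the formation of Hodge filtrations, so the $P_\mu$-torsor $P(\mathscr{Q})_{A'/I'}$ pulls back along $A'/I' \to A/I$ to the Hodge filtration $P(\mathcal{Q})_{A/I}$, which is trivial because $\mathcal{Q}$ is banal. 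Thus it suffices to show that a $P_\mu$-torsor over $\Spec A'/I'$ whose pullback to $\Spec A/I$ is trivial is itself trivial, given that $A'/I'$ is $K$-adically complete with $K = \ker(A'/I' \to A/I)$.

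The key step is therefore a torsor-lifting statement: a $P_\mu$-torsor over $\Spec A'/I'$ that becomes trivial modulo $K$ is trivial. I would argue as follows. A $P_\mu$-torsor is trivial if and only if it has a section over $\Spec A'/I'$. Working étale-locally is not quite enough, so instead I would use that $P_\mu$ is a smooth affine group scheme over $\O$ (recalled just before Definition \ref{Definition:1-bounded}) and that $A'/I'$ is $K$-adically complete. One writes $A'/I' = \varprojlim_n (A'/I')/K^{n}$ (using completeness), and lifts a section step by step along the surjections $(A'/I')/K^{n+1} \to (A'/I')/K^{n}$. At each stage the obstruction to lifting a section of a $P_\mu$-torsor lives in a cohomology group built from $\Lie(P_\mu) \otimes K^{n}/K^{n+1}$, and smoothness of $P_\mu$ together with the fact that the square-zero ideal $K^{n}/K^{n+1}$ has no higher cohomology on the affine scheme $\Spec A/I$ makes the lift exist (though not uniquely; only existence is needed). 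Passing to the limit produces a section over $A'/I'$, hence the torsor is trivial. Alternatively, and perhaps more cleanly, one can phrase this via the smoothness of $P_\mu$: deformations of the trivial torsor along a pro-nilpotent thickening are parametrized by a pointed set that is trivial because obstruction and classifying groups vanish on affines for smooth group schemes.

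Concretely, I expect the cleanest route is to deduce this directly from Proposition \ref{Proposition:BB isomorphism} and the structure of $G_\mu$: a deformation of a banal object can be presented (after choosing a trivialization over $(A, I)$) by an element of $G(A')_{I'}$ lifting a chosen element of $G(A)_I$, and the surjectivity $G(A') \to G(A)$ — which holds because $G$ is smooth affine and $A' \to A$ is a surjection with $A'$ complete along $\ker(A' \to A)$ — shows such a lift exists; one then checks that the resulting $G$-$\mu$-display is visibly banal since its underlying torsor $G_{\mu, A', I'}$ is trivial by construction. The main obstacle is the bookkeeping: making precise that surjectivity of $G(A') \to G(A)$ (equivalently, that every section lifts) follows from smoothness of $G$ together with $K$-adic, resp.\ $J$-adic, completeness, and confirming that the isomorphism $h \colon f^*\mathscr{Q} \overset{\sim}{\to} \mathcal{Q}$ can be used to transport a global trivialization of $\mathcal{Q}$ to a trivialization of $\mathscr{Q}$ rather than merely a local one. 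I would organize the proof around the Hodge-filtration criterion (Proposition \ref{Proposition:G display with trivial Hodge filtration is banal}), since that isolates exactly the torsor-triviality input and keeps the completeness hypothesis doing visible work.
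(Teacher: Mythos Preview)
Your proposal is correct and matches the paper's proof essentially verbatim: reduce via Proposition~\ref{Proposition:G display with trivial Hodge filtration is banal} to triviality of the $P_\mu$-torsor $P(\mathscr{Q})_{A'/I'}$, note that its reduction modulo $K$ is trivial because $\mathcal{Q}$ is banal, and then lift a section using smoothness of $P_\mu$ and $K$-adic completeness of $A'/I'$. One small caution about your alternative route (b): surjectivity of $G(A') \to G(A)$ would require $A'$ to be $J$-adically complete, which is not part of the lemma's hypotheses (only $K$-adic completeness of $A'/I'$ is assumed), so the Hodge-filtration approach you ultimately settle on is indeed the right one here.
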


\begin{proof}
By Proposition \ref{Proposition:G display with trivial Hodge filtration is banal}, it suffices 
to prove that
the Hodge filtration
$P(\mathscr{Q})_{A'/I'}$
is trivial as a $P_\mu$-torsor over $\Spec A'/I'$.
The base change $P(\mathscr{Q})_{A'/I'} \times_{\Spec A'/I'} \Spec A/I$ is trivial
since 
$\mathcal{Q}$ is banal.
Since $P(\mathscr{Q})_{A'/I'}$ is smooth over $\Spec A'/I'$ and 
$A'/I'$ is $K$-adically complete, we see that $P(\mathscr{Q})_{A'/I'}$ is trivial, as desired.
\end{proof}

The next lemma is an analogue of \cite[Lemma 7.1.4 (b)]{Lau21}.

\begin{lem}\label{Lemma:Gmu surjective nilpotent thickening}
If $A'$ is $J$-adically complete,
then the homomorphism
$f \colon G_\mu(A', I') \to G_\mu(A, I)$
is surjective.
\end{lem}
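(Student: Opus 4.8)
The plan is to reduce the surjectivity of $f\colon G_\mu(A',I')\to G_\mu(A,I)$ to the surjectivity of $G(A')\to G(A)$ on the relevant subgroups, using the structural results of Proposition \ref{Proposition:BB isomorphism} together with a successive-approximation argument along the $J$-adic filtration. First I would observe that since $f\colon A'\to A$ is surjective and $A'$ is $J$-adically complete, the group scheme $G$ being smooth and affine implies that $G(A')\to G(A)$ is surjective: one lifts a point mod $J$ and then lifts it step by step through the quotients $A'/J^{n+1}\to A'/J^n$ using the formal smoothness of $G$, the vanishing obstructions living in $\Lie(G)\otimes (J^n/J^{n+1})$, and finally passes to the limit by completeness. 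The same argument applies verbatim to the smooth closed subgroup schemes $P_\mu$ and $U^-_\mu$ over $\O$, so $P_\mu(A')\to P_\mu(A)$ and $U^-_\mu(A')\to U^-_\mu(A)$ are surjective as well. I would also note that since $I'$ is generated by a nonzerodivisor $d'\in I'$ whose image $d\in I$ generates $I$ (both prisms being orientable, and the map respecting the ideals), the element $d'\in A'$ is a valid choice of generator mapping to $d$.

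Next I would exploit Proposition \ref{Proposition:BB isomorphism}(2), which gives bijections
\[
(U^-_\mu(A')\cap G_\mu(A',I'))\times P_\mu(A') \overset{\sim}{\to} G_\mu(A',I'),
\qquad
(U^-_\mu(A)\cap G_\mu(A,I))\times P_\mu(A) \overset{\sim}{\to} G_\mu(A,I),
\]
compatibly with $f$. Since $P_\mu(A')\to P_\mu(A)$ is surjective by the above, it remains to show that $U^-_\mu(A')\cap G_\mu(A',I')\to U^-_\mu(A)\cap G_\mu(A,I)$ is surjective. An element of $U^-_\mu(A)\cap G_\mu(A,I)$ is by definition an element $g\in U^-_\mu(A)$ such that $\mu(d)g\mu(d)^{-1}\in G(A)$; unravelling through the grading on $U^-_\mu$ attached to $\mu$, the condition that $\mu(d)g\mu(d)^{-1}$ is integral translates, in coordinates given by the smoothness of $U^-_\mu$, into a divisibility condition on the coordinates of $g$ by powers of $d$ corresponding to the (positive) weights. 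One then lifts $g$ coordinate-wise: writing $U^-_\mu\simeq \Spec \O[x_1,\dots,x_r]$ (after étale localization, or directly since $U^-_\mu$ is an affine space bundle), a point of $U^-_\mu\cap G_\mu$ is a tuple $(a_i)$ with $a_i\in d^{n_i}A$ for prescribed $n_i\ge 0$; since $A'\to A$ is surjective and $d^{n_i}A'\to d^{n_i}A$ is surjective (as $d$ maps to $d$), we may lift each $a_i$ to $d^{n_i}A'$, producing the required preimage.

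Alternatively — and this is perhaps cleaner — I would combine surjectivity of $U^-_\mu(A')\to U^-_\mu(A)$ with the description of $U^-_\mu\cap G_\mu$ via conjugation: the map $g\mapsto \mu(d')g\mu(d')^{-1}$ identifies $U^-_\mu(A')\cap G_\mu(A',I')$ with $U^-_\mu(A')\cap G_\mu(A',I')$ sitting inside $U^-_\mu(A'[1/I'])$... more precisely, since the weights of $\mu$ on $\Lie U^-_\mu$ are strictly negative, conjugation by $\mu(d')$ maps $U^-_\mu(A')$ \emph{into} itself (integrally), and the intersection with $G_\mu$ is automatic in a suitable sense; the key point reduces once more to lifting along the smooth affine-space group $U^-_\mu$. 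The main obstacle I anticipate is bookkeeping the interplay between the $J$-adic completeness hypothesis (needed to lift $G$- and $P_\mu$-points through the infinitely many layers $J^n/J^{n+1}$) and the $I'$-adic structure entering the definition of $G_\mu$; one must check that the divisibility conditions defining $G_\mu$ inside $G$ are preserved under the step-by-step lifting, i.e. that lifting a coordinate lying in $d^{n}A$ can be done within $d^{n}A'$ compatibly with the $J$-adic approximation — but since $d'$ is a nonzerodivisor mapping to a generator of $I$ and the ideals $J^n$ and $(d')$ are handled independently, this causes no genuine difficulty. Once $U^-_\mu(A')\cap G_\mu(A',I')\to U^-_\mu(A)\cap G_\mu(A,I)$ and $P_\mu(A')\to P_\mu(A)$ are both seen to be surjective, the product decomposition of Proposition \ref{Proposition:BB isomorphism}(2) immediately yields surjectivity of $f\colon G_\mu(A',I')\to G_\mu(A,I)$, completing the proof.
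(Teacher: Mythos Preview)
Your proposal is correct and follows essentially the same route as the paper: use the product decomposition of Proposition~\ref{Proposition:BB isomorphism}(2) to reduce to surjectivity on the factors $P_\mu$ and $U^-_\mu\cap G_\mu$, handle $P_\mu(A')\to P_\mu(A)$ by smoothness and $J$-adic completeness, and argue directly for $U^-_\mu(A')\cap G_\mu(A',I')\to U^-_\mu(A)\cap G_\mu(A,I)$ via the explicit coordinate description (the paper simply cites \cite[Remark~4.2.7]{Ito-K23} for this last step, which is exactly the divisibility-in-coordinates argument you sketch). Your ``alternative'' paragraph is muddled and adds nothing; the first argument you give is the right one and is already complete.
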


\begin{proof}
Since $A'$ is $J$-adically complete and $P_\mu$ is smooth,
the homomorphism $P_\mu(A') \to P_\mu(A)$ is surjective.
It is easy to show that
$U^{-}_{\mu}(A') \cap G_\mu(A', I') \to U^{-}_{\mu}(A) \cap G_\mu(A, I)$
is surjective; see \cite[Remark 4.2.7]{Ito-K23}.
The claim then follows from Proposition \ref{Proposition:BB isomorphism}.
\end{proof}

We have the following useful description of deformations for banal $G$-$\mu$-displays.

\begin{prop}\label{Proposition:deformation of banal G displays}
Assume that $A'$ is $J$-adically complete and $A'/I'$ is $K$-adically complete.
Let $X \in G(A)_I$ be an element and
$\mathcal{Q}_X$ the banal $G$-$\mu$-display over $(A, I)$
associated with $X$.
Let
\[
\mathbf{Def}(X)_{(A', I')}
\]
be the groupoid whose objects are the elements $Y \in G(A')_{I'}$ such that $f(Y)=X$, and whose morphisms are given by
    \[
    \Hom_{\mathbf{Def}(X)_{(A', I')}}(Y, Z)=\{\, g \in G_\mu(A', I') \, \vert \, Z \cdot g =Y \, \, \mathrm{and} \, \, f(g)=1  \, \}.
    \]
For each $Y \in \mathbf{Def}(X)_{(A', I')}$,
we have a natural isomorphism
$f^*(\mathcal{Q}_Y) \overset{\sim}{\to} \mathcal{Q}_X$ given by $1 \in G_{\mu}(A, I)$.
The construction
$Y \mapsto (\mathcal{Q}_Y, 1)$
gives an equivalence
\[
\mathbf{Def}(X)_{(A', I')} \overset{\sim}{\to} \mathbf{Def}(\mathcal{Q}_X)_{(A', I')}.
\]
\end{prop}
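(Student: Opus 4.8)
The plan is to establish the claimed equivalence $\mathbf{Def}(X)_{(A', I')} \overset{\sim}{\to} \mathbf{Def}(\mathcal{Q}_X)_{(A', I')}$ by first verifying that the construction $Y \mapsto (\mathcal{Q}_Y, 1)$ is well-defined on objects, then on morphisms, and finally by checking essential surjectivity and full faithfulness. For the well-definedness on objects: given $Y \in G(A')_{I'}$ with $f(Y) = X$, Remark \ref{Remark:base change of G mu displays} gives $f^*(\mathcal{Q}_Y) = \mathcal{Q}_{f(Y)} = \mathcal{Q}_X$, so the identity element $1 \in G_\mu(A, I)$ furnishes a canonical isomorphism $f^*(\mathcal{Q}_Y) \overset{\sim}{\to} \mathcal{Q}_X$, and $(\mathcal{Q}_Y, 1)$ is indeed a deformation. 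For morphisms: a morphism $g \colon Y \to Z$ in $\mathbf{Def}(X)_{(A', I')}$ is an element $g \in G_\mu(A', I')$ with $Z \cdot g = Y$ and $f(g) = 1$; by Remark \ref{Remark:quotient groupoid banal G-displays}, $g$ defines an isomorphism $\mathcal{Q}_Y \overset{\sim}{\to} \mathcal{Q}_Z$ of $G$-$\mu$-displays, and the condition $f(g) = 1$ is exactly the compatibility $i \circ (f^*g) = h$ with $h = i = 1$ in Definition \ref{Definition:deformation}(2). Functoriality is immediate since composition of morphisms on both sides is given by the group law.

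Next I would prove full faithfulness. Fix $Y, Z \in \mathbf{Def}(X)_{(A', I')}$. Since $A'/I'$ is $K$-adically complete, Lemma \ref{Lemma:deformation of banal is banal} shows that $\mathcal{Q}_Y$ and $\mathcal{Q}_Z$ are banal $G$-$\mu$-displays over $(A', I')$, so by Remark \ref{Remark:quotient groupoid banal G-displays} every isomorphism $\mathcal{Q}_Y \overset{\sim}{\to} \mathcal{Q}_Z$ is of the form given by some $g \in G_\mu(A', I')$ with $Z \cdot g = Y$. Such a $g$ underlies a morphism of deformations precisely when $f^*(g)$, viewed as an automorphism of $\mathcal{Q}_X$, is compatible with the structure isomorphisms $1$, i.e.\ when $f(g) = 1$ in $G_\mu(A, I)$ (here one uses that $f(g) \in G_\mu(A, I)$ defines the zero automorphism iff $f(g) = 1$, since the action in Remark \ref{Remark:quotient groupoid banal G-displays} is free in the relevant sense). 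This matches exactly the Hom-set in $\mathbf{Def}(X)_{(A', I')}$, giving full faithfulness.

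The main work is essential surjectivity. Let $(\mathscr{Q}, h)$ be an arbitrary deformation of $\mathcal{Q}_X$ over $(A', I')$. Since $A'$ is $J$-adically complete and $A'/I'$ is $K$-adically complete, Lemma \ref{Lemma:deformation of banal is banal} shows $\mathscr{Q}$ is banal, so by Remark \ref{Remark:quotient groupoid banal G-displays} we may write $\mathscr{Q} \cong \mathcal{Q}_{Y'}$ for some $Y' \in G(A')_{I'}$. Then $f^*(\mathcal{Q}_{Y'}) = \mathcal{Q}_{f(Y')}$, and $h$ provides an isomorphism $\mathcal{Q}_{f(Y')} \overset{\sim}{\to} \mathcal{Q}_X$, which again by Remark \ref{Remark:quotient groupoid banal G-displays} is given by some $\bar g \in G_\mu(A, I)$ with $X \cdot \bar g = f(Y')$. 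The key point is then to lift $\bar g$: by Lemma \ref{Lemma:Gmu surjective nilpotent thickening}, the map $f \colon G_\mu(A', I') \to G_\mu(A, I)$ is surjective (this is where $J$-adic completeness of $A'$ enters), so we can pick $g \in G_\mu(A', I')$ with $f(g) = \bar g$. Setting $Y := Y' \cdot g^{-1}$ (or the appropriate twist so that the action conventions work out), we get $f(Y) = f(Y')\cdot \bar g^{-1} = X$, so $Y \in \mathbf{Def}(X)_{(A', I')}$, and $g$ itself induces an isomorphism $(\mathcal{Q}_{Y'}, h) \cong (\mathcal{Q}_Y, 1)$ of deformations because, by construction, $f^*(g)$ intertwines $h$ with the trivial structure isomorphism. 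Thus $(\mathscr{Q}, h)$ is isomorphic to the image of $Y$.

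The main obstacle I anticipate is bookkeeping around the action conventions — keeping track of whether one twists by $g$ or $g^{-1}$, and verifying that the chosen lift makes the structure isomorphisms match on the nose rather than merely up to a further automorphism. The essential inputs (banality of deformations from Lemma \ref{Lemma:deformation of banal is banal}, surjectivity of $f$ on display groups from Lemma \ref{Lemma:Gmu surjective nilpotent thickening}, and the explicit groupoid description in Remark \ref{Remark:quotient groupoid banal G-displays}) are all available, so the argument is essentially a diagram chase; the completeness hypotheses on $A'$ and $A'/I'$ are used exactly once each and are not needed beyond invoking those two lemmas.
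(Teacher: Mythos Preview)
Your proof is correct and follows the same approach as the paper, which simply declares full faithfulness ``clear'' and then proves essential surjectivity by exactly your argument (identify the deformation with some $\mathcal{Q}_{Y'}$, lift the structure isomorphism via Lemma~\ref{Lemma:Gmu surjective nilpotent thickening}, and twist). One harmless redundancy: in your full-faithfulness paragraph you invoke Lemma~\ref{Lemma:deformation of banal is banal}, but $\mathcal{Q}_Y$ and $\mathcal{Q}_Z$ are already banal by construction, so that lemma is only needed for essential surjectivity.
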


\begin{proof}
For the map $f \colon G(A')_{I'} \to G(A)_I$, see Remark \ref{Remark:base change of G mu displays}.
It is clear that $Y \mapsto (\mathcal{Q}_Y, 1)$ gives a fully faithful functor.
We shall prove that this functor is essentially surjective.
Let
$
(\mathscr{Q}, h) \in \mathbf{Def}(\mathcal{Q}_X)_{(A', I')}.
$
It follows from Lemma \ref{Lemma:deformation of banal is banal} that $\mathscr{Q}$ is banal.
We can identify $\mathscr{Q}$ with $\mathcal{Q}_Y$ for some $Y \in G(A')_{I'}$, and then $h$ is an element of $G_{\mu}(A, I)$ satisfying
the equality
$
X\cdot h=f(Y).
$
By Lemma \ref{Lemma:Gmu surjective nilpotent thickening}, there exists an element $h' \in G_\mu(A', I')$ with $f(h')=h$.
We define
$Y':=Y \cdot (h')^{-1} \in G(A')_{I'}$.
Then
$Y' \in \mathbf{Def}(X)_{(A', I')}$
and
$h'$ induces
$
(\mathcal{Q}_Y, h) \overset{\sim}{\to} (\mathcal{Q}_{Y'}, 1).
$
\end{proof}

For later use, we also record the following result.

\begin{prop}\label{Proposition:limit of G displays}
Let $(A, I)$ be an orientable and bounded $\O_E$-prism over $\O$.
Let $J \subset A$ be an ideal satisfying the following conditions:
\begin{enumerate}
    \item[(a)] $A$ is $J$-adically complete.
    \item[(b)] $J$ is stable under $\delta_E$. In particular $A/J^m$ admits a $\delta_E$-structure that is compatible with the one on $A$ for every $m \geq 1$.
    \item[(c)] For every $m \geq 1$, the pair
    $(A/J^m, I)$ is an orientable and bounded $\O_E$-prism over $\O$.
    Here we abuse notation and denote the image of $I$ in $A/J^m$ by the same symbol.
\end{enumerate}
Then the following assertions hold:
\begin{enumerate}
    \item We have the following equivalence of groupoids
\[
G\mathchar`-\mathrm{Disp}_\mu(A, I)_{\mathrm{banal}} \overset{\sim}{\to} {2-\varprojlim}_{m} G\mathchar`-\mathrm{Disp}_\mu(A/J^m, I)_{\mathrm{banal}}.
\]
    \item We further assume that, for any $G$-$\mu$-display $\mathcal{Q}$ over $(A, I)$ 
    (resp.\ $(A/J, I)$), there is a finite \'etale covering
    $A \to B$ such that the base change of $\mathcal{Q}$ to $(B, IB)$ 
    (resp.\ $(B/JB, I(B/JB))$)
    is banal.
    Then we have
\[
G\mathchar`-\mathrm{Disp}_\mu(A, I) \overset{\sim}{\to} {2-\varprojlim}_{m} G\mathchar`-\mathrm{Disp}_\mu(A/J^m, I).
\]
\end{enumerate}
\end{prop}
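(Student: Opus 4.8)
The plan is to reduce everything to a statement about the display groups, torsors, and the set $G(A)_I$, and then invoke the standard fact that sheaves of sets and groups on the étale site behave well under the $J$-adic limit together with the hypothesis that the relevant schemes are smooth (hence their torsors over the various $A/J^m$ glue). For part (1), I would first note that, under hypotheses (a)--(c), the map $A \to A/J^m$ is a surjection of $\delta_E$-rings making $(A/J^m, I)$ an orientable bounded $\O_E$-prism, so Lemma \ref{Lemma:Gmu surjective nilpotent thickening} applies to each $(A, I) \to (A/J^m, I)$ (note $A$ is $J$-adically complete, hence $J^m$-adically complete): the transition maps $G_\mu(A, I) \to G_\mu(A/J^m, I)$ and $G_\mu(A/J^{m+1}, I) \to G_\mu(A/J^m, I)$ are surjective. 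Using the bijection $U^{-}_\mu(A)\cap G_\mu(A,I)) \times P_\mu(A) \xrightarrow{\sim} G_\mu(A,I)$ from Proposition \ref{Proposition:BB isomorphism}(2), the smoothness of $P_\mu$ and $U^{-}_\mu$, and the $J$-adic completeness of $A$, one gets the bijection
\[
G_\mu(A, I) \xrightarrow{\sim} \varprojlim_m G_\mu(A/J^m, I),
\]
and similarly $G(A) \xrightarrow{\sim} \varprojlim_m G(A/J^m)$ since $G$ is smooth affine. For the set $G(A)_I$: choosing an orientation $d \in I$ (which maps to a generator of $I$ in each $A/J^m$), one has $G(A)_I \cong G(A)_d = G(A)$ compatibly with $m$, so $G(A)_I \xrightarrow{\sim} \varprojlim_m G(A/J^m, I)_I$ as well, equivariantly for the $G_\mu$-actions.

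Given these compatibilities, part (1) follows from Remark \ref{Remark:quotient groupoid banal G-displays}: the groupoid $G\text{-}\mathrm{Disp}_\mu(A, I)_{\mathrm{banal}}$ is equivalent to the quotient groupoid $[G(A)_I/G_\mu(A, I)]$, and likewise over each $A/J^m$. The claim then reduces to showing that the natural functor
\[
[G(A)_I/G_\mu(A, I)] \longrightarrow {2\text{-}\varprojlim}_m\, [G(A/J^m, I)_I/G_\mu(A/J^m, I)]
\]
is an equivalence. Full faithfulness is the statement that $\Hom$-sets are computed by the limit, which reduces to $G_\mu(A, I) \xrightarrow{\sim} \varprojlim_m G_\mu(A/J^m, I)$ together with the fact that an element of $G(A)_I$ is determined by its images (injectivity), both established above. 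Essential surjectivity: given a compatible system $(X_m)_m$ with $X_m \in G(A/J^m, I)_I$ and isomorphisms $g_m \colon X_{m+1}|_{A/J^m} \cdot g_m = X_m$, one uses surjectivity of the transition maps on $G_\mu$ to successively adjust the $X_m$ by elements of $G_\mu(A/J^{m+1}, I)$ so that the system becomes strictly compatible (replace $X_{m+1}$ by $X_{m+1}\cdot \widetilde{g}_m^{-1}$ for a lift $\widetilde{g}_m$), and then lift the resulting strictly compatible system to an element $X \in G(A)_I = \varprojlim_m G(A/J^m, I)_I$. This is the standard Mittag-Leffler argument; the surjectivity of the $G_\mu$-transition maps is exactly what makes the $\varprojlim^1$ obstruction vanish.

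For part (2), the idea is to descend from the banal case by the étale-local triviality hypothesis. Given $\mathcal{Q} \in G\text{-}\mathrm{Disp}_\mu(A, I)$, choose a finite étale covering $A \to B$ trivializing $\mathcal{Q}$; then $A/J^m \to B/J^m B$ is finite étale (and $(B/J^mB, I)$ is an orientable bounded $\O_E$-prism, with $B/J^mB = (A/J^m) \otimes_A B$), so one has the banal statement over the simplicial object associated to $B/B\otimes_A B/\cdots$ and its reductions mod $J^m$. One then applies part (1) levelwise in the Čech nerve and uses that $G$-$\mu$-displays satisfy flat (in particular finite étale) descent — as recorded in \cite{Ito-K23} — together with the compatibility $B/J^mB \otimes_{A/J^m} B/J^mB = (B\otimes_A B)/J^m(B\otimes_A B)$, to conclude that the general (non-banal) $G$-$\mu$-displays over $(A,I)$ are the limit of those over the $(A/J^m, I)$. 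Concretely: full faithfulness follows from part (1) applied to $B$ and $B\otimes_A B$ after descent; essential surjectivity follows by taking a compatible system, trivializing it over a common $B$ (possible since the triviality hypothesis is imposed over both $A$ and $A/J$, and a trivializing cover mod $J$ spreads out to a finite étale cover of $A$ by the topological invariance of the étale site, trivializing everything by Lemma \ref{Lemma:deformation of banal is banal}-type completeness arguments), applying part (1) to produce a banal object over $(B,IB)$ together with descent datum, and descending.

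The main obstacle I anticipate is the essential-surjectivity step in part (1): one must promote a system that is compatible only up to coherent isomorphism to a strictly compatible one before taking the honest inverse limit $G(A)_I = \varprojlim_m G(A/J^m, I)_I$. This is where the surjectivity of the transition maps $G_\mu(A/J^{m+1}, I) \twoheadrightarrow G_\mu(A/J^m, I)$ (Lemma \ref{Lemma:Gmu surjective nilpotent thickening}) is indispensable — it is precisely the vanishing of the relevant $\varprojlim^1$ — and one must be careful that the adjustments at each stage are compatible with the already-chosen adjustments at lower stages, i.e. organize the induction correctly. In part (2), the subtlety is matching up the trivializing covers across all $m$ simultaneously, which is handled by the topological invariance of the finite étale site (a finite étale cover of $A/J$ lifts uniquely to one of $A$) so that a single finite étale $A$-algebra $B$ works for every $A/J^m$ at once.
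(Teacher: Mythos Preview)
Your proposal is correct and follows essentially the same approach as the paper: for part (1) you establish the inverse-limit identities $G(A)_I \simeq \varprojlim_m G(A/J^m)_I$ and $G_\mu(A,I) \simeq \varprojlim_m G_\mu(A/J^m,I)$ (the paper does the same, citing affineness of $G$ and Proposition~\ref{Proposition:BB isomorphism}(2) respectively), reduce to the quotient groupoids via Remark~\ref{Remark:quotient groupoid banal G-displays}, and handle essential surjectivity by the lifting-and-adjusting argument using surjectivity of the $G_\mu$ transition maps (the paper packages this step as an application of Proposition~\ref{Proposition:deformation of banal G displays}); for part (2) both reduce to the banal case by finite \'etale descent, with the paper noting that Lemma~\ref{Lemma:deformation of banal is banal} guarantees a single finite \'etale cover $A \to B$ trivializes all the $\mathcal{Q}_m$ simultaneously. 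One small remark: the hypothesis in (2) already provides the trivializing cover over $A$ (not just over $A/J$), so your appeal to topological invariance is unnecessary, though harmless.
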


\begin{proof}
(1)
We claim that
\begin{equation}\label{equation:inverse limit display group}
    G(A)_I \overset{\sim}{\to} {\varprojlim}_{m} G(A/J^m)_I \quad \text{and} \quad G_\mu(A, I)\overset{\sim}{\to}{\varprojlim}_{m} G_\mu(A/J^m, I).
\end{equation}
Indeed, the first equality is clear since $G$ is affine.
The second equality follows from Proposition \ref{Proposition:BB isomorphism} (2).

In order to prove (1), it suffices to show that
the natural functor
\[
[G(A)_I/G_\mu(A, I)] \to \mathcal{C}:={2-\varprojlim}_{m} [G(A/J^m)_I/G_\mu(A/J^m, I)]
\]
is an equivalence (cf.\ Remark \ref{Remark:quotient groupoid banal G-displays}).
The fully faithfulness follows from (\ref{equation:inverse limit display group}).
We shall prove that the functor is essentially surjective.
An object of
$\mathcal{C}$
can be identified with a family of objects
$\{ X_m \}_{m \geq 1}$, where $X_m \in G(A/J^m)_I$, together with 
a family of isomorphisms
$\{ g_m \}_{m \geq 1}$, where $g_m \in G_\mu(A/J^m, I)$ is such that
the equality
$X_m\cdot g_m=X_{m+1}$
holds
in $G(A/J^m)_I$.
Applying Proposition \ref{Proposition:deformation of banal G displays} to
the natural map
$p_m \colon (A/J^{m+1}, I) \to (A/J^{m}, I)$ repeatedly, we can find an element
$(X'_m)_{m \geq 1} \in \varprojlim_{m} G(A/J^m)_I$
such that 
the corresponding object of $\mathcal{C}$
is isomorphic to the object $\{ X_m \}_{m \geq 1}$.
Let
$X \in G(A)_I$
be
the element corresponding to $(X'_m)_{m \geq 1}$.
Then the image of $X$ under the above functor is isomorphic to the object $\{ X_m \}_{m \geq 1}$.

(2) An object of ${2-\varprojlim}_{m} G\mathchar`-\mathrm{Disp}_\mu(A/J^m, I)$ can be identified with
a family
$\{ \mathcal{Q}_m \}_{m \geq 1}$, where $\mathcal{Q}_m$ is a $G$-$\mu$-display over $(A/J^m, I)$,
together with isomorphisms
$p^*_m(\mathcal{Q}_{m+1}) \overset{\sim}{\to} \mathcal{Q}_{m}$ ($m \geq 1$).
Our assumption and Lemma \ref{Lemma:deformation of banal is banal} imply that
there is a finite \'etale covering
$A \to B$
such that for any $m \geq 1$,
the base change of $\mathcal{Q}_m$ to $(B/J^mB, I(B/J^mB))$ is banal.
We note that for a finite \'etale covering $A \to B$, the ideal $JB$ satisfies the above conditions (a), (b), (c).
Using these observations and finite \'etale descent for $G$-$\mu$-displays,
we see that (2) follows from (1).
\end{proof}

\begin{ex}\label{Example:banal over finite etale covering}
Let $(\mathfrak{S}_\O, (\mathcal{E}))$ be an $\O_E$-prism of Breuil--Kisin type as in Example \ref{Example:Breuil-Kisin type frame}, where $\mathfrak{S}_\O=\O[[t_1, \dotsc, t_n]]$.
Let $\mathcal{Q}$ be a $G$-$\mu$-display over $(\mathfrak{S}_\O, (\mathcal{E}))$.
By Proposition \ref{Proposition:G display with trivial Hodge filtration is banal}, there is a finite extension $\widetilde{k}$ of $k$ such that the base change of
$\mathcal{Q}$ to
$(\mathfrak{S}_{\widetilde{\O}}, (\mathcal{E}))$ is banal,
where $\widetilde{\O}:= W(\widetilde{k}) \otimes_{W(\F_q)} \O_E$ and $\mathfrak{S}_{\widetilde{\O}}:=\mathfrak{S}_\O \otimes_\O \widetilde{\O} = \widetilde{\O}[[t_1, \dotsc, t_n]]$.
Therefore, by Proposition \ref{Proposition:limit of G displays}, we have
\[
G\mathchar`-\mathrm{Disp}_\mu(\mathfrak{S}_\O, (\mathcal{E})) \overset{\sim}{\to} {2-\varprojlim}_{m} G\mathchar`-\mathrm{Disp}_\mu(\mathfrak{S}_{\O, m}, (\mathcal{E})),
\]
where $\mathfrak{S}_{\O, m}=\O[[t_1, \dotsc, t_n]]/(t_1, \dotsc, t_n)^m$.
\end{ex}

\subsection{Special nilpotent thickenings}\label{Subsection:special nilpotent thickening}

Let $f \colon (A', I') \to (A, I)$ be a map of orientable and bounded $\O_E$-prisms over $\O$
such that $f \colon A' \to A$ is surjective.
Let $J \subset A'$ be the kernel of $f$.
We assume that $J^n=0$ for some $n \geq 1$.
In this case, we say that $f$ is a \textit{nilpotent thickening}.
The functor
\[
(A', I')_\et \to (A, I)_\et, \quad B' \mapsto B
\]
where $B$ is the $(\pi, I)$-adic completion of $B' \otimes_{A'} A$, is an equivalence (see \cite[Lemma 2.5.9]{Ito-K23} for example).

\begin{lem}\label{Lemma:thickening etale morphism}
Let the notation be as above.
Then the induced homomorphism
$f_B \colon B' \to B$
is surjective, and we have $J^n_B=0$ for the kernel $J_B$ of $f_B$.  
If furthermore $\phi_{A'}(J) = 0$,
then $\phi_{B'}(J_B) = 0$.
\end{lem}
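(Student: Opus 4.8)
The plan is to reduce every assertion to a statement about the $(\pi,I)$-adic completion of the base change $B' \otimes_{A'} A$, using that $A' \to A$ is surjective with square-zero-ish kernel $J$ and that $B' \to B$ is obtained by a $(\pi,I)$-completely étale base change followed by completion. First I would observe that, since $J$ is nilpotent, $A' \to A$ is already a surjection with $\pi$-complete kernel, so there is no derived subtlety: $B' \otimes_{A'} A = B'/JB'$ as ordinary rings, and its $(\pi,I)$-adic completion is $B$. Because $B'$ is $(\pi,I)$-adically complete (being a bounded $\O_E$-prism, so $(\pi, I')$-complete, and $I'B' = IB$ under the identification) and $JB'$ is finitely generated — indeed $J^n = 0$ forces $JB'$ nilpotent, hence closed, hence $B'/JB'$ is already $(\pi,I)$-adically complete — we get $B = B'/JB'$ with no further completion needed. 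This immediately gives that $f_B \colon B' \to B$ is surjective with kernel $J_B = JB'$.

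For the assertion $J_B^n = 0$: since $J^n = 0$ in $A'$, we have $(JB')^n = J^n B' = 0$, so $J_B^n = 0$. The only point requiring a small argument is identifying the kernel $J_B$ of $f_B$ with $JB'$ rather than something larger; this follows from the flatness input, namely that $A' \to B'$ is $(\pi,I)$-completely flat (even étale), so that $B' \otimes_{A'}^{\mathbb{L}} A$ is concentrated in degree zero and equals $B'/JB'$, whence $\Ker(f_B) = JB'$ exactly. Here I would cite the equivalence $(A', I')_\et \overset{\sim}{\to} (A, I)_\et$ recalled just above the lemma, together with \cite[Lemma 2.5.9]{Ito-K23}, which is precisely the mechanism that transports the étale algebra $B$ over $A$ to the étale algebra $B'$ over $A'$ and identifies $B$ with $B'/JB'$.

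For the final clause: suppose $\phi_{A'}(J) = 0$. The $\delta_E$-structure (equivalently the Frobenius $\phi_{B'}$) on $B'$ is the unique one compatible with $\phi_{A'}$, so $\phi_{B'}$ restricts to $\phi_{A'}$ on the image of $A'$. Since $J_B = JB'$, every element of $J_B$ is a finite sum $\sum a_i j_i$ with $a_i \in B'$ and $j_i \in J$; applying the ring homomorphism $\phi_{B'}$ gives $\sum \phi_{B'}(a_i)\phi_{B'}(j_i)$, and $\phi_{B'}(j_i) = \phi_{A'}(j_i) = 0$ by hypothesis. Hence $\phi_{B'}(J_B) = 0$. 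I do not expect any genuine obstacle here; the only place to be careful is the identification $J_B = JB'$, i.e.\ ensuring the base change is underived, which is exactly what the $(\pi,I)$-complete flatness of $A' \to B'$ provides — so I would make that the one explicit step and treat the rest as formal.
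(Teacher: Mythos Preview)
Your argument has a genuine gap at the step where you claim $B = B'/JB'$, i.e.\ that $B'/JB'$ is already $(\pi,I')$-adically complete. The two justifications you offer do not hold: the implication ``$J^n=0 \Rightarrow JB'$ finitely generated'' is a non-sequitur (nothing in the setup forces $J$ to be finitely generated; the prisms in play, such as $W_{\O_E}(S^\flat)$, are typically non-noetherian), and ``nilpotent $\Rightarrow$ closed'' is false in non-noetherian complete rings. For a concrete counterexample, take $R = k[[t]]\oplus V$ with $V=\prod_{n\ge 1} k[[t]]$ made into a square-zero extension; then $R$ is $t$-adically complete, the ideal $K=\bigoplus_{n\ge 1} k[[t]]\subset V$ satisfies $K^2=0$, yet the element $(t,t^2,t^3,\dotsc)\in V$ lies in $\bigcap_m(K+t^mR)$ but not in $K$. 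So one cannot conclude $J_B=JB'$ in general, and everything you deduce from that identification (the exact form of the kernel, $J_B^n=0$, $\phi_{B'}(J_B)=0$) is left unproved.

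The paper's proof sidesteps this issue entirely. Instead of trying to control the completed tensor product in one shot, it works modulo $(\pi,I')^m$ for each $m$: there the map $A'/(\pi,I')^m \to B'/(\pi,I')^m$ is genuinely flat, so tensoring the short exact sequence $0 \to (J+(\pi,I')^m)/(\pi,I')^m \to A'/(\pi,I')^m \to A/(\pi,I)^m \to 0$ with $B'/(\pi,I')^m$ stays exact. Passing to the inverse limit over $m$ gives surjectivity of $f_B$ and the description $J_B \simeq \varprojlim_m (JB'+(\pi,I')^m)/(\pi,I')^m$, i.e.\ $J_B$ is the \emph{closure} of $JB'$. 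The properties $J_B^n=0$ and $\phi_{B'}(J_B)=0$ then follow because they hold for $JB'$ modulo each $(\pi,I')^m$ and survive the limit. If you want to repair your approach, this passage to finite levels is exactly the missing ingredient.
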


\begin{proof}
We have the following exact sequence for every $m \geq 1$
\[
0 \to (J+(\pi, I')^m)/(\pi, I')^m \to A'/(\pi, I')^m \to A/(\pi, I)^m \to 0.
\]
Since $A'/(\pi, I')^m \to B'/(\pi, I')^m$ is flat,
the induced sequence
\[
0 \to (JB'+(\pi, I')^m)/(\pi, I')^m \to B'/(\pi, I')^m \to B/(\pi, I)^m \to 0
\]
is also exact.
By taking the limit over $m$, we see that $B' \to B$ is surjective, and
\[
J_B \simeq {\varprojlim}_m (JB'+(\pi, I')^m)/(\pi, I')^m.
\]
Using this isomorphism, we see that $J^n_B=0$, and that $\phi_{B'}(J_B) = 0$ if $\phi_{A'}(J) =0$.
\end{proof}

\begin{defn}[Special nilpotent thickening]\label{Definition:special nilpotent thickening}
Let
$
f \colon (A', I') \to (A, I)
$
be a nilpotent thickening of orientable and bounded $\O_E$-prisms over $\O$.
Let $J \subset A'$ be the kernel of $f \colon A' \to A$.
We say that $f$ is a \textit{special nilpotent thickening} if
it satisfies the following two conditions:
\begin{enumerate}
    \item We have $\phi_{A'}(J) = 0$.
    \item Let $d \in I'$ be a generator.
    For any $(\pi, I')$-completely \'etale $A'$-algebra $B'$,
    the element $\phi_{B'}(d)$ is a nonzerodivisor in $B'$.
\end{enumerate}
\end{defn}

\begin{rem}\label{Remark:thickening etale}
It follows from Lemma \ref{Lemma:thickening etale morphism} that
if $f \colon (A', I') \to (A, I)$ is a
nilpotent thickening
(resp.\ a special nilpotent thickening), then for any $B' \in (A', I')_\et$ with corresponding $B \in (A, I)_\et$,
the induced map $(B', I'B') \to (B, IB)$ is also a nilpotent thickening
(resp.\ a special nilpotent thickening).
\end{rem}


\begin{ex}\label{Example:phi(d) non zero divisor:Breuil Kisin}
    If $(A, I)$ is either $(\mathfrak{S}_\O, (\mathcal{E}))$ or $(\mathfrak{S}_{\O, m}, (\mathcal{E}))$
    as in Example \ref{Example:Breuil-Kisin type frame}, then $(A, I)$ satisfies
    the condition (2) in Definition \ref{Definition:special nilpotent thickening}.
    Indeed, it is clear that $\phi(\mathcal{E})$ is a nonzerodivisor in $A$.
    Since $A$ is noetherian, any $(\pi, I)$-completely \'etale $A$-algebra $B$ is flat over $A$, which in turn implies that $\phi(\mathcal{E})$ is a nonzerodivisor in $B$.
\end{ex}

\begin{ex}\label{Example:phi(d) non zero divisor:perfectoid}
Let $(S, a^\flat)$ be a perfectoid pair over $\O$ (Definition \ref{Definition:perfectoid pair}).
The $\O_E$-prism
    $
    (W_{\O_E}(S^\flat)/[a^\flat]^m, I_S)
    $
    as in Proposition \ref{Proposition:perfectoid type} satisfies the condition (2) in Definition \ref{Definition:special nilpotent thickening}.
    Indeed, by Lemma \ref{Lemma:perfectoid ring etale morphism:torsion case}, it suffices to show that
    $\phi(\xi)$ is a nonzerodivisor in $W_{\O_E}(S^\flat)/[a^\flat]^m$.
    Since
    $
    \phi \colon W_{\O_E}(S^\flat)/[(a^\flat)^{1/q}]^m \to W_{\O_E}(S^\flat)/[a^\flat]^m
    $
    is bijective and $\xi$ is a nonzerodivisor in $W_{\O_E}(S^\flat)/[(a^\flat)^{1/q}]^m$ (by Proposition \ref{Proposition:perfectoid type}), the assertion follows.
\end{ex}

\begin{ex}\label{Example:special nilpotent thickening}
Let the notation be as in Example \ref{Example:phi(d) non zero divisor:Breuil Kisin} and Example \ref{Example:phi(d) non zero divisor:perfectoid}.
In this paper, we say that a nilpotent thickening
$f \colon (A', I') \to (A, I)$
is \textit{of Breuil--Kisin type} (resp.\ \textit{of perfectoid type})
if it is of the form
\begin{align*}
    (\mathfrak{S}_{\O, m+1}, (\mathcal{E})) &\to (\mathfrak{S}_{\O, m}, (\mathcal{E})) \\
    (\text{resp.\ } (W_{\O_E}(S^\flat)/[a^\flat]^{m+1}, I_S)
    &\to (W_{\O_E}(S^\flat)/[a^\flat]^{m}, I_S)).
\end{align*}
In either case $f$ is a special nilpotent thickening.
\end{ex}

Deformations of $G$-$\phi$-modules are defined in the usual way.
If $\mathcal{Q}$ is a $G$-$\mu$-display over $(A, I)$ and 
$\mathscr{Q}$
is a deformation of $\mathcal{Q}$
over $(A', I')$, then
$\mathscr{Q}_\phi$ is naturally a deformation of $\mathcal{Q}_\phi$ over $(A', I')$.
Here $\mathscr{Q}_\phi$ and $\mathcal{Q}_\phi$
are the underlying $G$-$\phi$-modules; see Definition \ref{Definition:underlying phi G torsor}.
The following proposition is the key ingredient in the construction of period maps introduced in Section \ref{Subsection:Period map} below.

\begin{prop}\label{Proposition:canonical isomorphism of deformations of phi G torsors}
Assume that $f \colon (A', I') \to (A, I)$ is a special nilpotent thickening.
Let
$\mathcal{Q}$
be a $G$-$\mu$-display over $(A, I)$.
Let $(\mathscr{Q}, h)$ and $(\mathscr{R}, i)$
be two deformations
of $\mathcal{Q}$ over $(A', I')$.
Then the following assertions hold:
\begin{enumerate}
    \item There exists a unique isomorphism
$
\psi \colon \mathscr{Q}_{\phi} \overset{\sim}{\to} \mathscr{R}_{\phi}
$
of deformations of $\mathcal{Q}_{\phi}$ over $(A', I')$.
    \item There is at most one isomorphism
    between two deformations $(\mathscr{Q}, h)$ and $(\mathscr{R}, i)$.
\end{enumerate}
\end{prop}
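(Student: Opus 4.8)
The plan is to reduce both assertions to an explicit computation in the banal case and then descend along the $(\pi, I')$-completely \'etale topology. The conceptual input is that, since $\phi_{A'}(J) = 0$, the Frobenius $\phi_{A'} \colon A' \to A'$ factors through $f \colon A' \to A$; hence for any element $g$ in the kernel of $G(A') \to G(A)$ one has $\phi(g) = 1$, which will trivialize every Frobenius twist that occurs below. Throughout I would use that automorphisms of a $G$-$\mu$-display, and isomorphisms between deformations of a $G$-$\phi$-module, form sheaves on $(A', I')^{\op}_\et$, so both statements may be checked after passing to a $(\pi, I')$-completely \'etale covering; moreover, over a covering that trivializes the underlying $G_\mu$-torsors the displays $\mathscr{Q}$ and $\mathscr{R}$ become banal, and then $\mathcal{Q}\simeq f^*\mathscr{Q}$ is banal as well.

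For assertion (1), after passing to such a covering I would apply Proposition \ref{Proposition:deformation of banal G displays} to write $\mathcal{Q} = \mathcal{Q}_X$, $\mathscr{Q} = \mathcal{Q}_Y$, $\mathscr{R} = \mathcal{Q}_Z$ with $Y, Z \in G(A')_{I'}$ satisfying $f(Y) = f(Z) = X$ and with $h, i$ the canonical isomorphisms. Then $\mathscr{Q}_\phi$ and $\mathscr{R}_\phi$ are the $G$-$\phi$-modules $(G_{A'}, x \mapsto Y_\phi x)$ and $(G_{A'}, x \mapsto Z_\phi x)$ over $(A',I')$, where $Y_\phi = Y_d\phi(\mu(d))$ and $Z_\phi = Z_d\phi(\mu(d))$ as in (\ref{equation:Xphi}). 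An isomorphism $\mathscr{Q}_\phi \to \mathscr{R}_\phi$ of $G$-$\phi$-modules is left translation by some $g \in G(A')$; compatibility with the deformation data forces $f(g) = 1$, and compatibility with the Frobenii over $A'[1/\phi(I')]$ reads $g Y_\phi = Z_\phi\,\phi(g)$. Since $f(g)=1$ gives $\phi(g)=1$, this becomes $g = Z_\phi Y_\phi^{-1} = Z_d Y_d^{-1}$ (the factor $\phi(\mu(d))$ cancels), which genuinely lies in $G(A')$, maps to $1$ in $G(A)$ because $f(Y_d) = f(Z_d) = X_{f(d)}$, and is independent of the generator $d \in I'$ since $Y_\phi$ and $Z_\phi$ are. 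Hence there is exactly one admissible $g$, i.e. exactly one $\psi$ over the covering; by this uniqueness the local isomorphisms satisfy the cocycle condition, glue to a global $\psi \colon \mathscr{Q}_\phi \overset{\sim}{\to} \mathscr{R}_\phi$ of deformations of $\mathcal{Q}_\phi$, and are globally unique since such isomorphisms form a separated presheaf.

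For assertion (2), given two isomorphisms $g, g'$ of deformations $(\mathscr{Q}, h) \to (\mathscr{R}, i)$, I would set $e := g^{-1} \circ g' \colon \mathscr{Q} \overset{\sim}{\to} \mathscr{Q}$, an automorphism of the $G$-$\mu$-display with $f^* e = \id_{\mathcal{Q}}$ (as $i$ is an isomorphism), and show $e = \id$. Passing to a covering on which $\mathscr{Q} = \mathcal{Q}_X$ (Remark \ref{Remark:quotient groupoid banal G-displays}), $e$ corresponds to an element $g_0 \in G_\mu(A', I')$ with $f(g_0) = 1$ and $X \cdot g_0 = X$; taking $d$-components this last relation reads $g_0^{-1} X_d\,\sigma_{\mu, d}(g_0) = X_d$ with $\sigma_{\mu, d}(g_0) = \phi(\mu(d) g_0 \mu(d)^{-1})$. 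Now $\mu(d) g_0 \mu(d)^{-1}$ lies in $G(A')$, and its image in $G(A[1/I])$ is $\mu(f(d))\,f(g_0)\,\mu(f(d))^{-1} = 1$ since $f(g_0)=1$; as $G(A) \hookrightarrow G(A[1/I])$, the image of $\mu(d)g_0\mu(d)^{-1}$ already lies in the kernel of $G(A') \to G(A)$, whence $\sigma_{\mu,d}(g_0) = \phi(\mu(d)g_0\mu(d)^{-1}) = 1$ because $\phi_{A'}$ factors through $f$. Therefore $g_0^{-1} X_d = X_d$, i.e. $g_0 = 1$, so $e = \id$ over the covering and hence everywhere.

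The genuinely easy content — that $\phi_{A'}(J) = 0$ kills the twist in $\sigma_{\mu, d}$ and makes $Y_\phi$ and $Z_\phi$ differ only by the known factor $\phi(\mu(d))$ — is immediate, so the only real work is the bookkeeping: choosing the banal covering, invoking Proposition \ref{Proposition:deformation of banal G displays}, keeping straight the conventions for the (right) torsor action, for $d$-components, and for the Frobenius formula (\ref{equation:Xphi}), and verifying that the explicit local isomorphism descends. I do not expect to need condition (2) in Definition \ref{Definition:special nilpotent thickening} for this particular proposition beyond the fact that it is part of the standing hypotheses; condition (1) there, namely $\phi_{A'}(J) = 0$, is what does all the work.
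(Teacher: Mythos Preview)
Your approach is essentially the same as the paper's for part~(1), and your direct argument for part~(2) is correct (the paper instead deduces~(2) from~(1) via faithfulness of $\mathcal{Q}\mapsto\mathcal{Q}_\phi$). However, there is a genuine gap in your uniqueness argument for~(1), and your closing remark that condition~(2) of Definition~\ref{Definition:special nilpotent thickening} is not needed is precisely where it shows.

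In the banal situation you correctly arrive at: any candidate $g\in G(A')$ with $f(g)=1$ satisfies $g=Z_\phi Y_\phi^{-1}=Z_d Y_d^{-1}$ \emph{in} $G(A'[1/\phi(I')])$. You then write ``Hence there is exactly one admissible $g$''. But the isomorphism $\psi$ of $G$-$\phi$-modules is an isomorphism of $G$-torsors over $\Spec A'$, so $g$ is an element of $G(A')$, not merely of $G(A'[1/\phi(I')])$. To conclude that $g$ equals $Z_d Y_d^{-1}$ \emph{in} $G(A')$ you must know that $G(A')\to G(A'[1/\phi(I')])$ is injective. This is exactly what condition~(2) guarantees: since $\phi_{B'}(d)$ is a nonzerodivisor in every $B'\in(A',I')_\et$, the localization map $A'\to A'[1/\phi(I')]$ is injective, and (as $G$ is affine) so is the induced map on points. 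The paper makes this step explicit. Without condition~(2) there could a~priori be distinct elements of $G(A')$ with the same image in $G(A'[1/\phi(I')])$, both lying in the kernel of $f$, and your argument would not separate them.

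So: your existence proof and your treatment of~(2) are fine, but you should insert the injectivity of $G(A')\to G(A'[1/\phi(I')])$ (using condition~(2)) into the uniqueness step of~(1), and drop the claim that this condition is unused.
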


\begin{proof}
(1)
The problem is $(\pi, I)$-completely \'etale local on $A$ by
$(\pi, I)$-completely \'etale descent
for $G$-$\phi$-modules (which follows from the same argument as in \cite[Remark 5.1.3]{Ito-K23}) and the uniqueness assertion.
Thus we may assume that
$\mathscr{Q}$, $\mathscr{R}$, and $\mathcal{Q}$
are banal.
We can identify $\mathcal{Q}$ with $\mathcal{Q}_X$ for some $X \in G(A)_I$.
By Proposition \ref{Proposition:deformation of banal G displays},
we may assume that
$(\mathscr{Q}, h)$ and $(\mathscr{R}, i)$
are of the form
$(\mathcal{Q}_Y, 1)$ and $(\mathcal{Q}_Z, 1)$, respectively, for
some
$Y, Z \in \mathbf{Def}(X)_{(A', I')}$.
It suffices to prove that
there exists a unique element $\psi \in G(A')$ such that $f(\psi)=1$ in $G(A)$ and
\begin{equation}\label{equation:deformation equalities}
    \psi^{-1}Z_\phi \phi(\psi) =Y_\phi \quad \text{in} \quad G(A'[1/\phi(I')]).
\end{equation}
Here $Y_\phi, Z_\phi \in G(A'[1/\phi(I')])$ are the elements defined in (\ref{equation:Xphi}).

We fix a generator $d \in I'$.
We shall show that the element
\[
\psi:=Z_d(Y_d)^{-1} \in G(A')
\]
satisfies $f(\psi)=1$ and
(\ref{equation:deformation equalities}).
Indeed, since $X=f(Y)=f(Z)$,
we have $f(\psi)=1$.
This implies that $f(\mu(d)\psi\mu(d)^{-1})=1$.
By our assumption that $\phi_{A'}(J)=0$ for the kernel $J \subset A'$ of $f \colon A' \to A$,
it then follows that $\phi(\mu(d)\psi\mu(d)^{-1})=1$, whence (\ref{equation:deformation equalities}) holds.

It remains to show the uniqueness of $\psi$.
Let $\psi \in G(A')$ be an element satisfying $f(\psi)=1$ and
(\ref{equation:deformation equalities}).
As explained above, the equality $f(\psi)=1$ implies $\phi(\mu(d)\psi\mu(d)^{-1})=1$.
Then it follows from (\ref{equation:deformation equalities}) that $\psi=Z_d(Y_d)^{-1}$ in $G(A'[1/\phi(I')])$.
Since the natural homomorphism $G(A') \to G(A'[1/\phi(I')])$ is injective by the condition (2) in Definition \ref{Definition:special nilpotent thickening}, we conclude that $\psi=Z_d(Y_d)^{-1}$ in $G(A')$.

(2)
We note that the functor
$\mathcal{Q} \mapsto \mathcal{Q}_{\phi}$
from
$G\mathchar`-\mathrm{Disp}_\mu(A, I)$
to the groupoid of $G$-$\phi$-modules over $(A, I)$ is faithful
(since the functor
from
the groupoid of $G_{\mu, A, I}$-torsors to that of $G_{\Prism, A}$-torsors
obtained by pushout along the inclusion 
$\tau \colon G_{\mu, A, I} \hookrightarrow G_{\Prism, A}$
is faithful).
Then (2) follows from the uniqueness assertion in (1).
\end{proof}

\begin{rem}\label{Remark:isomorphism is unique for deformations}
Let the notation be as in Proposition \ref{Proposition:deformation of banal G displays}.
Let $Y, Z \in \mathbf{Def}(X)_{(A', I')}$.
For the deformations
$(\mathcal{Q}_Y, 1)$
and
$(\mathcal{Q}_Z, 1)$
of the $G$-$\mu$-display
$\mathcal{Q}_X$,
the proof of Proposition \ref{Proposition:canonical isomorphism of deformations of phi G torsors} shows that for any generator $d \in I'$,
\[
(\mathcal{Q}_Y, 1) \simeq (\mathcal{Q}_Z, 1) \quad \text{if and only if} \quad Z_d(Y_d)^{-1} \in G_{\mu}(A', I').
\]
If this is the case, the unique isomorphism
$(\mathcal{Q}_Y, 1) \overset{\sim}{\to} (\mathcal{Q}_Z, 1)$
is given by $Z_d(Y_d)^{-1}$.
\end{rem}

\begin{cor}\label{Corollary:deformations form a sheaf}
    Assume that $f \colon (A', I') \to (A, I)$ is a special nilpotent thickening.
    Let
    $\mathcal{Q}$
    be a $G$-$\mu$-display over $(A, I)$.
    For each $B' \in (A', I')_\et$ with corresponding $B \in (A, I)_\et$,
    we denote the base change of
    $\mathcal{Q}$
    to $(B, IB)$ by
    $\mathcal{Q}_{(B, IB)}$.
Then the functor
\[
\underline{\Def}(\mathcal{Q}) \colon (A', I')_\et \to \mathrm{Set}, \quad B' \mapsto \underline{\Def}(\mathcal{Q})(B')=\Def(\mathcal{Q}_{(B, IB)})_{(B', I'B')}
\]
forms a sheaf with respect to the $(\pi, I')$-completely \'etale topology.
\end{cor}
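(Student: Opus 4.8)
The plan is to deduce the sheaf property from two inputs: $(\pi, I')$-completely \'etale descent for $G$-$\mu$-displays and their morphisms (so that the presheaf of groupoids $B' \mapsto \mathbf{Def}(\mathcal{Q}_{(B, IB)})_{(B', I'B')}$ is a stack on $(A', I')^{\op}_\et$), and the rigidity statement of Proposition \ref{Proposition:canonical isomorphism of deformations of phi G torsors}(2), which says that between any two deformations over a fixed prism there is at most one isomorphism. The first input is essentially built into the definition of $G$-$\mu$-displays as torsors equipped with an equivariant map of sheaves on a site, and is already used (for $G$-$\phi$-modules) in the proof of Proposition \ref{Proposition:canonical isomorphism of deformations of phi G torsors}. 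The second input makes all coherence data automatic, so that the stack refining $\underline{\Def}(\mathcal{Q})$ has rigid fibers and is therefore the same datum as the presheaf of sets $\underline{\Def}(\mathcal{Q})$ itself; the corollary then drops out formally.

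In detail, I would first reduce to single-map covers: the $(\pi, I')$-completely \'etale topology on $(A', I')^{\op}_\et$ is generated by the coverings given by a single $(\pi, I'B')$-completely \'etale and $(\pi, I'B')$-completely faithfully flat map $B' \to C'$, so it suffices to verify that
\[
\underline{\Def}(\mathcal{Q})(B') \to \underline{\Def}(\mathcal{Q})(C') \rightrightarrows \underline{\Def}(\mathcal{Q})(C' \widehat{\otimes}_{B'} C')
\]
is an equalizer for every such cover, where $C' \widehat{\otimes}_{B'} C'$ denotes the $(\pi, I')$-completed tensor product, an object of $(A', I')_\et$. By Remark \ref{Remark:thickening etale}, for each $B' \in (A', I')_\et$ with corresponding $B \in (A, I)_\et$ the induced map $(B', I'B') \to (B, IB)$ is again a special nilpotent thickening, so Proposition \ref{Proposition:canonical isomorphism of deformations of phi G torsors} applies verbatim over $(B', I'B')$; in particular every object of $\mathbf{Def}(\mathcal{Q}_{(B, IB)})_{(B', I'B')}$ has trivial automorphism group, and passing to isomorphism classes identifies this groupoid with the set $\underline{\Def}(\mathcal{Q})(B')$.

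For exactness at the middle term (gluing), suppose we are given $\mathscr{Q}_{C'} \in \mathbf{Def}(\mathcal{Q}_{(C, IC)})_{(C', I'C')}$ whose two pullbacks to $C' \widehat{\otimes}_{B'} C'$ are isomorphic as deformations. The isomorphism witnessing this is unique by Proposition \ref{Proposition:canonical isomorphism of deformations of phi G torsors}(2), and the cocycle identity over the triple overlap is then automatic, being an equality of two isomorphisms between the same pair of deformations. Hence we have an honest descent datum for the stack of $G$-$\mu$-displays: since a $G$-$\mu$-display over $(B', I'B')$ is by Definition \ref{Definition:G mu display over oriented prisms} a $G_{\mu, B', I'B'}$-torsor together with an equivariant map of sheaves on $(B', I'B')^{\op}_\et$, and torsors and equivariant maps of sheaves glue, the underlying $G$-$\mu$-display of $\mathscr{Q}_{C'}$ descends to some $\mathscr{Q}_{B'}$ over $(B', I'B')$; the identification with $\mathcal{Q}_{(B, IB)}$ descends as well, using that $\mathcal{Q}_{(C, IC)}$ is the image of $\mathcal{Q}_{(B, IB)}$ under the equivalence $(A', I')_\et \overset{\sim}{\to} (A, I)_\et$ together with descent for isomorphisms of $G$-$\mu$-displays. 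This produces an element of $\mathbf{Def}(\mathcal{Q}_{(B, IB)})_{(B', I'B')}$ mapping to $\mathscr{Q}_{C'}$. For injectivity of the first map (separatedness), if $\mathscr{Q}_1, \mathscr{Q}_2 \in \mathbf{Def}(\mathcal{Q}_{(B, IB)})_{(B', I'B')}$ pull back to isomorphic deformations over $(C', I'C')$, the isomorphism over $C'$ is again unique, so its two pullbacks to $C' \widehat{\otimes}_{B'} C'$ agree and it descends to an isomorphism $\mathscr{Q}_1 \overset{\sim}{\to} \mathscr{Q}_2$ over $(B', I'B')$.

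I expect the only nonformal point to be the invocation of $(\pi, I')$-completely \'etale descent for $G$-$\mu$-displays and their isomorphisms, which is available by construction; once Proposition \ref{Proposition:canonical isomorphism of deformations of phi G torsors} is in hand, the verification of the cocycle condition --- normally the delicate part of a descent argument --- becomes vacuous, and the remaining work is the routine bookkeeping of the base point $\mathcal{Q}$ under the equivalence $(A', I')_\et \overset{\sim}{\to} (A, I)_\et$.
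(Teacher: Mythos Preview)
Your proposal is correct and follows the same approach as the paper, which simply records that the corollary is an immediate consequence of Proposition \ref{Proposition:canonical isomorphism of deformations of phi G torsors}(2) together with $(\pi, I)$-completely \'etale descent for $G$-$\mu$-displays. You have spelled out in full the formal argument that the paper leaves implicit: rigidity of deformations makes the cocycle condition automatic, so the stack of deformations is equivalent to its underlying presheaf of isomorphism classes, and descent for $G$-$\mu$-displays then yields the sheaf property.
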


\begin{proof}
    This is an immediate consequence of Proposition \ref{Proposition:canonical isomorphism of deformations of phi G torsors} (2) and $(\pi, I)$-completely \'etale descent for $G$-$\mu$-displays.
\end{proof}

\begin{cor}\label{Corollary:Uniqueness of isomorphisms between deformations:BK and Perfd case}
Let the notation be as in Example \ref{Example:phi(d) non zero divisor:Breuil Kisin} and Example \ref{Example:phi(d) non zero divisor:perfectoid}.
\begin{enumerate}
    \item Let $\mathcal{Q}$ be a $G$-$\mu$-display over $(\O, (\pi))=(\mathfrak{S}_{\O, 1}, (\mathcal{E}))$.
    Let $m \geq 1$ be an integer.
    Any deformation of $\mathcal{Q}$ over
$(\mathfrak{S}_{\O, m}, (\mathcal{E}))$
has no nontrivial automorphisms.
    Moreover, any deformation of $\mathcal{Q}$ over
$(\mathfrak{S}_\O, (\mathcal{E}))$
has no nontrivial automorphisms.
    \item 
    Let $m \geq 1$ be an integer.
    For a $G$-$\mu$-display $\mathcal{Q}$ over $(W_{\O_E}(S^\flat)/[a^\flat], I_S)$, any deformation of $\mathcal{Q}$ over $(W_{\O_E}(S^\flat)/[a^\flat]^m, I_S)$ has no nontrivial automorphisms.
\end{enumerate}
\end{cor}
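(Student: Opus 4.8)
The plan is to derive both parts from the uniqueness statement in Proposition~\ref{Proposition:canonical isomorphism of deformations of phi G torsors}(2) by induction on $m$. The key point is that the nilpotent thickenings of Breuil--Kisin type $(\mathfrak{S}_{\O, m+1}, (\mathcal{E})) \to (\mathfrak{S}_{\O, m}, (\mathcal{E}))$ and of perfectoid type $(W_{\O_E}(S^\flat)/[a^\flat]^{m+1}, I_S) \to (W_{\O_E}(S^\flat)/[a^\flat]^{m}, I_S)$ are \emph{special} nilpotent thickenings, as recorded in Example~\ref{Example:special nilpotent thickening} (using Example~\ref{Example:phi(d) non zero divisor:Breuil Kisin} and Example~\ref{Example:phi(d) non zero divisor:perfectoid}), whereas the full structure map down to $(\O, (\pi))$ need not be special in general: the Frobenius $\phi(t_i) = t_i^q$ annihilates the kernel of the Breuil--Kisin-type map only because $qm \geq m+1$, and similarly one step at a time in the perfectoid case. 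This is precisely why the argument must be organized as a step-by-step induction rather than a single application of the proposition.

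For part (1), I would first dispose of $m = 1$: a deformation of $\mathcal{Q}$ over $(\mathfrak{S}_{\O, 1}, (\mathcal{E})) = (\O, (\pi))$ is a $G$-$\mu$-display $\mathscr{Q}$ equipped with an isomorphism $h \colon \mathscr{Q} \overset{\sim}{\to} \mathcal{Q}$ (here the structure map is the identity), so any automorphism $g$ of $(\mathscr{Q}, h)$ satisfies $h \circ g = h$, hence $g = \mathrm{id}$. For the inductive step, assume the claim over $(\mathfrak{S}_{\O, m}, (\mathcal{E}))$, let $f \colon (\mathfrak{S}_{\O, m+1}, (\mathcal{E})) \to (\mathfrak{S}_{\O, m}, (\mathcal{E}))$ be the projection, and let $(\mathscr{Q}, h)$ be a deformation of $\mathcal{Q}$ over $(\mathfrak{S}_{\O, m+1}, (\mathcal{E}))$ with an automorphism $g$. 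Since the composite of $f$ with $(\mathfrak{S}_{\O, m}, (\mathcal{E})) \to (\O, (\pi))$ is the structure map of $(\mathfrak{S}_{\O, m+1}, (\mathcal{E}))$, the display $\mathscr{Q}' := f^*\mathscr{Q}$ is naturally a deformation of $\mathcal{Q}$ over $(\mathfrak{S}_{\O, m}, (\mathcal{E}))$ and $f^*g$ is one of its automorphisms, so $f^*g = \mathrm{id}$ by the inductive hypothesis. Therefore $g$ is an isomorphism from the deformation $(\mathscr{Q}, \mathrm{id})$ of the $G$-$\mu$-display $\mathscr{Q}'$ over $(\mathfrak{S}_{\O, m}, (\mathcal{E}))$ to itself; as $f$ is a special nilpotent thickening, Proposition~\ref{Proposition:canonical isomorphism of deformations of phi G torsors}(2) forces $g = \mathrm{id}_{\mathscr{Q}}$.

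For the ``moreover'' assertion over $(\mathfrak{S}_\O, (\mathcal{E}))$, I would invoke Example~\ref{Example:banal over finite etale covering}: the restriction functor from the groupoid of $G$-$\mu$-displays over $(\mathfrak{S}_\O, (\mathcal{E}))$ to the $2$-limit over $m$ of those over $(\mathfrak{S}_{\O, m}, (\mathcal{E}))$ is an equivalence, hence fully faithful, so an automorphism of a deformation over $(\mathfrak{S}_\O, (\mathcal{E}))$ corresponds to a compatible family of automorphisms of its reductions, each of which is trivial by the first part; thus the automorphism is trivial. Finally, part (2) is proved by exactly the same induction as the first half of part (1), with $(\mathfrak{S}_{\O, m}, (\mathcal{E}))$ replaced by $(W_{\O_E}(S^\flat)/[a^\flat]^{m}, I_S)$ everywhere and the base case $m = 1$, over $(W_{\O_E}(S^\flat)/[a^\flat], I_S)$ itself, again trivial. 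I do not anticipate any real difficulty: the entire content sits in Proposition~\ref{Proposition:canonical isomorphism of deformations of phi G torsors}(2), and the only thing requiring care is keeping track of the identifications with $\mathcal{Q}$ through the successive base changes, which is exactly what dictates the inductive structure.
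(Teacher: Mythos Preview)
Your proposal is correct and follows essentially the same approach as the paper. The paper's proof is terser---it says ``applying Proposition~\ref{Proposition:canonical isomorphism of deformations of phi G torsors}(2) to special nilpotent thickenings $(\mathfrak{S}_{\O, l+1}, (\mathcal{E})) \to (\mathfrak{S}_{\O, l}, (\mathcal{E}))$ $(1 \leq l \leq m-1)$ repeatedly'' and invokes Example~\ref{Example:banal over finite etale covering} for the ``moreover'' part---but your explicit induction is exactly what ``repeatedly'' means, and your treatment of the base case and the passage to the limit over $m$ matches the paper's intent.
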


\begin{proof}
(1)
By Example \ref{Example:banal over finite etale covering}, it suffices to prove the first assertion.
This follows by applying Proposition \ref{Proposition:canonical isomorphism of deformations of phi G torsors} (2) to special nilpotent thickenings 
$(\mathfrak{S}_{\O, l+1}, (\mathcal{E})) \to (\mathfrak{S}_{\O, l}, (\mathcal{E}))$
$(1 \leq l \leq m-1)$ repeatedly.

(2) This also follows from Proposition \ref{Proposition:canonical isomorphism of deformations of phi G torsors} (2).
\end{proof}

\subsection{Period maps}\label{Subsection:Period map}

Let $f \colon (A', I') \to (A, I)$ be a
special nilpotent thickening
of orientable and bounded $\O_E$-prisms over $\O$.
Let
$\mathcal{Q}$
be a $G$-$\mu$-display over $(A, I)$.
Recall the Hodge filtration
$P(\mathcal{Q})_{A/I} \to \mathcal{Q}_{A/I}$
from Section \ref{Subsection:Hodge filtrations and G-torsors}.

\begin{defn}\label{Definition:lift of Hodge filtration}
Let $\mathscr{Q}$ be a $G$-torsor over $\Spec A'/I'$ with an isomorphism
\[
f^*\mathscr{Q}:=\mathscr{Q} \times_{\Spec A'/I'} \Spec A/I \overset{\sim}{\to} \mathcal{Q}_{A/I}
\]
of $G$-torsors over $\Spec A/I$.
A \textit{lift} of the Hodge filtration
$P(\mathcal{Q})_{A/I} \to \mathcal{Q}_{A/I}$
in $\mathscr{Q}$
is
a pair
$(\mathscr{P}, \iota)$
of
a $P_\mu$-torsor
$\mathscr{P}$
over $\Spec A'/I'$
and
a $P_\mu$-equivariant morphism
$\iota \colon \mathscr{P} \to \mathscr{Q}$
such that the isomorphism 
$f^*\mathscr{Q} \overset{\sim}{\to} \mathcal{Q}_{A/I}$
restricts to an isomorphism
$f^*\mathscr{P} \overset{\sim}{\to} P(\mathcal{Q})_{A/I}$.
\end{defn}

There is an obvious notion of isomorphism of lifts of the Hodge filtration.
We note that there is at most one isomorphism between two lifts.
Let
\[
\Lift(P(\mathcal{Q})_{A/I}, \mathscr{Q})
\]
be the set of isomorphism classes of lifts of the Hodge filtration
$P(\mathcal{Q})_{A/I} \to \mathcal{Q}_{A/I}$
in $\mathscr{Q}$.

\begin{ex}\label{Example:lift of Hodge filtration}
For a deformation
$\mathscr{Q}$
of $\mathcal{Q}$ over $(A', I')$,
the pair
\[
(P(\mathscr{Q})_{A'/I'}, P(\mathscr{Q})_{A'/I'} \to \mathscr{Q}_{A'/I'})
\]
is a lift of the Hodge filtration $P(\mathcal{Q})_{A/I}$ in $\mathscr{Q}_{A'/I'}$.
\end{ex}

\begin{defn}[{Period map}]\label{Definition:period map}
Let
$\mathscr{Q}$
be a deformation
of $\mathcal{Q}$ over $(A', I')$.
We define a map of sets
\[
\Per_{\mathscr{Q}} \colon \Def(\mathcal{Q})_{(A', I')} \to
\Lift(P(\mathcal{Q})_{A/I}, \mathscr{Q}_{A'/I'})
\]
as follows.
Let $\mathscr{R} \in \Def(\mathcal{Q})_{(A', I')}$
be a deformation of $\mathcal{Q}$ over $(A', I')$.
By Proposition \ref{Proposition:canonical isomorphism of deformations of phi G torsors}, there exists a unique isomorphism
$
\psi \colon \mathscr{R}_{\phi} \overset{\sim}{\to} \mathscr{Q}_{\phi}
$
of deformations of the underlying $G$-$\phi$-module $\mathcal{Q}_{\phi}$.
In particular $\psi$ induces an isomorphism
$\mathscr{R}_{A'/I'} \overset{\sim}{\to} \mathscr{Q}_{A'/I'}$ of
$G$-torsors over $\Spec A'/I'$.
We define the map $\Per_{\mathscr{Q}}$ by sending $\mathscr{R}$ to the lift
\[
(P(\mathscr{R})_{A'/I'}, \, P(\mathscr{R})_{A'/I'} \to \mathscr{R}_{A'/I'} \overset{\sim}{\to} \mathscr{Q}_{A'/I'}) \in
\Lift(P(\mathcal{Q})_{A/I}, \mathscr{Q}_{A'/I'}).
\]
We call $\Per_{\mathscr{Q}}$ the \textit{period map} associated with the deformation $\mathscr{Q}$.
\end{defn}

The main result of this section is the following theorem, which can be seen as an analogue of the Grothendieck--Messing deformation theory.

\begin{thm}\label{Theorem:GM deformation}
Let $f \colon (A', I') \to (A, I)$ be a special nilpotent thickening of orientable and bounded $\O_E$-prisms over $\O$.
Let
$\mathcal{Q}$
be a $G$-$\mu$-display over $(A, I)$ and
we fix a deformation
$\mathscr{Q}$
of $\mathcal{Q}$ over $(A', I')$.
If the cocharacter $\mu$ is 1-bounded,
then the period map
\[
\Per_{\mathscr{Q}} \colon \Def(\mathcal{Q})_{(A', I')} \to \Lift(P(\mathcal{Q})_{A/I}, \mathscr{Q}_{A'/I'})
\]
is bijective.
\end{thm}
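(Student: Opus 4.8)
The plan is to reduce to the banal case and then make everything explicit. First I would observe that both sides of $\Per_{\mathscr{Q}}$ form sheaves on $(A',I')^{\op}_\et$: the left side is $\underline{\Def}(\mathcal{Q})$, which is a sheaf by Corollary \ref{Corollary:deformations form a sheaf}, and the right side $B' \mapsto \Lift(P(\mathcal{Q}_{(B,IB)})_{B/IB}, \mathscr{Q}_{B'/I'B'})$ is a sheaf since lifts of a filtration along a nilpotent thickening of affine schemes glue (with no automorphisms). The map $\Per_{\mathscr{Q}}$ is compatible with restriction, so by $(\pi,I)$-completely \'etale descent it suffices to check that $\Per_{\mathscr{Q}}$ is bijective after passing to a covering on which $\mathcal{Q}$, and hence (by Lemma \ref{Lemma:deformation of banal is banal}) every deformation, becomes banal. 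So we may assume $\mathcal{Q} = \mathcal{Q}_X$ for some $X \in G(A)_I$, and by Lemma \ref{Lemma:deformation of banal is banal} and Lemma \ref{Lemma:deformation of banal is banal} applied to the Hodge filtration, the target is also described by banal objects.

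Next I would set up the banal picture explicitly. Fix a generator $d \in I'$ (and its image, also $d$, in $I$). By Proposition \ref{Proposition:deformation of banal G displays}, deformations of $\mathcal{Q}_X$ over $(A',I')$ are classified by $\mathbf{Def}(X)_{(A',I')}$, i.e.\ by elements $Y \in G(A')_{I'}$ with $f(Y) = X$, up to the action of $\Ker(G_\mu(A',I') \to G_\mu(A,I))$. Using $Y \mapsto Y_d$ and Lemma \ref{Lemma:Gmu surjective nilpotent thickening}, one identifies $\Def(\mathcal{Q}_X)_{(A',I')}$ with a quotient of $\{Y_d \in G(A') : Y_d \equiv X_d \bmod J\}$; writing $Y_d = (1+N)X_d$ with $N$ a matrix (in a faithful representation, or more intrinsically an element of $J \otimes_{A} \Lie(G_A)$ after choosing coordinates), deformations are parametrized by such $N$ modulo the subgroup coming from $G_\mu$. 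Meanwhile, since $\mathscr{Q}$ is a fixed banal deformation, $\mathscr{Q}_{A'/I'} \simeq G_{A'/I'}$ is trivial, and $\Lift(P(\mathcal{Q})_{A/I}, \mathscr{Q}_{A'/I'})$ becomes the set of lifts of the coset $P_\mu(A/I) \backslash\!\!\!\backslash$ (the point of $G/P_\mu$ cut out by the Hodge filtration) to a point of $(G/P_\mu)(A'/I')$ reducing to it — a torsor under $\Lie(U^-_\mu) \otimes_{A/I} K$ where $K = \Ker(A'/I' \to A/I)$.

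Then I would compute $\Per_{\mathscr{Q}}$ directly using the formula from the proof of Proposition \ref{Proposition:canonical isomorphism of deformations of phi G torsors}: given $\mathscr{R} = \mathcal{Q}_Y$, the canonical isomorphism of underlying $G$-$\phi$-modules $\mathscr{R}_\phi \to \mathscr{Q}_\phi$ is induced by $\psi = Z_d(Y_d)^{-1}$ where $Z_d$ corresponds to the fixed $\mathscr{Q}$. Transporting the Hodge filtration $P(\mathscr{R})_{A'/I'}$ through $\psi$ gives an explicit point of $(G/P_\mu)(A'/I')$, and one reads off that the resulting map $N \mapsto (\text{reduction of } \psi^{-1} \cdot \text{base point})$ is, after the identifications above, an affine-linear map of torsors. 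The crucial input is that $\mu$ is 1-bounded: this is exactly what guarantees, via Proposition \ref{Proposition:BB isomorphism}(3), that $G_\mu(A',I')$ is the full preimage of $P_\mu(A'/I')$, so that the ambiguity in $Y_d$ (the quotient by $G_\mu$) matches precisely the ambiguity in choosing a lift of the Hodge filtration, with no excess or deficiency. Concretely, the tangent/infinitesimal level decomposes $J \otimes \Lie(G)$ — or rather the relevant $\phi$-twisted version — into the part along $\Lie(P_\mu)$ (which is absorbed by $G_\mu$, hence invisible to deformations) and the part along $\Lie(U^-_\mu)$ (which is exactly $\Lie(G/P_\mu)$, the target), and 1-boundedness ensures the Frobenius-twist identifies these cleanly after multiplying by $\phi(\mu(d))$.

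The main obstacle I expect is the last point: showing that the map is a \emph{bijection} rather than merely well-defined, i.e.\ controlling both injectivity and surjectivity simultaneously. Injectivity amounts to: if two deformations $\mathcal{Q}_Y, \mathcal{Q}_Z$ induce isomorphic lifts of the Hodge filtration, then $Z_d(Y_d)^{-1} \in G_\mu(A',I')$, which by Remark \ref{Remark:isomorphism is unique for deformations} means they are isomorphic as deformations — this needs the characterization of $G_\mu$ by its reduction mod $I'$, valid precisely because $\mu$ is 1-bounded and $f$ kills $J$ under $\phi$. Surjectivity requires, given an arbitrary lift of the Hodge filtration, to produce a $Y_d \in G(A')$ lifting $X_d$ and inducing it; one builds $Y_d$ by choosing, via smoothness of $U^-_\mu$ and $J$-adic completeness (here $J^n = 0$), an appropriate element of $U^-_\mu(A'/I')$ lifting the prescribed filtration and then lifting it further into $G(A')$, again using Proposition \ref{Proposition:BB isomorphism}. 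I would organize the computation by reducing modulo powers of $J$ and using that $\phi(J) = 0$ to kill all the Frobenius-correction terms, so that the comparison isomorphism $\psi$ is genuinely $A$-linear (not just $\phi$-linear) on the graded pieces, making the affine-linear structure manifest; keeping track of the $\mu(d)$-conjugation and the passage between $G(A')_{I'}$ and its $d$-component $G(A')_d$ is the fiddly bookkeeping that has to be done carefully.
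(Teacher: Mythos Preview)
Your proposal is correct and follows essentially the same approach as the paper: reduce to the banal case via Corollary \ref{Corollary:deformations form a sheaf}, then for injectivity use that $Z_d(Y_d)^{-1}$ landing in $P_\mu(A'/I')$ forces it into $G_\mu(A',I')$ by 1-boundedness (Proposition \ref{Proposition:BB isomorphism}(3)), and for surjectivity lift a prescribed filtration element $x \in G(A'/I')$ to $\widetilde{x} \in G(A')$ with $f(\widetilde{x})=1$ and set $Y_d = \widetilde{x}^{-1}X'_d$. The paper packages the surjectivity lifting step as a separate lemma (Lemma \ref{Lemma:used in GM deformation}) rather than invoking $U^-_\mu$ directly, but this is the same content; your extra remarks about torsor structure and reducing modulo powers of $J$ are not needed for the argument (the torsor picture belongs to the later Section \ref{Subsection:Normalized period map}, and no induction on powers of $J$ is used).
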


\begin{proof}
By Corollary \ref{Corollary:deformations form a sheaf},
we may work $(\pi, I')$-completely \'etale locally on $A'$.
We may thus assume that $\mathcal{Q}=\mathcal{Q}_X$ for some $X \in G(A)_I$.
We shall prove that $\Per_{\mathscr{Q}}$ is injective.
Let $\mathscr{R}$ and $\mathscr{S}$
be two deformations
of $\mathcal{Q}$ over $(A', I')$
such that
$P(\mathscr{R})_{A'/I'} = P(\mathscr{S})_{A'/I'}$
in $\Lift(P(\mathcal{Q})_{A/I}, \mathscr{Q}_{A'/I'})$.
By Proposition \ref{Proposition:deformation of banal G displays},
we may further assume that
\[
\mathscr{Q}=(\mathcal{Q}_{X'}, 1), \quad \mathscr{R}=(\mathcal{Q}_Y, 1), \quad \text{and} \quad \mathscr{S}=(\mathcal{Q}_Z, 1)
\]
for some $X', Y, Z \in \mathbf{Def}(X)_{(A', I')}$.
Then
$\mathscr{Q}_{A'/I'}$
is naturally identified with $G_{A'/I'}$,
and the isomorphism
$f^*(\mathscr{Q}_{A'/I'}) \overset{\sim}{\to} \mathcal{Q}_{A/I}$ agrees with the canonical isomorphism
$f^*G_{A'/I'} \overset{\sim}{\to} G_{A/I}$.
We fix a generator $d \in I'$.
The period map $\Per_{\mathscr{Q}}$
sends $(\mathcal{Q}_Y, 1)$ and $(\mathcal{Q}_Z, 1)$ to
\[
((P_{\mu})_{A'/I'}, \iota_{X'_d(Y_d)^{-1}}) \quad \text{and} \quad ((P_{\mu})_{A'/I'}, \iota_{X'_d(Z_d)^{-1}}),
\]
respectively, where $\iota_{X'_d(Y_d)^{-1}} \colon (P_{\mu})_{A'/I'} \hookrightarrow G_{A'/I'}$ is defined by $g \mapsto X'_d(Y_d)^{-1}g$,
and similarly for $\iota_{X'_d(Z_d)^{-1}}$.
(See (\ref{equation:d-component}) for the notation $X_d$.)
Since the two lifts are isomorphic,
the image of $Z_d(Y_d)^{-1}$ in $G(A'/I')$ belongs to
$P_\mu(A'/I')$.
This implies that $Z_d(Y_d)^{-1} \in G_\mu(A', I')$ by Proposition \ref{Proposition:BB isomorphism} since $\mu$ is 1-bounded.
Then $Z_d(Y_d)^{-1}$ gives
$(\mathcal{Q}_Y, 1) \overset{\sim}{\to} (\mathcal{Q}_Z, 1)$
by Remark \ref{Remark:isomorphism is unique for deformations}.

We next prove that $\Per_{\mathscr{Q}}$
is surjective.
We may assume that $\mathscr{Q}=(\mathcal{Q}_{X'}, 1)$ as above.
Let $(\mathscr{P}, \iota) \in \Lift(P(\mathcal{Q})_{A/I}, \mathscr{Q}_{A'/I'})$.
By the same argument as in the proof of Lemma \ref{Lemma:deformation of banal is banal},
we see that $\mathscr{P}$ is trivial as a $P_\mu$-torsor.
Then, with the notation as above, this lift can be identified with
$
((P_{\mu})_{A'/I'}, \iota_{x})
$
for some $x \in G(A'/I')$ whose image in $G(A/I)$ is $1$.
By Lemma \ref{Lemma:used in GM deformation} below,
after replacing $x$ by $x t$ for some $t \in P_\mu(A'/I')$, 
we can find an element $\widetilde{x} \in G(A')$ such that $f(\widetilde{x})=1$ in $G(A)$
and
the image of $\widetilde{x}$ in $G(A'/I')$ coincides with $x$.
We then define $Y \in \mathbf{Def}(X)_{(A', I')}$ to be the unique object such that $Y_d=\widetilde{x}^{-1} X'_d$.
By construction,
the period map $\Per_{\mathscr{Q}}$
sends $(\mathcal{Q}_Y, 1)$
to
$((P_{\mu})_{A'/I'}, \iota_{x})$.
This concludes the proof of the surjectivity of $\Per_{\mathscr{Q}}$, and hence that of Theorem \ref{Theorem:GM deformation}.
\end{proof}

The following lemma is used in the proof of Theorem \ref{Theorem:GM deformation}.

\begin{lem}\label{Lemma:used in GM deformation}
Let the notation be as in Theorem \ref{Theorem:GM deformation}.
Let $x \in G(A'/I')$ be such that $f(x)=1$ in $G(A/I)$.
Then
there exist $t \in P_\mu(A'/I')$ and $\widetilde{x} \in G(A')$ such that $f(\widetilde{x})=1$ in $G(A)$ and
the image of $\widetilde{x}$ in $G(A'/I')$ is equal to
$x t$.
\end{lem}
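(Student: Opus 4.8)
The plan is to deduce the statement directly from the structural properties of the display group already established, namely Proposition \ref{Proposition:BB isomorphism} together with the surjectivity of $G_\mu(A', I') \to G_\mu(A, I)$ provided by Lemma \ref{Lemma:Gmu surjective nilpotent thickening}; the latter applies because the kernel $J$ of $f \colon A' \to A$ is nilpotent, so $A'$ is $J$-adically complete. The conceptual point is that $G(A') \to G(A'/I')$ need not be surjective, so one cannot hope to lift $x$ itself; instead one lifts the class of $x$ modulo $P_\mu$, which is exactly what the $1$-boundedness of $\mu$ makes available.

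First I would use Proposition \ref{Proposition:BB isomorphism}(3): since $\mu$ is $1$-bounded, there is a bijection $G(A')/G_\mu(A', I') \overset{\sim}{\to} G(A'/I')/P_\mu(A'/I')$, so in particular the reduction map $G(A') \to G(A'/I')/P_\mu(A'/I')$ is surjective. Applying this to the class of $x$ yields $\widetilde{x}_0 \in G(A')$ and $t_0 \in P_\mu(A'/I')$ whose images satisfy $\overline{\widetilde{x}_0} = x t_0$ in $G(A'/I')$. Next I would track $\widetilde{x}_0$ modulo $I$: by commutativity of the reduction maps, the image of $\widetilde{x}_0$ in $G(A/I)$ equals the image of $x t_0$, which is the image of $t_0$ because $f(x) = 1$, and therefore lies in $P_\mu(A/I)$. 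Hence $f(\widetilde{x}_0) \in G_\mu(A, I)$ by Proposition \ref{Proposition:BB isomorphism}(3) applied over $(A, I)$.

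Finally, Lemma \ref{Lemma:Gmu surjective nilpotent thickening} furnishes $g \in G_\mu(A', I')$ with $f(g) = f(\widetilde{x}_0)$; I then set $\widetilde{x} := \widetilde{x}_0 g^{-1}$ and $t := t_0 \cdot \overline{g}^{\,-1}$, where $\overline{g}$ denotes the image of $g$ in $G(A'/I')$, which lies in $P_\mu(A'/I')$ by Proposition \ref{Proposition:BB isomorphism}(1). Then $f(\widetilde{x}) = 1$ in $G(A)$, the image of $\widetilde{x}$ in $G(A'/I')$ is $x t$, and $t \in P_\mu(A'/I')$, as required.

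There is no real computational obstacle once these inputs are assembled; the only thing to be careful about is structural. One must resist lifting $x$ on the nose (which would fail) and lift the $P_\mu$-coset instead, and one must choose the correcting factor $g$ inside the display group $G_\mu(A', I')$ rather than in $G(A')$ at large, so that the error term $t$ remains in $P_\mu(A'/I')$. It is worth noting that $1$-boundedness of $\mu$ enters twice, via Proposition \ref{Proposition:BB isomorphism}(3) over both $(A', I')$ and $(A, I)$, and that no hypothesis beyond nilpotence of $J$ is actually used in this lemma.
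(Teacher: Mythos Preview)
Your proof is correct, but it rests on a misconception that makes it more circuitous than necessary. You write that ``$G(A') \to G(A'/I')$ need not be surjective, so one cannot hope to lift $x$ itself''; in fact this map \emph{is} surjective, because $G$ is smooth and $A'$ is $I'$-adically complete (recall that a bounded $\O_E$-prism has $A'$ being $(\pi, I')$-adically complete). The paper exploits exactly this: it lifts $x$ directly to some $\widetilde{x}_1 \in G(A')$, observes that $f(\widetilde{x}_1)$ has trivial image in $G(A/I)$ and hence lies in $G_\mu(A,I)$ by Proposition~\ref{Proposition:BB isomorphism}(3), and then corrects by an element of $G_\mu(A',I')$ via Lemma~\ref{Lemma:Gmu surjective nilpotent thickening}, just as you do.

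So both arguments share the same correction step; the difference is only in how the initial lift is obtained. You invoke Proposition~\ref{Proposition:BB isomorphism}(3) over $(A',I')$ to lift the $P_\mu$-coset, whereas the paper lifts $x$ on the nose via smoothness. Consequently the paper uses $1$-boundedness only once (over $(A,I)$), not twice as in your approach. Your route is perfectly valid, but the detour through the coset bijection is unnecessary, and your diagnosis of ``the conceptual point'' should be revised.
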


\begin{proof}
Since $G$ is smooth and $A'$ is $I'$-adically complete, the map
$G(A') \to G(A'/I')$
is surjective.
Let $\widetilde{x}_1 \in G(A')$ be an element which is mapped to $x \in G(A'/I')$.
By Proposition \ref{Proposition:BB isomorphism},
we have $f(\widetilde{x}_1) \in G_\mu(A, I)$.
By Lemma \ref{Lemma:Gmu surjective nilpotent thickening},
there is an element $\widetilde{x}_2 \in G_\mu(A', I')$ such that $f(\widetilde{x}_2)=f(\widetilde{x}_1)$ in $G_\mu(A, I)$.
The image $t \in G(A'/I')$ of $\widetilde{x}^{-1}_2$ belongs to $P_\mu(A'/I')$.
Then $t$ and
$\widetilde{x}:=\widetilde{x}_1\widetilde{x}^{-1}_2$ have the desired properties.
\end{proof}

\begin{rem}\label{Remark:existence of deformation}
A banal $G$-$\mu$-display
over $(A, I)$
admits a deformation over $(A', I')$ 
since $G(A') \to G(A)$ is surjective.
However, for a $G$-$\mu$-display
$\mathcal{Q}$
over $(A, I)$ which is not necessarily banal, the existence of a deformation of $\mathcal{Q}$ over $(A', I')$ is not clear.
In Proposition \ref{Proposition:existence of deformations for non-banal cases} below,
under a mild additional assumption on
$f \colon (A', I') \to (A, I)$,
we will prove that
a deformation of $\mathcal{Q}$ over $(A', I')$ exists if $\mu$ is 1-bounded.
\end{rem}

\subsection{Normalized period maps}\label{Subsection:Normalized period map}

Let $f \colon (A', I') \to (A, I)$ be a special nilpotent thickening of orientable and bounded $\O_E$-prisms over $\O$.
In this subsection, we assume that $f$ satisfies the following assumption:

\begin{ass}\label{Assumption for special nilpotent thickening}
The kernel $K$ of $A'/I' \to A/I$ satisfies $K^2=0$.
Moreover, there exists an integer $n \geq 1$ such that $\pi^n=0$ in $A'/I'$.
\end{ass}

\begin{rem}
A special nilpotent thickening of Breuil--Kisin type or of perfectoid type (Example \ref{Example:special nilpotent thickening}) satisfies Assumption \ref{Assumption for special nilpotent thickening}.
\end{rem}

Let $\mathcal{Q}$ be a banal $G$-$\mu$-display over $(A, I)$.
The quotient (fppf) sheaf
\[
X_\mu(\mathcal{Q}):=\mathcal{Q}_{A/I}/(P_\mu)_{A/I}
\]
is representable by a scheme over $\Spec A/I$, which is also denoted by $X_\mu(\mathcal{Q})$.
(To see this, it suffices to show that the quotient sheaf $G_{\O/\pi^n}/(P_\mu)_{\O/\pi^n}$ is representable, which follows from \cite[Expose $\mathrm{VI_A}$, Th\'eor\`eme 3.2]{SGA3-1}.
We note that
$\mathcal{Q}_{A/I} \simeq G_{A/I}$ since $\mathcal{Q}$ is banal.)
The closed immersion
$P(\mathcal{Q})_{A/I} \hookrightarrow \mathcal{Q}_{A/I}$
induces a section
\[
s_{\mathcal{Q}} \colon \Spec A/I \hookrightarrow X_\mu(\mathcal{Q}).
\]
The $A/I$-module corresponding to the conormal sheaf of $s_{\mathcal{Q}}$ on $\Spec A/I$ is denoted by $C(\mathcal{Q})$.
We call $C(\mathcal{Q})$ the \textit{conormal module}.

\begin{defn}\label{Definition:universal local ring}
Let $\O_{U^{-}_{\mu}, 1}$
be the local ring of $U^{-}_{\mu}$ at $1 \in U^{-}_{\mu}(k)$ and let
\[
R_{G, \mu}:= \widehat{\O}_{U^{-}_{\mu}, 1}
\]
be the completion of $\O_{U^{-}_{\mu}, 1}$ with respect to the maximal ideal.
We denote by
$
J_{G, \mu} \subset R_{G, \mu}
$
the kernel of the homomorphism
$\epsilon \colon R_{G, \mu} \to \O$
induced by the unit morphism
$\Spec \O \to U^{-}_{\mu}$.
We set
\[
C(R_{G, \mu}):=J_{G, \mu}/J^2_{G, \mu},
\]
which is a free $\O$-module of finite rank.
\end{defn}

\begin{ex}\label{Example:conormal module}
    An isomorphism
$\mathcal{Q} \overset{\sim}{\to} \mathcal{Q}_{X}$
of $G$-$\mu$-displays over $(A, I)$ for an element $X \in G(A)_I$ induces
$
X_\mu(\mathcal{Q}) \overset{\sim}{\to} G_{A/I}/(P_\mu)_{A/I}
$
and
$
C(R_{G, \mu}) \otimes_\O A/I \overset{\sim}{\to} 
    C(\mathcal{Q}).
$
\end{ex}

In the following, for modules $M$ and $N$ over a ring $R$,
the set of $R$-linear homomorphisms $M \to N$ is denoted by $\Hom_R(M, N)$.

\begin{lem}\label{Lemma:lift of Hodge filtration banal case, conormal module}
Let $\mathcal{Q}$ be a banal $G$-$\mu$-display over $(A, I)$, and let $\mathscr{Q}$ be a deformation of $\mathcal{Q}$ over $(A', I')$.
Then there exists a natural bijection
\[
\Lift(P(\mathcal{Q})_{A/I}, \mathscr{Q}_{A'/I'}) \overset{\sim}{\to} \Hom_{A/I}(C(\mathcal{Q}), K).
\]
(Since $K^2=0$, the ideal $K \subset A'/I'$ can be naturally regarded as an $A/I$-module.)
\end{lem}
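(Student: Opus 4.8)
The plan is to trade the datum of a lift of the Hodge filtration for a section of the smooth homogeneous space
$X_\mu(\mathscr{Q}):=\mathscr{Q}_{A'/I'}/(P_\mu)_{A'/I'}$ over $\Spec A'/I'$, and then read off $\Hom_{A/I}(C(\mathcal{Q}),K)$ from the infinitesimal lifting theory for smooth morphisms. First I would note that $\mathscr{Q}$ is banal by Lemma \ref{Lemma:deformation of banal is banal} (applicable since $K^2=0$ forces $A'/I'$ to be $K$-adically complete), so $X_\mu(\mathscr{Q})$ is representable by a scheme over $\Spec A'/I'$ by the same argument used for $X_\mu(\mathcal{Q})$: it reduces to representability of $G_\O/P_\mu$ over $\O/\pi^n$, where $\pi^n=0$ in $A'/I'$ by Assumption \ref{Assumption for special nilpotent thickening}. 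The formation of this fppf quotient commutes with base change, so $f^*X_\mu(\mathscr{Q})\simeq X_\mu(\mathcal{Q})$, compatibly with the identification $f^*\mathscr{Q}_{A'/I'}\simeq\mathcal{Q}_{A/I}$ supplied by the deformation datum. Applying the standard dictionary between $P_\mu$-reductions of a $G$-torsor and sections of the associated quotient, a lift $(\mathscr{P},\iota)$ of the Hodge filtration in $\mathscr{Q}_{A'/I'}$ is the same as a section $\sigma\colon\Spec A'/I'\to X_\mu(\mathscr{Q})$, with the compatibility $f^*\mathscr{P}\simeq P(\mathcal{Q})_{A/I}$ turning into the condition that the base change of $\sigma$ along $f$ equals $s_{\mathcal{Q}}$:
\[
\Lift(P(\mathcal{Q})_{A/I},\mathscr{Q}_{A'/I'})\;\simeq\;\{\,\sigma\colon\Spec A'/I'\to X_\mu(\mathscr{Q})\ \text{a section with}\ f^*\sigma=s_{\mathcal{Q}}\,\}.
\]

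Next I would produce a distinguished element of this set: the $G$-$\mu$-display structure of $\mathscr{Q}$ gives its own Hodge filtration $P(\mathscr{Q})_{A'/I'}\hookrightarrow\mathscr{Q}_{A'/I'}$ (Example \ref{Example:lift of Hodge filtration}), hence a canonical section $s_{\mathscr{Q}}$, and since $\mathscr{Q}$ deforms $\mathcal{Q}$ and the Hodge filtration is compatible with base change, $f^*s_{\mathscr{Q}}=s_{\mathcal{Q}}$; in particular the set of lifts is nonempty. The morphism $X_\mu(\mathscr{Q})\to\Spec A'/I'$ is smooth, being $(\pi,I')$-completely étale locally isomorphic to $(G_\O/P_\mu)_{A'/I'}$, and $G_\O/P_\mu$ is smooth over $\O$ because $P_\mu\subset G_\O$ is a smooth closed subgroup and $G_\O\to G_\O/P_\mu$ is a $P_\mu$-torsor. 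Therefore, since $\Spec A/I\hookrightarrow\Spec A'/I'$ is a square-zero thickening with ideal $K$, standard deformation theory shows that the (nonempty) set of sections lifting $s_{\mathcal{Q}}$ is a torsor under $\Hom_{A/I}(s_{\mathcal{Q}}^*\Omega^1_{X_\mu(\mathscr{Q})/(A'/I')},K)$. Using $f^*X_\mu(\mathscr{Q})\simeq X_\mu(\mathcal{Q})$ and the compatibility of Kähler differentials with base change, $s_{\mathcal{Q}}^*\Omega^1_{X_\mu(\mathscr{Q})/(A'/I')}\simeq s_{\mathcal{Q}}^*\Omega^1_{X_\mu(\mathcal{Q})/(A/I)}$, which is precisely the conormal module $C(\mathcal{Q})$ of the section $s_{\mathcal{Q}}$. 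Taking $s_{\mathscr{Q}}$ as the zero element rigidifies the torsor into the module, yielding the desired bijection $\Lift(P(\mathcal{Q})_{A/I},\mathscr{Q}_{A'/I'})\overset{\sim}{\to}\Hom_{A/I}(C(\mathcal{Q}),K)$; since every object used (the quotient scheme, its section $s_{\mathscr{Q}}$, the conormal module, the torsor structure) is functorial, the bijection is natural in $\mathscr{Q}$ and compatible with isomorphisms of deformations and with base change of $f$.

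The main obstacle I anticipate is the bookkeeping in the first step: matching the "lift of the Hodge filtration" datum of Definition \ref{Definition:lift of Hodge filtration} precisely with sections of the quotient scheme, and verifying $f^*X_\mu(\mathscr{Q})\simeq X_\mu(\mathcal{Q})$ at the scheme level rather than merely as fppf sheaves. If one prefers to avoid this abstraction, the alternative is to fix a trivialization $\mathcal{Q}\simeq\mathcal{Q}_X$ and, via Proposition \ref{Proposition:deformation of banal G displays}, write $\mathscr{Q}\simeq(\mathcal{Q}_{X'},1)$, so that $X_\mu(\mathscr{Q})\simeq(G_\O/P_\mu)_{A'/I'}$, both $s_{\mathcal{Q}}$ and $s_{\mathscr{Q}}$ become the base-point section, and $C(\mathcal{Q})\simeq C(R_{G,\mu})\otimes_\O A/I$ by Example \ref{Example:conormal module}; one then computes lifts of the base-point section of $(G_\O/P_\mu)_{A'/I'}$ directly and checks that the resulting map is independent of the chosen trivialization. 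Either way the heart of the argument is a routine application of smooth infinitesimal lifting once the geometric reformulation is in place.
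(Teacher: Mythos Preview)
Your proposal is correct and follows essentially the same approach as the paper: both pass from lifts of the Hodge filtration to sections of the quotient scheme $X_\mu(\mathscr{Q})=\mathscr{Q}_{A'/I'}/(P_\mu)_{A'/I'}$ lifting $s_{\mathcal{Q}}$, use the distinguished section $s_{\mathscr{Q}}$ coming from the Hodge filtration of $\mathscr{Q}$ as base point, and identify the relevant module with $\Hom_{A/I}(C(\mathcal{Q}),K)$. The only cosmetic difference is that the paper writes down the map explicitly via a chain of conormal modules and then verifies bijectivity by trivializing ($\mathcal{Q}\simeq\mathcal{Q}_X$, $\mathscr{Q}\simeq(\mathcal{Q}_{X'},1)$) and identifying the set of lifts with $\ker\bigl(U^-_\mu(A'/I')\to U^-_\mu(A/I)\bigr)$ using the open immersion $U^-_\mu\times P_\mu\hookrightarrow G_\O$, whereas you invoke the standard infinitesimal lifting package for smooth morphisms directly; this is exactly the ``alternative'' you describe in your last paragraph, and it is what the paper actually does.
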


\begin{proof}
    As above, the Hodge filtration
    $P(\mathscr{Q})_{A'/I'} \to \mathscr{Q}_{A'/I'}$
    induces a section
    \[
    s_\mathscr{Q} \colon \Spec A'/I' \hookrightarrow X_\mu(\mathscr{Q}):=\mathscr{Q}_{A'/I'}/(P_\mu)_{A'/I'}.
    \]
    Let $C(\mathscr{Q})$ be the conormal module of $s_\mathscr{Q}$.
    We have $C(\mathscr{Q})\otimes_{A'/I'} A/I \simeq C(\mathcal{Q})$.
    Let $s \colon \Spec A/I \hookrightarrow X_\mu(\mathscr{Q})$
    be the composition of $s_\mathscr{Q}$ with $\Spec A/I \hookrightarrow \Spec A'/I'$, and let
    $C_s$ be the conormal module of $s$.
    We have a natural homomorphism
    $
    C(\mathscr{Q})\otimes_{A'/I'} A/I \to C_s
    $
    of $A/I$-modules.
    
    Let
    $\iota \colon \mathscr{P} \to \mathscr{Q}_{A'/I'}$
    be a lift of $P(\mathcal{Q})_{A/I}$.
    In the same way as above, this induces a section
    $
    s_{\mathscr{P}} \colon \Spec A'/I' \to X_\mu(\mathscr{Q}).
    $
    Since the composition of $s_{\mathscr{P}}$ with
    $\Spec A/I \hookrightarrow \Spec A'/I'$ is equal to $s$,
    the morphism $s_{\mathscr{P}}$
    induces a homomorphism
    $t_{\mathscr{P}} \colon C_s \to K$
    of $A/I$-modules (defined by $x \mapsto s^*_{\mathscr{P}}(x)$).
    Associating to $\iota \colon \mathscr{P} \to \mathscr{Q}_{A'/I'}$ the composition
    \[
    C(\mathcal{Q}) \simeq C(\mathscr{Q})\otimes_{A'/I'} A/I \to C_s \overset{t_{\mathscr{P}}}{\to} K,
    \]
    we obtain a map
    \[
    \Lift(P(\mathcal{Q})_{A/I}, \mathscr{Q}_{A'/I'}) \to \Hom_{A/I}(C(\mathcal{Q}), K).
    \]

    We shall prove that this map is bijective.
    For this, we may assume that $\mathcal{Q}=\mathcal{Q}_X$ for some
    $X \in G(A)_I$
    and $\mathscr{Q}=(\mathcal{Q}_{X'}, 1)$
    for some $X' \in \mathbf{Def}(X)_{(A', I')}$
    as in the proof of Theorem \ref{Theorem:GM deformation}.
    Then we have
    $P(\mathcal{Q})_{A/I}=(P_\mu)_{A/I}$
    and
    $\mathscr{Q}_{A'/I'}=G_{A'/I'}$, and by the argument in the proof of Theorem \ref{Theorem:GM deformation},
    we can identify
    $\Lift(P(\mathcal{Q})_{A/I}, \mathscr{Q}_{A'/I'})$
    with the set of $P_\mu(A'/I')$-orbits
    $
    x P_\mu(A'/I') \subset G(A'/I')
    $
    of elements $x$ of the kernel of $G(A'/I') \to G(A/I)$.
    We note that for such elements $x$, 
    there are elements $u \in U^-_{\mu}(A'/I')$ and $t \in P_\mu(A'/I')$ such that
    $x=ut$
    since the multiplication map
    \[
    U^{-}_{\mu} \times_{\Spec \O} P_\mu \to G_\O
    \]
    is an open immersion; see also the proof of \cite[Proposition 4.2.8]{Ito-K23}.
    Thus we can identify
    $\Lift(P(\mathcal{Q})_{A/I}, \mathscr{Q}_{A'/I'})$
    with the set of $P_\mu(A'/I')$-orbits
    $
    x P_\mu(A'/I') \subset G(A'/I')
    $
    of elements $x$ of the kernel of $U^{-}_{\mu}(A'/I') \to U^{-}_{\mu}(A/I)$.
    Using this observation and using the fact that
    $U^{-}_{\mu} \times_{\Spec \O} P_\mu \to G_\O$
    is an open immersion again, we can identify
    $\Lift(P(\mathcal{Q})_{A/I}, \mathscr{Q}_{A'/I'})$
    with the kernel of $U^{-}_{\mu}(A'/I') \to U^{-}_{\mu}(A/I)$.
    On the other hand,
    we have
    \[
    \Hom_{A/I}(C(\mathcal{Q}), K) = \Hom_\O(C(R_{G, \mu}), K)
    \]
    by Example \ref{Example:conormal module}.
    The map
    $
    \Lift(P(\mathcal{Q})_{A/I}, \mathscr{Q}_{A'/I'}) \to \Hom_{A/I}(C(\mathcal{Q}), K)
    $
    sends
    an element $x \colon \Spec A'/I' \to U^{-}_{\mu}$ of the kernel of $U^{-}_{\mu}(A'/I') \to U^{-}_{\mu}(A/I)$
    to the homomorphism
    $x^* \colon C(R_{G, \mu}) \to K$.
    Using this description, it is easy to see that this map is bijective.
\end{proof}

The proof of the following result is inspired by that of \cite[Theorem 3.5.11]{Bultel-Pappas}.

\begin{prop}\label{Proposition:existence of deformations for non-banal cases}
Assume that $\mu$ is 1-bounded.
Let $\mathcal{Q}$ be a (not necessarily banal) $G$-$\mu$-display over $(A, I)$.
If $f \colon (A', I') \to (A, I)$ satisfies Assumption \ref{Assumption for special nilpotent thickening}, then 
the set
$\Def(\mathcal{Q})_{(A', I')}$
of isomorphism classes of deformations of $\mathcal{Q}$ over $(A', I')$ has the structure of a torsor under an $A/I$-module.
In particular
$\Def(\mathcal{Q})_{(A', I')}$
is nonempty.
\end{prop}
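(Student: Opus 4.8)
The plan is to first reduce to the banal case and then combine the period map with the conormal description of lifts. Since $\mathcal{Q}$ is a $G$-$\mu$-display over $(A,I)$ which need not be banal, cover $\Spec A/I$ (equivalently, work $(\pi,I)$-completely étale locally on $A$, using Remark~\ref{Remark:thickening etale} and Corollary~\ref{Corollary:deformations form a sheaf}) by a faithfully flat $(\pi,I)$-completely étale map $A \to B$ over which $\mathcal{Q}$ becomes banal (this is possible by Proposition~\ref{Proposition:G display with trivial Hodge filtration is banal} together with the smoothness of $P_\mu$). Because $\underline{\Def}(\mathcal{Q})$ is a sheaf for the $(\pi,I')$-completely étale topology, the value $\Def(\mathcal{Q})_{(A',I')}$ is computed as the equalizer of $\underline{\Def}(\mathcal{Q})(B') \rightrightarrows \underline{\Def}(\mathcal{Q})(B'\otimes_{A'}B')$, etc. So the first task is to establish an $A/I$-module torsor structure on the banal-local values in a way that is compatible with base change along the Čech maps, and then descend both the nonemptiness and the torsor structure.

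Next, for $\mathcal{Q}$ banal (say $\mathcal{Q} \simeq \mathcal{Q}_X$ for some $X \in G(A)_I$) a deformation certainly exists: lift $X_d \in G(A)$ to $\widetilde{X}_d \in G(A')$ using the surjectivity of $G(A') \to G(A)$ (which holds since $G$ is smooth and $A'$ is $(\pi,I')$-adically complete), giving a banal deformation $\mathscr{Q}_0 \in \Def(\mathcal{Q})_{(A',I')}$. Fix this $\mathscr{Q}_0$. By Theorem~\ref{Theorem:GM deformation} (where $1$-boundedness of $\mu$ is used) the period map $\Per_{\mathscr{Q}_0}$ gives a bijection
\[
\Def(\mathcal{Q})_{(A',I')} \;\overset{\sim}{\longrightarrow}\; \Lift(P(\mathcal{Q})_{A/I}, (\mathscr{Q}_0)_{A'/I'}),
\]
and by Lemma~\ref{Lemma:lift of Hodge filtration banal case, conormal module} (here Assumption~\ref{Assumption for special nilpotent thickening}, i.e. $K^2=0$, is used) the right-hand side is identified with $\Hom_{A/I}(C(\mathcal{Q}),K)$. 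Transporting the $A/I$-module structure of $\Hom_{A/I}(C(\mathcal{Q}),K)$ through these bijections endows $\Def(\mathcal{Q})_{(A',I')}$ with the structure of a torsor under the $A/I$-module $\Hom_{A/I}(C(\mathcal{Q}),K)$, with $\mathscr{Q}_0$ as the zero element. (By Example~\ref{Example:conormal module}, $C(\mathcal{Q})$ is, after the trivialization, identified with $C(R_{G,\mu})\otimes_\O A/I$, so the module in question is $\Hom_\O(C(R_{G,\mu}),K)$, independent of $X$ up to canonical isomorphism.)

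The final step is to check that this torsor structure is canonical, i.e. independent of the chosen base point $\mathscr{Q}_0$ and of the trivialization $\mathcal{Q}\simeq\mathcal{Q}_X$, so that it glues along the étale cover. For two choices of base point the resulting torsor structures differ by a translation, which is exactly what it means for two torsor structures to define the same torsor; concretely, one checks that the difference map $\Def(\mathcal{Q})_{(A',I')} \to \Hom_{A/I}(C(\mathcal{Q}),K)$ built from $\mathscr{Q}_0$ versus from $\mathscr{Q}_0'$ differ by the constant $\Per_{\mathscr{Q}_0}(\mathscr{Q}_0')$, using the cocycle-type compatibility of the period maps (the uniqueness clauses in Proposition~\ref{Proposition:canonical isomorphism of deformations of phi G torsors} make $\Per$ well-behaved under change of base point). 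Independence of the trivialization follows from the fact that any two trivializations differ by an element of $G_\mu(A,I)$, which acts $A/I$-linearly on everything in sight via Example~\ref{Example:conormal module}. Given this, the sheaf $\underline{\Def}(\mathcal{Q})$ on $(A',I')_\et$ is a torsor under the quasi-coherent sheaf associated with $\Hom_{A/I}(C(\mathcal{Q}),K)$ (a module over $A/I$, hence over $A'/I'$ via $K^2=0$); since this sheaf is the sheafification of a module and the base $\Spec A/I$ is affine, $H^1$ of it in the relevant topology vanishes, so the torsor is trivial and in particular $\Def(\mathcal{Q})_{(A',I')}$ is nonempty. I expect the main obstacle to be precisely this last descent/gluing bookkeeping — verifying that the locally defined torsor structures are canonical and that the relevant $H^1$ vanishes so that global nonemptiness follows — rather than any single hard computation; the inputs (Theorem~\ref{Theorem:GM deformation}, Lemma~\ref{Lemma:lift of Hodge filtration banal case, conormal module}, Corollary~\ref{Corollary:deformations form a sheaf}) are already in place.
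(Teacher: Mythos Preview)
Your proposal is correct and follows essentially the same approach as the paper's proof: pass to an \'etale cover where $\mathcal{Q}$ is banal, use Theorem~\ref{Theorem:GM deformation} together with Lemma~\ref{Lemma:lift of Hodge filtration banal case, conormal module} to identify $\underline{\Def}(\mathcal{Q})$ locally as a torsor under $\Hom_{A/I}(C(\mathcal{Q}),K)$, check that this action is independent of the chosen local deformation, and conclude by the vanishing of \'etale $H^1$ for quasi-coherent sheaves on the affine scheme $\Spec A/I$. The paper additionally notes that the second part of Assumption~\ref{Assumption for special nilpotent thickening} (that $\pi^n=0$ in $A'/I'$) is what lets one identify $(A,I)_\et$ with $(A/I)_\et$, so that the torsor really lives over the \'etale site of an affine scheme; you should make this explicit when carrying out the descent step.
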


\begin{proof}
We recall that the functor
$
(A', I')_\et \to (A, I)_\et
$
defined by
$
B' \mapsto B,
$
where $B$ is the $(\pi, I)$-adic completion of $B' \otimes_{A'} A$,
is an equivalence.
Moreover, 
any $\pi$-completely \'etale homomorphism $A/I \to C$ is \'etale since $\pi^n=0$ in $A/I$, and thus the category $(A, I)_\et$ is equivalent to the category
$(A/I)_\et$
of \'etale $A/I$-algebras (see \cite[Lemma 2.5.9]{Ito-K23}).

By Corollary \ref{Corollary:deformations form a sheaf},
the functor
\[
\underline{\Def}(\mathcal{Q}) \colon (A', I')_\et \to \mathrm{Set}, \quad B' \mapsto \underline{\Def}(\mathcal{Q})(B')=\Def(\mathcal{Q}_{(B, IB)})_{(B', I'B')}
\]
forms a sheaf.
Let $B' \in (A', I')_\et$ be such that $\mathcal{Q}_{(B, IB)}$ is banal.
We choose a deformation $\mathscr{Q}$ of $\mathcal{Q}_{(B, IB)}$ over $(B', I'B')$, which exists since $G(B') \to G(B)$ is surjective.
To simplify the notation,
the base change $\mathcal{Q}_{(B, IB)}$ is also denoted by $\mathcal{Q}$.
By Theorem \ref{Theorem:GM deformation}, the period map
\[
\Per_{\mathscr{Q}} \colon \underline{\Def}(\mathcal{Q})(B')\to \Lift(P(\mathcal{Q})_{B/IB}, \mathscr{Q}_{B'/I'B'})
\]
is bijective.
By Lemma \ref{Lemma:lift of Hodge filtration banal case, conormal module},
we have
\[
\Lift(P(\mathcal{Q})_{B/IB}, \mathscr{Q}_{B'/I'B'}) \overset{\sim}{\to} \Hom_{B/IB}(C(\mathcal{Q}), K \otimes_{A/I} B/IB).
\]
We note that $K \otimes_{A/I} B/IB$ is the kernel of $B'/I'B' \to B/IB$.
We endow the set
$\underline{\Def}(\mathcal{Q})(B')$
with
an action of
the $B/IB$-module
$\Hom_{B/IB}(C(\mathcal{Q}), K \otimes_{A/I} B/IB)$ via these identifications.
One can check that this action does not depend on the choice of $\mathscr{Q}$.
By \'etale descent for quasi-coherent sheaves,
the $B/IB$-modules
$\Hom_{B/IB}(C(\mathcal{Q}), K \otimes_{A/I} B/IB)$
give rise to a quasi-coherent sheaf $\mathcal{F}$ on
$(A/I)^{\op}_\et$, and $\underline{\Def}(\mathcal{Q})$ is an $\mathcal{F}$-torsor on $(A/I)^{\op}_\et$.
Since any $\mathcal{F}$-torsor on $(A/I)^{\op}_\et$ is trivial, 
the result follows.
\end{proof}

In the following, the maximal ideal of a local ring $R$ is denoted by $\mathfrak{m}_R$.

\begin{rem}\label{Remark:tangent space}
For a local ring $R$ with a local homomorphism $\O \to R$,
we define
\[
\mathfrak{t}_R:=(\mathfrak{m}_R/(\mathfrak{m}^2_R+\pi R))^\vee:=\Hom_k(\mathfrak{m}_R/(\mathfrak{m}^2_R+\pi R), k),
\]
which is called the \textit{tangent space} of $R$ over $\O$.
When $R=R_{G, \mu}$, we write
$
\mathfrak{m}_{G, \mu}:=\mathfrak{m}_{R_{G, \mu}} 
$
and
$
\mathfrak{t}_{G, \mu}:=\mathfrak{t}_{R_{G, \mu}} 
$
for simplicity.
The natural homomorphism
\[
C(R_{G, \mu}) \otimes_\O k \to \mathfrak{m}_{G, \mu}/(\mathfrak{m}^2_{G, \mu}+\pi R_{G, \mu})=\mathfrak{t}^\vee_{G, \mu}
\]
is an isomorphism.
\end{rem}

\begin{rem}[{Normalized period map}]\label{Remark:normalized period map}
We assume that $K$ is killed by $\pi$, so that $K$ is a $k$-vector space.
Let
$\mathcal{Q}$
be a banal $G$-$\mu$-display over $(A, I)$
and
$\mathscr{Q}$
a deformation of $\mathcal{Q}$ over
$(A', I')$.
We choose
an isomorphism
$\beta \colon \mathcal{Q} \overset{\sim}{\to} \mathcal{Q}_{X}$
of $G$-$\mu$-displays over $(A, I)$ for some $X \in G(A)_{I}$.
The isomorphism $\beta$ induces
\[
\Hom_{A/I}(C(\mathcal{Q}), K) \overset{\sim}{\to} \Hom_\O(C(R_{G, \mu}), K)=\mathfrak{t}_{G, \mu} \otimes_k K.
\]
See Example \ref{Example:conormal module} and Remark \ref{Remark:tangent space}.
By Lemma \ref{Lemma:lift of Hodge filtration banal case, conormal module},
we then obtain the following isomorphism
\[
\Lift(P(\mathcal{Q})_{A/I}, \mathscr{Q}_{A'/I'})  \overset{\sim}{\to} \mathfrak{t}_{G, \mu} \otimes_k K.
\]
We define
\[
\Per^{\beta}_{\mathscr{Q}} \colon \Def(\mathcal{Q})_{(A', I')} \to \mathfrak{t}_{G, \mu} \otimes_k K
\]
as the composition of $\Per_{\mathscr{Q}}$ with the above isomorphism.
We call $\Per^{\beta}_{\mathscr{Q}}$ the \textit{normalized period map} with respect to
the isomorphism
$
\beta \colon \mathcal{Q} \overset{\sim}{\to} \mathcal{Q}_{X}.
$

If $\beta' \colon \mathcal{Q} \overset{\sim}{\to} \mathcal{Q}_{X'}$ is another isomorphism, then there exists an $A/I$-linear isomorphism
$h \colon \mathfrak{t}_{G, \mu} \otimes_k K \overset{\sim}{\to} \mathfrak{t}_{G, \mu} \otimes_k K$
such that $\Per^{\beta'}_{\mathscr{Q}}=h \circ \Per^{\beta}_{\mathscr{Q}}$.
\end{rem}

\section{Universal deformations}\label{Section:Universal deformations}

This section is the main part of this paper.
Throughout this section, we will assume that $\mu$ is 1-bounded.
For a $G$-$\mu$-display $\mathcal{Q}$ over $(\O, (\pi))$, we construct a universal deformation of $\mathcal{Q}$ as
a prismatic $G$-$\mu$-display over $R_{G, \mu}$, where
$R_{G, \mu}$
is the local ring defined in Definition \ref{Definition:universal local ring}.
The precise definition of a universal deformation is explained below.
We also give some characterizations of universal deformations.

\subsection{Definitions and main results}\label{Subsection:Definitions and main results}

Recall that $\mathcal{C}_\O$ is the category of complete regular local rings $R$ over $\O$ with residue field $k$.
Let $R \in \mathcal{C}_\O$.
The $\O_E$-prism
$
(\O, (\pi))
$
with the projection $e \colon R \to k$ is an object of the category $(R)_{\Prism, \O_E}$ defined in Section \ref{Subsection:Prismatic sites}.

\begin{defn}\label{Definition:deformation absolute prismatic crysta}
Let $\mathcal{Q}$ be a $G$-$\mu$-display over $(\O, (\pi))$.
Let $R \in \mathcal{C}_\O$.
A \textit{deformation} of $\mathcal{Q}$ over $R$
    is a pair 
    $(\mathfrak{Q}, \gamma)$ of a prismatic $G$-$\mu$-display $\mathfrak{Q}$ over $R$ (Definition \ref{Definition:G displays over prismatic sites}) and an isomorphism
    $
    \gamma \colon \mathfrak{Q}_{(\O, (\pi))} \overset{\sim}{\to} \mathcal{Q}
    $
    of $G$-$\mu$-displays over $(\O, (\pi))$.
    An isomorphism $(\mathfrak{Q}, \gamma) \overset{\sim}{\to} (\mathfrak{Q}', \gamma')$
    of deformations
    is an isomorphism
    $\mathfrak{Q} \overset{\sim}{\to} \mathfrak{Q}'$
    of prismatic $G$-$\mu$-displays over $R$
    such that the induced isomorphism
    $\mathfrak{Q}_{(\O, (\pi))} \overset{\sim}{\to} \mathfrak{Q}'_{(\O, (\pi))}$
    makes
    the following diagram commute:
    \[
    \xymatrix{
    \mathfrak{Q}_{(\O, (\pi))}   \ar[rr]^-{} \ar[rd]_-{\gamma} & & \mathfrak{Q}'_{(\O, (\pi))}  \ar[ld]^-{\gamma'} \\
    & \mathcal{Q}.  &
    }
    \]
    When there is no danger of confusion, we write $\mathfrak{Q}$ instead of $(\mathfrak{Q}, \gamma)$.
\end{defn}

\begin{rem}\label{Remark:base change of deformations over prismatic site}
Let
$(\mathfrak{Q}, \gamma)$
be a deformation of $\mathcal{Q}$ over $R$.
Let
$h \colon R \to R'$
be a morphism in $\mathcal{C}_\O$.
Then the base change $h^*\mathfrak{Q}$ with the isomorphism
$
\gamma \colon (h^*\mathfrak{Q})_{(\O, (\pi))} = \mathfrak{Q}_{(\O, (\pi))} \overset{\sim}{\to} \mathcal{Q}
$
is a deformation of $\mathcal{Q}$ over $R'$.
\end{rem}

\begin{defn}[Universal deformation]\label{Definition:universal deformation}
Let $\mathcal{Q}$ be a $G$-$\mu$-display over $(\O, (\pi))$.
Let $R \in \mathcal{C}_\O$.
    We say that a deformation $(\mathfrak{Q}, \gamma)$ of $\mathcal{Q}$ over $R$ is a \textit{universal deformation} if it satisfies the following property:
\begin{enumerate}
   \item[$(\ast)$] Let $R' \in \mathcal{C}_\O$ and let $(\mathfrak{Q}', \gamma')$ be a deformation of $\mathcal{Q}$ over $R'$.
    Then there exists a unique 
    local homomorphism
    $
    h \colon R \to R'
    $
    over $\O$ such that 
    $(h^*\mathfrak{Q}, \gamma)$ is isomorphic to
    $(\mathfrak{Q}', \gamma')$
    as a deformation of $\mathcal{Q}$ over $R'$.
\end{enumerate}
\end{defn}

In order to state our main results, we need to introduce some more notation.
We consider a map $f \colon (A', I') \to (A, I)$
of orientable and bounded $\O_E$-prisms over $\O$
such that $f \colon A' \to A$ is surjective.
Let $R \in \mathcal{C}_\O$ and
let
$
\mathfrak{Q}
$
be
a prismatic $G$-$\mu$-display over $R$.
Given a homomorphism
$g \colon R \to A/I$ over $\O$,
we can regard $(A, I)$ as an object of $(R)_{\Prism, \O_E}$, and we have the associated $G$-$\mu$-display $\mathfrak{Q}_{g}=\mathfrak{Q}_{(A, I)}$ over $(A, I)$.
Let
\[
\Hom(R, A'/I')_{g}
\]
be the set of homomorphisms $g' \colon R \to A'/I'$ over $\O$ which are lifts of $g$.
For each $g' \in \Hom(R, A'/I')_{g}$,
the $G$-$\mu$-display $\mathfrak{Q}_{g'}$ over $(A', I')$ is equipped with the isomorphism
$
\gamma_f \colon f^*(\mathfrak{Q}_{g'}) \overset{\sim}{\to} \mathfrak{Q}_{g},
$
namely, it is a deformation of $\mathfrak{Q}_{g}$.
In conclusion, we have a map
\[
\ev_\mathfrak{Q} \colon \Hom(R, A'/I')_{g} \to \Def(\mathfrak{Q}_g)_{(A', I')}, \quad g' \mapsto \mathfrak{Q}_{g'}
\]
of sets.
We call $\ev_\mathfrak{Q}$ the \textit{evaluation map}.

\begin{ex}\label{Example:Kodaira--Spencer map}
Let $\mathcal{Q}$ be a $G$-$\mu$-display over $(\O, (\pi))$.
Let $R \in \mathcal{C}_\O$ and
let
$
\mathfrak{Q}
$
be
a deformation of $\mathcal{Q}$ over $R$.
We consider
the nilpotent thickening
\[
f \colon (\O[[t]]/t^2, (\pi)) \to (\O, (\pi))
\]
of Breuil--Kisin type.
By the above construction, we have a map
\[
\ev_\mathfrak{Q} \colon \Hom(R, k[[t]]/t^2)_{e} \to \Def(\mathcal{Q})_{(\O[[t]]/t^2, (\pi))}, \quad g \mapsto \mathfrak{Q}_{g}.
\]
The set $\Hom(R, k[[t]]/t^2)_{e}$ can be naturally identified with the tangent space $\mathfrak{t}_{R}$ of $R$ over $\O$ (cf.\ Remark \ref{Remark:tangent space}).
We write
\[
\KS(\mathfrak{Q}) \colon \mathfrak{t}_{R} \to \Def(\mathcal{Q})_{(\O[[t]]/t^2, (\pi))}
\]
for the above map $\ev_\mathfrak{Q}$.
\end{ex}

\begin{defn}[Kodaira--Spencer map]\label{Definition:versal deformation over prismatic sites}
The map $\KS(\mathfrak{Q})$
defined in Example \ref{Example:Kodaira--Spencer map}
is called the \textit{Kodaira--Spencer map} of $\mathfrak{Q}$.
We say that $\mathfrak{Q}$ is \textit{versal} if the Kodaira--Spencer map
$\KS(\mathfrak{Q})$ is surjective.
\end{defn}

\begin{defn}\label{Definition:property (Perfd) and (BK)}
    Let
    $R \in \mathcal{C}_\O$
    and
    let
    $
    \mathfrak{Q}
    $
    be
    a prismatic $G$-$\mu$-display over $R$.
    We say that $\mathfrak{Q}$ \textit{has the property $(\mathrm{Perfd})$} if it satisfies the following condition:
    \begin{itemize}
    \item[$(\ast)$] 
    Let $(S, a^\flat)$ be a perfectoid pair over $\O$.
    Let $e \colon R \to S/a$ be the composition $R \to k \to S/a$.
    Then
    the evaluation map
    \[
    \ev_\mathfrak{Q} \colon \Hom(R, S/a^{m})_{e} \to \Def(\mathfrak{Q}_e)_{(W_{\O_E}(S^\flat)/[a^\flat]^{m}, I_S)}
    \]
    is bijective for any $m \geq 1$.
    \end{itemize}
    We say that $\mathfrak{Q}$ \textit{has the property $(\mathrm{BK})$} if it satisfies the following condition:
\begin{itemize}
    \item[$(\ast)$] Let $\widetilde{k}$ be a perfect field containing $k$.
    We set $\widetilde{\O}:=W(\widetilde{k}) \otimes_{W(\F_q)} \O_E$.
    Let $(\mathfrak{S}_{\widetilde{\O}}, (\mathcal{E}))$ be an $\O_E$-prism of Breuil--Kisin type over $\widetilde{\O}$, where $\mathfrak{S}_{\widetilde{\O}}=\widetilde{\O}[[t_1, \dotsc, n]]$ for an integer $n \geq 0$.
    We set $\widetilde{R}:=\mathfrak{S}_{\widetilde{\O}}/\mathcal{E}$
    and $\widetilde{R}_m:=\widetilde{R}/\mathfrak{m}^{m}_{\widetilde{R}}=\mathfrak{S}_{\widetilde{\O}, m}/\mathcal{E}$
    for an integer $m \geq 1$.
    Let $e \colon R \to \widetilde{k}$ be the natural homomorphism.
    Then the evaluation map
    \[
    \ev_\mathfrak{Q} \colon \Hom(R, \widetilde{R}_m)_{e} \to \Def(\mathfrak{Q}_e)_{(\mathfrak{S}_{\widetilde{\O}, m}, (\mathcal{E}))}
    \]
    is bijective for any $m \geq 1$.
\end{itemize}
\end{defn}

The goal of this section is to prove the following two results.
The first one is about the existence of universal deformations and their properties.
Recall that $R_{G, \mu}$ is the completion of $\O_{U^{-}_{\mu}, 1}$ with respect to the maximal ideal; see Definition \ref{Definition:universal local ring}.
There exists an isomorphism
$R_{G, \mu} \simeq \O[[t_1, \dotsc, t_r]]$
over $\O$, so that $R_{G, \mu} \in \mathcal{C}_\O$.

\begin{thm}\label{Theorem:Existence of universal deformation}
Assume that $\mu$ is 1-bounded.
For a $G$-$\mu$-display $\mathcal{Q}$ over $(\O, (\pi))$,
there exists a universal deformation $(\mathfrak{Q}^{\univ}, \gamma)$ of $\mathcal{Q}$ over $R_{G, \mu}$.
Moreover
$\mathfrak{Q}^{\univ}$
has the properties $(\mathrm{Perfd})$ and $(\mathrm{BK})$.
In particular $(\mathfrak{Q}^{\univ}, \gamma)$ is versal.
\end{thm}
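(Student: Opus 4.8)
The plan is to realise the universal deformation as the object over $R_{G,\mu}$ corresponding to a single explicit deformation over an $\O_E$-prism of Breuil--Kisin type, built in the spirit of Faltings' universal $p$-divisible group, and then to verify the properties $(\mathrm{Perfd})$ and $(\mathrm{BK})$ directly; universality and versality will follow formally. First I would fix $(\mathfrak{S}^{\univ},(\mathcal{E}^{\univ}))$ of Breuil--Kisin type with $\mathfrak{S}^{\univ}=\O[[t_1,\dotsc,t_n]]$, $\mathcal{E}^{\univ}(0)=\pi$, and an isomorphism $R_{G,\mu}\simeq\mathfrak{S}^{\univ}/\mathcal{E}^{\univ}$ over $\O$ (Remark~\ref{Remark:regular local ring and Breuil-Kisin module}) under which $f\colon\mathfrak{S}^{\univ}\to\O$, $t_i\mapsto 0$, induces the residue map $R_{G,\mu}\to k$. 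By Theorem~\ref{Theorem:main result on G displays over complete regular local rings} and Remark~\ref{Remark:Breuil-Kisin prism and deformation introduction}, deformations of $\mathcal{Q}$ over $R_{G,\mu}$ are the same as deformations of $\mathcal{Q}$ over $(\mathfrak{S}^{\univ},(\mathcal{E}^{\univ}))$; moreover, once $\mathfrak{Q}^{\univ}$ over $R_{G,\mu}$ is known to have $(\mathrm{BK})$, the universal property of Definition~\ref{Definition:universal deformation} comes for free: for $R'\in\mathcal{C}_\O$ written as $\mathfrak{S}'_\O/\mathcal{E}'$ and a deformation $\mathfrak{Q}'$ of $\mathcal{Q}$ over $R'$, bijectivity of $\ev_{\mathfrak{Q}^{\univ}}$ at all finite levels $(\mathfrak{S}'_{\O,m},(\mathcal{E}'))$ together with the limit statement of Example~\ref{Example:banal over finite etale covering} yields a unique local $\O$-homomorphism $h\colon R_{G,\mu}\to R'$ with $h^{*}\mathfrak{Q}^{\univ}\simeq\mathfrak{Q}'$; and versality is the case $(\O[[t]]/t^{2},(\pi))$ of $(\mathrm{BK})$, for which $\ev_{\mathfrak{Q}^{\univ}}$ is the Kodaira--Spencer map.

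To construct the deformation, I would first pass to a finite extension of $k$ and use faithfully flat descent (as in Example~\ref{Example:banal over finite etale covering}) to reduce to the banal case $\mathcal{Q}=\mathcal{Q}_X$ with $X\in G(\O)_{(\pi)}$. Let $u^{\univ}\in U^{-}_{\mu}(R_{G,\mu})\subseteq G(R_{G,\mu})$ be the tautological point, i.e.\ the one classified by the identity map of $R_{G,\mu}=\widehat{\O}_{U^{-}_{\mu},1}$. Using smoothness of $G$ and $\mathcal{E}^{\univ}$-adic completeness I would lift $u^{\univ}$ to $\widetilde{u}\in G(\mathfrak{S}^{\univ})$ with $f(\widetilde{u})=1$, and take $\mathscr{Q}:=(\mathcal{Q}_Y,1)$, where $Y\in G(\mathfrak{S}^{\univ})_{(\mathcal{E}^{\univ})}$ is determined by $Y_{\mathcal{E}^{\univ}}=\widetilde{u}\,X_{\mathcal{E}^{\univ}}$. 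Since $f(Y)=X$, Proposition~\ref{Proposition:deformation of banal G displays} shows $\mathscr{Q}$ is a deformation of $\mathcal{Q}$; transporting it along the equivalence above and descending from the field extension gives $\mathfrak{Q}^{\univ}$ over $R_{G,\mu}$. The reason to expect this to work is that, for special nilpotent thickenings, the Grothendieck--Messing period map of $\mathscr{Q}$ (Theorem~\ref{Theorem:GM deformation}) is governed by the differential of $u^{\univ}$, which is invertible precisely because $R_{G,\mu}$ is the completed local ring of $U^{-}_{\mu}$ at $1$.

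Next I would verify $(\mathrm{Perfd})$. For a perfectoid pair $(S,a^\flat)$ over $\O$ with $e\colon R_{G,\mu}\to S/a$ I argue by induction on $m$, the case $m=1$ being trivial; for $m\to m+1$, fixing $g\in\Hom(R_{G,\mu},S/a^{m})_e$ it suffices that the induced map $\ev_{\mathfrak{Q}^{\univ}}\colon\Hom(R_{G,\mu},S/a^{m+1})_g\to\Def((\mathfrak{Q}^{\univ})_g)_{(W_{\O_E}(S^\flat)/[a^\flat]^{m+1},I_S)}$ be bijective. Here the thickening is special nilpotent of perfectoid type (Example~\ref{Example:special nilpotent thickening}) satisfying Assumption~\ref{Assumption for special nilpotent thickening} with $K:=\Ker(S/a^{m+1}\to S/a^m)$; after choosing a lift of $g$ the source is a torsor under $\Hom_{S/a}(\Omega_{R_{G,\mu}/\O}\otimes_{R_{G,\mu},e}S/a,K)\simeq\mathfrak{t}_{G,\mu}\otimes_k K$ (Remark~\ref{Remark:tangent space}), and by Theorem~\ref{Theorem:GM deformation}, Lemma~\ref{Lemma:lift of Hodge filtration banal case, conormal module}, Example~\ref{Example:conormal module} and Remark~\ref{Remark:tangent space} the target is a torsor under $\Hom_{S/a}(C((\mathfrak{Q}^{\univ})_g),K)\simeq\mathfrak{t}_{G,\mu}\otimes_k K$. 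The decisive point---and the main obstacle---is that $\ev_{\mathfrak{Q}^{\univ}}$ is here not merely bijective but an $S/a$-linear isomorphism of torsors; I would prove this stronger linear statement (cf.\ Theorem~\ref{Theorem:existence of deformations with (Perfd-lin)}) by first invoking Andr\'e's flatness lemma to reduce to the case where every element of $S$ admits compatible $p$-power roots, and then using the explicit formula $Y_{\mathcal{E}^{\univ}}=\widetilde{u}\,X_{\mathcal{E}^{\univ}}$ to compute the normalized period map (Remark~\ref{Remark:normalized period map}) of $(\mathfrak{Q}^{\univ})_h$ and identify it with the differential of $u^{\univ}$. Everything up to this point is formal; the honest Frobenius computation over $W_{\O_E}(S^\flat)/[a^\flat]^{m+1}$ is where the work lies.

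Finally I would deduce $(\mathrm{BK})$ from the linear form of $(\mathrm{Perfd})$. Each Breuil--Kisin type prism $(\mathfrak{S}_{\widetilde{\O},m},(\mathcal{E}))$ admits a faithfully flat map to a prism of perfectoid type (adjoin compatible $p$-power roots of the $t_i$ and complete), so the objects and morphisms entering $\ev_{\mathfrak{Q}^{\univ}}$ satisfy faithfully flat descent; combined with the $S/a$-linear refinement, this descends bijectivity of $(\ref{equation:evaluation map perfectoid intro})$ to bijectivity of $(\ref{equation:evaluation map BK intro})$ (cf.\ Proposition~\ref{Proposition:(Perfd-lin) implies (BK-lin)}). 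With $(\mathrm{Perfd})$ and $(\mathrm{BK})$ established, the first paragraph shows that $(\mathfrak{Q}^{\univ},\gamma)$ is a universal deformation of $\mathcal{Q}$ over $R_{G,\mu}$ and is versal.
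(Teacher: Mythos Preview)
Your plan matches the paper's proof: construct an explicit banal deformation over $(\mathfrak{S}^{\univ},(\mathcal{E}^{\univ}))$ via the tautological point of $U^{-}_\mu$, verify the linear refinement $(\mathrm{Perfd\mathchar`-lin})$ by an explicit Frobenius computation after Andr\'e's lemma, deduce $(\mathrm{BK\mathchar`-lin})$, and conclude via Remark~\ref{Remark:implications}. Two places where your sketch diverges from the paper's execution deserve comment.

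First, the reduction to the banal case is more delicate than ``faithfully flat descent as in Example~\ref{Example:banal over finite etale covering}''. The paper's Lemma~\ref{Lemma:reduction to banal case} does not descend the deformation you build over $\widetilde{k}$ directly; instead it uses vanishing of $H^1(\Gal(\widetilde{k}/k),\GL_r(\widetilde{k}))$ to twist so that the first-order truncation over $(\widetilde{\O}[[t_1,\dotsc,t_r]]/(t_1,\dotsc,t_r)^2,(\pi))$ carries a Galois descent datum, descends that, and then lifts over $(\mathfrak{S}^{\univ}_\O,(\mathcal{E}))$ using Proposition~\ref{Proposition:existence of deformations for non-banal cases}. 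The resulting object over $R_{G,\mu}$ need not be the naive descent of the banal construction, but it still has $(\mathrm{Perfd\mathchar`-lin})$ because its pullback to $R_{G,\widetilde{\mu}}$ differs from the banal one by an automorphism of $R_{G,\widetilde{\mu}}$.

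Second, your route from $(\mathrm{Perfd\mathchar`-lin})$ to $(\mathrm{BK\mathchar`-lin})$ via a faithfully flat map from $(\mathfrak{S}_{\widetilde{\O},m},(\mathcal{E}))$ to a perfectoid-type prism is not what the paper does, and making it precise would require checking that the perfectoidization really has the form $(W_{\O_E}(S^\flat)/[a^\flat]^m,I_S)$ for a perfectoid pair and that the comparison map is $(\pi,I)$-completely faithfully flat. The paper's Proposition~\ref{Proposition:(Perfd-lin) implies (BK-lin)} avoids this by a pointwise argument: Corollary~\ref{Corollary:map from regular local ring to valuation ring} shows that nonzero elements of $\mathfrak{m}^m_{\widetilde{R}}/\mathfrak{m}^{m+1}_{\widetilde{R}}$ can be detected after mapping to $a^mV/a^{m+1}V$ for suitable rank-$1$ valuation rings $V$, so additivity and injectivity of $\mathrm{L}^{\beta}_{\mathfrak{Q},h}$ follow from the already-established $V/a$-linearity in the perfectoid case.
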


The second one gives some characterizations of universal deformations.
Let $\O_C$ be the $\pi$-adic completion of the integral closure of $\O$ in an algebraic closure of $\O[1/\pi]$.
Then $\O_C$ is a $\pi$-adically complete valuation ring of rank $1$ with algebraically closed fraction field.
We note that $\O_C$ is a perfectoid ring.
We denote by $\O_{C^\flat}$ the tilt of $\O_C$.
There is an element
$\pi^\flat \in \O_{C^\flat}$ with $\theta([\pi^\flat])=\pi$, so that $(\O_C, \pi^\flat)$ is a perfectoid pair over $\O$.

\begin{thm}\label{Theorem:characterization of universal family}
Assume that $\mu$ is 1-bounded.
Let $\mathcal{Q}$ be a $G$-$\mu$-display over $(\O, (\pi))$.
Let $R \in \mathcal{C}_\O$
and
let
$(\mathfrak{Q}, \gamma)$
be a deformation of $\mathcal{Q}$ over $R$.
Then the following statements are equivalent:
\begin{enumerate}
    \item $(\mathfrak{Q}, \gamma)$ is a universal deformation.
    \item $\mathfrak{Q}$ has the property $(\mathrm{BK})$.
    \item $\mathfrak{Q}$ has the property $(\mathrm{Perfd})$.
    \item 
    Let $e \colon R \to \O_C/\pi$ be the composition $R \to k \to \O_C/\pi$.
    The evaluation map
    \[
    \ev_\mathfrak{Q} \colon \Hom(R, \O_C/\pi^{2})_{e} \to \Def(\mathfrak{Q}_e)_{(W_{\O_E}(\O_{C^\flat})/[\pi^\flat]^{2}, I_{\O_C})}
    \]
    is bijective.
    \item The equalities $\dim R=\dim R_{G, \mu}$ and $\dim_k \mathfrak{t}_R=\dim_k \mathfrak{t}_{G, \mu}$ hold, and $(\mathfrak{Q}, \gamma)$ is versal.
    (Here $\dim R$ is the dimension of the ring $R$ and $\dim_k \mathfrak{t}_R$ is the dimension of the $k$-vector space $\mathfrak{t}_R$.)
\end{enumerate}
\end{thm}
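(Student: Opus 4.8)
The proof will be a formal consequence of Theorem~\ref{Theorem:Existence of universal deformation}, which already supplies a universal deformation $(\mathfrak{Q}^{\univ},\gamma)$ of $\mathcal{Q}$ over $R_{G,\mu}\simeq\O[[t_1,\dotsc,t_r]]$ having the properties $(\mathrm{BK})$ and $(\mathrm{Perfd})$ and which is versal. The plan is: deduce $(1)\Rightarrow(2),(3),(5)$ by transport of structure, note $(3)\Rightarrow(4)$ trivially, and then prove the three substantive implications $(2)\Rightarrow(1)$, $(4)\Rightarrow(1)$, $(5)\Rightarrow(1)$ by comparing an arbitrary deformation with $\mathfrak{Q}^{\univ}$. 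I would begin by recording the uniqueness of universal deformations: if $(\mathfrak{Q},\gamma)$ over $R$ is universal, the local $\O$-homomorphism $h\colon R_{G,\mu}\to R$ with $h^*\mathfrak{Q}^{\univ}\simeq\mathfrak{Q}$ provided by the universal property of $\mathfrak{Q}^{\univ}$ is an isomorphism, by playing the two uniqueness clauses against each other. Since the conditions in $(2)$, $(3)$, $(5)$ are manifestly preserved under pullback along an isomorphism of bases in $\mathcal{C}_\O$, this gives $(1)\Rightarrow(2),(3),(5)$; and $(1)\Rightarrow(4)$ is the case of $(3)$ for the perfectoid pair $(\O_C,\pi^\flat)$ and $m=2$.

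For $(2)\Rightarrow(1)$ the key point is passage to truncations. Given a deformation $(\mathfrak{Q}',\gamma')$ over an arbitrary $R'\in\mathcal{C}_\O$, I would write $R'\simeq\mathfrak{S}_\O/\mathcal{E}$ for an $\O_E$-prism $(\mathfrak{S}_\O,(\mathcal{E}))$ of Breuil--Kisin type over $\O$ (possible by Remark~\ref{Remark:regular local ring and Breuil-Kisin module}, with $\widetilde{k}=k$ since the residue field of $R'$ is $k$), so that $R'/\mathfrak{m}_{R'}^m\simeq\mathfrak{S}_{\O,m}/\mathcal{E}$. Combining Remark~\ref{Remark:Breuil-Kisin prism and deformation introduction}, Example~\ref{Example:banal over finite etale covering}, and the absence of nontrivial automorphisms of deformations (Corollary~\ref{Corollary:Uniqueness of isomorphisms between deformations:BK and Perfd case}) gives
\[
\Def(\mathcal{Q})_{R'}\;\simeq\;{\varprojlim}_{m}\,\Def(\mathcal{Q})_{(\mathfrak{S}_{\O,m},(\mathcal{E}))},
\]
while $\Hom(R,R')_e={\varprojlim}_{m}\Hom(R,R'/\mathfrak{m}_{R'}^m)_e$ by completeness of $R'$; the evaluation maps are compatible with these towers, so property $(\mathrm{BK})$ yields that $\ev_{\mathfrak{Q}}\colon\Hom(R,R')_e\to\Def(\mathcal{Q})_{R'}$ is bijective. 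As $R'$ was arbitrary in $\mathcal{C}_\O$, this is exactly the universal property of $(\mathfrak{Q},\gamma)$.

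For $(4)\Rightarrow(1)$ and $(5)\Rightarrow(1)$ I would again take $h\colon R_{G,\mu}\to R$ with $h^*\mathfrak{Q}^{\univ}\simeq\mathfrak{Q}$ and show $h$ is an isomorphism; since $R_{G,\mu}$ is a regular domain and $h$ is local into a regular local ring, it suffices to show that $h$ induces an isomorphism on $\mathfrak{m}_R/(\mathfrak{m}_R^2+\pi R)$ — whence $h$ is surjective, as $\pi$ lies in its image — together with $\dim R=\dim R_{G,\mu}$ (then $\Ker h$ is a height-$0$ prime, hence $0$). In case $(5)$ this follows from the compatibility $\KS(\mathfrak{Q})=\KS(\mathfrak{Q}^{\univ})\circ\bar{h}$ of Kodaira--Spencer maps and the bijectivity of $\KS(\mathfrak{Q}^{\univ})$ (the special case of $(\mathrm{BK})$ for $\mathfrak{Q}^{\univ}$ with $n=1$, $\mathcal{E}=\pi$, $m=2$): versality of $\mathfrak{Q}$ makes $\bar{h}\colon\mathfrak{t}_R\to\mathfrak{t}_{G,\mu}$ surjective, hence bijective by the hypothesis $\dim_k\mathfrak{t}_R=\dim_k\mathfrak{t}_{G,\mu}$, and one concludes using $\dim R=\dim R_{G,\mu}$. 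In case $(4)$, the bijectivity of $\ev_{\mathfrak{Q}}$ and $\ev_{\mathfrak{Q}^{\univ}}$ on $\Hom(-,\O_C/\pi^2)_e$ forces $g\mapsto g\circ h$ to be a bijection $\Hom(R,\O_C/\pi^2)_e\xrightarrow{\sim}\Hom(R_{G,\mu},\O_C/\pi^2)_e$; in particular the source is nonempty, which forces $\pi\notin\mathfrak{m}_R^2$ (otherwise evaluating a lift of $e$ would give $\pi=0$ in $\O_C/\pi^2$), and the standard description of these square-zero lifting sets as torsors under $\Hom_k(\mathfrak{m}/(\mathfrak{m}^2+\pi),\pi\O_C/\pi^2)$ converts the bijection into the required isomorphism on $\mathfrak{m}_R/(\mathfrak{m}_R^2+\pi R)$; with $\pi\notin\mathfrak{m}_R^2$ this also gives $\dim R=r+1=\dim R_{G,\mu}$. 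In either case $h$ is an isomorphism and $(\mathfrak{Q},\gamma)$ inherits universality from $\mathfrak{Q}^{\univ}$.

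The honest assessment is that there is no deep obstacle here once Theorem~\ref{Theorem:Existence of universal deformation} is in hand — all the content of the present theorem lies in that construction. The only points that require care are the passage to truncations in $(2)\Rightarrow(1)$, which rests on the prismatic--Breuil--Kisin equivalence (Theorem~\ref{Theorem:main result on G displays over complete regular local rings}) and on deformations having no automorphisms, and the elementary local-algebra bookkeeping (chiefly $\pi\notin\mathfrak{m}_R^2$) needed to upgrade ``$h$ is an isomorphism on relative tangent spaces'' to ``$h$ is an isomorphism''.
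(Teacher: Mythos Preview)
Your proposal is correct and follows essentially the same route as the paper: both deduce $(1)\Rightarrow(2),(3),(4),(5)$ from Theorem~\ref{Theorem:Existence of universal deformation}, prove $(2)\Rightarrow(1)$ via the truncation argument of Corollary~\ref{Corollary:(BK) implies universality}, and handle $(5)\Rightarrow(1)$ and $(4)\Rightarrow(1)$ by comparing with $\mathfrak{Q}^{\univ}$ through the map $h\colon R_{G,\mu}\to R$, using the tangent-space isomorphism plus the observation $\pi\notin\mathfrak{m}_R^2$ to force $h$ to be an isomorphism. The only cosmetic difference is that the paper phrases the dimension count in $(4)\Rightarrow(1)$ via the regularity of $R/\pi$, whereas you count directly as $\dim R=\dim_k\mathfrak{t}_R+1$; these are equivalent.
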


The rest of this section is devoted to the proofs of Theorem \ref{Theorem:Existence of universal deformation} and Theorem \ref{Theorem:characterization of universal family}.
In Section \ref{Subsection:Preliminaries on maps of prisms}, we present a few technical results on maps of $\O_E$-prisms.
In Section \ref{Subsection:Linearity of period maps}, we investigate the relation between $\mathrm{(Perfd)}$, $\mathrm{(BK)}$, and the property of being a universal deformation.
For this, we discuss the \textit{linearity} of evaluation maps, which plays an important role in our discussion.
Finally, in Section \ref{Subsection:Constructions of universal deformations}, we complete the proofs of Theorem \ref{Theorem:Existence of universal deformation} and Theorem \ref{Theorem:characterization of universal family}.

\subsection{Preliminaries on maps of $\O_E$-prisms}\label{Subsection:Preliminaries on maps of prisms}

Let $R \in \mathcal{C}_\O$.
We fix an $\O_E$-prism
$
(\mathfrak{S}_\O, (\mathcal{E}))
$
of Breuil--Kisin type, where $\mathfrak{S}_\O=\O[[t_1, \dotsc, t_n]]$, with an isomorphism
$R \simeq \mathfrak{S}_\O/\mathcal{E}$
over $\O$.
We note that $R$ is of dimension $n$.
(Here $n$ is allowed to be zero.)

Let $(S, a^\flat)$ be a perfectoid pair over $\O$.
Let $m \geq 1$ be an integer.
The following notion of primitive map will play an important role in our proof of Theorem \ref{Theorem:Existence of universal deformation}.

\begin{defn}[{Primitive map}]\label{Definition:primitive map}
A map
\[
f \colon (\mathfrak{S}_\O, (\mathcal{E})) \to (W_{\O_E}(S^\flat)/[a^\flat]^m, I_S)
\]
of $\O_E$-prisms over $\O$
is called \textit{primitive with respect to $a^\flat$} (or simply, primitive) if for every $1 \leq i \leq n$, the map $f$ sends $t_i$ to $[a^\flat b^\flat_i]$ for some element $b^\flat_i \in S^\flat$.
(Here we denote the image of $[a^\flat b^\flat_i] \in W_{\O_E}(S^\flat)$ in $W_{\O_E}(S^\flat)/[a^\flat]^m$ by the same symbol.)
\end{defn}

We note that a primitive map
$(\mathfrak{S}_\O, (\mathcal{E})) \to (W_{\O_E}(S^\flat)/[a^\flat]^m, I_S)$
factors through a map
$(\mathfrak{S}_{\O, m}, (\mathcal{E})) \to (W_{\O_E}(S^\flat)/[a^\flat]^m, I_S)$.


\begin{lem}\label{Lemma:lifting lemma I}
Assume that the map $S \to S$, $x \mapsto x^p$ is surjective.
Then, for a homomorphism
$g \colon R \to S/a^m$ over $\O$ which is a lift of the composition $R \to k \to S/a$,
there exists a primitive map
$
f \colon (\mathfrak{S}_\O, (\mathcal{E})) \to (W_{\O_E}(S^\flat)/[a^\flat]^m, I_S)
$
which induces $g$.
\end{lem}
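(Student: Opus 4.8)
The plan is to build $f$ by hand: specify the images of the variables $t_1, \dotsc, t_n$ as Teichm\"uller-type elements, then check that the resulting $\O$-algebra homomorphism is a morphism of $\O_E$-prisms inducing $g$. Since each $t_i$ lies in $\mathfrak{m}_R$ and $g$ lifts $R \to k \to S/a$, we have $g(t_i) \in aS/a^mS$; as $a$ is a nonzerodivisor in $S$, lift $g(t_i)$ to an element of the form $ad_i$ with $d_i \in S$. The hypothesis that $x \mapsto x^p$ is surjective on $S$ says precisely that the sharp map $S^\flat \to S$ is surjective: given $d_i$, iteratively choose $p$-power roots $d_i^{1/p}, d_i^{1/p^2}, \dotsc$ and form $b^\flat_i := (d_i, d_i^{1/p}, \dotsc)$ in $S^\flat \cong \varprojlim_{x \mapsto x^p} S$, so that $(b^\flat_i)^\sharp = d_i$. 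I then define $f \colon \mathfrak{S}_\O \to W_{\O_E}(S^\flat)/[a^\flat]^m$ to be the continuous $\O$-algebra homomorphism with $f(t_i) = [a^\flat b^\flat_i]$; this is well defined because $[a^\flat b^\flat_i]^m = [a^\flat]^m[b^\flat_i]^m = 0$, so the $f(t_i)$ are nilpotent and every power series in the $t_i$ has finite image.

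Next I would verify that $f$ is a morphism of $\delta_E$-rings. Since $\phi(t_i) = t_i^q$ in $\mathfrak{S}_\O$ and $\phi([x]) = [x]^q$ in $W_{\O_E}(S^\flat)$, the homomorphisms $\phi \circ f$ and $f \circ \phi$ agree on $\O$ (because $W_{\O_E}(S^\flat)/[a^\flat]^m$ is an $\O_E$-prism over $\O$) and on each $t_i$, hence everywhere; since $W_{\O_E}(S^\flat)/[a^\flat]^m = W_{\O_E}(S^\flat)/[(a^\flat)^m]$ is $\pi$-torsion free by Lemma \ref{Lemma:pi torsion free of witt vectors}, compatibility of $f$ with $\phi$ forces compatibility with $\delta_E$. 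This is the point where the shape of a \emph{primitive} map is used: the fact that the $f(t_i)$ are Teichm\"uller lifts makes the $\delta_E$-compatibility automatic, which I regard as the conceptual heart of the argument.

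Finally I would check that $f$ respects the distinguished ideals and induces $g$. Because $g$ is a ring homomorphism and $\mathcal{E}$ maps to $0$ in $R = \mathfrak{S}_\O/\mathcal{E}$, we get $\theta_{\O_E}(f(\mathcal{E})) = \mathcal{E}(g(t_1), \dotsc, g(t_n)) = 0$ in $S/a^m$, so $f(\mathcal{E}) \in I_S$. To see that $f(\mathcal{E})$ generates $I_S$, reduce modulo $[a^\flat]$: there all the $f(t_i)$ vanish, so the image of $f(\mathcal{E})$ equals the image of the constant term of $\mathcal{E}$, a uniformizer of $\O$, which is a distinguished element of $W_{\O_E}(S^\flat)/[a^\flat]$ lying in $I_S/[a^\flat]$ and hence generates it by rigidity of distinguished elements; as $[a^\flat]$ is nilpotent in $W_{\O_E}(S^\flat)/[a^\flat]^m$, any element generating $I_S$ modulo $[a^\flat]$ generates $I_S$. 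Thus $f$ is a morphism of $\O_E$-prisms $(\mathfrak{S}_\O, (\mathcal{E})) \to (W_{\O_E}(S^\flat)/[a^\flat]^m, I_S)$; it is primitive by construction, and the induced map $R \to S/a^m$ sends $t_i$ to $\theta_{\O_E}([a^\flat b^\flat_i]) = a(b^\flat_i)^\sharp = ad_i = g(t_i)$, so it equals $g$ (both being $\O$-algebra maps out of the surjection $\mathfrak{S}_\O \to R$). I expect this last step — confirming that $f$ is genuinely a map of prisms, i.e.\ that $f(\mathcal{E})$ \emph{generates} $I_S$ rather than merely lies in it — to be the only real obstacle; everything else is formal once the $b^\flat_i$ have been produced.
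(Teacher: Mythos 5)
Your construction is exactly the paper's: write the image of $t_i$ in $S/a^m$ as $ab_i$, use surjectivity of the $p$-power map to find $b_i^\flat \in S^\flat$ with $\theta([b_i^\flat])=b_i$, set $f(t_i)=[a^\flat b_i^\flat]$, deduce $\delta_E$-compatibility from $\phi$-equivariance together with $\pi$-torsion-freeness of $W_{\O_E}(S^\flat)/[a^\flat]^m$ (Lemma \ref{Lemma:pi torsion free of witt vectors}), and observe that $f$ lifts $g$, hence sends $\mathcal{E}$ into $I_S$. Your closing verification that $f(\mathcal{E})$ actually generates $I_S$ is superfluous (a map of $\O_E$-prisms only requires $f(\mathcal{E})\in I_S$, and generation is then automatic by rigidity), and note that your one-line justification via nilpotence of $[a^\flat]$ alone is not a general fact — it works here because a generator of $I_S$ stays a nonzerodivisor modulo $[a^\flat]$ (Proposition \ref{Proposition:perfectoid type}), which is what your unit-lifting argument implicitly uses.
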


\begin{proof}
For every $1 \leq i \leq n$, the image of $t_i$ under the composition
$\mathfrak{S}_\O \to R \to S/a^m$
can be written as $a b_i$ for some element $b_i \in S$.
(Again, we abuse notation and denote the image of $a b_i \in S$ in $S/a^m$ by the same symbol.)
Since the map $S \to S$, $x \mapsto x^p$ is surjective, there exists an element $b^\flat_i \in S^\flat$ with $\theta([b^\flat_i])=b_i$ (cf.\ \cite[Lemma 3.3]{BMS}).
We define a homomorphism
$f \colon \mathfrak{S}_\O \to W_{\O_E}(S^\flat)/[a^\flat]^m$
over $\O$
by $t_i \mapsto [a^\flat b^\flat_i]$.
Since $\phi([a^\flat b^\flat_i])=[a^\flat b^\flat_i]^q$,
it follows that $f$ is $\phi$-equivariant, which in turn implies that $f$ is a homomorphism of $\delta_E$-rings since both the target and the source are $\pi$-torsion free (see Lemma \ref{Lemma:pi torsion free of witt vectors}).
By construction, we see that $f$ is a lift of $g$.
In particular $f$ sends $\mathcal{E}$ into the kernel of $W_{\O_E}(S^\flat)/[a^\flat]^m \to S/a^m$.
Thus we have a primitive map
$
f \colon (\mathfrak{S}_\O, (\mathcal{E})) \to (W_{\O_E}(S^\flat)/[a^\flat]^m, I_S)
$
which induces $g$.
\end{proof}

\begin{rem}\label{Remark:Andre lemma}
Any perfectoid ring $S$ admits a $p$-completely faithfully flat homomorphism
$S \to S'$
of perfectoid rings such that the map $S' \to S'$, $x \mapsto x^p$ is surjective.
This is a consequence of Andr\'e's flatness lemma; see \cite[Theorem 7.14]{BS}.
\end{rem}

\begin{lem}\label{Lemma:lifting lemma II}
Let
$
f \colon (\mathfrak{S}_\O, (\mathcal{E})) \to (W_{\O_E}(S^\flat)/[a^\flat]^m, I_S)
$
be a primitive map and let
$g \colon R \to S/a^m$
be the induced homomorphism.
Suppose that we are given a homomorphism
$g' \colon R \to S/a^{m+1}$ over $\O$ which is a lift of $g$.
Then there exists a map
$
f' \colon (\mathfrak{S}_\O, (\mathcal{E})) \to (W_{\O_E}(S^\flat)/[a^\flat]^{m+1}, I_S)
$
which induces $f$ and $g'$.
\end{lem}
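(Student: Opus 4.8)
The plan is to construct $f'$ by prescribing the images of $t_1,\dots,t_n$. Write $W:=W_{\O_E}(S^\flat)$ and $W_\ell:=W/[a^\flat]^\ell$; by Lemma~\ref{Lemma:pi torsion free of witt vectors} (applied to $[a^\flat]^{m+1}=[(a^\flat)^{m+1}]$) the ring $W_{m+1}$ is $\pi$-torsion free. Hence an $\O$-algebra homomorphism $\mathfrak{S}_\O\to W_{m+1}$ is a homomorphism of $\delta_E$-rings precisely when it is $\phi$-equivariant, and since $\phi(t_i)=t_i^q$ in $\mathfrak{S}_\O$, such a homomorphism is the same as a tuple $(x_i)_{1\le i\le n}$ of elements of the nilpotent ideal $[a^\flat]W_{m+1}$ with $\phi(x_i)=x_i^q$. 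Let $s_i'\in S/a^{m+1}$ denote the image of $t_i$ under $\mathfrak{S}_\O\to R\xrightarrow{g'}S/a^{m+1}$. A $\delta_E$-homomorphism $f'$ with $f'(t_i)=x_i$ and $\theta(x_i)=s_i'$ for all $i$ automatically carries $\mathcal{E}$ into $I_S$ (since $\mathcal{E}\mapsto 0$ in $R$) and thus is a map of $\O_E$-prisms inducing $g'$; and it induces $f$ once $x_i\equiv f(t_i)\pmod{[a^\flat]^m}$. So it suffices to find such $x_i$.

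I would start from the tautological primitive lift. Since $f$ is primitive, $f(t_i)=[a^\flat b_i^\flat]$ for some $b_i^\flat\in S^\flat$, and $\tilde x_i:=[a^\flat b_i^\flat]\in W_{m+1}$ is a Teichm\"uller lift, so $\phi(\tilde x_i)=\tilde x_i^q$ and $\tilde x_i$ reduces to $f(t_i)$ modulo $[a^\flat]^m$. Its image $\theta(\tilde x_i)\in S/a^{m+1}$ and $s_i'$ both reduce to the image of $t_i$ in $S/a^m$, hence $s_i'-\theta(\tilde x_i)\in\ker(S/a^{m+1}\to S/a^m)=a^mS/a^{m+1}S$. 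Because $\theta\colon W_{m+1}\to S/a^{m+1}$ is surjective and $\theta([a^\flat]^m)=a^m$, the subgroup $[a^\flat]^mW_{m+1}$ surjects onto $a^mS/a^{m+1}S$, so I can choose $\epsilon_i\in[a^\flat]^mW_{m+1}$ with $\theta(\epsilon_i)=s_i'-\theta(\tilde x_i)$ and put $x_i:=\tilde x_i+\epsilon_i$. Then $x_i\equiv f(t_i)\pmod{[a^\flat]^m}$ and $\theta(x_i)=s_i'$ by construction.

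The only real content is to verify that $x_i$ still satisfies $\phi(x_i)=x_i^q$; note that, unlike in Lemma~\ref{Lemma:lifting lemma I}, there is no reason for $f'$ to be primitive, so an arbitrary perturbation $\epsilon_i$ has to be controlled. This is where the truncation degree is used: in $W_{m+1}$ one has $[a^\flat]^{m+1}=0$, and since $m\ge 1$ and $q\ge 2$ the inequalities $2m\ge m+1$, $qm\ge m+1$ and $(q-1)+m\ge m+1$ hold, which yield $\epsilon_i^2=0$, $\phi(\epsilon_i)=[a^\flat]^{qm}\phi(w_i)=0$ (writing $\epsilon_i=[a^\flat]^mw_i$), and $\tilde x_i^{q-1}\epsilon_i\in[a^\flat]^{(q-1)+m}W_{m+1}=0$. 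Expanding the $q$-th power and using that $\phi$ is additive with $\phi([z])=[z]^q$, this gives
\[
x_i^q=\tilde x_i^q+q\,\tilde x_i^{q-1}\epsilon_i=\tilde x_i^q=\phi(\tilde x_i)+\phi(\epsilon_i)=\phi(x_i).
\]
Hence $f'\colon\mathfrak{S}_\O\to W_{m+1}$, $t_i\mapsto x_i$, is a well-defined $\delta_E$-homomorphism (continuity is automatic since the $x_i$ lie in a nilpotent ideal), and by the first paragraph it is the desired map. I expect the bookkeeping of these degree vanishings to be the main---and essentially the only---obstacle; once the $\delta_E$-condition is seen to impose nothing extra in this truncated range, the rest is just lifting along the surjection $\theta$.
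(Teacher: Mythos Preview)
Your proposal is correct and follows essentially the same approach as the paper's proof: lift each $t_i$ to $[a^\flat b_i^\flat]+[a^\flat]^m w_i$ and use the degree bounds $qm\ge m+1$, $2m\ge m+1$, $(q-1)+m\ge m+1$ to verify $\phi$-equivariance (the paper abbreviates the last two into the single line ``similarly $([a^\flat b_i^\flat]+[a^\flat]^m x_i)^q=[a^\flat b_i^\flat]^q$''). Your extra remarks on why $\phi$-equivariance suffices for the $\delta_E$-structure and why $\mathcal{E}$ lands in $I_S$ are fine and match the paper's reasoning.
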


\begin{proof}
Since $f$ is primitive, it maps $t_i$ to $[a^\flat b^\flat_i] \in W_{\O_E}(S^\flat)/[a^\flat]^m$ for some element $b^\flat_i \in S^\flat$ for any $i$.
Let $b_i:=\theta([b^\flat_i]) \in S$.
The image of $t_i$ under the composition
$\mathfrak{S}_\O \to R \to S/a^{m+1}$
can be written as
$ab_i+a^m y_i$
for some element $y_i \in S/a^{m+1}$.
Let $x_i \in W_{\O_E}(S^\flat)/[a^\flat]^{m+1}$ be such that $\theta_{\O_E}(x_i)=y_i$ in $S/a^{m+1}$.
We define a homomorphism
$f' \colon \mathfrak{S}_\O \to W_{\O_E}(S^\flat)/[a^\flat]^{m+1}$
over $\O$
by $t_i \mapsto [a^\flat b^\flat_i]+[a^\flat]^m x_i$.
By construction, it is a lift of $g'$.
We claim that $f'$ is a homomorphism of $\delta_E$-rings.
Indeed, we have
\[
\phi([a^\flat b^\flat_i]+[a^\flat]^m x_i)=[a^\flat b^\flat_i]^q + [a^\flat]^{qm} \phi(x_i)= [a^\flat b^\flat_i]^q
\]
in $W_{\O_E}(S^\flat)/[a^\flat]^{m+1}$
since $qm \geq m+1$.
Similarly, we have
$
([a^\flat b^\flat_i]+[a^\flat]^m x_i)^q=[a^\flat b^\flat_i]^q.
$
It follows that $f'$ is $\phi$-equivariant, which in turn implies that it is a homomorphism of $\delta_E$-rings.
The resulting map
$
f' \colon (\mathfrak{S}_\O, (\mathcal{E})) \to (W_{\O_E}(S^\flat)/[a^\flat]^{m+1}, I_S)
$
has the desired properties.
\end{proof}

\begin{lem}\label{Lemma:map to dimension 1 regular local ring}
    Assume that $k$ is algebraically closed.
    Let $m \geq 1$ be an integer and $x \in \mathfrak{m}^m_R/\mathfrak{m}^{m+1}_R$
    a nonzero element.
    Then there exists a complete regular local ring $R'$ of dimension $1$ with a surjection
    $R \to R'$ such that
    the induced homomorphism
    $
    \mathfrak{m}^m_R/\mathfrak{m}^{m+1}_R \to \mathfrak{m}^m_{R'}/\mathfrak{m}^{m+1}_{R'}
    $
    sends $x$ to a nonzero element.
\end{lem}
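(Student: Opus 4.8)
The plan is to translate the statement into a question about homogeneous polynomials via the associated graded ring. Set $d:=\dim R$; since $\mathfrak{m}^m_R/\mathfrak{m}^{m+1}_R \ne 0$ we have $d\geq 1$. As $R$ is a complete regular local ring with residue field $k$, a choice of regular system of parameters $y_1,\dotsc,y_d$ of $R$ identifies $\bigoplus_{i\geq 0}\mathfrak{m}^i_R/\mathfrak{m}^{i+1}_R$ with the polynomial ring $k[\bar y_1,\dotsc,\bar y_d]$ on the variables $\bar y_i \in \mathfrak{m}_R/\mathfrak{m}^2_R$, compatibly with the grading. Under this identification the given nonzero element $x$ corresponds to a nonzero homogeneous polynomial $P$ of degree $m$.

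The key step is to choose the quotient $R\to R'$ so that it is transverse to $P$. Since $k$ is algebraically closed, hence infinite, and $P\neq 0$, there is a point $u=(u_1,\dotsc,u_d)\in k^d$ with $P(u)\neq 0$; because $P$ is homogeneous of positive degree, $u\neq 0$. Hence there is a basis $w_1,\dotsc,w_d$ of $\mathfrak{m}_R/\mathfrak{m}^2_R$ such that, rewriting $P$ in $k[w_1,\dotsc,w_d]$, the coefficient of $w_d^m$ equals $P(u)\neq 0$ (concretely, choose an invertible matrix $C=(c_{ij})$ over $k$ with last column $u$, define the new basis by $\bar y_i=\sum_j c_{ij}w_j$, and note that the pure $w_d^m$-part of $P$ has coefficient $P(u)$ by homogeneity). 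Now lift $w_1,\dotsc,w_{d-1}$ to elements $z_1,\dotsc,z_{d-1}\in\mathfrak{m}_R$; their images in $\mathfrak{m}_R/\mathfrak{m}^2_R$ are linearly independent, so they form part of a regular system of parameters, and $R':=R/(z_1,\dotsc,z_{d-1})$ is a complete regular local ring of dimension $1$, with residue field $k$ and with the induced local homomorphism from $\O$.

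It then remains to check that the induced map $\mathfrak{m}^m_R/\mathfrak{m}^{m+1}_R\to\mathfrak{m}^m_{R'}/\mathfrak{m}^{m+1}_{R'}$ sends $x$ to a nonzero element. Completing $z_1,\dotsc,z_{d-1}$ to a regular system of parameters $z_1,\dotsc,z_d$ of $R$ with $z_d$ lifting $w_d$, we have $\bigoplus_{i\geq 0}\mathfrak{m}^i_R/\mathfrak{m}^{i+1}_R=k[\bar z_1,\dotsc,\bar z_d]$ and $\bar z_i=w_i$. The associated graded ring of $R'$ is the quotient of $k[\bar z_1,\dotsc,\bar z_d]$ by the ideal of initial forms of $(z_1,\dotsc,z_{d-1})$, which contains $(\bar z_1,\dotsc,\bar z_{d-1})$; since $R'$ is regular of dimension $1$, each graded piece of its associated graded ring is $1$-dimensional, and a comparison of dimensions in each degree forces this ideal of initial forms to be exactly $(\bar z_1,\dotsc,\bar z_{d-1})$. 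Thus $\bigoplus_{i\geq 0}\mathfrak{m}^i_{R'}/\mathfrak{m}^{i+1}_{R'}=k[\bar z_d]$ and the map in question is the natural projection $k[\bar z_1,\dotsc,\bar z_d]_m\to k[\bar z_d]_m$ killing $\bar z_1,\dotsc,\bar z_{d-1}$; it sends $x=P$ to $P(u)\,\bar z_d^m\neq 0$.

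I expect the only mildly delicate point to be this last bookkeeping with initial forms, which is nevertheless entirely formal once one knows that the associated graded ring of a regular local ring is a polynomial ring on any regular system of parameters and that passing to a quotient replaces the associated graded ring by its quotient by the initial ideal. Everything else uses only the standard structure theory of complete regular local rings together with the elementary fact that a nonzero polynomial over an infinite field does not vanish identically.
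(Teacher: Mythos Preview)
Your proof is correct and follows essentially the same approach as the paper's: both identify the associated graded ring with a polynomial ring over $k$, use the infiniteness of $k$ to find a point where the corresponding homogeneous polynomial does not vanish, and then pass to a regular quotient determined by this point. The only difference is packaging: the paper reduces the dimension by one at a time via quotients of the form $R \to R/(x_i - b x_j)$ and iterates, whereas you perform a single linear change of coordinates and kill $d-1$ parameters simultaneously.
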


\begin{proof}
    Let $x_1, \dotsc, x_n \in \mathfrak{m}_R$ be a regular system of parameters.
    The homomorphism of graded $k$-algebras
    \[
    k[x_1, \dotsc, x_n] \to \bigoplus_{l \geq 0} \mathfrak{m}^l_R/\mathfrak{m}^{l+1}_R
    \]
    defined by $x_i \mapsto x_i \in \mathfrak{m}_R/\mathfrak{m}^{2}_R$ is an isomorphism.
    Since $\mathfrak{m}^m_R/\mathfrak{m}^{m+1}_R$ is nonzero, we have $n \geq 1$.
    If $n = 1$, then the assertion holds trivially.
    We assume that $n \geq 2$.

    We claim that, for a nonzero homogeneous polynomial $f(x_1, \dotsc, x_n)$ of degree $m$,
    there exist two distinct integers $i, j \in \{ 1, \dotsc, n \}$ and an element $b \in k$ such that
    the image of $f(x_1, \dotsc, x_n)$ in $k[x_1, \dotsc, x_n]/(x_i-bx_j)$ is nonzero.
    Indeed, since $k$ is infinite, there is a nonzero element $(b_1, \dotsc, b_n) \in k^n$ such that
    $f(b_1, \dotsc, b_n) \neq 0$.
    Without loss of generality, we may assume that $b_{n-1} \neq 0$.
    We set $b:=b_n/b_{n-1}$.
    Then the polynomial
    $f(x_1, \dotsc, x_{n-1}, bx_{n-1}) \in k[x_1, \dotsc, x_{n-1}]$
    is nonzero since its evaluation at $(b_1, \dotsc, b_{n-1})$ is nonzero.
    
    The claim implies that
    there exist two distinct integers $i, j \in \{ 1, \dotsc, n \}$ and an element $b \in R$ such that, for the quotient $R'':=R/(x_i-bx_j)$,
    the induced homomorphism
    $
    \mathfrak{m}^m_R/\mathfrak{m}^{m+1}_R \to \mathfrak{m}^m_{R''}/\mathfrak{m}^{m+1}_{R''}
    $
    sends $x$ to a nonzero element.
    Since $R''$ is regular, the assertion follows by repeating this procedure.
\end{proof}

\begin{cor}\label{Corollary:map from regular local ring to valuation ring}
Let $m \geq 1$ be an integer and $x \in \mathfrak{m}^m_R/\mathfrak{m}^{m+1}_R$
a nonzero element.
Then
there exists a perfectoid pair
$(V, a^\flat)$
over $\O$ with the following properties:
\begin{enumerate}
    \item $V$ is a $\pi$-adically complete valuation ring of rank $1$ with algebraically closed fraction field.
    \item 
    There exists a primitive map
    \[
    (\mathfrak{S}_\O, (\mathcal{E})) \to (W_{\O_E}(V^\flat)/[a^\flat]^{m+1}, I_V)
    \]
    such that the induced homomorphism
    $
    \mathfrak{m}^m_R/\mathfrak{m}^{m+1}_R \to a^mV/a^{m+1}V
    $
    sends $x$ to a nonzero element.
\end{enumerate}
\end{cor}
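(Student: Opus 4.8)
The plan is to reduce to the case where $k$ is algebraically closed and $R$ is one-dimensional, then to realise the required valuation ring as the ring of integers of an explicitly built algebraically closed perfectoid field, and finally to apply Lemma \ref{Lemma:lifting lemma I} to upgrade the resulting $\O$-algebra homomorphism to a primitive map.

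\emph{Reduction.} First I would base change to an algebraic closure $\overline{k}$ of $k$: with $\overline{\O}:=W(\overline{k})\otimes_{W(\F_q)}\O_E$ and $\overline{R}:=\mathfrak{S}_{\overline{\O}}/\mathcal{E}$, the map $R\to\overline{R}$ is a faithfully flat local homomorphism with $\mathfrak{m}_{\overline{R}}=\mathfrak{m}_R\overline{R}$ (both maximal ideals are generated by the images of $\pi,t_1,\dots,t_n$), so $\mathfrak{m}^m_R/\mathfrak{m}^{m+1}_R\to\mathfrak{m}^m_{\overline{R}}/\mathfrak{m}^{m+1}_{\overline{R}}$ is injective and sends $x$ to a nonzero element. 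A perfectoid pair over $\overline{\O}$ is one over $\O$, and precomposing a primitive map out of $(\mathfrak{S}_{\overline{\O}},(\mathcal{E}))$ with the natural map $(\mathfrak{S}_\O,(\mathcal{E}))\to(\mathfrak{S}_{\overline{\O}},(\mathcal{E}))$ yields a primitive map out of $(\mathfrak{S}_\O,(\mathcal{E}))$ (primitivity only constrains the images of $t_1,\dots,t_n$), whose induced homomorphism on $\mathfrak{m}^m_R/\mathfrak{m}^{m+1}_R$ factors through $\mathfrak{m}^m_{\overline{R}}/\mathfrak{m}^{m+1}_{\overline{R}}$. Thus it suffices to treat $\overline{R}$ and the image of $x$; that is, we may assume $k$ is algebraically closed. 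Since $x\neq0$ forces $\mathfrak{m}^m_R/\mathfrak{m}^{m+1}_R\neq0$, Lemma \ref{Lemma:map to dimension 1 regular local ring} produces a complete regular local ring $R'$ of dimension one, a surjection $R\to R'$, and a nonzero element $\overline{x}\in\mathfrak{m}^m_{R'}/\mathfrak{m}^{m+1}_{R'}$ onto which $x$ maps. Then $R'\in\mathcal{C}_\O$ is a complete discrete valuation ring; fix a uniformizer $u\in R'$, so that $\mathfrak{m}^m_{R'}/\mathfrak{m}^{m+1}_{R'}=u^mR'/u^{m+1}R'$, $\overline{x}$ is represented by $c\,u^m$ for some unit $c\in R'$, and $\pi\in uR'$ (because $R'/uR'=k$, so $\pi\mapsto0$ there).

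\emph{Construction of $V$.} Next I would adjoin to $\Frac(R')$ a compatible system $(u^{1/p^n})_{n\geq0}$ of $p$-power roots of $u$, complete, and pass to a completed algebraic closure $C$; this $C$ is an algebraically closed perfectoid field (of characteristic $p$ if $R'$ has characteristic $p$, of mixed characteristic otherwise), and $R'$ embeds as a local subring of $\O_C$. The chosen root system determines an element $a^\flat\in\O_{C^\flat}$ with $a:=\theta([a^\flat])$ equal to the image of $u$ in $\O_C$; this $a$ is a nonzerodivisor, and $\pi\in uR'$ gives $\pi\in a\O_C$, so $(\O_C,a^\flat)$ is a perfectoid pair over $\O$ and $V:=\O_C$ is a $\pi$-adically complete valuation ring of rank one with algebraically closed fraction field. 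I expect this to be the step with the real content; the remaining verifications are formal.

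\emph{Conclusion.} Let $g\colon R\to V/a^{m+1}$ be the reduction modulo $a^{m+1}$ of the composite $R\to R'\hookrightarrow V$. Since $R\to R'$ is local and carries $u$ to $a$, the map $g$ lifts the composite $R\to k\to V/a$, and the homomorphism $\mathfrak{m}^m_R/\mathfrak{m}^{m+1}_R\to a^mV/a^{m+1}V$ induced by $g$ is the composite $\mathfrak{m}^m_R/\mathfrak{m}^{m+1}_R\to u^mR'/u^{m+1}R'\to a^mV/a^{m+1}V$; the latter sends $c\,u^m$ to $c\,a^m$, which is nonzero since $c$ maps to a unit of $V$ and $a$ is a non-unit nonzerodivisor. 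Because $C$ is algebraically closed, $y\mapsto y^p$ is surjective on $V$, so Lemma \ref{Lemma:lifting lemma I} (applied with its $m$ replaced by $m+1$) produces a primitive map $(\mathfrak{S}_\O,(\mathcal{E}))\to(W_{\O_E}(V^\flat)/[a^\flat]^{m+1},I_V)$ inducing $g$; the homomorphism $\mathfrak{m}^m_R/\mathfrak{m}^{m+1}_R\to a^mV/a^{m+1}V$ attached to this primitive map is exactly the one above, hence sends $x$ to a nonzero element, as required. The main obstacle is precisely the construction of $V$: one must exhibit a rank-one perfectoid valuation ring receiving $R'$ in which the uniformizer of $R'$ is of the form $\theta([a^\flat])$ for a genuine Teichm\"uller-root system $a^\flat$; granting that, the statement follows formally from Lemmas \ref{Lemma:map to dimension 1 regular local ring} and \ref{Lemma:lifting lemma I}.
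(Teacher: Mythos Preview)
Your proposal is correct and follows essentially the same route as the paper: reduce to $k$ algebraically closed, apply Lemma~\ref{Lemma:map to dimension 1 regular local ring} to pass to a one-dimensional quotient $R'$, take $V$ to be the ring of integers of a completed algebraic closure of $\Frac(R')$, and then invoke Lemma~\ref{Lemma:lifting lemma I}. The paper constructs $V$ in one line as the $\pi$-adic completion of the integral closure of $R'$ in an algebraic closure of $R'[1/a]$, and obtains $a^\flat$ simply from the surjectivity of $p$-th powers on $V$; your intermediate step of first adjoining a tower $(u^{1/p^n})$ is harmless but unnecessary, and the step you flag as the ``main obstacle'' is in fact routine.
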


\begin{proof}
    We may assume that $k$ is algebraically closed.
    By Lemma \ref{Lemma:map to dimension 1 regular local ring},
    there exists a complete regular local ring $R'$ of dimension $1$ with a surjection
    $R \to R'$ such that
    the induced homomorphism
    $
    \mathfrak{m}^m_R/\mathfrak{m}^{m+1}_R \to \mathfrak{m}^m_{R'}/\mathfrak{m}^{m+1}_{R'}
    $
    sends $x$ to a nonzero element.

    Let $a \in \mathfrak{m}_{R'}$ be a uniformizer.
    Let $V$ be the $\pi$-adic completion of the integral closure of $R'$ in an algebraic closure of $R'[1/a]$.
    Then $V$ is a $\pi$-adically complete valuation ring of rank $1$ with algebraically closed fraction field.
    The natural homomorphism
    $g \colon R \to V/a^{m+1}V$ sends $\mathfrak{m}_R$ into $(a)$, and the induced homomorphism
    $
    \mathfrak{m}^m_R/\mathfrak{m}^{m+1}_R \to a^mV/a^{m+1}V
    $
    sends $x$ to a nonzero element.
    There exists an element
    $a^\flat \in V^\flat$ with $\theta([a^\flat])=a$, so that $(V, a^\flat)$ is a perfectoid pair over $\O$.
    By Lemma \ref{Lemma:lifting lemma I}, there exists a primitive map
    $
    (\mathfrak{S}_\O, (\mathcal{E})) \to (W_{\O_E}(V^\flat)/[a^\flat]^{m+1}, I_V)
    $
    which induces $g$.
    This concludes the proof.
\end{proof}

\subsection{Linearity of evaluation maps}\label{Subsection:Linearity of period maps}

Let $\mathcal{Q}$ be a $G$-$\mu$-display over $(\O, (\pi))$.
For deformations of $\mathcal{Q}$, we investigate the relation between $\mathrm{(Perfd)}$, $\mathrm{(BK)}$, and the property of being a universal deformation.

We first note the following consequence of Theorem \ref{Theorem:main result on G displays over complete regular local rings}.

\begin{rem}\label{Remark:equivalence deformation evaluation}
Let $R \in \mathcal{C}_\O$.
We choose an $\O_E$-prism
$(\mathfrak{S}_\O, (\mathcal{E}))$
of Breuil--Kisin type, where $\mathfrak{S}_\O=\O[[t_1, \dotsc, t_n]]$, with an isomorphism
$R \simeq \mathfrak{S}_\O/\mathcal{E}$
over $\O$.
Let
$
f \colon (\mathfrak{S}_\O, (\mathcal{E})) \to (\O, (\pi))
$
be the morphism defined by $t_i \mapsto 0$.
For a deformation $\mathfrak{Q}$ of $\mathcal{Q}$ over $R$,
the value
$\mathfrak{Q}_{(\mathfrak{S}_\O, (\mathcal{E}))}$
is naturally a deformation of $\mathcal{Q}$ over $(\mathfrak{S}_\O, (\mathcal{E}))$.
By Theorem \ref{Theorem:main result on G displays over complete regular local rings}, this construction induces
an equivalence from the groupoid of deformations of $\mathcal{Q}$ over $R$ to that of deformations of $\mathcal{Q}$ over $(\mathfrak{S}_\O, (\mathcal{E}))$.
\end{rem}

\begin{cor}\label{Corollary:isomorphism of deformatiions over prismatic site}
Let $R \in \mathcal{C}_\O$.
There is at most one isomorphism between two deformations of $\mathcal{Q}$ over $R$.
\end{cor}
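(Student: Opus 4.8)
The plan is to reduce the statement to a uniqueness result about deformations over an $\O_E$-prism of Breuil--Kisin type, where we already have the required rigidity from Proposition~\ref{Proposition:canonical isomorphism of deformations of phi G torsors}. First I would fix an $\O_E$-prism $(\mathfrak{S}_\O, (\mathcal{E}))$ of Breuil--Kisin type with an isomorphism $R \simeq \mathfrak{S}_\O/\mathcal{E}$ over $\O$ (such a prism exists by Remark~\ref{Remark:regular local ring and Breuil-Kisin module}). Let $\mathfrak{Q}_1, \mathfrak{Q}_2$ be two deformations of $\mathcal{Q}$ over $R$, and suppose $g, g'$ are two isomorphisms $\mathfrak{Q}_1 \overset{\sim}{\to} \mathfrak{Q}_2$ of deformations. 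By Remark~\ref{Remark:equivalence deformation evaluation}, passing to the value at $(\mathfrak{S}_\O, (\mathcal{E}))$ induces an equivalence from the groupoid of deformations of $\mathcal{Q}$ over $R$ to the groupoid of deformations of $\mathcal{Q}$ over $(\mathfrak{S}_\O, (\mathcal{E}))$; since an equivalence of groupoids is faithful, it therefore suffices to show that the two induced isomorphisms $g_{(\mathfrak{S}_\O, (\mathcal{E}))}$ and $g'_{(\mathfrak{S}_\O, (\mathcal{E}))}$ between the deformations $(\mathfrak{Q}_i)_{(\mathfrak{S}_\O, (\mathcal{E}))}$ over $(\mathfrak{S}_\O, (\mathcal{E}))$ coincide.

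Next I would reduce the uniqueness statement over $(\mathfrak{S}_\O, (\mathcal{E}))$ to uniqueness over the truncations $(\mathfrak{S}_{\O, m}, (\mathcal{E}))$. By Example~\ref{Example:banal over finite etale covering} we have the equivalence
\[
G\mathchar`-\mathrm{Disp}_\mu(\mathfrak{S}_\O, (\mathcal{E})) \overset{\sim}{\to} {2-\varprojlim}_{m} G\mathchar`-\mathrm{Disp}_\mu(\mathfrak{S}_{\O, m}, (\mathcal{E})),
\]
and this is compatible with base change to $(\O, (\pi))$, so it carries deformations of $\mathcal{Q}$ over $(\mathfrak{S}_\O, (\mathcal{E}))$ to compatible systems of deformations over the $(\mathfrak{S}_{\O, m}, (\mathcal{E}))$. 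Hence it is enough to prove that for each $m$ there is at most one isomorphism between the induced deformations over $(\mathfrak{S}_{\O, m}, (\mathcal{E}))$. Now each $(\mathfrak{S}_{\O, m}, (\mathcal{E}))$ receives the base point $(\mathfrak{S}_{\O, 1}, (\mathcal{E})) = (\O, (\pi))$ via the nilpotent thickening $t_i \mapsto 0$, which is of Breuil--Kisin type and hence a special nilpotent thickening (Example~\ref{Example:special nilpotent thickening}). Therefore Corollary~\ref{Corollary:Uniqueness of isomorphisms between deformations:BK and Perfd case}(1) applies: any deformation of $\mathcal{Q}$ over $(\mathfrak{S}_{\O, m}, (\mathcal{E}))$ has no nontrivial automorphisms. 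Given two isomorphisms $g, g'$ between deformations, the composite $g' \circ g^{-1}$ is an automorphism of a deformation, hence the identity, so $g = g'$.

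The only subtlety — and the step I expect to be the mildest obstacle — is bookkeeping the compatibilities: one must check that the equivalences of Remark~\ref{Remark:equivalence deformation evaluation} and Example~\ref{Example:banal over finite etale covering} are compatible with the specified identifications along the base point $(\O, (\pi))$, so that a morphism of deformations (in the sense of Definition~\ref{Definition:deformation absolute prismatic crysta}, i.e. one respecting $\gamma$) is carried to a morphism of deformations in the sense of Definition~\ref{Definition:deformation}. This is essentially formal, since all the relevant functors are base-change functors and the isomorphisms $\gamma$ live over the base point. Once this is in place, the conclusion is immediate: "at most one isomorphism" between the deformations over $R$ follows from "at most one isomorphism" over each $(\mathfrak{S}_{\O, m}, (\mathcal{E}))$, which in turn follows from the absence of nontrivial automorphisms, i.e. from the uniqueness clause of Proposition~\ref{Proposition:canonical isomorphism of deformations of phi G torsors}(2) repackaged as Corollary~\ref{Corollary:Uniqueness of isomorphisms between deformations:BK and Perfd case}(1).
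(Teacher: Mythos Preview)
Your proof is correct and follows essentially the same approach as the paper, which simply cites Remark~\ref{Remark:equivalence deformation evaluation} and Corollary~\ref{Corollary:Uniqueness of isomorphisms between deformations:BK and Perfd case}. One minor inaccuracy: the thickening $(\mathfrak{S}_{\O,m},(\mathcal{E})) \to (\O,(\pi))$ is not in general a special nilpotent thickening (Example~\ref{Example:special nilpotent thickening} only covers the single-step $m{+}1 \to m$ case, and $\phi(J)=0$ can fail once $q < m$), but this does no harm since you correctly invoke Corollary~\ref{Corollary:Uniqueness of isomorphisms between deformations:BK and Perfd case}(1), whose proof already performs the required iteration; indeed the second assertion of that corollary (for $(\mathfrak{S}_\O,(\mathcal{E}))$ itself) makes your explicit passage to truncations unnecessary.
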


\begin{proof}
    This follows from Remark \ref{Remark:equivalence deformation evaluation} and Corollary \ref{Corollary:Uniqueness of isomorphisms between deformations:BK and Perfd case}.
\end{proof}

\begin{cor}\label{Corollary:(BK) implies universality}
Let $R \in \mathcal{C}_\O$
and
let
$\mathfrak{Q}$
be a deformation of $\mathcal{Q}$ over $R$.
If $\mathfrak{Q}$ has the property $\mathrm{(BK)}$,
then $\mathfrak{Q}$ is a universal deformation of $\mathcal{Q}$.
\end{cor}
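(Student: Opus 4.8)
The plan is to use a Breuil--Kisin presentation of the target ring and deduce the universal property from the bijectivity statements in the property $\mathrm{(BK)}$ (Definition \ref{Definition:property (Perfd) and (BK)}), after passing to the limit over Artinian quotients. So fix $R' \in \mathcal{C}_\O$ and a deformation $(\mathfrak{Q}', \gamma')$ of $\mathcal{Q}$ over $R'$. By Remark \ref{Remark:regular local ring and Breuil-Kisin module} we may choose an $\O_E$-prism $(\mathfrak{S}'_\O, (\mathcal{E}'))$ of Breuil--Kisin type over $\O$, with $\mathfrak{S}'_\O = \O[[t_1, \dotsc, t_n]]$, together with an isomorphism $R' \simeq \mathfrak{S}'_\O/\mathcal{E}'$ over $\O$; set $R'_m := R'/\mathfrak{m}^m_{R'} = \mathfrak{S}'_{\O, m}/\mathcal{E}'$.

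First I would identify $\Def(\mathcal{Q})_{R'}$, the set of isomorphism classes of deformations of $\mathcal{Q}$ over $R'$, with an inverse limit. By Theorem \ref{Theorem:main result on G displays over complete regular local rings} (see Remark \ref{Remark:equivalence deformation evaluation}), evaluation at the Breuil--Kisin prism identifies deformations of $\mathcal{Q}$ over $R'$ with deformations of $\mathcal{Q}$ over $(\mathfrak{S}'_\O, (\mathcal{E}'))$, and Example \ref{Example:banal over finite etale covering} identifies the latter groupoid with ${2-\varprojlim}_m \mathbf{Def}(\mathcal{Q})_{(\mathfrak{S}'_{\O, m}, (\mathcal{E}'))}$ (using that the structure map $(\mathfrak{S}'_\O, (\mathcal{E}')) \to (\O, (\pi))$ factors through every $(\mathfrak{S}'_{\O, m}, (\mathcal{E}'))$). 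By Corollary \ref{Corollary:Uniqueness of isomorphisms between deformations:BK and Perfd case}\,(1), each $\mathbf{Def}(\mathcal{Q})_{(\mathfrak{S}'_{\O, m}, (\mathcal{E}'))}$ has no nontrivial automorphisms, so this $2$-limit is again a discrete groupoid whose set of objects up to isomorphism is the ordinary inverse limit; thus one obtains a bijection
\[
\Def(\mathcal{Q})_{R'} \overset{\sim}{\to} \varprojlim_m \Def(\mathcal{Q})_{(\mathfrak{S}'_{\O, m}, (\mathcal{E}'))}.
\]

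Next I would apply $\mathrm{(BK)}$ with $\widetilde{\O} = \O$ and $\widetilde{R} = R'$: for each $m \geq 1$ it provides a bijection $\ev_{\mathfrak{Q}} \colon \Hom(R, R'_m)_e \overset{\sim}{\to} \Def(\mathcal{Q})_{(\mathfrak{S}'_{\O, m}, (\mathcal{E}'))}$, where $e \colon R \to k$ is the residue map, so that $\mathfrak{Q}_e \simeq \mathcal{Q}$. These bijections are compatible with the transition maps in $m$, both being induced by base change along $(\mathfrak{S}'_{\O, m+1}, (\mathcal{E}')) \to (\mathfrak{S}'_{\O, m}, (\mathcal{E}'))$ (Remark \ref{Remark:alternative definition of G displays over prismatic sites}: $(h^*\mathfrak{Q})_g = \mathfrak{Q}_{g \circ h}$). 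Since $R' = \varprojlim_m R'_m$, we have $\Hom(R, R')_e = \varprojlim_m \Hom(R, R'_m)_e$, and combining with the previous paragraph, taking $\varprojlim_m$ yields a bijection
\[
\Hom(R, R')_e \overset{\sim}{\to} \Def(\mathcal{Q})_{R'}, \qquad h \mapsto [\,h^*\mathfrak{Q}\,].
\]
Now $\Hom(R, R')_e$ is precisely the set of local homomorphisms $R \to R'$ over $\O$, so this bijection says exactly that, for the given deformation $(\mathfrak{Q}', \gamma')$, there is a unique local $\O$-homomorphism $h \colon R \to R'$ with $(h^*\mathfrak{Q}, \gamma) \simeq (\mathfrak{Q}', \gamma')$ as deformations of $\mathcal{Q}$ over $R'$. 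This is condition $(\ast)$ of Definition \ref{Definition:universal deformation}, so $\mathfrak{Q}$ is a universal deformation. (For the uniqueness of the isomorphism of deformations, which is not actually needed here, one may invoke Corollary \ref{Corollary:isomorphism of deformatiions over prismatic site}.)

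The only point requiring care is the collapse of the $2$-categorical inverse limit of the deformation groupoids over the $(\mathfrak{S}'_{\O, m}, (\mathcal{E}'))$ to an ordinary inverse limit of sets in the first displayed isomorphism; this is precisely where the rigidity statement Corollary \ref{Corollary:Uniqueness of isomorphisms between deformations:BK and Perfd case} enters. Everything else is routine bookkeeping with base-change functors and with the identification of $\Hom(-,-)_e$ sets with local $\O$-homomorphisms.
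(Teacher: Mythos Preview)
Your proof is correct and follows essentially the same route as the paper: choose a Breuil--Kisin presentation of $R'$, use Example \ref{Example:banal over finite etale covering} and Corollary \ref{Corollary:Uniqueness of isomorphisms between deformations:BK and Perfd case} to pass to the inverse limit over the truncated prisms $(\mathfrak{S}'_{\O,m},(\mathcal{E}'))$, and then take the inverse limit of the levelwise bijections supplied by $\mathrm{(BK)}$. The only cosmetic difference is that the paper phrases the resulting bijection with target $\Def(\mathcal{Q})_{(\mathfrak{S}'_\O,(\mathcal{E}'))}$ and then invokes Remark \ref{Remark:equivalence deformation evaluation} to translate to $\Def(\mathcal{Q})_{R'}$, whereas you fold that translation into the first step.
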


\begin{proof}
    Let $R' \in \mathcal{C}_\O$ and
    let
    $\mathfrak{Q}'$
    be
    a deformation of $\mathcal{Q}$ over $R'$.
    We want to show that
    there exists a unique 
    local homomorphism
    $
    h \colon R \to R'
    $
    over $\O$ such that 
    $h^*\mathfrak{Q}$ is isomorphic to
    $\mathfrak{Q}'$
    as a deformation of $\mathcal{Q}$.
    We fix
    an $\O_E$-prism
    $(\mathfrak{S}_\O, (\mathcal{E}))$
    of Breuil--Kisin type
    with an isomorphism
    $
    R' \simeq \mathfrak{S}_\O/\mathcal{E}
    $
    over $\O$.
    By Example \ref{Example:banal over finite etale covering} and Corollary \ref{Corollary:Uniqueness of isomorphisms between deformations:BK and Perfd case}, we have
    \[
    \Def(\mathcal{Q})_{(\mathfrak{S}_\O, (\mathcal{E}))} \overset{\sim}{\to}
    {\varprojlim}_m \Def(\mathcal{Q})_{(\mathfrak{S}_{\O, m}, (\mathcal{E}))}.
    \]
    Since $\mathfrak{Q}$ has the property $\mathrm{(BK)}$,
    we see that the evaluation map
    \[
    \ev_\mathfrak{Q} \colon \Hom(R, R')_{e} \to \Def(\mathcal{Q})_{(\mathfrak{S}_\O, (\mathcal{E}))}, \quad h \mapsto \mathfrak{Q}_h
    \]
    is bijective.
    (Here $e \colon R \to k$ is the projection.)
    For a local homomorphism $h \colon R \to R'$ over $\O$,
    it follows from Remark \ref{Remark:equivalence deformation evaluation}
    that $h^*\mathfrak{Q} \simeq \mathfrak{Q}'$
    if and only if
    $\mathfrak{Q}_h \simeq \mathfrak{Q}'_{(\mathfrak{S}_\O, (\mathcal{E}))}$.
    Since the above map $\ev_\mathfrak{Q}$ is bijective, such a homomorphism $h$ exists and is unique.
\end{proof}

\begin{rem}\label{Remark:BK implies versal}
    If
    a deformation
    $
    \mathfrak{Q}
    $
    of
    $\mathcal{Q}$
    over $R$
    for some $R \in \mathcal{C}_\O$
    satisfies the property $(\mathrm{BK})$,
    then the Kodaira--Spencer map $\KS(\mathfrak{Q})$ is bijective, so that $\mathfrak{Q}$ is versal.
\end{rem}

\begin{rem}\label{Remark:versality implies universality}
Assume that there exists a deformation
$\mathfrak{Q}$
of
$\mathcal{Q}$ over $R$ for some $R \in \mathcal{C}_\O$ such that $\mathfrak{Q}$ has the property $\mathrm{(BK)}$.
It follows from Corollary \ref{Corollary:(BK) implies universality} that $\mathfrak{Q}$ is a universal deformation.
Let $\mathfrak{Q}'$ be another deformation of $\mathcal{Q}$ over $R'$ for some $R' \in \mathcal{C}_\O$.
There exists a unique local homomorphism
$
h \colon R \to R'
$
over $\O$ such that
$h^*\mathfrak{Q} \simeq \mathfrak{Q}'$ by the universal property.
For
the homomorphism
$
\mathfrak{t}_{R'} \to \mathfrak{t}_{R}
$
induced from $h$, the following diagram commutes:
    \[
    \xymatrix{
    \mathfrak{t}_{R'}   \ar[rr]^-{} \ar[rd]_-{\KS(\mathfrak{Q}')} & &  \mathfrak{t}_{R}   \ar[ld]^-{\KS(\mathfrak{Q})} \\
    & \Def(\mathcal{Q})_{(\O[[t]]/t^2, (\pi))}.  &
    }
    \]
Since $\KS(\mathfrak{Q})$ is bijective, we see that
$\mathfrak{Q}'$ is versal if and only if $\mathfrak{t}_{R'} \to \mathfrak{t}_{R}$ is surjective.
\end{rem}

To proceed further, we need to introduce two more conditions (Perfd-lin) and (BK-lin).

\begin{rem}\label{Remark:differential map}
We retain the notation from Definition \ref{Definition:property (Perfd) and (BK)}
with $R=R_{G, \mu}$.
Let $m \geq 1$ be an integer.
Let $g \in \Hom(R_{G, \mu}, S/a^{m})_{e}$.
For any $h \in \Hom(R_{G, \mu}, S/a^{m+1})_{g}$, the map
\[
\mathrm{Diff}_{h} \colon \Hom(R_{G, \mu}, S/a^{m+1})_{g} \to \Hom_k(\mathfrak{t}^\vee_{G, \mu}, a^{m}S/a^{m+1}S)=\mathfrak{t}_{G, \mu} \otimes_k (a^{m}S/a^{m+1}S)
\]
defined by $g' \mapsto g'-h$ is bijective.

Similarly, let
$g \in \Hom(R_{G, \mu}, \widetilde{R}_m)_{e}$.
For any $h \in \Hom(R_{G, \mu}, \widetilde{R}_{m+1})_{g}$, we have the following bijection:
\[
\mathrm{Diff}_{h} \colon \Hom(R_{G, \mu}, \widetilde{R}_{m+1})_{g} \overset{\sim}{\to} \mathfrak{t}_{G, \mu} \otimes_k (\mathfrak{m}^{m}_{\widetilde{R}}/\mathfrak{m}^{m+1}_{\widetilde{R}}), \quad g' \mapsto g'-h.
\]
\end{rem}

\begin{defn}\label{Definition:properties (Perfd-lin) and (BK-lin)}
    We retain the notation of Definition \ref{Definition:property (Perfd) and (BK)}.
    Let
    $
    \mathfrak{Q}
    $
    be a prismatic $G$-$\mu$-display over $R_{G, \mu}$.
    We say that $\mathfrak{Q}$ \textit{has the property} (Perfd-lin) if it satisfies the following condition:
    \begin{itemize}
    \item[$(\ast)$] 
    Let $(S, a^\flat)$ be a perfectoid pair over $\O$ such that $\mathfrak{Q}_e$ is a banal $G$-$\mu$-display over $(W_{\O_E}(S^\flat)/[a^\flat], I_S)$.
    Let $m \geq 1$ be an integer
    and let $g \in \Hom(R_{G, \mu}, S/a^{m})_{e}$.
    Then, for any $h \in \Hom(R_{G, \mu}, S/a^{m+1})_{g}$ and any isomorphism
    $\beta \colon \mathfrak{Q}_g \overset{\sim}{\to} \mathcal{Q}_X$, the following composition
    \begin{equation*}
    \begin{split}
    \mathfrak{t}_{G, \mu} \otimes_k &(a^{m}S/a^{m+1}S) \overset{\mathrm{Diff}^{-1}_{h}}{\longrightarrow} \Hom(R_{G, \mu}, S/a^{m+1})_{g} \\
    &\overset{\ev_\mathfrak{Q}}{\longrightarrow} \Def(\mathfrak{Q}_g)_{(W_{\O_E}(S^\flat)/[a^\flat]^{m+1}, I_S)} \overset{\Per^{\beta}_{\mathfrak{Q}_h}}{\longrightarrow} \mathfrak{t}_{G, \mu} \otimes_k (a^{m}S/a^{m+1}S)
    \end{split}
\end{equation*}
    is an \textit{$S/a$-linear isomorphism}.
    See Remark \ref{Remark:normalized period map} for the normalized period map $\Per^{\beta}_{\mathfrak{Q}_h}$, which is bijective by Theorem \ref{Theorem:GM deformation}.
    The above composition is denoted by
    $
    \mathrm{L}^{\beta}_{\mathfrak{Q}, h}.
    $
    \end{itemize}
    We say that $\mathfrak{Q}$ \textit{has the property} (BK-lin) if it satisfies the following condition:
\begin{itemize}
    \item[$(\ast)$] Let $\widetilde{k}$ be a perfect field containing $k$ such that
    $\mathfrak{Q}_{(\widetilde{\O}, (\pi))}$ is banal.
    Let $(\mathfrak{S}_{\widetilde{\O}}, (\mathcal{E}))$ be an $\O_E$-prism of Breuil--Kisin type over $\widetilde{\O}$.
    Let $m \geq 1$ be an integer and
    let $g \in \Hom(R_{G, \mu}, \widetilde{R}_m)_{e}$.
    Then, for any $h \in \Hom(R_{G, \mu}, \widetilde{R}_{m+1})_{g}$ and any isomorphism
    $\beta \colon \mathfrak{Q}_g \overset{\sim}{\to} \mathcal{Q}_X$, the following composition
    \begin{equation*}
    \begin{split}
    \mathfrak{t}_{G, \mu} \otimes_k &(\mathfrak{m}^{m}_{\widetilde{R}}/\mathfrak{m}^{m+1}_{\widetilde{R}}) \overset{\mathrm{Diff}^{-1}_{h}}{\longrightarrow} \Hom(R_{G, \mu}, \widetilde{R}_{m+1})_{g} \\
    &\overset{\ev_\mathfrak{Q}}{\longrightarrow} \Def(\mathfrak{Q}_g)_{(\mathfrak{S}_{\widetilde{\O}, m+1}, (\mathcal{E}))} \overset{\Per^{\beta}_{\mathfrak{Q}_h}}{\longrightarrow} \mathfrak{t}_{G, \mu} \otimes_k (\mathfrak{m}^{m}_{\widetilde{R}}/\mathfrak{m}^{m+1}_{\widetilde{R}})
    \end{split}
\end{equation*}
    is a \textit{$\widetilde{k}$-linear isomorphism}.
    This composition is denoted by
    $
    \mathrm{L}^{\beta}_{\mathfrak{Q}, h}.
    $
\end{itemize}
\end{defn}

\begin{rem}\label{Remark: flat descent for (Perfd-lin)}
    Let the notation be as in Definition \ref{Definition:properties (Perfd-lin) and (BK-lin)}.
    The property (Perfd-lin) can be checked $\pi$-completely flat locally on $S$.
    More precisely,
    let $S \to S'$ be a $\pi$-completely faithfully flat homomorphism of perfectoid rings.
    Let $S''$ be the $\pi$-adic completion of $S' \otimes_S S'$, which is a perfectoid ring (see \cite[Proposition 2.1.11]{CS}).
    If the condition in Definition \ref{Definition:properties (Perfd-lin) and (BK-lin)} is satisfied for $(S', a^\flat)$ and $(S'', a^\flat)$
    (cf.\ Lemma \ref{Lemma:perfectoid ring etale morphism:torsion case} (1)), then it is also satisfied for $(S, a^\flat)$.
    Indeed, since $\pi=0$ in $S/a$, we see that
    $S/a \to S'/a$ is faithfully flat and $S''/a = S'/a \otimes_{S/a} S'/a$.
    Therefore the sequence
    \[
    \mathfrak{t}_{G, \mu} \otimes_k (a^{m}S/a^{m+1}S) \to \mathfrak{t}_{G, \mu} \otimes_k (a^{m}S'/a^{m+1}S') \rightrightarrows \mathfrak{t}_{G, \mu} \otimes_k (a^{m}S''/a^{m+1}S'')
    \]
    is exact by flat descent, from which our claim follows.
    (The same holds if we replace $S''$ by a perfectoid ring which is $\pi$-completely faithfully flat over the $\pi$-adic completion of $S' \otimes_S S'$.)
\end{rem}

\begin{lem}\label{Lemma:(BK-lin) implies (BK), (Perfd-lin) implies (Perfd)}
    Let
    $
    \mathfrak{Q}
    $
    be a prismatic $G$-$\mu$-display over $R_{G, \mu}$.
    \begin{enumerate}
        \item
        Assume that
        $\mathfrak{Q}$
        has the property $\mathrm{(Perfd\mathchar`-lin)}$.
        Then $\mathfrak{Q}$ has the property $\mathrm{(Perfd)}$.
        \item
        Assume that
        $\mathfrak{Q}$
        has the property $\mathrm{(BK\mathchar`-lin)}$.
        Then $\mathfrak{Q}$ has the property $\mathrm{(BK)}$.
    \end{enumerate}
\end{lem}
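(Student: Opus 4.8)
The plan is to prove both implications at once by induction on $m$, using that the evaluation map at level $m+1$ lies over the evaluation map at level $m$ in a commutative square, and that the fibers of the vertical restriction maps are controlled by the normalized period map. I will carry out the argument for the implication (Perfd-lin) $\Rightarrow$ (Perfd); the implication (BK-lin) $\Rightarrow$ (BK) is identical after replacing the tower $(W_{\O_E}(S^\flat)/[a^\flat]^{m+1}, I_S) \to (W_{\O_E}(S^\flat)/[a^\flat]^{m}, I_S)$ by $(\mathfrak{S}_{\widetilde{\O}, m+1}, (\mathcal{E})) \to (\mathfrak{S}_{\widetilde{\O}, m}, (\mathcal{E}))$, and $S/a^m$ by $\widetilde{R}_m = \mathfrak{S}_{\widetilde{\O}, m}/\mathcal{E}$.

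The first step is to reduce to the situation in which $\mathfrak{Q}_e$ is banal over $(W_{\O_E}(S^\flat)/[a^\flat], I_S)$, so that the hypothesis (Perfd-lin) becomes applicable. Given a perfectoid pair $(S, a^\flat)$ over $\O$, both $\Hom(R_{G, \mu}, S/a^m)_e$ and $\Def(\mathfrak{Q}_e)_{(W_{\O_E}(S^\flat)/[a^\flat]^m, I_S)}$ satisfy faithfully flat descent along a $\pi$-completely faithfully flat map $S \to S'$ of perfectoid rings: for the first because $S/a \to S'/a$ is faithfully flat with $S''/a = S'/a \otimes_{S/a} S'/a$ (compare Remark \ref{Remark: flat descent for (Perfd-lin)}), and for the second because $G$-$\mu$-displays over the prismatic site satisfy flat descent together with Corollary \ref{Corollary:deformations form a sheaf}. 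Choosing a finite extension $\widetilde{k}$ of $k$ over which the Hodge filtration of the value $\mathfrak{Q}_{(\O, (\pi))}$ of $\mathfrak{Q}$ at $(\O, (\pi))$ (with structure map $R_{G, \mu} \to k$) becomes trivial — which exists because $P_\mu$ is smooth, so the relevant $P_\mu$-torsor over $\Spec k$ splits over a finite separable extension — and setting $\widetilde{\O} = W(\widetilde{k}) \otimes_{W(\F_q)} \O_E$, the map $S \to S' := S \otimes_\O \widetilde{\O}$ is finite étale, hence $S'$ underlies a perfectoid pair $(S', a^\flat)$, and $\mathfrak{Q}_e$ over the corresponding prism is the pullback of a banal $G$-$\mu$-display and so is banal by Proposition \ref{Proposition:G display with trivial Hodge filtration is banal}. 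Thus after this replacement we may assume $\mathfrak{Q}_e$ is banal, and (Perfd-lin) applies. (In the (BK) case one argues the same way, enlarging the given perfect field $\widetilde{k}$ to, say, $\overline{\widetilde{k}}$, over which the Hodge filtration is trivial, and descending along the faithfully flat map $\widetilde{k} \to \overline{\widetilde{k}}$.)

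Now I induct on $m$. For $m = 1$ the evaluation map is a bijection between singletons: $\Hom(R_{G, \mu}, S/a)_e = \{e\}$, and $\Def(\mathfrak{Q}_e)_{(W_{\O_E}(S^\flat)/[a^\flat], I_S)}$ is the set of isomorphism classes of deformations along the identity thickening, which is a point. For the inductive step, consider the square formed by $\ev_\mathfrak{Q}$ at levels $m+1$ and $m$ together with the restriction maps along $(W_{\O_E}(S^\flat)/[a^\flat]^{m+1}, I_S) \to (W_{\O_E}(S^\flat)/[a^\flat]^{m}, I_S)$ and $S/a^{m+1} \to S/a^m$; the bottom row is bijective by the inductive hypothesis. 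Fix $g \in \Hom(R_{G, \mu}, S/a^m)_e$. The fiber of the top restriction map over $g$ is $\Hom(R_{G, \mu}, S/a^{m+1})_g$, which is nonempty — since $R_{G, \mu}$ is formally smooth over $\O$ and $S/a^{m+1} \to S/a^m$ is square-zero — and, after a lift $h$ is chosen, is identified with $\mathfrak{t}_{G, \mu} \otimes_k (a^m S/a^{m+1}S)$ via $\mathrm{Diff}_h$ (Remark \ref{Remark:differential map}). On the other side, $\mathfrak{Q}_g$ is a deformation of the banal display $\mathfrak{Q}_e$ along a thickening with nilpotent kernel, hence banal by Lemma \ref{Lemma:deformation of banal is banal}; fixing an isomorphism $\beta \colon \mathfrak{Q}_g \overset{\sim}{\to} \mathcal{Q}_X$, Corollary \ref{Corollary:Uniqueness of isomorphisms between deformations:BK and Perfd case} identifies the fiber of the bottom restriction map over the class of $\mathfrak{Q}_g$ with $\Def(\mathfrak{Q}_g)_{(W_{\O_E}(S^\flat)/[a^\flat]^{m+1}, I_S)}$, and under these identifications $\ev_\mathfrak{Q}$ restricts to precisely the evaluation map occurring in (Perfd-lin). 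Since the thickening $(W_{\O_E}(S^\flat)/[a^\flat]^{m+1}, I_S) \to (W_{\O_E}(S^\flat)/[a^\flat]^{m}, I_S)$ is a special nilpotent thickening of perfectoid type and satisfies Assumption \ref{Assumption for special nilpotent thickening}, the normalized period map $\Per^{\beta}_{\mathfrak{Q}_h}$ is bijective by Theorem \ref{Theorem:GM deformation}; hence the factorization $\mathrm{L}^{\beta}_{\mathfrak{Q}, h} = \Per^{\beta}_{\mathfrak{Q}_h} \circ \ev_\mathfrak{Q} \circ \mathrm{Diff}^{-1}_h$ shows that $\ev_\mathfrak{Q}$ is bijective on the fiber over $g$ exactly when $\mathrm{L}^{\beta}_{\mathfrak{Q}, h}$ is — and the latter holds by (Perfd-lin), which asserts $\mathrm{L}^{\beta}_{\mathfrak{Q}, h}$ to be even an $S/a$-linear isomorphism. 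As $\ev_\mathfrak{Q}$ is thus bijective on the base and on every fiber, it is bijective at level $m+1$, and the induction goes through.

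The step I expect to be the main obstacle is the reduction to the banal case: one has to check carefully that $\Def(\mathfrak{Q}_e)$ descends along $\pi$-completely faithfully flat base changes of $S$ (and, in the (BK) version, along extensions of the perfect residue field, where a finite splitting field of the Hodge torsor need not embed into the given field $\widetilde{k}$ and one must pass to a larger perfect field), and one has to identify the fibers of the restriction maps on deformation sets with the deformation sets of $\mathfrak{Q}_g$ — the key input here being the uniqueness of isomorphisms between deformations (Corollary \ref{Corollary:Uniqueness of isomorphisms between deformations:BK and Perfd case}). Granting this bookkeeping, the inductive step is a formal manipulation of the definitions of (Perfd-lin)/(BK-lin), Remark \ref{Remark:differential map}, and Theorem \ref{Theorem:GM deformation}.
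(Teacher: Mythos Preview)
Your proof is correct and follows the same approach as the paper: reduce to the banal case via étale descent, then on each fiber of the tower combine $\mathrm{(Perfd\mathchar`-lin)}$/$\mathrm{(BK\mathchar`-lin)}$ with the bijectivity of the normalized period map (Theorem \ref{Theorem:GM deformation}). The only difference is the order of the two reductions---the paper first passes to the fiber map $\ev_\mathfrak{Q} \colon \Hom(R_{G,\mu}, S/a^{m+1})_g \to \Def(\mathfrak{Q}_g)$ and only then invokes Corollary \ref{Corollary:deformations form a sheaf} (which applies directly since the single-step thickening is special), whereas your upfront reduction to banal needs the $\Def$ set for the multi-step thickening to be an étale sheaf, a fact that follows from Corollary \ref{Corollary:Uniqueness of isomorphisms between deformations:BK and Perfd case} together with étale descent for $G$-$\mu$-displays but is not literally Corollary \ref{Corollary:deformations form a sheaf}.
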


\begin{proof}
    (1) Let $(S, a^\flat)$ be a perfectoid pair over $\O$.
    We want to prove that the map
    \[
    \ev_\mathfrak{Q} \colon \Hom(R_{G, \mu}, S/a^{m})_{e} \to \Def(\mathfrak{Q}_e)_{(W_{\O_E}(S^\flat)/[a^\flat]^{m}, I_S)}
    \]
    is bijective for any $m \geq 1$.
    It suffices to prove that for any $m \geq 1$ and any $g \in \Hom(R_{G, \mu}, S/a^{m})_{e}$,
    the map
    \[
    \ev_\mathfrak{Q} \colon \Hom(R_{G, \mu}, S/a^{m+1})_{g} \to \Def(\mathfrak{Q}_g)_{(W_{\O_E}(S^\flat)/[a^\flat]^{m+1}, I_S)}
    \]
    is bijective.
    By Corollary \ref{Corollary:deformations form a sheaf},
    we may assume that $\mathfrak{Q}_e$ is banal.
    Then the assertion follows from the property $\mathrm{(Perfd\mathchar`-lin)}$ and Theorem \ref{Theorem:GM deformation}.

    (2) This follows by the same argument as in (1).
\end{proof}

The following proposition is the main reason for introducing (Perfd-lin) and (BK-lin).

\begin{prop}\label{Proposition:(Perfd-lin) implies (BK-lin)}
Let
$
\mathfrak{Q}
$
be a prismatic $G$-$\mu$-display over $R_{G, \mu}$.
Assume that
$\mathfrak{Q}$
has the property $\mathrm{(Perfd\mathchar`-lin)}$.
Then $\mathfrak{Q}$ has the property $\mathrm{(BK\mathchar`-lin)}$.
\end{prop}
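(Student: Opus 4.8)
The goal is to deduce the property $\mathrm{(BK\mathchar`-lin)}$ from $\mathrm{(Perfd\mathchar`-lin)}$, i.e.\ to check that for a prism $(\mathfrak{S}_{\widetilde{\O}}, (\mathcal{E}))$ of Breuil--Kisin type over $\widetilde{\O}$, an integer $m \geq 1$, a homomorphism $g \in \Hom(R_{G, \mu}, \widetilde{R}_m)_e$, a lift $h \in \Hom(R_{G, \mu}, \widetilde{R}_{m+1})_g$, and an isomorphism $\beta \colon \mathfrak{Q}_g \overset{\sim}{\to} \mathcal{Q}_X$, the composite map $\mathrm{L}^{\beta}_{\mathfrak{Q}, h}$ on $\mathfrak{t}_{G, \mu} \otimes_k (\mathfrak{m}^m_{\widetilde{R}}/\mathfrak{m}^{m+1}_{\widetilde{R}})$ is $\widetilde{k}$-linear and bijective. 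The strategy is to compare the Breuil--Kisin situation with a perfectoid one via a primitive map, using the technical lemmas of Section~\ref{Subsection:Preliminaries on maps of prisms}, and to transport the already-known $S/a$-linearity of $\mathrm{L}^{\beta}_{\mathfrak{Q}, h}$ in the perfectoid case along this comparison. Concretely, I would take a primitive map
\[
f \colon (\mathfrak{S}_{\widetilde{\O}}, (\mathcal{E})) \to (W_{\O_E}(V^\flat)/[a^\flat]^{m+1}, I_V)
\]
where $(V, a^\flat)$ is a perfectoid pair over $\widetilde{\O}$ chosen so that the induced map $\mathfrak{m}^l_{\widetilde{R}}/\mathfrak{m}^{l+1}_{\widetilde{R}} \to a^l V/a^{l+1}V$ is injective (in the relevant degrees $l = m$ and $l = 1$); Lemma~\ref{Lemma:lifting lemma I} and Corollary~\ref{Corollary:map from regular local ring to valuation ring} supply such maps, and by choosing $V$ large enough (a valuation ring with algebraically closed fraction field, possibly after base change as in Remark~\ref{Remark:Andre lemma}) one can make these maps injective on any prescribed finite-dimensional subspace. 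Then $\widetilde{R}_{m+1}$ embeds compatibly, $\widetilde{R}_m$ embeds into $V/a^{m+1}$, and $\widetilde{k}$ embeds into $V/a$.

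\textbf{Key steps, in order.} First, fix the data $(\mathfrak{S}_{\widetilde{\O}}, (\mathcal{E}))$, $m$, $g$, $h$, $\beta$, and reduce to checking linearity and bijectivity of $\mathrm{L}^{\beta}_{\mathfrak{Q}, h}$ after applying a faithfully flat base change. Second, using Corollary~\ref{Corollary:map from regular local ring to valuation ring} (applied degreewise, and combining finitely many one-dimensional quotients, or passing to an algebraic closure of $\widetilde{k}$ which is harmless by flat descent as in Remark~\ref{Remark: flat descent for (Perfd-lin)}), construct a perfectoid pair $(V, a^\flat)$ over $\widetilde{\O}$ and a primitive map $f$ lifting $g$ (and, after composing with a further lift via Lemma~\ref{Lemma:lifting lemma II}, lifting $h$) such that the ring homomorphisms $\widetilde{R}_m \hookrightarrow V/a^m$ and the induced graded maps $\mathfrak{m}^m_{\widetilde{R}}/\mathfrak{m}^{m+1}_{\widetilde{R}} \hookrightarrow a^m V/a^{m+1} V$ are injective. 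Third, observe that the evaluation map $\ev_{\mathfrak{Q}}$, the $\mathrm{Diff}_h$ maps, and the normalized period map $\Per^\beta$ are all compatible with the base change along $f$ (this compatibility is essentially formal from the definitions: base change of prismatic $G$-$\mu$-displays commutes with the constructions in Section~\ref{Section:The Grothendieck--Messing deformation theory for G displays}, cf.\ Remark~\ref{Remark:base change of G mu displays} and the naturality in Lemma~\ref{Lemma:lift of Hodge filtration banal case, conormal module}). Hence one obtains a commutative square relating $\mathrm{L}^{\beta}_{\mathfrak{Q}, h}$ over $\widetilde{R}$ to $\mathrm{L}^{\beta'}_{\mathfrak{Q}, h'}$ over $V$, where $\beta'$ and $h'$ are the base changes, and the vertical maps are $\otimes_k$ of the injections $\mathfrak{m}^m_{\widetilde{R}}/\mathfrak{m}^{m+1}_{\widetilde{R}} \hookrightarrow a^m V/a^{m+1} V$ together with $\mathfrak{t}_{G,\mu}$. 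Fourth, invoke $\mathrm{(Perfd\mathchar`-lin)}$: $\mathrm{L}^{\beta'}_{\mathfrak{Q}, h'}$ is $V/a$-linear and bijective. Since the vertical injections are obtained by extension of scalars $\widetilde{k} \hookrightarrow V/a$ from $\widetilde{k}$-linear injections, $\widetilde{k}$-linearity of $\mathrm{L}^{\beta}_{\mathfrak{Q}, h}$ follows immediately (a map that becomes linear after a faithfully flat extension of the base field was already linear), and bijectivity follows because a $\widetilde{k}$-linear map between finite-dimensional $\widetilde{k}$-vector spaces that is injective after $\otimes_{\widetilde{k}} V/a$ must itself be injective, hence — once one knows the two spaces have the same dimension — bijective; equidimensionality is clear since $\dim_{\widetilde{k}} \mathfrak{t}_{G,\mu} \otimes_k (\mathfrak{m}^m_{\widetilde{R}}/\mathfrak{m}^{m+1}_{\widetilde{R}})$ is the same on both sides of the composite by construction (source and target of $\mathrm{L}^{\beta}_{\mathfrak{Q}, h}$ coincide).

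\textbf{Main obstacle.} The routine part is the compatibility of all the constructions with base change along $f$; the genuinely delicate point is ensuring that a single primitive map $f$ can be chosen so that \emph{both} the degree-$m$ graded piece $\mathfrak{m}^m_{\widetilde{R}}/\mathfrak{m}^{m+1}_{\widetilde{R}}$ injects into $a^m V/a^{m+1} V$ \emph{and} $f$ simultaneously lifts the given $h \in \Hom(R_{G,\mu}, \widetilde{R}_{m+1})_g$ at level $m+1$, since Corollary~\ref{Corollary:map from regular local ring to valuation ring} only controls injectivity on a single chosen element in a single degree. I would handle this either by applying the corollary to a basis of $\mathfrak{m}^m_{\widetilde{R}}/\mathfrak{m}^{m+1}_{\widetilde{R}}$ one vector at a time and taking a suitable fiber product / common refinement of the resulting valuation rings (using that a finite intersection of the corresponding conditions can be met over a large enough valuation ring with algebraically closed fraction field), or more cleanly by first passing to $\widetilde{k} = \overline{\widetilde{k}}$ and then choosing the map $R \to R'$ in Lemma~\ref{Lemma:map to dimension 1 regular local ring} generic enough that the associated-graded map is injective in the finitely many relevant degrees at once; the lifting of $h$ is then arranged afterwards by Lemma~\ref{Lemma:lifting lemma II}. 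Everything else is bookkeeping with the normalized period maps and flat descent as in Remark~\ref{Remark: flat descent for (Perfd-lin)}.
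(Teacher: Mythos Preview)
Your core strategy---produce a commutative square comparing the Breuil--Kisin map $\mathrm{L}^{\beta}_{\mathfrak{Q}, h}$ with a perfectoid counterpart $\mathrm{L}^{\beta'}_{\mathfrak{Q}, h'}$ and import linearity and bijectivity from $\mathrm{(Perfd\mathchar`-lin)}$---is exactly the paper's. The difference is in how the obstacle you identify is handled. The paper does \emph{not} look for a single comparison map with injective graded piece. Instead it checks the identities one element at a time: to prove $\mathrm{L}(g_1+g_2)=\mathrm{L}(g_1)+\mathrm{L}(g_2)$, it suffices (by Corollary~\ref{Corollary:map from regular local ring to valuation ring}, which says every nonzero element of $\mathfrak{t}_{G,\mu}\otimes_k(\mathfrak{m}^m_{\widetilde R}/\mathfrak{m}^{m+1}_{\widetilde R})$ is detected by \emph{some} primitive map) to show that \emph{every} comparison map sends the difference to zero; this follows from the commutative square and the linearity of $\mathrm{L}^{\beta'}_{\mathfrak{Q},h'}$. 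Scalar multiplication and injectivity of $\mathrm{L}$ are checked the same way. So the ``main obstacle'' simply disappears.

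Your proposed fix (b) fails as written: a one-dimensional regular quotient $R'$ has $\dim_{\widetilde k}(\mathfrak{m}^m_{R'}/\mathfrak{m}^{m+1}_{R'})=1$, so the map from $\mathfrak{m}^m_{\widetilde R}/\mathfrak{m}^{m+1}_{\widetilde R}$ cannot be injective once $n=\dim\widetilde R>1$, however generic the choice. Fix (a) can be made rigorous---finite-dimensionality gives finitely many detecting maps, and a finite \emph{product} of the resulting perfectoid valuation rings (not a fibre product) is again perfectoid and yields an injective diagonal comparison---but this is heavier than the paper's element-by-element argument, and ``fibre product / common refinement'' does not describe it. Also, your phrase ``a map that becomes linear after a faithfully flat extension of the base field was already linear'' is not quite right as stated, since $\mathrm{L}$ is not a priori a map that one can base-change; what you actually need (and what works) is the direct diagram chase $\iota(\mathrm{L}(g_1+g_2)-\mathrm{L}(g_1)-\mathrm{L}(g_2))=0$ together with injectivity of $\iota$.
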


\begin{proof}
    With the notation of Definition \ref{Definition:properties (Perfd-lin) and (BK-lin)},
    we want to prove that for any homomorphism $h \in \Hom(R_{G, \mu}, \widetilde{R}_{m+1})_{g}$ and any isomorphism
    $\beta \colon \mathfrak{Q}_g \overset{\sim}{\to} \mathcal{Q}_X$,
    the map
    \[
    \mathrm{L}^{\beta}_{\mathfrak{Q}, h} \colon \mathfrak{t}_{G, \mu} \otimes_k (\mathfrak{m}^{m}_{\widetilde{R}}/\mathfrak{m}^{m+1}_{\widetilde{R}}) \to \mathfrak{t}_{G, \mu} \otimes_k (\mathfrak{m}^{m}_{\widetilde{R}}/\mathfrak{m}^{m+1}_{\widetilde{R}})
    \]
    is a $\widetilde{k}$-linear isomorphism.
    We first prove that $\mathrm{L}^{\beta}_{\mathfrak{Q}, h}$ is a $\widetilde{k}$-linear homomorphism.
    For two elements $g_1, g_2 \in \mathfrak{t}_{G, \mu} \otimes_k (\mathfrak{m}^{m}_{\widetilde{R}}/\mathfrak{m}^{m+1}_{\widetilde{R}})$,
    we claim that
    \[
    \mathrm{L}^{\beta}_{\mathfrak{Q}, h}(g_1+g_2)=\mathrm{L}^{\beta}_{\mathfrak{Q}, h}(g_1)+\mathrm{L}^{\beta}_{\mathfrak{Q}, h}(g_2).
    \]
    Indeed, by virtue of Corollary \ref{Corollary:map from regular local ring to valuation ring},
    it suffices to prove that for any 
    perfectoid pair
    $(V, a^\flat)$ over $\widetilde{\O}$ and any map
    $
    f \colon (\mathfrak{S}_{\widetilde{\O}}, (\mathcal{E})) \to (W_{\O_E}(V^\flat)/[a^\flat]^{m+1}, I_V)
    $
    over $\widetilde{\O}$,
    the induced homomorphism
    \[
    \mathfrak{t}_{G, \mu} \otimes_k (\mathfrak{m}^{m}_{\widetilde{R}}/\mathfrak{m}^{m+1}_{\widetilde{R}}) \to
    \mathfrak{t}_{G, \mu} \otimes_k (a^mV/a^{m+1}V)
    \]
    sends the element
    $
    \mathrm{L}^{\beta}_{\mathfrak{Q}, h}(g_1+g_2)-\mathrm{L}^{\beta}_{\mathfrak{Q}, h}(g_1)-\mathrm{L}^{\beta}_{\mathfrak{Q}, h}(g_2)
    $
    to $0$.
    Let $h' \colon R_{G, \mu} \to V/a^{m+1}$
    be the composition of $h$ with
    the induced homomorphism $\widetilde{R}_{m+1} \to V/a^{m+1}$, and let $\beta'$ be the base change of $\beta$ along
    the map
    $
    (\mathfrak{S}_{\widetilde{\O}, m}, (\mathcal{E})) \to (W_{\O_E}(V^\flat)/[a^\flat]^{m}, I_V)
    $
    induced from $f$.
    The following diagram is commutative:
    \[
    \xymatrix{
    \mathfrak{t}_{G, \mu} \otimes_k (\mathfrak{m}^{m}_{\widetilde{R}}/\mathfrak{m}^{m+1}_{\widetilde{R}}) \ar^-{\mathrm{L}^{\beta}_{\mathfrak{Q}, h}}[r]  \ar[d]_-{}  &  \mathfrak{t}_{G, \mu} \otimes_k (\mathfrak{m}^{m}_{\widetilde{R}}/\mathfrak{m}^{m+1}_{\widetilde{R}})  \ar[d]_-{} \\
    \mathfrak{t}_{G, \mu} \otimes_k (a^mV/a^{m+1}V) \ar[r]^-{\mathrm{L}^{\beta'}_{\mathfrak{Q}, h'}} & \mathfrak{t}_{G, \mu} \otimes_k (a^mV/a^{m+1}V).
    }
    \]
    Since $\mathrm{L}^{\beta'}_{\mathfrak{Q}, h'}$ is a $V/a$-linear homomorphism by our assumption, the claim follows.
    We can prove that
    $
    \mathrm{L}^{\beta}_{\mathfrak{Q}, h}(bg)=b\mathrm{L}^{\beta}_{\mathfrak{Q}, h}(g)
    $
    for any $b \in \widetilde{k}$ and any $g \in \mathfrak{t}_{G, \mu} \otimes_k (\mathfrak{m}^{m}_R/\mathfrak{m}^{m+1}_R)$ in the same way.
    Thus we conclude that $\mathrm{L}^{\beta}_{\mathfrak{Q}, h}$ is a $\widetilde{k}$-linear homomorphism.

    It remains to prove that $\mathrm{L}^{\beta}_{\mathfrak{Q}, h}$ is bijective.
    Since we have shown that $\mathrm{L}^{\beta}_{\mathfrak{Q}, h}$ is a $\widetilde{k}$-linear endomorphism of the finite dimensional $\widetilde{k}$-vector space
    $\mathfrak{t}_{G, \mu} \otimes_k (\mathfrak{m}^{m}_{\widetilde{R}}/\mathfrak{m}^{m+1}_{\widetilde{R}})$, it suffices to prove that its kernel is zero.
    This follows from the same argument as above (since $\mathrm{L}^{\beta'}_{\mathfrak{Q}, h'}$ is injective by our assumption).
\end{proof}

\begin{rem}\label{Remark:implications}
Let $\mathfrak{Q}$ be a deformation of $\mathcal{Q}$ over $R_{G, \mu}$.
In summary, we have the following implications for $\mathfrak{Q}$:
\[
\xymatrix{
\mathrm{(Perfd\mathchar`-lin)} \ar@{=>}[r]  \ar@{=>}[d] &  \mathrm{(BK\mathchar`-lin)}  \ar@{=>}[d] & \\
\mathrm{(Perfd)}  & \mathrm{(BK)} \ar@{=>}[d] \ar@{=>}[r] & \mathrm{(versal)} \\
& \mathrm{(universal \, \, deformation)}. &
}
\]
We will prove in Section \ref{Subsection:Constructions of universal deformations} below that 
there exists a deformation of $\mathcal{Q}$ over $R_{G, \mu}$ with the property $\mathrm{(Perfd\mathchar`-lin)}$.
This in turn implies that all these conditions are equivalent for deformations of $\mathcal{Q}$ over $R_{G, \mu}$.
\end{rem}

\subsection{Constructions of universal deformations}\label{Subsection:Constructions of universal deformations}

Since $\mu$ is 1-bounded, there is an isomorphism
\[
U^{-}_{\mu} \simeq \G^r_a=\Spec \O[t_1, \dotsc, t_r]
\]
of group schemes over $\O$; see \cite[Lemma 4.2.6]{Ito-K23}.
We fix such an isomorphism.
This induces an identification
$
R_{G, \mu} \simeq \O[[t_1, \dotsc, t_r]]
$
over $\O$.
We set
\[
\mathfrak{S}^{\univ}_\O:=\O[[t_1, \dotsc, t_{r+1}]] \quad \text{and} \quad \mathcal{E}:=\pi-t_{r+1},
\]
so that $(\mathfrak{S}^{\univ}_\O, (\mathcal{E}))$ is
an $\O_E$-prism of Breuil--Kisin type over $\O$.
We obtain the following isomorphism defined by $t_{r+1} \mapsto \pi$
\[
\mathfrak{S}^{\univ}_\O/\mathcal{E} \simeq  R_{G, \mu}.
\]
For a $G$-$\mu$-display $\mathcal{Q}$ over $(\O, (\pi))$,
we will construct a deformation of $\mathcal{Q}$ over $(\mathfrak{S}^{\univ}_\O, (\mathcal{E}))$ such that the corresponding deformation 
over $R_{G, \mu}$
has the property $\mathrm{(Perfd\mathchar`-lin)}$.

We need a lemma which allows us to reduce the problem to the case where $\mathcal{Q}$ is banal.
Let $\widetilde{k}$ be a finite Galois extension of $k$ and we set $\widetilde{\O}:=W(\widetilde{k}) \otimes_{W(\F_q)} \O_E$.
Let $\widetilde{\mu} \colon \G_m \to G_{\widetilde{\O}}$ be the base change of $\mu$.
We define
$R_{G, \widetilde{\mu}} \in \mathcal{C}_{\widetilde{\O}}$
in the same way as $R_{G, \mu}$; see Definition \ref{Definition:universal local ring}.
We have
$
R_{G, \mu}\otimes_\O \widetilde{\O} \overset{\sim}{\to} R_{G, \widetilde{\mu}}.
$

\begin{lem}\label{Lemma:reduction to banal case}
    Let $\mathcal{Q}$ be a $G$-$\mu$-display over $(\O, (\pi))$.
    Assume that the base change $\widetilde{\mathcal{Q}}$ of $\mathcal{Q}$ to $(\widetilde{\O}, (\pi))$ is banal.
    If there exists a deformation
    $\widetilde{\mathfrak{Q}}$ of
    $\widetilde{\mathcal{Q}}$ over $R_{G, \widetilde{\mu}}$ having the property $\mathrm{(Perfd\mathchar`-lin)}$,
    then
    the same holds for $\mathcal{Q}$, i.e.\
    there exists a deformation
    of $\mathcal{Q}$
    over $R_{G, \mu}$
    having the property $\mathrm{(Perfd\mathchar`-lin)}$.
\end{lem}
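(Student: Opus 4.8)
The plan is to deduce the general case from $\widetilde{\mathfrak{Q}}$ by faithfully flat --- in fact finite \'etale Galois --- descent along $\widetilde{\O}/\O$. Set $\Gamma:=\Gal(\widetilde{k}/k)$. Since $\widetilde{k}/k$ is finite Galois, $\widetilde{\O}/\O$ is finite \'etale Galois with group $\Gamma$, and hence so are $R_{G, \widetilde{\mu}}=R_{G, \mu}\otimes_\O \widetilde{\O}$ over $R_{G, \mu}$ and, after fixing coordinates as in Section \ref{Subsection:Constructions of universal deformations}, the map of Breuil--Kisin prisms $(\mathfrak{S}^{\univ}_\O, (\mathcal{E})) \to (\mathfrak{S}^{\univ}_{\widetilde{\O}}, (\mathcal{E}))$. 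Because $\widetilde{\mathcal{Q}}$ is the base change of the $G$-$\mu$-display $\mathcal{Q}$ over $(\O, (\pi))$, it carries the tautological $\Gamma$-semilinear descent datum $\{u_\sigma \colon \sigma^*\widetilde{\mathcal{Q}} \overset{\sim}{\to} \widetilde{\mathcal{Q}}\}_{\sigma \in \Gamma}$ whose descent is $\mathcal{Q}$. The task is to promote $\{u_\sigma\}$ to a descent datum on $\widetilde{\mathfrak{Q}}$, descend, and check that the result retains (Perfd-lin); by Remark \ref{Remark:equivalence deformation evaluation} (i.e.\ Theorem \ref{Theorem:main result on G displays over complete regular local rings}) it is harmless to phrase the descent either over $R_{G, \mu}$ or over $(\mathfrak{S}^{\univ}_\O, (\mathcal{E}))$.

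First I would record that $\widetilde{\mathfrak{Q}}$ is a universal deformation of $\widetilde{\mathcal{Q}}$ over $R_{G, \widetilde{\mu}}$: this is the chain (Perfd-lin) $\Rightarrow$ (BK-lin) $\Rightarrow$ (BK) $\Rightarrow$ universal supplied by Proposition \ref{Proposition:(Perfd-lin) implies (BK-lin)}, Lemma \ref{Lemma:(BK-lin) implies (BK), (Perfd-lin) implies (Perfd)}, and Corollary \ref{Corollary:(BK) implies universality} (compare Remark \ref{Remark:implications}). For each $\sigma \in \Gamma$, the Galois conjugate $\sigma^*\widetilde{\mathfrak{Q}}$ is again a prismatic $G$-$\mu$-display over $R_{G, \widetilde{\mu}}$; transporting the base-point identification through the isomorphism of prisms $(\widetilde{\O}, (\pi)) \overset{\sim}{\to} (\widetilde{\O}, (\pi))$ induced by $\sigma$ and then composing with $u_\sigma$ exhibits it as a deformation of $\widetilde{\mathcal{Q}}$, and --- since the conditions defining (Perfd-lin) are phrased through perfectoid pairs over $\widetilde{\O}$ and $\sigma$ is a $\delta_E$-automorphism over $\O$ --- a direct check shows $\sigma^*\widetilde{\mathfrak{Q}}$ again has (Perfd-lin), hence is again a universal deformation of $\widetilde{\mathcal{Q}}$. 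A universal deformation being unique up to a unique isomorphism --- here one invokes Corollary \ref{Corollary:isomorphism of deformatiions over prismatic site}, that two deformations over a fixed ring admit at most one isomorphism --- the universal deformation is genuinely $\Gamma$-equivariant, so that one obtains unique isomorphisms $\varphi_\sigma \colon \sigma^*\widetilde{\mathfrak{Q}} \overset{\sim}{\to} \widetilde{\mathfrak{Q}}$ of deformations over $\mathrm{id}_{R_{G, \widetilde{\mu}}}$ matching $u_\sigma$, and uniqueness forces the cocycle condition. I expect this step --- making the rigidity/$\Gamma$-equivariance argument fully precise while keeping track of the semilinear action on $\widetilde{\O}$ and on the base-point prism (equivalently, showing that the local $\widetilde{\O}$-automorphism of $R_{G, \widetilde{\mu}}$ produced by comparing $\widetilde{\mathfrak{Q}}$ with $\sigma^*\widetilde{\mathfrak{Q}}$ via the universal property is trivial) --- to be the main obstacle; one may prefer to run it over the Breuil--Kisin prisms $(\mathfrak{S}^{\univ}_{\widetilde{\O}}, (\mathcal{E}))$, where Corollary \ref{Corollary:Uniqueness of isomorphisms between deformations:BK and Perfd case} gives the needed rigidity more directly.

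Next I would descend. By finite \'etale (faithfully flat) descent for $G$-$\mu$-displays over the prismatic site --- the descent already used in the proof of Proposition \ref{Proposition:limit of G displays}, applied to $R_{G, \mu} \to R_{G, \widetilde{\mu}}$ --- the datum $\{\varphi_\sigma\}$ produces a prismatic $G$-$\mu$-display $\mathfrak{Q}$ over $R_{G, \mu}$ together with a $\Gamma$-equivariant isomorphism $\mathfrak{Q}_{R_{G, \widetilde{\mu}}} \overset{\sim}{\to} \widetilde{\mathfrak{Q}}$. Compatibility with the descent data at the base points shows that the induced isomorphism at $(\widetilde{\O}, (\pi))$ descends $\widetilde{\mathfrak{Q}}_{(\widetilde{\O}, (\pi))} \overset{\sim}{\to} \widetilde{\mathcal{Q}}$ to an isomorphism $\mathfrak{Q}_{(\O, (\pi))} \overset{\sim}{\to} \mathcal{Q}$, so that $\mathfrak{Q}$ is a deformation of $\mathcal{Q}$ over $R_{G, \mu}$.

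Finally I would verify that $\mathfrak{Q}$ has (Perfd-lin). By Remark \ref{Remark: flat descent for (Perfd-lin)} the property may be checked $\pi$-completely flat-locally on $S$, so given a perfectoid pair $(S, a^\flat)$ over $\O$ appearing in the condition it suffices to treat $\widetilde{S}:=S\otimes_\O \widetilde{\O}$, which by Example \ref{Example:perfectoid ring etale morphism} is a perfectoid ring over $\widetilde{\O}$, is $\pi$-completely faithfully flat over $S$ (as $\widetilde{\O}$ is finite free over $\O$), and satisfies $\widetilde{S}\otimes_S \widetilde{S} \cong \prod_{\Gamma} \widetilde{S}$. For such $\widetilde{S}$ one has $\Hom(R_{G, \mu}, \widetilde{S}/a^m)_e = \Hom(R_{G, \widetilde{\mu}}, \widetilde{S}/a^m)_e$, since $\widetilde{S}/a^m$ is a $\widetilde{\O}$-algebra and $R_{G, \widetilde{\mu}}=R_{G, \mu}\otimes_\O \widetilde{\O}$, and under this identification the evaluation maps, the period maps, and hence the linearity maps $\mathrm{L}^{\beta}_{\mathfrak{Q}, h}$ for $\mathfrak{Q}$ coincide with those for $\widetilde{\mathfrak{Q}}$ (using $\mathfrak{Q}_{R_{G, \widetilde{\mu}}} \cong \widetilde{\mathfrak{Q}}$, and that $\widetilde{\mathcal{Q}}$ is banal so the relevant banality hypothesis is automatic). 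Since $\widetilde{\mathfrak{Q}}$ has (Perfd-lin), these maps are $\widetilde{S}/a$-linear isomorphisms, hence $S/a$-linear isomorphisms by faithfully flat descent along $S/a \to \widetilde{S}/a$, which gives (Perfd-lin) for $\mathfrak{Q}$ and completes the argument.
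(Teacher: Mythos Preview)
There is a genuine gap at the heart of your descent step. The universal property of $\widetilde{\mathfrak{Q}}$ does \emph{not} give you isomorphisms $\varphi_\sigma \colon \sigma^*\widetilde{\mathfrak{Q}} \overset{\sim}{\to} \widetilde{\mathfrak{Q}}$ over $\mathrm{id}_{R_{G,\widetilde{\mu}}}$. What it gives is, for each $\sigma$, a unique local $\widetilde{\O}$-automorphism $h_\sigma$ of $R_{G,\widetilde{\mu}}$ together with $h_\sigma^*\widetilde{\mathfrak{Q}} \simeq \sigma^*\widetilde{\mathfrak{Q}}$; you correctly identify this in your parenthetical remark, but then assert that $h_\sigma$ is trivial. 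There is no reason for this: the family $\{h_\sigma\}$ is a $1$-cocycle valued in the very large group $\Aut_{\widetilde{\O}}(R_{G,\widetilde{\mu}})$, not a priori a coboundary, let alone the trivial cocycle. Your proposed fix via Corollary~\ref{Corollary:Uniqueness of isomorphisms between deformations:BK and Perfd case} does not help: that corollary says deformations over a \emph{fixed} prism have no nontrivial automorphisms, which is exactly the rigidity that pins down $h_\sigma$ uniquely --- it says nothing about whether $h_\sigma$ equals the identity.

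The paper sidesteps this obstruction rather than resolving it. It observes that the induced cocycle $\{h'_\sigma\}$ on the tangent level lands in $\GL_r(\widetilde{k})$, where Hilbert~90 kills it; after twisting $\widetilde{\mathfrak{Q}}$ by an automorphism $g$ of $R_{G,\widetilde{\mu}}$ one may thus assume each $h_\sigma$ is the identity modulo $\mathfrak{m}^2$. This is enough to descend only the value of $\widetilde{\mathfrak{Q}}$ at $(\widetilde{\O}[[t_1,\dots,t_r]]/(t_1,\dots,t_r)^2,(\pi))$ to a deformation $\mathscr{Q}'$ over $(\O[[t_1,\dots,t_r]]/(t_1,\dots,t_r)^2,(\pi))$. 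One then \emph{abandons} $\widetilde{\mathfrak{Q}}$ and simply lifts $\mathscr{Q}'$ arbitrarily to $(\mathfrak{S}^{\univ}_\O,(\mathcal{E}))$ using Proposition~\ref{Proposition:existence of deformations for non-banal cases} and Proposition~\ref{Proposition:limit of G displays}. The resulting $\mathfrak{Q}$ satisfies $\iota^*\mathfrak{Q} \simeq h^*\widetilde{\mathfrak{Q}}$ for some $h$ that is the identity on tangent level, hence an isomorphism of $R_{G,\widetilde{\mu}}$; since (Perfd-lin) is invariant under such base-change automorphisms, $\iota^*\mathfrak{Q}$ inherits it, and then Remark~\ref{Remark: flat descent for (Perfd-lin)} finishes as in your last paragraph. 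So the paper never descends $\widetilde{\mathfrak{Q}}$ itself --- only its first-order jet.
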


\begin{proof}
    The proof is divided into four parts.
    
    \textit{Step 1.}
    Let $\Gal(\widetilde{k}/k)$ be the Galois group of $\widetilde{k}$ over $k$.
    For an $\O$-algebra $A$,
    each $s \in \Gal(\widetilde{k}/k)$ induces an automorphism of
    $A \otimes_{\O} \widetilde{\O}$ over $\O$, which we denote by the same symbol $s$.
    In particular, since $R_{G, \widetilde{\mu}}=R_{G, \mu}\otimes_\O \widetilde{\O}$,
    we have an automorphism
    $
    s \colon R_{G, \widetilde{\mu}} \overset{\sim}{\to} R_{G, \widetilde{\mu}}
    $
    over $\O$.
    Since $s^*(\widetilde{\mathcal{Q}})=\widetilde{\mathcal{Q}}$,
    the base change
    $s^*(\widetilde{\mathfrak{Q}})$ is a deformation of
    $\widetilde{\mathcal{Q}}$ over $R_{G, \widetilde{\mu}}$.
    By Remark \ref{Remark:implications}, we see that
    $\widetilde{\mathfrak{Q}}$, and hence $s^*(\widetilde{\mathfrak{Q}})$, is a universal deformation of $\widetilde{\mathcal{Q}}$.
    Thus there exists a unique isomorphism
    $
    h_s \colon R_{G, \widetilde{\mu}} \overset{\sim}{\to} R_{G, \widetilde{\mu}}
    $
    over $\widetilde{\O}$ such that
    $
    h^*_s(\widetilde{\mathfrak{Q}}) \simeq s^*(\widetilde{\mathfrak{Q}})
    $
    as deformations of $\widetilde{\mathcal{Q}}$.
    We have
    \begin{equation}\label{equation:1-cocycle}
        h_{ss'}=s(h_{s'}) \circ h_s
    \end{equation}
    for all $s, s' \in \Gal(\widetilde{k}/k)$.
    Here $s(h_{s'}) \colon R_{G, \widetilde{\mu}} \to R_{G, \widetilde{\mu}}$ is the base change of $h_{s'}$ along $s$.

    \textit{Step 2.}
    Since
    \[
    R_{G, \mu}/(\mathfrak{m}^2_{G, \mu}+\pi R_{G, \mu}) = k[t_1, \dotsc, t_r]/(t_1, \dotsc, t_r)^2,
    \]
    the automorphisms $h_s$ induce automorphisms
    \[
    h'_s \colon \widetilde{k}[t_1, \dotsc, t_r]/(t_1, \dotsc, t_r)^2 \overset{\sim}{\to} \widetilde{k}[t_1, \dotsc, t_r]/(t_1, \dotsc, t_r)^2
    \]
    over $\widetilde{k}$ satisfying the same relation as (\ref{equation:1-cocycle}).
    We note that the set of automorphisms of $\widetilde{k}[t_1, \dotsc, t_r]/(t_1, \dotsc, t_r)^2$ over $\widetilde{k}$ can be identified with 
    $\GL_r(\widetilde{k})$.
    Since the Galois cohomology $H^1(\Gal(\widetilde{k}/k), \GL_r(\widetilde{k}))$ has only one element,
    there exists an automorphism $g'$ of 
    $\widetilde{k}[t_1, \dotsc, t_r]/(t_1, \dotsc, t_r)^2$ over $\widetilde{k}$
    such that
    $h'_s=s(g'^{-1}) \circ g'$
    for any
    $s \in \Gal(\widetilde{k}/k)$.
    Let $g$ be an automorphism of $R_{G, \widetilde{\mu}}$ over $\widetilde{\O}$ lifting $g'$.
    By replacing $\widetilde{\mathfrak{Q}}$ by $g^*(\widetilde{\mathfrak{Q}})$,
    we may assume that
    the automorphisms $h_s$ induce the identity on
    $\widetilde{k}[t_1, \dotsc, t_r]/(t_1, \dotsc, t_r)^2$.

    \textit{Step 3.}
    The $\O_E$-prism
    $
    (\widetilde{\O}[[t_1, \dotsc, t_r]]/(t_1, \dotsc, t_r)^2, (\pi))
    $
    is naturally an object of the category $(R_{G, \widetilde{\mu}})_{\Prism, \O_E}$.
    Since $h_s$ is a lift of the identity of
    $\widetilde{k}[t_1, \dotsc, t_r]/(t_1, \dotsc, t_r)^2$,
    we have
    \[
    (h^*_s(\widetilde{\mathfrak{Q}}))_{(\widetilde{\O}[[t_1, \dotsc, t_r]]/(t_1, \dotsc, t_r)^2, (\pi))}=\widetilde{\mathfrak{Q}}_{(\widetilde{\O}[[t_1, \dotsc, t_r]]/(t_1, \dotsc, t_r)^2, (\pi))};
    \]
    see also Remark \ref{Remark:alternative definition of G displays over prismatic sites}.
    On the other hand, we have
    \[
    (s^*(\widetilde{\mathfrak{Q}}))_{(\widetilde{\O}[[t_1, \dotsc, t_r]]/(t_1, \dotsc, t_r)^2, (\pi))}=s^*(\widetilde{\mathfrak{Q}}_{(\widetilde{\O}[[t_1, \dotsc, t_r]]/(t_1, \dotsc, t_r)^2, (\pi))}).
    \]
    Thus the isomorphism
    $h^*_s(\widetilde{\mathfrak{Q}}) \simeq s^*(\widetilde{\mathfrak{Q}})$
    induces
    an isomorphism
    \[
    \widetilde{\mathfrak{Q}}_{(\widetilde{\O}[[t_1, \dotsc, t_r]]/(t_1, \dotsc, t_r)^2, (\pi))} \simeq s^*(\widetilde{\mathfrak{Q}}_{(\widetilde{\O}[[t_1, \dotsc, t_r]]/(t_1, \dotsc, t_r)^2, (\pi))})
    \]
    of deformations of $\widetilde{\mathcal{Q}}$.
    Then, by Galois descent (see Corollary \ref{Corollary:deformations form a sheaf}),
    we see that
    \[
    \widetilde{\mathfrak{Q}}_{(\widetilde{\O}[[t_1, \dotsc, t_r]]/(t_1, \dotsc, t_r)^2, (\pi))}
    \]
    arises from a deformation $\mathscr{Q}'$ of $\mathcal{Q}$ over $(\O[[t_1, \dotsc, t_r]]/(t_1, \dotsc, t_r)^2, (\pi))$.
    
    The homomorphism
    $\mathfrak{S}^{\univ}_\O \to \O[[t_1, \dotsc, t_r]]/(t_1, \dotsc, t_r)^2$, defined by $t_i \mapsto t_i$ ($1 \leq i \leq r$) and $t_{r+1} \mapsto 0$,
    induces a morphism
    \[
    (\mathfrak{S}^{\univ}_\O, (\mathcal{E})) \to (\O[[t_1, \dotsc, t_r]]/(t_1, \dotsc, t_r)^2, (\pi))
    \]
    in $(R_{G, \mu})_{\Prism, \O_E}$.
    By Proposition \ref{Proposition:limit of G displays} and Proposition \ref{Proposition:existence of deformations for non-banal cases},
    there exists a deformation $\mathscr{Q}$ of
    $\mathscr{Q}'$
    over $(\mathfrak{S}^{\univ}_\O, (\mathcal{E}))$.

    \textit{Step 4.}
    The deformation
    $\mathfrak{Q}$
    of
    $\mathcal{Q}$
    over $R_{G, \mu}$ corresponding to $\mathscr{Q}$ has the property $\mathrm{(Perfd\mathchar`-lin)}$.
    Indeed, let $\iota \colon R_{G, \mu} \to R_{G, \widetilde{\mu}}$ be the inclusion.
    It is enough to show that $\iota^*\mathfrak{Q}$ has the property $\mathrm{(Perfd\mathchar`-lin)}$.
    (To check that the condition in Definition \ref{Definition:properties (Perfd-lin) and (BK-lin)} is satisfied for a perfectoid pair $(S, a^\flat)$ over $\O$, it suffices to check it for the induced perfectoid pair $(S \otimes_{\O} \widetilde{\O}, a^\flat)$; see Remark \ref{Remark: flat descent for (Perfd-lin)}.)
    Let $h \colon R_{G, \widetilde{\mu}} \to R_{G, \widetilde{\mu}}$
    be the unique local homomorphism over $\widetilde{\O}$ such that $h^*(\widetilde{\mathfrak{Q}}) \simeq \iota^*\mathfrak{Q}$
    as deformations of $\widetilde{\mathcal{Q}}$.
    By construction, we have
    \[
    (\iota^*\mathfrak{Q})_{(\widetilde{\O}[[t_1, \dotsc, t_r]]/(t_1, \dotsc, t_r)^2, (\pi))} \simeq \widetilde{\mathfrak{Q}}_{(\widetilde{\O}[[t_1, \dotsc, t_r]]/(t_1, \dotsc, t_r)^2, (\pi))}.
    \]
    Since $\widetilde{\mathfrak{Q}}$ has the property 
    $\mathrm{(BK\mathchar`-lin)}$ by Proposition \ref{Proposition:(Perfd-lin) implies (BK-lin)},
    it follows that $h$ induces the identity on $\widetilde{k}[t_1, \dotsc, t_r]/(t_1, \dotsc, t_r)^2$.
    In particular $h$ is an isomorphism, which in turn implies that $\iota^*\mathfrak{Q}$ has the property $\mathrm{(Perfd\mathchar`-lin)}$.
    This concludes the proof.
\end{proof}

\begin{thm}\label{Theorem:existence of deformations with (Perfd-lin)}
    Let $\mathcal{Q}$ be a $G$-$\mu$-display over $(\O, (\pi))$.
    There exists a
    deformation $\mathfrak{Q}^{\univ}$ of $\mathcal{Q}$ over $R_{G, \mu}$
    having the property $\mathrm{(Perfd\mathchar`-lin)}$.
    In particular $\mathfrak{Q}^{\univ}$ is a universal deformation of $\mathcal{Q}$.
\end{thm}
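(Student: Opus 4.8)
The plan is to reduce to the banal case, write down an explicit candidate in the spirit of Faltings' universal $p$-divisible group, and then verify the property $\mathrm{(Perfd\mathchar`-lin)}$ by a direct computation that exploits the $1$-boundedness of $\mu$. First I would choose a finite Galois extension $\widetilde{k}$ of $k$ such that the base change $\widetilde{\mathcal{Q}}$ of $\mathcal{Q}$ to $(\widetilde{\O}, (\pi))$ is banal, which is possible because the Hodge filtration becomes trivial after a suitable finite separable base change (cf.\ Example \ref{Example:banal over finite etale covering}); by Lemma \ref{Lemma:reduction to banal case} it then suffices to produce a deformation of $\widetilde{\mathcal{Q}}$ over $R_{G, \widetilde{\mu}}$ with the property $\mathrm{(Perfd\mathchar`-lin)}$. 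Hence I may assume $\mathcal{Q} = \mathcal{Q}_{X_0}$ is banal for some $X_0 \in G(\O)$.

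For the construction, recall from the beginning of this subsection the fixed isomorphism $U^{-}_{\mu} \simeq \G^r_a$ of group schemes over $\O$ (available because $\mu$ is $1$-bounded) and the resulting identifications $R_{G, \mu} \simeq \O[[t_1, \dots, t_r]]$ and $\mathfrak{S}^{\univ}_\O/\mathcal{E} \simeq R_{G, \mu}$ with $\mathfrak{S}^{\univ}_\O = \O[[t_1, \dots, t_{r+1}]]$ and $\mathcal{E} = \pi - t_{r+1}$. Let $\widetilde{h}^{\univ} \in U^{-}_{\mu}(\mathfrak{S}^{\univ}_\O) \subset G(\mathfrak{S}^{\univ}_\O)$ be the tautological element corresponding to $(t_1, \dots, t_r)$, and let $\mathscr{Q}$ be the banal $G$-$\mu$-display over $(\mathfrak{S}^{\univ}_\O, (\mathcal{E}))$ attached to the element $Y \in G(\mathfrak{S}^{\univ}_\O)_{(\mathcal{E})}$ with $Y_{\mathcal{E}} = \widetilde{h}^{\univ} X_0$. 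Since $\widetilde{h}^{\univ}$ specializes to the identity of $U^-_\mu$ under $t_1, \dots, t_r \mapsto 0$, the base change of $\mathscr{Q}$ along $(\mathfrak{S}^{\univ}_\O, (\mathcal{E})) \to (\O, (\pi))$ is canonically $\mathcal{Q}_{X_0} = \mathcal{Q}$, so $\mathscr{Q}$ is a deformation of $\mathcal{Q}$; I let $\mathfrak{Q}^{\univ}$ be the corresponding deformation over $R_{G, \mu}$ under the equivalence of Remark \ref{Remark:equivalence deformation evaluation}.

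To verify $\mathrm{(Perfd\mathchar`-lin)}$, let $(S, a^\flat)$ be a perfectoid pair over $\O$; by André's flatness lemma (Remark \ref{Remark:Andre lemma}) together with the flat-descent statement of Remark \ref{Remark: flat descent for (Perfd-lin)}, I may assume every element of $S$ admits a $p$-th root. Fix $m \ge 1$, $g \in \Hom(R_{G,\mu}, S/a^m)_e$, a lift $h \in \Hom(R_{G,\mu}, S/a^{m+1})_g$, and an isomorphism $\beta \colon \mathfrak{Q}_g \overset{\sim}{\to} \mathcal{Q}_X$. Using Lemma \ref{Lemma:lifting lemma I} and Lemma \ref{Lemma:lifting lemma II} I would realize $h$ and any competitor $g' \in \Hom(R_{G,\mu}, S/a^{m+1})_g$ by primitive maps $(\mathfrak{S}^{\univ}_\O, (\mathcal{E})) \to (W_{\O_E}(S^\flat)/[a^\flat]^{m+1}, I_S)$, so that $\mathfrak{Q}_{g'} = \mathcal{Q}_{f_{g'}(Y)}$ with $f_{g'}(Y)_{d} = \widetilde{h}^{\univ}(\vec{c}_{g'})\,X_0$ for a parameter vector $\vec{c}_{g'}$ whose entries are Teichmüller elements plus higher-order corrections, and likewise for $h$. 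The period-map formula recalled in the proof of Theorem \ref{Theorem:GM deformation} then sends $\mathfrak{Q}_{g'}$ to the lift $\iota_{f_h(Y)_d(f_{g'}(Y)_d)^{-1}}$; since $U^{-}_{\mu} \hookrightarrow G$ is a homomorphism and $U^{-}_{\mu} \cong \G^r_a$ is \emph{commutative}, one gets $f_h(Y)_d(f_{g'}(Y)_d)^{-1} = \widetilde{h}^{\univ}(\vec{c}_h - \vec{c}_{g'})$, whose reduction modulo $I_S$ has coordinates $h(t_i) - g'(t_i) \in a^m S/a^{m+1}S$. Tracing this through the identification of $\Lift(\cdots)$ with $\mathfrak{t}_{G,\mu} \otimes_k (a^m S/a^{m+1}S)$ from Lemma \ref{Lemma:lift of Hodge filtration banal case, conormal module} and through $\mathrm{Diff}_h$, I expect to find that $\mathrm{L}^{\beta}_{\mathfrak{Q}, h}$ is $S/a$-linear and bijective — indeed essentially the identity, up to the $S/a$-linear automorphism induced by $\beta$. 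Here $1$-boundedness is used twice: once to obtain $U^{-}_{\mu} \cong \G^r_a$ as a \emph{group} scheme, so that the difference of the parameter vectors is genuinely additive, and once through the estimate $qm \ge m+1$ (as in Lemma \ref{Lemma:lifting lemma II}) ensuring that the Frobenius twists and the non-Teichmüller corrections in $f_{g'}$ contribute nothing modulo $[a^\flat]^{m+1}$.

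Granting $\mathrm{(Perfd\mathchar`-lin)}$ for $\mathfrak{Q}^{\univ}$ (and, for non-banal $\mathcal{Q}$, transporting it back via Lemma \ref{Lemma:reduction to banal case}), Lemma \ref{Lemma:(BK-lin) implies (BK), (Perfd-lin) implies (Perfd)} gives $\mathrm{(Perfd)}$, Proposition \ref{Proposition:(Perfd-lin) implies (BK-lin)} gives $\mathrm{(BK\mathchar`-lin)}$ and hence $\mathrm{(BK)}$, and Corollary \ref{Corollary:(BK) implies universality} shows $\mathfrak{Q}^{\univ}$ is a universal deformation. The hard part will be the bookkeeping in the $\mathrm{(Perfd\mathchar`-lin)}$ computation: one must pin down the precise twist in the definition of $Y$ (in particular the interaction between the convention for the $\mathcal{E}$-component and the factor $\phi(\mu(u))$ that appears when passing between generators of $I_S$), carefully unwind the Hodge filtration, the conormal module and the normalized period map for the banal display $\mathcal{Q}_{f_{g'}(Y)}$, and check that every Frobenius/Teichmüller correction drops out modulo $[a^\flat]^{m+1}$. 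The remaining implications are formal, relying only on the Grothendieck--Messing theory developed in Section \ref{Section:The Grothendieck--Messing deformation theory for G displays}.
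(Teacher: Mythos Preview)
Your proposal is correct and follows essentially the same strategy as the paper: reduce to the banal case via Lemma~\ref{Lemma:reduction to banal case}, build the deformation over $(\mathfrak{S}^{\univ}_\O,(\mathcal{E}))$ from the tautological point of $U^-_\mu$, pass to perfectoid rings with $p$-th roots using Andr\'e's lemma and Remark~\ref{Remark: flat descent for (Perfd-lin)}, realize the relevant homomorphisms by primitive maps via Lemmas~\ref{Lemma:lifting lemma I} and~\ref{Lemma:lifting lemma II}, and compute the period map explicitly. The only difference is a harmless sign convention: you take $Y_{\mathcal{E}}=\widetilde{h}^{\univ}X_0$ while the paper sets $(X^{\univ})_{\mathcal{E}}=(U^{\univ})^{-1}X_\pi$, so your $\mathrm{L}^\beta_{\mathfrak{Q},h}$ comes out as $-\id$ rather than $\id$, which is still an $S/a$-linear isomorphism.
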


\begin{proof}
    By Lemma \ref{Lemma:reduction to banal case},
    we may assume that
    $\mathcal{Q}=\mathcal{Q}_X$
    for some $X \in G(\O)_{(\pi)}$.
    Let
    $
    U \in G(R_{G, \mu})
    $
    be the $R_{G, \mu}$-valued point
    corresponding to the composition
    $
    \Spec R_{G, \mu} \to U^{-}_{\mu} \to G_\O.
    $
    We define
    \[
    U^{\univ} \in G(\mathfrak{S}^{\univ}_\O)
    \]
    as the image of $U \in G(R_{G, \mu})$ under the homomorphism
    $G(R_{G, \mu}) \to G(\mathfrak{S}^{\univ}_\O)$ induced by $t_i \mapsto t_i$ ($1 \leq i \leq r$).
    Let $X^{\univ} \in G(\mathfrak{S}^{\univ}_\O)_{(\mathcal{E})}$ be the element such that
    \[
    (X^{\univ})_{\mathcal{E}}=(U^{\univ})^{-1}X_\pi \in G(\mathfrak{S}^{\univ}_\O).
    \]
    (Here the image of $X_\pi \in G(\O)$ in $G(\mathfrak{S}^{\univ}_\O)$ is denoted by the same symbol.
    See also (\ref{equation:d-component}) for $(X^{\univ})_{\mathcal{E}}$ and $X_\pi$.)
    Then the $G$-$\mu$-display $\mathcal{Q}_{X^{\univ}}$ over
    $(\mathfrak{S}^{\univ}_\O, (\mathcal{E}))$
    is naturally a deformation of $\mathcal{Q}_X$.
    Let
    $
    \mathfrak{Q}^{\univ}
    $
    be the deformation of $\mathcal{Q}_X$ over $R_{G, \mu}$
    such that
    \[
    (\mathfrak{Q}^{\univ})_{(\mathfrak{S}^{\univ}_\O, (\mathcal{E}))} \simeq \mathcal{Q}_{X^{\univ}}.
    \]

    We shall prove that $\mathfrak{Q}^{\univ}$ has the property $\mathrm{(Perfd\mathchar`-lin)}$.
    In other words, with the notation of Definition \ref{Definition:properties (Perfd-lin) and (BK-lin)},
    we want to show that
    for any $\widetilde{g} \in \Hom(R_{G, \mu}, S/a^{m+1})_{g}$ and any isomorphism
    $\beta \colon \mathfrak{Q}^{\univ}_g \overset{\sim}{\to} \mathcal{Q}_Y$,
    the map
    \[
    \mathrm{L}^{\beta}_{\mathfrak{Q}^{\univ}, \widetilde{g}} \colon \mathfrak{t}_{G, \mu} \otimes_k (a^{m}S/a^{m+1}S) \to \mathfrak{t}_{G, \mu} \otimes_k (a^{m}S/a^{m+1}S)
    \]
    is an $S/a$-linear isomorphism.
    By Remark \ref{Remark:Andre lemma} and Remark \ref{Remark: flat descent for (Perfd-lin)}, 
    we are reduced to the case where the map $S \to S$, $x \mapsto x^p$ is surjective.
    By Lemma \ref{Lemma:lifting lemma I}, the homomorphism $\widetilde{g}$
    then lifts to a
    primitive map
    \[
    \widetilde{f} \colon (\mathfrak{S}^{\univ}_\O, (\mathcal{E})) \to (W_{\O_E}(S^\flat)/[a^\flat]^{m+1}, I_S).
    \]
    Let $f \colon (\mathfrak{S}^{\univ}_\O, (\mathcal{E})) \to (W_{\O_E}(S^\flat)/[a^\flat]^{m}, I_S)$ be the map induced by $\widetilde{f}$.
    Without loss of generality, we may assume that
    $Y=f(X^{\univ})$
    in
    $G(W_{\O_E}(S^\flat)/[a^\flat]^{m})_{I_S}$
    and
    $\beta$ is the following composition
    \[
    \mathfrak{Q}^{\univ}_{g} \overset{\gamma^{-1}_{f}}{\to} {f}^*(\mathcal{Q}_{X^{\univ}}) \simeq \mathcal{Q}_{f(X^{\univ})};
    \]
    see the last paragraph of Remark \ref{Remark:normalized period map}.
    In this case, we claim that
    $\mathrm{L}^{\beta}_{\mathfrak{Q}^{\univ}, \widetilde{g}}$ is the identity map.
    Let $g' \in \Hom(R_{G, \mu}, S/a^{m+1})_{g}$.
    By Lemma \ref{Lemma:lifting lemma II}, there exists a map
    \[
    f' \colon (\mathfrak{S}^{\univ}_\O, (\mathcal{E})) \to (W_{\O_E}(S^\flat)/[a^\flat]^{m+1}, I_S)
    \]
    of $\O_E$-prisms which induces $g'$ and $f$.
    It suffices to show that
    the period map
    \[
    \Per_{\mathcal{Q}_{\widetilde{f}(X^{\univ})}} \colon \Def(\mathcal{Q}_{f(X^{\univ})})_{(W_{\O_E}(S^\flat)/[a^\flat]^{m+1}, I_S)} \to \mathfrak{t}_{G, \mu} \otimes_k (a^{m}S/a^{m+1}S)
    \]
    sends the deformation $\mathcal{Q}_{f'(X^{\univ})}$ to $\mathrm{Diff}_{\widetilde{g}}(g')$.
    Here, as in Remark \ref{Remark:normalized period map},
    we identify
    $\Lift(P(\mathcal{Q}_{f(X^{\univ})})_{S/a^m}, (\mathcal{Q}_{\widetilde{f}(X^{\univ})})_{S/a^{m+1}})$
    with $\mathfrak{t}_{G, \mu} \otimes_k (a^{m}S/a^{m+1}S)$.
    We set $d:=\widetilde{f}(\mathcal{E})$.
    We have $f'(\mathcal{E})=u d$ for a unique $u \in (W_{\O_E}(S^\flat)/[a^\flat]^{m+1})^\times$.
    Since the image of $u$ in $W_{\O_E}(S^\flat)/[a^\flat]^{m}$ is equal to $1$,
    the equality
    $\phi(\mu(u))=1$ holds in $G(W_{\O_E}(S^\flat)/[a^\flat]^{m+1})$.
    Then we have
    \[
    \widetilde{f}(X^{\univ})_d=\widetilde{f}(U^{\univ})^{-1} X_\pi \quad \text{and} \quad f'(X^{\univ})_d=f'(U^{\univ})^{-1} X_\pi
    \]
    in $G(W_{\O_E}(S^\flat)/[a^\flat]^{m+1})$.
    By the proof of Proposition \ref{Proposition:canonical isomorphism of deformations of phi G torsors}, the unique isomorphism
    \[
    (\mathcal{Q}_{f'(X^{\univ})})_\phi \overset{\sim}{\to} (\mathcal{Q}_{\widetilde{f}(X^{\univ})})_\phi
    \]
    of deformations of the underlying $G$-$\phi$-module $(\mathcal{Q}_{f(X^{\univ})})_\phi$
    is given by
    $\widetilde{f}(U^{\univ})^{-1}f'(U^{\univ})$.
    Thus the lift
    $\Per_{\mathcal{Q}_{\widetilde{f}(X^{\univ})}}(\mathcal{Q}_{f'(X^{\univ})})$
    of the Hodge filtration
    can be identified with the $P_\mu(S/a^{m+1})$-orbit
    \[
    \widetilde{g}(U)^{-1}g'(U)P_\mu(S/a^{m+1}) \subset G(S/a^{m+1}),
    \]
    and this corresponds to
    $\mathrm{Diff}_{\widetilde{g}}(g') \in \mathfrak{t}_{G, \mu} \otimes_k (a^{m}S/a^{m+1}S)$ as desired.
    This concludes the proof of Theorem \ref{Theorem:existence of deformations with (Perfd-lin)}.
\end{proof}

We can now complete the proofs of Theorem \ref{Theorem:Existence of universal deformation} and Theorem \ref{Theorem:characterization of universal family}.

\begin{proof}[Proof of Theorem \ref{Theorem:Existence of universal deformation}]
This follows from Theorem \ref{Theorem:existence of deformations with (Perfd-lin)} and Remark \ref{Remark:implications}.
\end{proof}

\begin{proof}[Proof of Theorem \ref{Theorem:characterization of universal family}]
We have already proved that (1) implies (2), (3), (4), and (5); see Theorem \ref{Theorem:Existence of universal deformation}.
By Corollary \ref{Corollary:(BK) implies universality}, we have
$(2) \Rightarrow (1)$.

Let $\mathfrak{Q}^{\univ}$ be a universal deformation of $\mathcal{Q}$ over $R_{G, \mu}$, which has the properties $\mathrm{(Perfd)}$ and $\mathrm{(BK)}$ by Theorem \ref{Theorem:Existence of universal deformation}.
Let
$h \colon R_{G, \mu} \to R$ be the unique local homomorphism over $\O$ such that $h^*(\mathfrak{Q}^{\univ}) \simeq \mathfrak{Q}$.

We assume that (5) holds.
By Remark \ref{Remark:versality implies universality},
the induced homomorphism
$\mathfrak{t}_R \to \mathfrak{t}_{G, \mu}$
is surjective.
Since $\dim_k \mathfrak{t}_R = \dim_k \mathfrak{t}_{G, \mu}$, we then have
$\mathfrak{t}_R \overset{\sim}{\to} \mathfrak{t}_{G, \mu}$.
It follows that $h \colon R_{G, \mu} \to R$ is surjective.
Since $\dim R=\dim R_{G, \mu}$, we conclude that $h$ is an isomorphism, and hence $\mathfrak{Q}$ is a universal deformation. 
Thus we have
$(5) \Rightarrow (1)$.

It is obvious that (3) implies (4).
It remains to show that (4) implies (1).
We assume that 
the evaluation map
    \[
    \ev_\mathfrak{Q} \colon \Hom(R, \O_C/\pi^{2})_{e} \to \Def(\mathcal{Q})_{(W_{\O_E}(\O_{C^\flat})/[\pi^\flat]^{2}, I_{\O_C})}
    \]
    is bijective, where the base change of $\mathcal{Q}$ to $(W_{\O_E}(\O_{C^\flat})/[\pi^\flat], I_{\O_C})$ is denoted by the same symbol.
Since $\Def(\mathcal{Q})_{(W_{\O_E}(\O_{C^\flat})/[\pi^\flat]^{2}, I_{\O_C})}$ is nonempty (by Remark \ref{Remark:existence of deformation} or Proposition \ref{Proposition:existence of deformations for non-banal cases}), there exists a homomorphism
$g \colon R \to \O_C/\pi^{2}$ which is a lift of $e$.
In particular, it follows that
$\pi$ is not contained in $\mathfrak{m}^2_R \subset R$.
The following diagram commutes:
\[
    \xymatrix{
    \mathfrak{t}_{R} \otimes_k (\pi \O_C/\pi^2 \O_C) \ar^-{\mathrm{Diff}^{-1}_g}[r]  \ar[d]_-{}  &  \Hom(R, \O_C/\pi^{2})_{e}  \ar[d]_-{} \ar[r]^-{\ev_\mathfrak{Q}} & \Def(\mathcal{Q})_{(W_{\O_E}(\O_{C^\flat})/[\pi^\flat]^{2}, I_{\O_C})}\\
    \mathfrak{t}_{G, \mu} \otimes_k (\pi \O_C/\pi^2 \O_C) \ar[r]^-{\mathrm{Diff}^{-1}_{g \circ h}} & \Hom(R_{G, \mu}, \O_C/\pi^{2})_{e} \ar[ur]_-{\ev_{\mathfrak{Q}^{\univ}}} &
    }
    \]
where the vertical arrows are induced from $h$.
It follows that
\[
\mathfrak{t}_{R} \otimes_k (\pi \O_C/\pi^2 \O_C) \to \mathfrak{t}_{G, \mu} \otimes_k (\pi \O_C/\pi^2 \O_C),
\]
and hence $\mathfrak{t}_R \to \mathfrak{t}_{G, \mu}$,
is an isomorphism.
This implies that $h \colon R_{G, \mu} \to R$ is surjective.
Since $\pi$ is not contained in $\mathfrak{m}^2_R$, we see that $R/\pi$ is regular, or equivalently $\dim R/\pi = \dim_k \mathfrak{t}_R$.
We then obtain
\[
\dim R = \dim_k \mathfrak{t}_R +1 = \dim_k \mathfrak{t}_{G, \mu} +1 = \dim R_{G, \mu}.
\]
It follows that $h \colon R_{G, \mu} \to R$ is an isomorphism.

The proof of Theorem \ref{Theorem:characterization of universal family} is now complete.
\end{proof}

\section{Integral local Shimura varieties with hyperspecial level structure}\label{Section:Integral models of local Shimura varieties}

In this section, as an application of our deformation theory,
we prove the local representability and the formal smoothness of integral local Shimura varieties
with hyperspecial level structure; see Theorem \ref{Theorem:conjecture of Pappas-Rapoport} for the precise statement.
Throughout this section,
we assume that $G$ is a connected reductive group scheme over $\Spec \O_E$.
We assume that $k$ is an algebraic closure of $\F_q$.

In Section \ref{Subsection:$G$-shtukas}, we recall the definition of $G$-shtukas (with one leg) over perfectoid spaces over $k$ introduced by Scholze, and discuss their relation to $G$-$\mu$-displays.
In Section \ref{Subsection:quasi-isogenies}, we study the notion of quasi-isogeny for $G$-shtukas.
Integral local Shimura varieties are defined as moduli spaces of $G$-shtukas together with some quasi-isogeny.
In Section \ref{Subsection:Moduli spaces of $G$-shtukas}, we prove the main result of this section.

We assume that the reader is familiar with the theory of perfectoid spaces, $v$-sheaves, and ``the curve'' $\mathcal{Y}^{[0, \infty)}_S$.
Our basic references are \cite{Scholzediamond}, \cite{Scholze-Weinstein}, and \cite{FS}.

\subsection{$G$-shtukas}\label{Subsection:$G$-shtukas}

Let
$
\Perf_k
$
be the category of perfectoid spaces over $k$.
We endow $\Perf_k$ with the $v$-topology introduced in \cite[Definition 8.1]{Scholzediamond}.
A sheaf on $\Perf_k$ with respect to the $v$-topology is called a \textit{$v$-sheaf}.

\begin{rem}[The notation in this section]\label{Remark:Perf_k, untilt, Spd O_E}
    In this section, we let $S$ stand for a perfectoid space over $k$ (rather than a perfectoid ring).
    We write
\[
\O_{\breve{E}}:=W(k) \otimes_{W(\F_q)} \O_E \quad \text{and} \quad \breve{E}:=\O_{\breve{E}}[1/\pi].
\]
    Let
    $
    \Spd \O_{\breve{E}}
    $
    be the $v$-sheaf on $\Perf_k$ 
    which sends a perfectoid space $S$ over $k$
    to the set of isomorphism classes of untilts $S^\sharp$ of $S$ over $\O_{\breve{E}}$; see \cite[Section 18.1]{Scholze-Weinstein}.
    If $S=\Spa(R, R^+)$ is an affinoid perfectoid space over $k$, then an untilt $S^\sharp$ is also an affinoid perfectoid space, and 
    we use the following notation:
    \[
    S^\sharp=\Spa(R^\sharp, R^{\sharp+}).
    \]
    The ring $R^{\sharp+}$ is a perfectoid ring (in the sense of \cite{BMS}) with $(R^{\sharp+})^\flat = R^+$.
    Let $\varpi^\flat \in R^+$ be a pseudo-uniformizer (i.e.\ it is a topologically nilpotent unit in $R$).
    Then
    $\varpi:=\theta([\varpi^\flat]) \in R^{\sharp+}$
    is also a pseudo-uniformizer.
    After replacing $\varpi^\flat$ by $(\varpi^\flat)^{1/p^n}$ for a large enough $n$, we may assume that
    $\pi \in (\varpi)$ in $R^{\sharp+}$.
    The pair $(R^{\sharp+}, \varpi^\flat)$ is a perfectoid pair in the sense of Definition \ref{Definition:perfectoid pair}.
\end{rem}


\begin{ex}[Product of points]\label{Example:product of points}
Let $\{ (C_i, C^{+}_i) \}_{i \in I}$
be a set of analytic affinoid fields over $k$ such that $C_i$ is algebraically closed for any $i\in I$.
We choose a pseudo-uniformizer $\varpi^\flat_i \in C^{+}_i$ for each $i \in I$.
Let $\varpi^\flat:= (\varpi^\flat_i)_{i \in I} \in \prod_{i \in I} C^{+}_i$.
We endow
$\prod_{i \in I} C^{+}_i$
with the $\varpi^\flat$-adic topology.
Then
\[
S:=\Spa((\prod_{i \in I} C^{+}_i)[1/\varpi^\flat], \prod_{i \in I} C^{+}_i)
\]
is an affinoid perfectoid space over $k$.
Following \cite{Gleason},
we call such a perfectoid space a \textit{product of points}.
Products of points over $k$ form a basis of $\Perf_k$ with respect to the $v$-topology; see \cite[Example 1.1]{Gleason}.
Let $S^\sharp$ be an untilt of $S$ over $\O_{\breve{E}}$.
By \cite[Corollary 3.20]{Scholzediamond}, we have the corresponding untilt
$\Spa(C^{\sharp}_i, C^{\sharp+}_i)$ of $\Spa(C_i, C^{+}_i)$ over $S^\sharp$.
Let $\varpi_i:=\theta([\varpi^\flat_i]) \in C^{\sharp+}_i$ and let $\varpi:=(\varpi_i)_{i \in I} \in \prod_{i \in I} C^{\sharp+}_i$.
Then the untilt $S^\sharp$ is isomorphic to
$
\Spa((\prod_{i \in I} C^{\sharp+}_i)[1/\varpi], \prod_{i \in I} C^{\sharp+}_i).
$
\end{ex}

We shall recall the definition of $G$-shtukas,
following \cite{Scholze-Weinstein}.
In the rest of this subsection, we assume that
$S=\Spa(R, R^+)$ is an affinoid perfectoid space over $k$ for simplicity.
Let
$S^\sharp=\Spa(R^\sharp, R^{\sharp+})$
be an untilt
of $S$ over $\O_{\breve{E}}$.

We consider the following adic space
\[
\mathcal{Y}^{[0, \infty)}_S:=\Spa(W_{\O_E}(R^+))\backslash V([\varpi^\flat]),
\]
where $\varpi^\flat \in R^+$ is a pseudo-uniformizer and $W_{\O_E}(R^+)$ is equipped with the
$(\pi, [\varpi^\flat])$-adic topology.
The adic space $\mathcal{Y}^{[0, \infty)}_S$
is called \textit{the curve}
and denoted by
$\mathcal{Y}_S$ in \cite[Section II.1.1]{FS}.
The curve $\mathcal{Y}^{[0, \infty)}_S$ is a \textit{sousperfectoid} adic space over $\O_{\breve{E}}$
in the sense of \cite[Definition 6.3.1, Appendix to Lecture 19]{Scholze-Weinstein}; see the proof of \cite[Proposition II.1.1]{FS}.
The Frobenius of $W_{\O_E}(R^+)$ induces an isomorphism
$
\Frob \colon \mathcal{Y}^{[0, \infty)}_S \to \mathcal{Y}^{[0, \infty)}_S
$
over $\O_{E}$.
The untilt $S^\sharp$ can be regraded as a (closed) Cartier divisor of $\mathcal{Y}^{[0, \infty)}_S$; see \cite[Proposition II.1.4]{FS}.
This Cartier divisor is induced by the ideal $I_{R^{\sharp+}} \subset W_{\O_E}(R^+)$.

\begin{rem}
For a sousperfectoid adic space $X$ over $\O_E$,
let $\Vect(X)$ be the category of vector bundles on $X$.
A \textit{$G$-torsor} over $X$ is an exact tensor functor
$
\Rep_{\O_E}(G) \to \Vect(X),
$
where $\Rep_{\O_E}(G)$ is the category of algebraic representations of $G$ on free $\O_E$-modules of finite rank.
We refer to \cite[Theorem 19.5.2]{Scholze-Weinstein} and \cite[Definition/Proposition III.1.1]{FS} for equivalent definitions of $G$-torsors.
\end{rem}

\begin{defn}\label{Definition:G-shtuka}
A \textit{$G$-shtuka} over $S$ with one leg at $S^\sharp$ is a $G$-torsor $\mathscr{P}$ over $\mathcal{Y}^{[0, \infty)}_S$ with an isomorphism
\[
\phi_\mathscr{P} \colon (\Frob^*\mathscr{P})_{ \vert \mathcal{Y}^{[0, \infty)}_S \backslash S^{\sharp}} \overset{\sim}{\to} \mathscr{P}_{ \vert \mathcal{Y}^{[0, \infty)}_S \backslash S^{\sharp}} 
\]
of $G$-torsors over the open subspace $\mathcal{Y}^{[0, \infty)}_S \backslash S^{\sharp} \subset \mathcal{Y}^{[0, \infty)}_S$ which is meromorphic along
the Cartier divisor
$S^{\sharp} \hookrightarrow \mathcal{Y}^{[0, \infty)}_S$.
The meromorphic condition on $\phi_\mathscr{P}$ is defined as in \cite[Definition 5.3.5]{Scholze-Weinstein} (via the Tannakian formalism).
\end{defn}

\begin{rem}\label{Remark:base change of vector bundles}
Let
$\Vect(W_{\O_E}(R^+))$
be the category of finite projective $W_{\O_E}(R^+)$-modules.
We have a natural morphism
$
\mathcal{Y}^{[0, \infty)}_S \to \Spec(W_{\O_E}(R^+))
$
of locally ringed spaces.
This induces a functor
\[
\Vect(W_{\O_E}(R^+))  \to \Vect(\mathcal{Y}^{[0, \infty)}_S),
\]
which in turn induces a functor from the category of $G$-torsors over $\Spec W_{\O_E}(R^+)$ to the category of $G$-torsors over $\mathcal{Y}^{[0, \infty)}_S$.
\end{rem}

In the following,
a $G$-Breuil--Kisin module
over $(W_{\O_E}(R^+), I_{R^{\sharp+}})$
in the sense of Definition \ref{Definition:G-BK module of type mu}
is also called a $G$-Breuil--Kisin module for $R^{\sharp+}$.

\begin{ex}\label{Example:G shtuka assotieted with phi G torsor}
Let
$\mathcal{P}_{\sht}$
be the $G$-torsor over $\mathcal{Y}^{[0, \infty)}_S$
associated with a $G$-Breuil--Kisin module $\mathcal{P}$ for $R^{\sharp+}$.
The isomorphism
$
\phi_{\mathcal{P}_{\sht}} \colon (\Frob^*\mathcal{P}_{\sht})_{ \vert \mathcal{Y}^{[0, \infty)}_S \backslash S^{\sharp}} \overset{\sim}{\to} (\mathcal{P}_{\sht})_{ \vert \mathcal{Y}^{[0, \infty)}_S \backslash S^{\sharp}}
$
induced by the Frobenius $F_\mathcal{P}$
is meromorphic along $S^{\sharp} \hookrightarrow \mathcal{Y}^{[0, \infty)}_S$.
With this isomorphism,
we view $\mathcal{P}_{\sht}$
as a $G$-shtuka over $S$ with one leg at $S^\sharp$.
\end{ex}

Following \cite{Scholze-Weinstein} and \cite{PappasRapoport21},
we introduce some boundedness condition on $G$-shtukas.
First, we fix some notation:

\begin{rem}\label{Remark:completed local ring of the curve}
    We assume that $S$ is a geometric point of rank 1, that is, $S= \Spa(C, \O_C)$ for an algebraically closed nonarchimedean field $C$ over $k$ with ring of integers $\O_C$.
    Let $S^\sharp=\Spa(C^\sharp, \O_{C^\sharp})$ be an untilt of $S$ over $\O_{\breve{E}}$.
    The completed local ring of $\mathcal{Y}^{[0, \infty)}_S$ at the closed point $\Spa(C^\sharp, \O_{C^\sharp}) \hookrightarrow \mathcal{Y}^{[0, \infty)}_S$ is naturally isomorphic to
    \[
    B^+_{\dR}(C^\sharp):={\varprojlim}_{m} W_{\O_E}(\O_C)[1/[\varpi^\flat]]/\xi^m
    \]
    where $\xi \in I_{\O_{C^\sharp}}$ is a generator.
    (See also \cite[Example 15.2.10]{Scholze-Weinstein} and the discussion before it.)
    We recall that $B^+_{\dR}(C^\sharp)$ is a complete discrete valuation ring.
    The maximal ideal of $B^+_{\dR}(C^\sharp)$ is generated by $\xi$, and the homomorphism $\theta$ induces $B^+_{\dR}(C^\sharp)/\xi \simeq C^\sharp$.
    We set
    $
    B_{\dR}(C^\sharp):=B^+_{\dR}(C^\sharp)[1/\xi].
    $

    Let $\mathscr{P}$ be a $G$-shtuka over $S$ with one leg at $S^\sharp$.
    We write $\mathscr{P}_{\vert B^+_{\dR}(C^\sharp)}$ for the restriction of $\mathscr{P}$ to 
    $B^+_{\dR}(C^\sharp)$, which we regard as a $G$-torsor over $\Spec B^+_{\dR}(C^\sharp)$.
    Since $\phi_\mathscr{P}$ is meromorphic, it induces an isomorphism
    \[
    (\Frob^*\mathscr{P})_{ \vert B^+_{\dR}(C^\sharp)} \times_{\Spec B^+_{\dR}(C^\sharp)} \Spec B_{\dR}(C^\sharp) \overset{\sim}{\to} \mathscr{P}_{ \vert B^+_{\dR}(C^\sharp)} \times_{\Spec B^+_{\dR}(C^\sharp)} \Spec B_{\dR}(C^\sharp)
    \]
    of $G$-torsors over $\Spec B_{\dR}(C^\sharp)$, which is denoted by $(\phi_{\mathscr{P}})_{ \vert B_{\dR}(C^\sharp)}$.
    We note that, since $C^\sharp$ is algebraically closed, the $G$-torsor $\mathscr{P}_{\vert B^+_{\dR}(C^\sharp)}$ is trivial.
\end{rem}

Let
$\mu \colon \G_m \to G_{\O_{\breve{E}}}$
be a minuscule cocharacter.

\begin{defn}\label{Definition:G shtuka bounded by mu}
Let 
$\mathscr{P}$
be a $G$-shtuka over $S$ with one leg at $S^\sharp$.

\begin{enumerate}
    \item  We assume that $S = \Spa(C, \O_C)$ is a geometric point of rank 1 with $S^\sharp=\Spa(C^\sharp, \O_{C^\sharp})$.
    We say that $\mathscr{P}$ is \textit{bounded by $\mu$} if for some (and hence all) trivializations $\mathscr{P}_{ \vert B^+_{\dR}(C^\sharp)} \simeq G_{B^+_{\dR}(C^\sharp)}$ and $(\Frob^*\mathscr{P})_{ \vert B^+_{\dR}(C^\sharp)} \simeq G_{B^+_{\dR}(C^\sharp)}$, the isomorphism $(\phi_{\mathscr{P}})_{ \vert B_{\dR}(C^\sharp)}$ (see Remark \ref{Remark:completed local ring of the curve}) is given by
        $g \mapsto Yg$ for some element $Y$
        in the double coset
        \[
        G(B^+_{\dR}(C^\sharp))\mu(\xi)^{-1}G(B^+_{\dR}(C^\sharp)) \subset G(B_{\dR}(C^\sharp)).
        \]
    \item In general, we say that $\mathscr{P}$ is \textit{bounded by $\mu$} if, for any morphism
    $\Spa(C, \O_C) \to S$
    with $\Spa(C, \O_C)$ a geometric point of rank 1, the pull-back of $\mathscr{P}$ to 
    $\mathcal{Y}^{[0, \infty)}_{\Spa(C, \O_C)}$ is bounded by $\mu$ in the sense of (1).
\end{enumerate}
Notice that we have chosen $\mu(\xi)^{-1}$ rather than $\mu(\xi)$; our convention agrees with that of \cite[Definition 2.4.3]{PappasRapoport21}.
\end{defn}

\begin{ex}\label{Example:G shtuka assotieted with phi G torsor of type mu}
Let
$\mu^{-1}$ be the inverse of $\mu$.
If $\mathcal{P}$ is a $G$-Breuil--Kisin module for $R^{\sharp+}$ of type $\mu^{-1}$ in the sense of Definition \ref{Definition:G-BK module of type mu},
then the associated $G$-shtuka
$\mathcal{P}_{\sht}$
over $S$ with one leg at $S^\sharp$ is bounded by $\mu$.
\end{ex}

The following result can be viewed as a partial converse to Example \ref{Example:G shtuka assotieted with phi G torsor of type mu}.

\begin{prop}\label{Proposition:bounded by mu and of type mu}
    Assume that
    $S=\Spa(R, R^+)$
    is a product of points over $k$.
    Let
    $\mathcal{P}$
    be a $G$-Breuil--Kisin module for $R^{\sharp+}$.
    Then
    $\mathcal{P}$
    is of type $\mu^{-1}$
    if and only if
    the associated $G$-shtuka 
    $\mathcal{P}_{\sht}$
    over $S$ with one leg at $S^\sharp$ is bounded by $\mu$.
\end{prop}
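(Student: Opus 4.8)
The plan is to reduce the statement to a pointwise assertion at geometric points of rank $1$, and then to a purely algebraic comparison between the ``type $\mu^{-1}$'' condition for $G$-Breuil--Kisin modules over the perfectoid prism $(W_{\O_E}(\O_{C^\sharp})^+, I_{\O_{C^\sharp}})$ and the ``bounded by $\mu$'' condition, which is phrased in terms of the double coset in $G(B_{\dR}(C^\sharp))$ with respect to $\mu(\xi)^{-1}$. The forward implication is already Example \ref{Example:G shtuka assotieted with phi G torsor of type mu}, so only the converse needs proof. First I would observe that, since $S=\Spa(R,R^+)$ is a product of points, $R^+=\prod_{i\in I}C^+_i$ and $R^{\sharp+}=\prod_{i\in I}\O_{C^\sharp_i}$, so $W_{\O_E}(R^+)=\prod_{i\in I}W_{\O_E}(\O_{C_i})$ (completed appropriately), and the ``of type $\mu^{-1}$'' condition for $\mathcal{P}$ can be checked factor by factor, i.e.\ after pulling back along each geometric point $\Spa(C_i,\O_{C_i})\to S$. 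Indeed, the key point is that the Smith/Cartan decomposition is detected on stalks: by Definition \ref{Definition:G-BK module of type mu}, being of type $\mu^{-1}$ is an $(\pi,I)$-completely étale local condition, and for a product of points the relevant local rings are (the $p$-completions of products of) valuation rings, over which a $G$-torsor with meromorphic Frobenius decomposes into the standard form. So I would fix one geometric point $\Spa(C,\O_C)$ and work with the $G$-Breuil--Kisin module $\mathcal{P}$ for $\O_{C^\sharp}$.

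Next, after base change along $W_{\O_E}(\O_C)^+\to B^+_{\dR}(C^\sharp)$, the hypothesis that $\mathcal{P}_{\sht}$ is bounded by $\mu$ says precisely that, in suitable trivializations over $B^+_{\dR}(C^\sharp)$, the Frobenius $(\phi_{\mathcal{P}})_{\vert B_{\dR}(C^\sharp)}$ lands in the double coset $G(B^+_{\dR}(C^\sharp))\mu(\xi)^{-1}G(B^+_{\dR}(C^\sharp))$. I would then invoke the Cartan decomposition for $G$ over the complete discrete valuation ring $B^+_{\dR}(C^\sharp)$ (with algebraically closed residue field $C^\sharp$): the double coset $G(B^+_{\dR}(C^\sharp))\backslash G(B_{\dR}(C^\sharp))/G(B^+_{\dR}(C^\sharp))$ is parametrized by dominant cocharacters, so the ``bounded by $\mu$'' condition pins down the cocharacter to be $\mu^{-1}$ up to conjugacy and up to the Weyl group. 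The crux is to transport this from $B^+_{\dR}$ back down to $W_{\O_E}(\O_C)^+$: here I would use that $W_{\O_E}(\O_C)^+$ is (after inverting $[\varpi^\flat]$, or rather working with the completion along $\xi$) a $\xi$-adic ring whose $\xi$-adic completion is $B^+_{\dR}(C^\sharp)$, together with the fact that the Frobenius of $\mathcal{P}$ is already defined with poles only along the Cartier divisor $S^\sharp = V(I_{\O_{C^\sharp}})$. Concretely, the elementary-divisor form of $F_\mathcal{P}$ with respect to $I_{\O_{C^\sharp}}$ can be read off from its form modulo powers of $\xi$, because $G_\mu$-type conditions only involve $A[1/I]$; so the double-coset membership over $B_{\dR}(C^\sharp)$ already implies the corresponding membership $Y\in G(W_{\O_E}(\O_C)^+)\mu(\xi)^{-1}G(W_{\O_E}(\O_C)^+)$ inside $G(W_{\O_E}(\O_C)^+[1/I])$, which is exactly the ``of type $\mu^{-1}$'' condition.

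To make that last step precise, I would use Proposition \ref{Proposition:BB isomorphism} and the structure of the display group: being of type $\mu^{-1}$ is equivalent to the existence of a trivialization in which $Y=A_1\mu(\xi)^{-1}A_2$ with $A_1,A_2\in G(W_{\O_E}(\O_C)^+)$, and the obstruction to finding such $A_1,A_2$ over $W_{\O_E}(\O_C)^+$ rather than over its $\xi$-adic completion $B^+_{\dR}(C^\sharp)$ vanishes because $G$ is smooth affine and $W_{\O_E}(\O_C)^+\to B^+_{\dR}(C^\sharp)$ is a filtered colimit/limit situation along which $G$-torsors and their reductions lift --- more carefully, one uses that $W_{\O_E}(\O_C)^+[1/[\varpi^\flat]]$ with its $\xi$-adic topology has the Beauville--Laszlo property, so vector bundles (hence $G$-torsors) glue from the generic fiber and the $\xi$-adic completion. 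I expect the \textbf{main obstacle} to be exactly this descent from $B^+_{\dR}(C^\sharp)$ to $W_{\O_E}(\O_C)^+$: the ``bounded by $\mu$'' condition is a priori only a condition on the $\xi$-adic completion at the leg, whereas ``of type $\mu^{-1}$'' is a condition on all of $\Spec W_{\O_E}(\O_C)^+$, and one must argue that no further degeneration of the Frobenius can occur away from the leg --- which follows because away from $V(\xi)$ the Frobenius $\phi_{\mathcal{P}}$ is an \emph{isomorphism} of $G$-torsors (it is a shtuka with a single leg), so the only place the elementary divisors can be nontrivial is along $S^\sharp$, and there they are governed by $\mu$. Once this localization-at-the-leg principle is in hand, the equivalence follows by combining it with the Cartan decomposition over $B^+_{\dR}(C^\sharp)$ and reassembling over the product of points.
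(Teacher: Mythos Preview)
Your overall plan is sound and matches the paper's: reduce to a single geometric point $\Spa(C,\O_C)$, trivialize $\mathcal{P}$ (possible since $W_{\O_E}(\O_C)$ is strictly henselian), write the Frobenius as $Y\in G(A[1/\xi])$ with $A:=W_{\O_E}(\O_C)$, and then descend the double-coset condition from $B^+:=B^+_{\dR}(C^\sharp)$ to $A$. You also correctly identify the descent as the main obstacle.

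However, your proposed mechanism for the descent is not the right one and would not close the gap. Beauville--Laszlo gluing compares bundles on $\Spec A'[1/\xi]$ and on $\Spec \widehat{A'}_\xi$; here the $\xi$-adic completion giving $B^+_{\dR}$ is that of $W_{\O_E}(\O_C)[1/[\varpi^\flat]]$, not of $A=W_{\O_E}(\O_C)$, so at best you would land in a double coset over $A[1/[\varpi^\flat]]$, not over $A$. More importantly, the problem is not to glue a torsor but to show that a specific element $Y\in G(A[1/\xi])$ whose image in $G(B_{\dR})$ lies in $G(B^+)\mu(\xi)^{-1}G(B^+)$ already lies in $G(A)\mu(\xi)^{-1}G(A)$. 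Smooth lifting of $G$ along $A\to B^+$ gives you nothing here: it would let you lift elements of $G(B^+)$ approximately, but not force them into $G(A)$.

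The paper's argument supplies exactly the missing idea. One first checks $A=A[1/\xi]\cap B^+$ (since $A/\xi=\O_{C^\sharp}\hookrightarrow C^\sharp=B^+/\xi$), so the map $G(A[1/\xi])/G(A)\to G(B_{\dR})/G(B^+)$ is injective. Then one shows that
\[
G(A)\mu(\xi)^{-1}G(A)/G(A)\;\longrightarrow\;G(B^+)\mu(\xi)^{-1}G(B^+)/G(B^+)
\]
is \emph{bijective} by identifying each side, via Proposition~\ref{Proposition:BB isomorphism} (which uses that $\mu$ is minuscule), with $G(\bar A)/P_\mu(\bar A)$ for $\bar A=\O_{C^\sharp}$ and $\bar A=C^\sharp$ respectively, i.e.\ with the $\bar A$-points of the projective variety $G_{\O_{\breve E}}/P_\mu$. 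The bijection $G(\O_{C^\sharp})/P_\mu(\O_{C^\sharp})\to G(C^\sharp)/P_\mu(C^\sharp)$ then follows from the valuative criterion of properness applied to $G/P_\mu$ (using that $C^\sharp$ is algebraically closed and $\O_{C^\sharp}$ strictly henselian). This is the step your proposal lacks; once you have it, the injectivity above forces $Y\in G(A)\mu(\xi)^{-1}G(A)$, and the reduction to individual factors $C_i$ (your first paragraph, which the paper handles via \cite[Proposition 5.6.11]{Ito-K23}) finishes the proof.
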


\begin{proof}
We write
$
(R, R^+)=((\prod_{i \in I} C^{+}_i)[1/\varpi^\flat], \prod_{i \in I} C^{+}_i).
$
Then
$(R^{\sharp}, R^{\sharp+})$
is of the form
$((\prod_{i \in I} C^{\sharp+}_i)[1/\varpi], \prod_{i \in I} C^{\sharp+}_i)$
as in Example \ref{Example:product of points}.
It suffices to prove the ``if'' direction.
By the proof of \cite[Proposition 5.6.11]{Ito-K23},
it is enough to show that
the base change of $\mathcal{P}$ to
$(W_{\O_E}(C^{+}_i), I_{C^{\sharp+}_i})$
is of type $\mu^{-1}$ for each $i \in I$.
Thus we may assume that the set $I$ is a singleton.
We write $(R, R^+)=(C, C^+)$
and
$(R^{\sharp}, R^{\sharp+}) =(C^{\sharp}, C^{\sharp+})$.

We note that $\mathcal{P}$ is trivial as a $G$-torsor since $W_{\O_E}(C^{+})$ is strictly henselian.
We fix a trivialization
$\mathcal{P} \simeq G_{W_{\O_E}(C^{+})}$,
and 
we set $Y:=F_{\mathcal{P}}(1) \in G(W_{\O_E}(C^{+})[1/\xi])$, where $\xi \in I_{C^{\sharp+}}$ is a generator.
Since $\mathcal{P}_{\sht}$ is bounded by $\mu$,
the image of
$Y$
under the homomorphism
$
G(W_{\O_E}(C^{+})[1/\xi]) \to G(B_{\dR}(C^\sharp))
$
lies in
$G(B^+_{\dR}(C^\sharp))\mu(\xi)^{-1}G(B^+_{\dR}(C^\sharp))$.
We want to prove that
$Y$ is contained in
\[
G(W_{\O_E}(C^{+}))\mu(\xi)^{-1}G(W_{\O_E}(C^{+})) \subset G(W_{\O_E}(C^{+})[1/\xi]).
\]

To simplify the notation, we write $A:=W_{\O_E}(C^{+})$
and $B^+:=B^+_{\dR}(C^\sharp)$.
The homomorphism
$A/\xi \to B^+/\xi$
can be identified with $C^{\sharp+} \to C^\sharp$, 
and hence it is injective.
It follows that $A \to B^+$ is injective, and we have
$A=A[1/\xi] \cap B^+$.
This implies that
the natural map
$
G(A[1/\xi])/G(A) \to G(B^+[1/\xi])/G(B^+)
$
is injective.
We claim that this map induces the following bijection:
\[
G(A)\mu(\xi)^{-1}G(A)/G(A) \overset{\sim}{\to} G(B^+)\mu(\xi)^{-1}G(B^+)/G(B^+).
\]
This claim implies that $Y \in G(A)\mu(\xi)^{-1}G(A)$, as desired.

We shall prove the claim.
The map
$
G(A)\mu(\xi)^{-1}G(A)/G(A) \to G(A)/G_\mu(A, (\xi))
$
defined by $g\mu(\xi)^{-1}G(A) \mapsto gG_\mu(A, (\xi))$ is bijective.
By Proposition \ref{Proposition:BB isomorphism}, we have
$G(A)/G_\mu(A, (\xi)) \overset{\sim}{\to} G(C^{\sharp+})/P_\mu(C^{\sharp+})$.
Similarly, we have
\[
G(B^+)\mu(\xi)^{-1}G(B^+)/G(B^+) \overset{\sim}{\to} G(B^+)/G_\mu(B^+, (\xi)) \overset{\sim}{\to} G(C^{\sharp})/P_\mu(C^{\sharp}).
\]
(We note that the results of Section \ref{Subsection:Display groups} apply to the pair $(B^+, (\xi))$.)
It then suffices to prove that
the map
\[
G(C^{\sharp+})/P_\mu(C^{\sharp+}) \to
G(C^{\sharp})/P_\mu(C^{\sharp})
\]
is bijective.
This follows from the valuative criterion and the properness of the scheme $G_{\O_{\breve{E}}}/P_\mu$ over $\Spec \O_{\breve{E}}$.
(We also use that the field $C^\sharp$ is algebraically closed and that the valuation ring $C^{\sharp+}$ is strictly henselian.)
\end{proof}

We define an open subset
\[
\mathcal{Y}^{[0, \infty]}_S:=\Spa(W_{\O_E}(R^+))\backslash V(\pi, [\varpi^\flat]) \subset \Spa(W_{\O_E}(R^+)).
\]
We collect some results on $\mathcal{Y}^{[0, \infty]}_S$ that will be needed in the sequel.

\begin{rem}\label{Remark:punctured curve is sousperfectoid}
    As explained in \cite[Proposition 13.1.1]{Scholze-Weinstein} and \cite{Kedlaya} (see also \cite[Section 2.1]{PappasRapoport21}),
    we can regard $\mathcal{Y}^{[0, \infty]}_S$ as a sousperfectoid adic space.
    More precisely, we consider the following rational open subsets of $\Spa(W_{\O_E}(R^+))$:
    \begin{align*}
    U&:= \{ \, x \in \Spa(W_{\O_E}(R^+)) \, \vert \, \vert [\varpi^\flat](x) \vert \leq \vert \pi(x) \vert \neq 0 \, \},\\
    V&:= \{ \, x \in \Spa(W_{\O_E}(R^+)) \, \vert \, \vert \pi(x) \vert \leq \vert [\varpi^\flat](x) \vert \neq 0 \, \}.
    \end{align*}
    By the proof of \cite[Proposition 3.6]{Kedlaya},
    both $U$ and $V$ are sousperfectoid adic spaces.
    We can glue them along $U\cap V$ and endow $\mathcal{Y}^{[0, \infty]}_S$ with the structure of a sousperfectoid adic space.
    Strictly speaking, only the case where $\O_E=\Z_p$ is considered in \cite{Kedlaya}.
    However, it is easy to see that the same argument works for general $\O_E$.
\end{rem}

\begin{rem}\label{Remark:punctured curve, vector bundles}
Let
$
j^* \colon \Vect(W_{\O_E}(R^+))  \to \Vect(\mathcal{Y}^{[0, \infty]}_S)
$
be the functor induced by the natural morphism
    $
    j \colon \mathcal{Y}^{[0, \infty]}_S \to \Spec(W_{\O_E}(R^+))
    $
    of locally ringed spaces.
    We claim that $j^*$ is fully faithful.
    Indeed, the morphism $j$ factors through the open subscheme
    \[
    \mathcal{Y}^{[0, \infty], \alg}_S:=\Spec(W_{\O_E}(R^+)) \backslash V(\pi, [\varpi^\flat]) \subset \Spec(W_{\O_E}(R^+)). 
    \]
    Thus $j^*$ can be written as the composition
    \[
    \Vect(W_{\O_E}(R^+)) \to \Vect(\mathcal{Y}^{[0, \infty], \alg}_S) \to \Vect(\mathcal{Y}^{[0, \infty]}_S),
    \]
    where
    $\Vect(\mathcal{Y}^{[0, \infty], \alg}_S)$ is the category of vector bundles on the scheme $\mathcal{Y}^{[0, \infty], \alg}_S$.
    Since we have
    $W_{\O_E}(R^+) = \O_{\mathcal{Y}^{[0, \infty], \alg}_S}(\mathcal{Y}^{[0, \infty], \alg}_S)$, the first functor is fully faithful.
    The second functor is an equivalence by \cite[Theorem 3.8]{Kedlaya} (again, the result holds for general $\O_E$ by the same argument), and hence the claim follows.
    The second equivalence is also an exact equivalence of exact categories (cf.\ \cite[Remark 2.3]{Gleasongeometric}).
    
    Let
    $\xi \in I_{R^{\sharp+}}$ be a generator.
    For finite projective $W_{\O_E}(R^+)$-modules $M$ and $N$,
    the functor $j^*$ induces a bijection between
    the set of isomorphisms
    $M[1/\xi] \overset{\sim}{\to} N[1/\xi]$ and the set of isomorphisms
    $
    (j^*M)_{\vert \mathcal{Y}^{[0, \infty]}_S \backslash S^{\sharp}} \overset{\sim}{\to} (j^*N)_{\vert \mathcal{Y}^{[0, \infty]}_S \backslash S^{\sharp}}
    $
    which are meromorphic.
\end{rem}

The following result is obtained in \cite[Section 2.1.2]{Gleasonthesis} (in the more general case where $G$ is parahoric).

\begin{prop}\label{Proposition:G-torsor extension}
    Assume that
    $S=\Spa(R, R^+)$
    is a product of points over $k$.
    Then the functor
    $j^* \colon \Vect(W_{\O_E}(R^+))  \to \Vect(\mathcal{Y}^{[0, \infty]}_S)$
    induces an equivalence from the category of $G$-torsors over $\Spec W_{\O_E}(R^+)$ to that of $G$-torsors over $\mathcal{Y}^{[0, \infty]}_S$.
\end{prop}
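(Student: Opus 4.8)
The plan is first to reduce the assertion about $G$-torsors to the corresponding assertion about vector bundles. Via the Tannakian description used in the excerpt, a $G$-torsor over $\mathcal{Y}^{[0, \infty]}_S$ (resp.\ over $\Spec W_{\O_E}(R^+)$) is an exact tensor functor out of $\Rep_{\O_E}(G)$ into the relevant category of vector bundles, and the functor $j^*$ is exact and symmetric monoidal. Hence an exact tensor equivalence $\Vect(W_{\O_E}(R^+)) \xrightarrow{\sim} \Vect(\mathcal{Y}^{[0, \infty]}_S)$ induces, by postcomposition, an equivalence between the two categories of $G$-torsors. So I would reduce to proving that $j^* \colon \Vect(W_{\O_E}(R^+)) \to \Vect(\mathcal{Y}^{[0, \infty]}_S)$ is an equivalence of categories.

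Next, by Remark~\ref{Remark:punctured curve, vector bundles} this functor factors as
\[
\Vect(W_{\O_E}(R^+)) \xrightarrow{\rho} \Vect(\mathcal{Y}^{[0, \infty], \alg}_S) \xrightarrow{\sim} \Vect(\mathcal{Y}^{[0, \infty]}_S),
\]
where $\mathcal{Y}^{[0, \infty], \alg}_S = \Spec W_{\O_E}(R^+) \setminus V(\pi, [\varpi^\flat])$, the functor $\rho$ (restriction of the associated quasi-coherent sheaf) is fully faithful because $W_{\O_E}(R^+) = \O_{\mathcal{Y}^{[0, \infty], \alg}_S}(\mathcal{Y}^{[0, \infty], \alg}_S)$, and the second arrow is an exact equivalence by \cite[Theorem~3.8]{Kedlaya}. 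Thus everything comes down to essential surjectivity of $\rho$, i.e.\ to showing that every vector bundle $\mathcal{F}$ on the open subscheme $U := \Spec W_{\O_E}(R^+) \setminus V(\pi, [\varpi^\flat])$ extends to a finite projective $W_{\O_E}(R^+)$-module $M$ (necessarily unique, by full faithfulness of $\rho$).

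The key input, and the point where the product-of-points hypothesis intervenes, is that $(\pi, [\varpi^\flat])$ forms a regular sequence on $A := W_{\O_E}(R^+)$: writing $R^+ = \prod_{i \in I} C^+_i$ and using that $W_{\O_E}(-)$ commutes with products, one has $A \cong \prod_{i \in I} W_{\O_E}(C^+_i)$; on each factor $[\varpi^\flat_i]$ is a nonzerodivisor (as in the proof of Proposition~\ref{Proposition:perfectoid type}) and $\pi$ is a nonzerodivisor on $W_{\O_E}(C^+_i)/[\varpi^\flat_i]$ by Lemma~\ref{Lemma:pi torsion free of witt vectors}, and both properties pass to the product. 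Consequently the closed subset $V(\pi, [\varpi^\flat])$ has codimension (more precisely, grade) $\geq 2$ in $\Spec A$. Given this, I would set $M := \Gamma(U, \mathcal{F})$: the grade estimate forces the canonical map $\widetilde{M}|_U \to \mathcal{F}$ to be an isomorphism, and then one checks, using the affine covering of $U$ by $\Spec A[1/\pi]$ and $\Spec A[1/[\varpi^\flat]]$ together with $[\varpi^\flat]$-adic (Beauville--Laszlo-type) gluing along the nonzerodivisor $[\varpi^\flat]$ in the $(\pi, [\varpi^\flat])$-adically complete ring $A$, that $M$ is finitely generated and finite projective over $A$. This extension statement is exactly the content of \cite[Section~2.1.2]{Gleasonthesis}.

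I expect the main obstacle to be precisely this last point: controlling the finiteness of the module of sections $\Gamma(U, \mathcal{F})$ over the highly non-noetherian ring $W_{\O_E}(R^+)$, for which the product-of-points hypothesis — through the decomposition $A \cong \prod_{i} W_{\O_E}(C^+_i)$ into well-behaved (e.g.\ coherent) factors and the regular-sequence property established above — is indispensable; all of the other steps are formal once this is available.
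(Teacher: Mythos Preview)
Your treatment of the vector-bundle equivalence is essentially the paper's: reduce to the algebraic punctured spectrum via \cite[Theorem~3.8]{Kedlaya}, then extend vector bundles across $V(\pi,[\varpi^\flat])$ using the regular-sequence/depth~$\geq 2$ argument, which is what the paper cites as \cite[Proposition~2.1.17]{Gleasonthesis} (reducing to Kedlaya's case of a single geometric point via the product decomposition). That part is fine.

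The gap is in your first step. You assert that an exact tensor equivalence $\Vect(W_{\O_E}(R^+))\xrightarrow{\sim}\Vect(\mathcal{Y}^{[0,\infty]}_S)$ yields, by postcomposition, an equivalence on $G$-torsors. But you only establish that $j^*$ is an equivalence of tensor categories with $j^*$ itself exact; you never verify that the \emph{inverse} $\Gamma(U,-)$ is exact. It is not: for a short exact sequence $0\to E_1\to E_2\to E_3\to 0$ of vector bundles on $U$, surjectivity of $\Gamma(U,E_2)\to\Gamma(U,E_3)$ is obstructed by $H^1(U,E_1)\cong H^2_{(\pi,[\varpi^\flat])}(M_1)$, which has no reason to vanish over the non-noetherian ring $W_{\O_E}(R^+)$. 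So from an exact tensor functor $\Rep_{\O_E}(G)\to\Vect(\mathcal{Y}^{[0,\infty]}_S)$ you get only a tensor functor into $\Vect(W_{\O_E}(R^+))$, not a priori an exact one, i.e.\ not a $G$-torsor. (Relatedly, on the affine side you are silently invoking a Tannakian reconstruction over $\Spec W_{\O_E}(R^+)$ that the paper does not set up.)

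The paper avoids this issue entirely: after proving the vector-bundle equivalence $\Vect(W_{\O_E}(R^+))\simeq\Vect(\mathcal{Y}^{[0,\infty],\alg}_S)$, it invokes \cite[Proposition~8.5]{Anschutz} to extend $G$-torsors directly across the codimension-$2$ locus, and this is precisely where the reductivity of $G$ (assumed throughout Section~\ref{Section:Integral models of local Shimura varieties}) is used. Your argument never uses reductivity, which is a telltale sign that something is missing. To fix your approach you would either need to cite the same extension result, or argue separately that exactness of $(j^*)^{-1}\circ F$ is forced for tensor functors $F$ coming from $\Rep_{\O_E}(G)$ with $G$ reductive.
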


\begin{proof}
We briefly recall the argument for the convenience of the reader.
By Remark \ref{Remark:punctured curve, vector bundles}, it suffices to prove that
the restriction functor
from the category of $G$-torsors over $\Spec W_{\O_E}(R^+)$ to that of $G$-torsors over $\mathcal{Y}^{[0, \infty], \alg}_S$ is an equivalence.

By the proof of \cite[Proposition 2.1.17]{Gleasonthesis},
we see that the functor
$
\Vect(W_{\O_E}(R^+))  \to \Vect(\mathcal{Y}^{[0, \infty], \alg}_S)
$
is an equivalence.
(This is proved in \cite[Theorem 2.7]{Kedlaya} if $S=\Spa(C, C^+)$ for an algebraically closed nonarchimedean field $C$ over $k$, and the general case can be deduced from this special case.)
Then the assertion follows from \cite[Proposition 8.5]{Anschutz} since we assume that $G$ is reductive here.
\end{proof}

The Frobenius of $W_{\O_E}(R^+)$ induces an isomorphism
$
\Frob \colon \mathcal{Y}^{[0, \infty]}_S \to \mathcal{Y}^{[0, \infty]}_S
$
over $\O_{E}$.
It will be convenient to make the following definition:

\begin{defn}\label{Definition:extended G shtuka}
An \textit{extended $G$-shtuka} over $S$ with one leg at $S^{\sharp}$ is a $G$-torsor $\widetilde{\mathscr{P}}$ over
$\mathcal{Y}^{[0, \infty]}_S$ 
with
an isomorphism
    \[
\phi_{\widetilde{\mathscr{P}}} \colon (\Frob^*\widetilde{\mathscr{P}})_{ \vert \mathcal{Y}^{[0, \infty]}_S \backslash S^{\sharp}} \overset{\sim}{\to} \widetilde{\mathscr{P}}_{ \vert \mathcal{Y}^{[0, \infty]}_S \backslash S^{\sharp}}
\]
which is meromorphic along $S^{\sharp} \hookrightarrow \mathcal{Y}^{[0, \infty]}_S$.
\end{defn}

As in Example \ref{Example:G shtuka assotieted with phi G torsor},
to a $G$-Breuil--Kisin module
$\mathcal{P}$ for $R^{\sharp+}$,
we can attach an extended $G$-shtuka $\widetilde{\mathcal{P}}_{\sht}$ over $S$ with one leg at $S^{\sharp}$.

\begin{cor}\label{Corollary:equivalence between G BK module and extended G shtuks}
    The functor $\mathcal{P} \mapsto \widetilde{\mathcal{P}}_{\sht}$ from
    the groupoid of $G$-Breuil--Kisin modules for $R^{\sharp+}$ to that of extended $G$-shtukas over $S$ with one leg at $S^{\sharp}$ is fully faithful.
    If $S$
    is a product of points over $k$, then the functor is an equivalence.
\end{cor}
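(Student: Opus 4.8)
The plan is to deduce Corollary \ref{Corollary:equivalence between G BK module and extended G shtuks} from Proposition \ref{Proposition:G-torsor extension} together with the full faithfulness of $j^*$ recorded in Remark \ref{Remark:punctured curve, vector bundles}. First I would treat full faithfulness in general (no product-of-points hypothesis). Given $G$-Breuil--Kisin modules $\mathcal{P}, \mathcal{P}'$ for $R^{\sharp+}$, a morphism of the associated extended $G$-shtukas consists of an isomorphism $\mathcal{P}_{\sht} \simeq \mathcal{P}'_{\sht}$ of $G$-torsors over $\mathcal{Y}^{[0,\infty]}_S$ intertwining the Frobenii over $\mathcal{Y}^{[0,\infty]}_S \backslash S^\sharp$. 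Working through the Tannakian formalism (applying $j^*$ to each representation in $\Rep_{\O_E}(G)$), the fully faithfulness of $j^* \colon \Vect(W_{\O_E}(R^+)) \to \Vect(\mathcal{Y}^{[0,\infty]}_S)$ from Remark \ref{Remark:punctured curve, vector bundles} shows that such an isomorphism of $G$-torsors descends uniquely to an isomorphism of the underlying $G$-torsors over $\Spec W_{\O_E}(R^+)$; and the last sentence of Remark \ref{Remark:punctured curve, vector bundles}, that $j^*$ matches meromorphic isomorphisms over $\mathcal{Y}^{[0,\infty]}_S \backslash S^\sharp$ with isomorphisms after inverting a generator $\xi$ of $I_{R^{\sharp+}}$, shows that this descended isomorphism is compatible with the Frobenius structures $F_{\mathcal{P}}, F_{\mathcal{P}'}$. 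Hence the functor is fully faithful.

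Next I would prove essential surjectivity under the assumption that $S$ is a product of points. Let $(\widetilde{\mathscr{P}}, \phi_{\widetilde{\mathscr{P}}})$ be an extended $G$-shtuka over $S$ with one leg at $S^\sharp$. By Proposition \ref{Proposition:G-torsor extension}, the $G$-torsor $\widetilde{\mathscr{P}}$ over $\mathcal{Y}^{[0,\infty]}_S$ is isomorphic to $j^*\mathcal{P}_0$ for a (unique up to unique isomorphism) $G$-torsor $\mathcal{P}_0$ over $\Spec W_{\O_E}(R^+)$; fix such an isomorphism and transport $\phi_{\widetilde{\mathscr{P}}}$ to a meromorphic isomorphism $(\Frob^*j^*\mathcal{P}_0)_{\vert \mathcal{Y}^{[0,\infty]}_S \backslash S^\sharp} \overset{\sim}{\to} (j^*\mathcal{P}_0)_{\vert \mathcal{Y}^{[0,\infty]}_S \backslash S^\sharp}$. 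Since $\Frob$ on $\mathcal{Y}^{[0,\infty]}_S$ is induced by the Frobenius of $W_{\O_E}(R^+)$, one has $\Frob^* j^* \mathcal{P}_0 \simeq j^* \phi^* \mathcal{P}_0$, so via the Tannakian formalism and the meromorphic-isomorphism part of Remark \ref{Remark:punctured curve, vector bundles} this Frobenius isomorphism descends to an isomorphism $F_{\mathcal{P}_0} \colon (\phi^*\mathcal{P}_0)[1/I_{R^{\sharp+}}] \overset{\sim}{\to} \mathcal{P}_0[1/I_{R^{\sharp+}}]$ of $G$-torsors over $\Spec W_{\O_E}(R^+)[1/I_{R^{\sharp+}}]$. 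Thus $(\mathcal{P}_0, F_{\mathcal{P}_0})$ is a $G$-Breuil--Kisin module for $R^{\sharp+}$, and by construction its associated extended $G$-shtuka is isomorphic to $(\widetilde{\mathscr{P}}, \phi_{\widetilde{\mathscr{P}}})$.

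For the Tannakian bookkeeping it is cleanest to phrase everything representation by representation: a $G$-torsor over $X$ is an exact tensor functor $\Rep_{\O_E}(G) \to \Vect(X)$, so that $j^*$ and the descent along $j^*$ act compatibly with the tensor structure, and a morphism (resp.\ a Frobenius structure) of $G$-torsors is the same as a natural family of morphisms (resp.\ isomorphisms) of the underlying vector bundles compatible with tensor products; applying the fully faithful (resp.\ essentially surjective) functor $j^*$ at each representation and invoking exactness gives the claim. The main obstacle is ensuring that the descent of the Frobenius isomorphism genuinely lands in $G(W_{\O_E}(R^+)[1/I_{R^{\sharp+}}])$ — i.e.\ that meromorphy along $S^\sharp$ on the adic side corresponds exactly to ``defined after inverting a generator $\xi$ of $I_{R^{\sharp+}}$'' on the scheme side — but this is precisely the content of the last sentence of Remark \ref{Remark:punctured curve, vector bundles}, so once that is invoked the argument is formal. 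I would close by noting that the uniqueness in Proposition \ref{Proposition:G-torsor extension} together with full faithfulness makes the two constructions quasi-inverse, completing the proof.
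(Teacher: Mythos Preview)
Your proof is correct and follows the same approach as the paper: full faithfulness is deduced from Remark \ref{Remark:punctured curve, vector bundles} (including its last sentence on meromorphic isomorphisms), and essential surjectivity for products of points comes from combining Remark \ref{Remark:punctured curve, vector bundles} with Proposition \ref{Proposition:G-torsor extension}. The paper's own proof consists of exactly these two citations, so you have simply unpacked the Tannakian bookkeeping that the paper leaves implicit.
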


\begin{proof}
The first assertion follows from Remark \ref{Remark:punctured curve, vector bundles}.
The second assertion follows from Remark \ref{Remark:punctured curve, vector bundles} and Proposition \ref{Proposition:G-torsor extension}.
\end{proof}

\begin{prop}[{cf.\ \cite[Proposition 2.4.6]{PappasRapoport21}}]\label{Proposition:equivalence between G shutukas and G-BK modules}
Assume that $S=\Spa(C, \O_C)$ is a geometric point of rank $1$ with $S^{\sharp}=\Spa(C^{\sharp}, \O_{C^\sharp})$.
Then
the construction $\mathcal{P} \mapsto \mathcal{P}_{\sht}$
induces an equivalence of groupoids:
\[
\left\{
\begin{tabular}{c}
     $G$-Breuil--Kisin modules for $\O_{C^\sharp}$
\end{tabular}
\right\}
\overset{\sim}{\to}
\left\{
\begin{tabular}{c}
     $G$-shtukas over $\Spa(C, \O_C)$ \\
     with one leg at $\Spa(C^{\sharp}, \O_{C^\sharp})$
\end{tabular}
\right\}.
\]
Moreover, this equivalence induces
\[
\left\{
\begin{tabular}{c}
     $G$-$\mu^{-1}$-displays \\ over $(W_{\O_E}(\O_C), I_{\O_{C^\sharp}})$
\end{tabular}
\right\}
\overset{\sim}{\to}
\left\{
\begin{tabular}{c}
     $G$-shtukas over $\Spa(C, \O_C)$ \\
     with one leg at $\Spa(C^{\sharp}, \O_{C^\sharp})$ \\
     which are bounded by $\mu$
\end{tabular}
\right\}.
\]
\end{prop}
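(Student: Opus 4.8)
The plan is to deduce both equivalences from the results already established for products of points, noting that $S = \Spa(C, \O_C)$ is a product of points with singleton index set. The key step for the first equivalence is to pass from extended $G$-shtukas to $G$-shtukas. By Corollary \ref{Corollary:equivalence between G BK module and extended G shtuks}, the construction $\mathcal{P} \mapsto \widetilde{\mathcal{P}}_{\sht}$ is an equivalence from the groupoid of $G$-Breuil--Kisin modules for $\O_{C^\sharp}$ to the groupoid of extended $G$-shtukas over $S$ with one leg at $S^{\sharp}$. Moreover, by construction the restriction of $\widetilde{\mathcal{P}}_{\sht}$ along the open immersion $\mathcal{Y}^{[0, \infty)}_S \hookrightarrow \mathcal{Y}^{[0, \infty]}_S$ is precisely the $G$-shtuka $\mathcal{P}_{\sht}$ of Example \ref{Example:G shtuka assotieted with phi G torsor}. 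So it suffices to prove that this restriction functor is an equivalence between the groupoid of extended $G$-shtukas over $S$ with one leg at $S^{\sharp}$ and the groupoid of $G$-shtukas over $S$ with one leg at $S^{\sharp}$.

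To prove this, I would first handle vector bundles and then pass to $G$-torsors. Since $S$ is a geometric point of rank $1$, the locus $\mathcal{Y}^{[0, \infty]}_S \setminus \mathcal{Y}^{[0, \infty)}_S$ is disjoint from $S^{\sharp}$, and a Frobenius-equivariant vector bundle on $\mathcal{Y}^{[0, \infty)}_S$ extends uniquely across this locus by the standard extension of Frobenius modules across the point at infinity on the curve (using $\Frob$ to propagate the bundle from a compact piece of the interior); uniqueness of the extension also yields full faithfulness at the level of morphisms. Passing from vector bundles with Frobenius structure to $G$-torsors with Frobenius structure is then carried out via the Tannakian formalism together with Ansch\"utz's result on $G$-torsors, exactly as in the proof of Proposition \ref{Proposition:G-torsor extension}. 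Composing with Corollary \ref{Corollary:equivalence between G BK module and extended G shtuks} then gives the first equivalence of the proposition.

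For the second equivalence I would restrict the first one to full subgroupoids. By Proposition \ref{Proposition:G-displays to G-BK}, the groupoid of $G$-$\mu^{-1}$-displays over $(W_{\O_E}(\O_C), I_{\O_{C^\sharp}})$ is equivalent to the groupoid of $G$-Breuil--Kisin modules for $\O_{C^\sharp}$ of type $\mu^{-1}$, and by Example \ref{Example:G shtuka assotieted with phi G torsor of type mu} such a module is sent to a $G$-shtuka bounded by $\mu$. Conversely, applying Proposition \ref{Proposition:bounded by mu and of type mu} with $S = \Spa(C, \O_C)$ (which is a product of points), a $G$-Breuil--Kisin module $\mathcal{P}$ for $\O_{C^\sharp}$ is of type $\mu^{-1}$ if and only if the associated $G$-shtuka $\mathcal{P}_{\sht}$ is bounded by $\mu$. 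Hence the equivalence $\mathcal{P} \mapsto \mathcal{P}_{\sht}$ carries the type-$\mu^{-1}$ objects exactly onto the bounded-by-$\mu$ objects, giving the desired refinement. The main obstacle is the first step: the comparison between extended $G$-shtukas and $G$-shtukas over the rank-$1$ geometric point. Everything else is a formal assembly of results already available in the excerpt, but this step relies on the standard, yet nontrivial, fact that a Frobenius-equivariant $G$-torsor on $\mathcal{Y}^{[0, \infty)}_S$ extends uniquely over the point at infinity.
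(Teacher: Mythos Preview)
Your overall strategy coincides with the paper's: factor $\mathcal{P}\mapsto\mathcal{P}_{\sht}$ through extended $G$-shtukas via Corollary~\ref{Corollary:equivalence between G BK module and extended G shtuks}, and then compare extended $G$-shtukas with $G$-shtukas over the rank~$1$ point; the second assertion is then deduced exactly as you say, from Proposition~\ref{Proposition:G-displays to G-BK} and Proposition~\ref{Proposition:bounded by mu and of type mu}. This part is fine.

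The gap is in your justification of the extension across the point at infinity. The heuristic ``use $\Frob$ to propagate the bundle from a compact piece of the interior'' does not do this: since $\Frob(\mathcal{Y}^{[n,\infty)}_S)=\mathcal{Y}^{[qn,\infty)}_S$, iterating Frobenius moves you \emph{towards} the locus $[\varpi^\flat]=0$ but never reaches it; you stay inside $\mathcal{Y}^{[0,\infty)}_S$. Likewise, the reference to Ansch\"utz's result via Proposition~\ref{Proposition:G-torsor extension} is misplaced: that result extends $G$-torsors from $\mathcal{Y}^{[0,\infty]}_S$ (or its algebraic counterpart) to $\Spec W_{\O_E}(\O_C)$, which is a different extension problem. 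What the paper actually uses is Fargues' classification of $G$-torsors on the Fargues--Fontaine curve: away from the leg the $G$-shtuka is Frobenius-equivariant, hence descends to a $G$-torsor on the curve, and Fargues' theorem (for a geometric point of rank~$1$) shows it comes from an isocrystal, which manifestly extends over the point at infinity. For full faithfulness the paper simply cites \cite[Proposition~2.2.7]{PappasRapoport21} rather than going through the restriction functor. So your outline is right, but the crucial input you need to name is Fargues' classification, not Frobenius propagation.
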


\begin{proof}
Although this result is not used in the rest of this paper, we include a proof for completeness.
By Proposition \ref{Proposition:G-displays to G-BK}
    and
    Proposition \ref{Proposition:bounded by mu and of type mu}, it suffices to prove the first assertion.
    By \cite[Proposition 2.2.7]{PappasRapoport21} (which holds for general $\O_E$ by the same argument), the functor is fully faithful.
    As in \cite[Proposition 2.4.6]{PappasRapoport21},
    we can extend a $G$-shtuka $\mathscr{P}$ over $S$
    with one leg at $S^\sharp$ to
    an extended $G$-shtuka $\widetilde{\mathscr{P}}$, by using Fargues' classification \cite{Fargues20} of $G$-torsors over the Fargues--Fontaine curve (see also \cite{Anschutz19}).
    By Corollary \ref{Corollary:equivalence between G BK module and extended G shtuks},
    it then follows that the functor is essentially surjective.
\end{proof}

\subsection{Quasi-isogenies}\label{Subsection:quasi-isogenies}

Let $S=\Spa(R, R^+)$ be an affinoid perfectoid space over $k$,
and let
$S^\sharp=\Spa(R^\sharp, R^{\sharp+})$ be an untilt of $S$ over $\O_{\breve{E}}$.
We fix a pseudo-uniformizer $\varpi^\flat \in R^+$.

For an integer $n \geq 1$, we define a rational open subset
\[
\mathcal{Y}^{[n, \infty]}_S:= \{ \, x \in \Spa(W_{\O_E}(R^+)) \, \vert \, \vert [\varpi^\flat](x) \vert \leq \vert \pi(x) \vert^{n} \neq 0 \, \} \subset \Spa(W_{\O_E}(R^+)).
\]
We set
$
\mathcal{Y}^{[n, \infty)}_S:=\mathcal{Y}^{[0, \infty)}_S \cap \mathcal{Y}^{[n, \infty]}_S.
$
For the isomorphism
$\Frob \colon \mathcal{Y}^{[0, \infty]}_S \to \mathcal{Y}^{[0, \infty]}_S$,
the equality
$
\Frob(\mathcal{Y}^{[n, \infty]}_S)=\mathcal{Y}^{[qn, \infty]}_S$
holds.
Similarly, we have
$\Frob(\mathcal{Y}^{[n, \infty)}_S)=\mathcal{Y}^{[qn, \infty)}_S$.

\begin{defn}\label{Definition:quasi isogeny for G-shtuka}
Let $\mathcal{P}_0$ be a $G$-Breuil--Kisin module over $(\O_{\breve{E}}, (\pi))$ and $\mathscr{P}$ a $G$-shtuka over $S$ with one leg at $S^\sharp$.
A \textit{quasi-isogeny} from $\mathscr{P}$ to $\mathcal{P}_0$
is an element of the set
\[
{\varinjlim}_n\{ \, \text{Frobenius equivariant isomorphisms} \ \mathscr{P}_{\vert \mathcal{Y}^{[n, \infty)}_S} \overset{\sim}{\to} (\mathcal{P}_0)_{\vert \mathcal{Y}^{[n, \infty)}_S}  \ \text{over} \ \mathcal{Y}^{[n, \infty)}_S \, \}
\]
where $n$ runs over all integers $n \geq 1$ such that $\mathcal{Y}^{[n, \infty)}_S$ does not intersect with $S^\sharp$.
Here $(\mathcal{P}_0)_{\vert \mathcal{Y}^{[n, \infty)}_S}$ is the pull-back of $\mathcal{P}_0$ along
the morphism $\mathcal{Y}^{[n, \infty)}_S \to \Spec \O_{\breve{E}}$ of locally ringed spaces.
Similarly, for
an extended $G$-shtuka $\widetilde{\mathscr{P}}$ over $S$ with one leg at $S^{\sharp}$,
we define a \textit{quasi-isogeny} from
$\widetilde{\mathscr{P}}$ to $\mathcal{P}_0$ as an element of the set
\[
{\varinjlim}_n\{ \, \text{Frobenius equivariant isomorphisms} \ \widetilde{\mathscr{P}}_{\vert \mathcal{Y}^{[n, \infty]}_S} \overset{\sim}{\to} (\mathcal{P}_0)_{\vert \mathcal{Y}^{[n, \infty]}_S}  \ \text{over} \ \mathcal{Y}^{[n, \infty]}_S \, \}
\]
where $n$ runs over all integers $n \geq 1$ such that $\mathcal{Y}^{[n, \infty]}_S$ does not intersect with $S^\sharp$.
\end{defn}


\begin{rem}\label{Remark:extending along quasi-isogenies}
Let $\mathscr{P}$ be a $G$-shtuka over $S$ with one leg at $S^\sharp$.
Using a quasi-isogeny from $\mathscr{P}$ to $\mathcal{P}_0$, we can extend
$\mathscr{P}$
to an extended $G$-shtuka over $S$ with one leg at $S^{\sharp}$.
It follows that the restriction functor
from the groupoid of extended $G$-shtukas $\widetilde{\mathscr{P}}$ over $S$ with one leg at $S^\sharp$ together with a quasi-isogeny from
$\widetilde{\mathscr{P}}$ to $\mathcal{P}_0$
to the groupoid of $G$-shtukas $\mathscr{P}$ over $S$ with one leg at $S^\sharp$ together with a quasi-isogeny from
$\mathscr{P}$ to $\mathcal{P}_0$ is an equivalence.
\end{rem}

We want to relate quasi-isogenies of (extended) $G$-shtukas to quasi-isogenies of $G$-Breuil--Kisin modules, which we define as follows:

\begin{defn}\label{Definition:quasi-isogeny}
    Let $(A, (\pi))$ be an $\O_E$-prism.
    Let $\mathcal{P}$ and $\mathcal{P}'$ be $G$-Breuil--Kisin modules over $(A, (\pi))$.
    A \textit{quasi-isogeny} from $\mathcal{P}$ to $\mathcal{P}'$ is a Frobenius equivariant isomorphism
    \[
    \eta  \colon \mathcal{P}[1/\pi] \overset{\sim}{\to} \mathcal{P}'[1/\pi]
    \]
    of $G$-torsors over $\Spec A[1/\pi]$.
    If $\eta$ arises from a (unique) isomorphism
    $\mathcal{P} \overset{\sim}{\to} \mathcal{P}'$
    of $G$-Breuil--Kisin modules over $(A, (\pi))$,
    we say that $\eta$ is \textit{effective}.
    In this case, the isomorphism $\mathcal{P} \overset{\sim}{\to} \mathcal{P}'$ is denoted by the same symbol $\eta$.
\end{defn}

We assume that $\pi=0$ in $R^{\sharp+}/\varpi$,
or equivalently $\pi \in I_{R^{\sharp+}}$, where $\varpi:=\theta([\varpi^\flat]) \in R^{\sharp+}$.
By \cite[Lemma 2.3.3]{Ito-K23},
we have
$I_{R^{\sharp+}}=(\pi)$ in $W_{\O_E}(R^+)/[\varpi^\flat]$.
We use the following notation.
For an integer $m \geq 0$, we set
\[
(A_m, (\pi)):=(W_{\O_E}(R^+)/[\varpi^\flat]^{1/q^m}, (\pi)),
\]
where $[\varpi^\flat]^{1/q^m}:=[(\varpi^\flat)^{1/q^m}]$.
We write
$A=A_0$.
Let
\[
(A_{\red}, (\pi)):=(W_{\O_E}(R^+_{\red}), (\pi))
\]
where
$
R^+_{\red}:= \varinjlim_{m} R^+/(\varpi^\flat)^{1/q^m}.
$
For a
$G$-Breuil--Kisin module
$\mathcal{P}$ over $(A, (\pi))$,
the base change of $\mathcal{P}$ to $(A_m, (\pi))$ (resp.\ $(A_{\red}, (\pi))$) is denoted by $\mathcal{P}_m$ (resp.\ $\mathcal{P}_{\red}$).
We use the same notation for
quasi-isogenies (or isomorphisms).

\begin{lem}\label{Lemma:quasi-isogeny reduction equivalence}
    Let $\mathcal{P}$ and $\mathcal{P}'$ be $G$-Breuil--Kisin modules over $(A, (\pi))$.
    The map
    \[
    \{ \, \text{quasi-isogenies from}\ \mathcal{P} \ \text{to} \  \mathcal{P}' \,\} \to \{ \, \text{quasi-isogenies from}\ \mathcal{P}_m \ \text{to} \  \mathcal{P}'_m \,\}, \quad \eta \mapsto \eta_m
    \]
    is bijective for every $m \geq 1$.
\end{lem}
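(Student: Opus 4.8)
The plan is to reduce to the case where $\mathcal{P}$ and $\mathcal{P}'$ are banal and then to exploit the fact that the transition maps $A_l \to A_{l+1}$ are nilpotent thickenings on which the Frobenius vanishes. First I would reduce to $m=1$: the element $(\varpi^\flat)^{1/q^l}$ is again a pseudo-uniformizer (its unit locus in $R$ is the same as that of $\varpi^\flat$), so each map $A_l \to A_{l+1}$ is an instance of the map $A \to A_1$ for a suitable choice of pseudo-uniformizer, and by \cite[Lemma 2.3.3]{Ito-K23} we have $I_{R^{\sharp+}} = (\pi)$ in every $A_l$; hence the general statement follows by composing the cases $A_l \to A_{l+1}$ for $0 \le l \le m-1$. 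Next, $A \to A_1$ is a nilpotent thickening (the kernel $[\varpi^\flat]^{1/q}A$ has $q$-th power $[\varpi^\flat]A = 0$), so the functor $(A,(\pi))_\et \to (A_1,(\pi))_\et$ is an equivalence as in \cite[Lemma 2.5.9]{Ito-K23}, and a common $(\pi, I)$-completely \'etale cover trivializes $\mathcal{P}, \mathcal{P}'$ and simultaneously $\mathcal{P}_1, \mathcal{P}'_1$. Since the set of quasi-isogenies is the set of global sections of a sheaf of $\phi$-equivariant isomorphisms for the $(\pi, I)$-completely \'etale topology and the reduction map is compatible with the site equivalence, bijectivity may be checked after such a cover; this reduces us to the banal case.

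In the banal case I would choose trivializations under which $F_\mathcal{P}$ and $F_{\mathcal{P}'}$ are left multiplication by elements $Y, Y' \in G(A[1/\pi])$ (note $A[1/\pi] = A[1/I]$ since $I = (\pi)$ in $A$), so that quasi-isogenies from $\mathcal{P}$ to $\mathcal{P}'$ are identified with $\{\, h \in G(A[1/\pi]) \mid h = Y'\phi(h)Y^{-1} \,\}$, and similarly over $A_1$; the map in question is then induced by $G(A[1/\pi]) \to G(A_1[1/\pi])$. The crucial observation is that the Frobenius of $W_{\O_E}(R^+)$ is bijective (as $R^+$ is perfect) and sends $[\varpi^\flat]^{1/q}$ to $[\varpi^\flat]$, hence induces an isomorphism $A_1 \overset{\sim}{\to} A$; consequently $\phi \colon A \to A$ factors as $A \twoheadrightarrow A_1 \overset{\sim}{\to} A$, is compatible with $A \to A_1$ and with the Frobenius of $A_1$, and — the key point — kills the ideal $\mathfrak{a} := \ker(A \to A_1)$. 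Therefore $\phi$ carries the kernel of $G(A[1/\pi]) \to G(A_1[1/\pi])$ to the identity.

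Granting this, injectivity is immediate: if two quasi-isogenies $h, h'$ have the same image over $A_1$, then $g := h(h')^{-1}$ lies in the kernel of $G(A[1/\pi]) \to G(A_1[1/\pi])$ and satisfies $g = Y'\phi(g)(Y')^{-1} = Y' \cdot 1 \cdot (Y')^{-1} = 1$, so $h = h'$. For surjectivity, given a quasi-isogeny $h_1$ over $A_1$, I would lift it to some $\widetilde{h} \in G(A[1/\pi])$ — possible since $G$ is smooth and $A[1/\pi] \to A_1[1/\pi]$ is surjective with nilpotent kernel — and set $h := Y'\phi(\widetilde{h})Y^{-1}$; using the compatibility of the Frobenii with $A \to A_1$ and the relation $h_1 = Y'_1\phi(h_1)Y_1^{-1}$ one checks that $h$ reduces to $h_1$ over $A_1$, whence $h\widetilde{h}^{-1}$ lies in the kernel, so $\phi(h) = \phi(\widetilde{h})$ and therefore $h = Y'\phi(h)Y^{-1}$ is a quasi-isogeny lifting $h_1$. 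I expect the main obstacle to be the bookkeeping in the reduction to the banal case — tracking the equivalences of $(\pi, I)$-completely \'etale sites and the sheaf property of quasi-isogenies — together with verifying that each $A_l \to A_{l+1}$ is a nilpotent thickening killed by Frobenius and that all the relevant Frobenius maps are compatible; once that is in place the descent of Frobenius-equivariant isomorphisms is formal, in the same spirit as Proposition~\ref{Proposition:canonical isomorphism of deformations of phi G torsors}.
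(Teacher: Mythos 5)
Your argument is correct in substance, but it takes a genuinely different route from the paper, and the two hinge on the same elementary fact packaged differently. The paper's proof is a two-step affair: by the Tannakian formalism a quasi-isogeny is encoded by Frobenius-invariant elements of (localizations of) internal hom modules, so the lemma reduces to the statement that for a finite projective $A$-module $M$ with $F_M \colon (\phi^*M)[1/\pi] \overset{\sim}{\to} M[1/\pi]$ the map $M[1/\pi]^{F_M=\id} \to (M \otimes_A A_m[1/\pi])^{F_M=\id}$ is bijective, which follows at once from the fact that $\ker(A \to A_m)$ is killed by $\phi^m$ (injectivity: an invariant element of the kernel equals its own $m$-fold Frobenius transform, hence vanishes; surjectivity: replace an arbitrary lift $x$ by $F^m(x)$). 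Your observation that $\phi_A$ factors as $A \twoheadrightarrow A_1 \overset{\sim}{\to} A$ and kills $\ker(A \to A_1)$ is exactly this mechanism in the case $m=1$, and your fixed-point computation in the banal case ($g = Y'\phi(g)(Y')^{-1}$ with $\phi(g)=1$ forces $g=1$; replacing a lift $\widetilde{h}$ by $Y'\phi(\widetilde{h})Y^{-1}$ gives an equivariant lift) is a correct group-theoretic version of it, as is the reduction from general $m$ to $m=1$ by viewing each $A_l \to A_{l+1}$ as an instance of $A \to A_1$ for the pseudo-uniformizer $(\varpi^\flat)^{1/q^l}$. What your route buys is avoidance of the Tannakian translation; what it costs is the preliminary reduction to the banal case, and this is the one step you assert rather than prove: you need descent of quasi-isogenies, i.e.\ of Frobenius-equivariant isomorphisms over $A[1/\pi]$, along a $(\pi,I)$-completely \'etale covering $A \to B$. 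This is not covered by the banal-reduction arguments used elsewhere in the paper (e.g.\ in Proposition \ref{Proposition:canonical isomorphism of deformations of phi G torsors}), because there the relevant elements live in $G(A')$ rather than in $G(A'[1/\pi])$. It can be justified — the sheaf property of $\O_{\Prism}$ (Remark \ref{Remark:structure sheaf}) gives the exact sequence $0 \to A \to B \to B'$, localization at $\pi$ is exact, and one applies this to the internal hom modules of $\mathcal{P}$ and $\mathcal{P}'$ to descend the locally constructed lifts (their agreement on overlaps coming from your injectivity statement over $B'$) — but carrying this out essentially re-introduces the module-level Tannakian argument, at which point the paper's direct proof, which never trivializes the torsors, is shorter. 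So: no error, same engine, but your banal-reduction step needs the justification sketched above to be complete.
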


\begin{proof}
    By the Tannakian formalism, we are reduced to proving the following claim (by applying it to internal homs): Let $M$ be a finite projective $A$-module with
    an isomorphism
$F_M \colon (\phi^*M)[1/\pi] \overset{\sim}{\to} M[1/\pi]$.
Then the natural homomorphism
\[
M[1/\pi]^{F_M=\id} \to (M \otimes_A A_m[1/\pi])^{F_{M}=\id}
\]
is bijective.
This is easy to see since the kernel of $A \to A_m$ is killed by $\phi^m$.
\end{proof}

\begin{lem}\label{Lemma:quasi-isogeny reduction effective}
    Let $\mathcal{P}$ and $\mathcal{P}'$ be $G$-Breuil--Kisin modules over $(A, (\pi))$ such that $\mathcal{P}$ and $\mathcal{P}'$ are trivial as $G$-torsors over $\Spec A$.
    Let $\eta \colon \mathcal{P}[1/\pi] \overset{\sim}{\to} \mathcal{P}'[1/\pi]$ be a quasi-isogeny.
    If 
    the base change
    $\eta_{\red} \colon \mathcal{P}_{\red}[1/\pi] \overset{\sim}{\to} \mathcal{P}'_{\red}[1/\pi]$
    is effective,
    then there exists an integer $m \geq 1$
    such that
    $
    \eta_m \colon \mathcal{P}_{m}[1/\pi] \overset{\sim}{\to} \mathcal{P}'_{m}[1/\pi]
    $
    is effective.
\end{lem}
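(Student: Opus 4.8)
The plan is to reduce the statement, via a faithful representation of $G$, to an elementary computation with $\O_E$-Witt vectors. First I would fix a closed immersion $G\hookrightarrow\GL_N$ of group schemes over $\O_E$ (possible since $G$ is affine of finite type over $\O_E$). As $\mathcal{P}$ and $\mathcal{P}'$ are trivial $G$-torsors over $\Spec A$, I would choose trivializations $\mathcal{P}\simeq G_A\simeq\mathcal{P}'$, under which $\eta$ becomes left translation by a point $g\in G(A[1/\pi])$ and its base changes $\eta_m$, $\eta_{\red}$ become left translation by the images $g_m\in G(A_m[1/\pi])$, $g_{\red}\in G(A_{\red}[1/\pi])$. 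Now each of the rings $A=A_0$, $A_m$ $(m\geq1)$, and $A_{\red}=W_{\O_E}(R^+_{\red})$ is $\pi$-torsion free: for $A$ and $A_m$ by Lemma~\ref{Lemma:pi torsion free of witt vectors}, and for $A_{\red}$ because $R^+_{\red}=\varinjlim_m R^+/(\varpi^\flat)^{1/q^m}$ is a perfect ring, so that the argument of Lemma~\ref{Lemma:pi torsion free of witt vectors} applies. Since $G\hookrightarrow\GL_N$ is a closed immersion and $B\hookrightarrow B[1/\pi]$ for $B\in\{A_0,A_m,A_{\red}\}$, a point of $G(B[1/\pi])$ lies in $G(B)$ if and only if it lies in $\GL_N(B)$. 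Hence $\eta$ becomes effective after base change to $(B,(\pi))$ exactly when all matrix entries of $g$ and of $g^{-1}$ lie in $B$, and it is enough to prove: if $x\in A[1/\pi]$ has image in $A_{\red}\subseteq A_{\red}[1/\pi]$, then for all sufficiently large $m$ the image of $x$ in $A_m[1/\pi]$ lies in $A_m$. One then applies this to the finitely many matrix entries of $g$ and $g^{-1}$ and takes the maximum of the resulting bounds.

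For this last assertion I would write $x=y/\pi^n$ with $y\in A_0$ and lift $y$ to an element $\widetilde{y}\in W_{\O_E}(R^+)$. Since $R^+_{\red}=R^+/\mathfrak{a}$ with $\mathfrak{a}:=\bigcup_{m\geq1}\bigl((\varpi^\flat)^{1/q^m}\bigr)$, and since $R^+$ is perfect so that $\pi^nW_{\O_E}(R^+)=V^nW_{\O_E}(R^+)$ (where $V$ denotes the Verschiebung), reducing modulo $\pi^n$ and using again that $A_{\red}$ is $\pi$-torsion free identifies the hypothesis with the statement that the Witt coordinates $a_0,\dotsc,a_{n-1}$ of $\widetilde{y}$ lie in $\mathfrak{a}$; in particular each $a_j$ lies in $\bigl((\varpi^\flat)^{1/q^{m_j}}\bigr)$ for some integer $m_j$. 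Expanding $\widetilde{y}=\sum_{j\geq0}V^j([a_j])$ and using $\pi^jW_{\O_E}(R^+)=V^jW_{\O_E}(R^+)$ again, only the terms with $j<n$ matter modulo $\pi^n$. Writing $a_j=\bigl((\varpi^\flat)^{1/q^{m_j+j}}\bigr)^{q^j}c_j$ for suitable $c_j\in R^+$ and using the identity $V^j\bigl([b^{q^j}]w\bigr)=[b]\,V^j(w)$, one gets
\[
V^j([a_j])=[(\varpi^\flat)^{1/q^{m_j+j}}]\,V^j([c_j])\in[(\varpi^\flat)^{1/q^{m_j+j}}]\,W_{\O_E}(R^+).
\]
Setting $m:=\max_{0\leq j<n}(m_j+j)$, each factor $[(\varpi^\flat)^{1/q^{m_j+j}}]$ is a multiple of $[(\varpi^\flat)^{1/q^m}]$, so $\widetilde{y}\in[(\varpi^\flat)^{1/q^m}]\,W_{\O_E}(R^+)+\pi^nW_{\O_E}(R^+)$; equivalently, the image of $y$ in $A_m$ lies in $\pi^nA_m$. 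Since $A_m$ is $\pi$-torsion free this means exactly that the image of $x=y/\pi^n$ in $A_m[1/\pi]$ lies in $A_m$, which proves the claim and hence the lemma.

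The only genuinely non-formal point is the last paragraph. One is tempted to write $A_{\red}=\varinjlim_m A_m$, but this is false — $A_{\red}$ is the $\pi$-adic completion of that colimit — so the vanishing of $\widetilde{y}$ in $A_{\red}$ modulo $\pi^n$ has to be transported to a finite stage $A_m$ by hand, through the Witt-coordinate description of the kernels of $A_0\to A_m$ modulo $\pi^n$; this is where the perfectness of $R^+$ is used. The other ingredients — the embedding into $\GL_N$, the reduction to single matrix entries, and the $\pi$-torsion-freeness statements — are routine and rely only on results established earlier in the paper.
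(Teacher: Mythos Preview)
Your proposal is correct and follows essentially the same approach as the paper. The paper also trivializes $\mathcal{P}$ and $\mathcal{P}'$, identifies $\eta$ with an element $g\in G(A[1/\pi])$, and reduces (using that $G$ is affine of finite type over $\O_E$, which is equivalent to your use of a closed immersion $G\hookrightarrow\GL_N$) to the elementary claim that an element $x\in A$ whose image in $A_{\red}$ is divisible by $\pi$ becomes divisible by $\pi$ in some $A_m$; where you give an explicit Witt-coordinate computation handling divisibility by $\pi^n$ directly, the paper simply notes that this is clear from $R^+_{\red}=\varinjlim_m R^+/(\varpi^\flat)^{1/q^m}$ (working modulo $\pi$, where this colimit description suffices and your completion concern disappears).
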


\begin{proof}
We fix trivializations $\mathcal{P} \simeq G_A$ and $\mathcal{P}' \simeq G_A$.
Then $\eta$ is identified with an element $g \in G(A[1/\pi])$ whose image in $G(A_{\red}[1/\pi])$ lies in $G(A_{\red})$.
We want to show that
the image of $g$ in $G(A_m[1/\pi])$ belongs to $G(A_m)$ for some $m \geq 1$.
Since $G$ is an affine scheme of finite type over $\O_E$, it suffices to prove that for an element $x \in A$ whose image $x_{\red} \in A_{\red}$ is divisible by $\pi$,
there exists an integer $m \geq 1$ such that the image
$x_{m} \in A_m$ is also divisible by $\pi$.
This is clear since $R^+_{\red}= \varinjlim_{m} R^+/(\varpi^\flat)^{1/q^m}$.
\end{proof}

We define
$B^{[n, \infty]}_S:=\O_{\mathcal{Y}^{[n, \infty]}_S}(\mathcal{Y}^{[n, \infty]}_S)$,
which is a $W_{\O_E}(R^+)[1/\pi]$-algebra.
The universal property of the rational open subset $\mathcal{Y}^{[n, \infty]}_S$
(by applying it to the affinoid ring $(A[1/\pi], A[1/\pi]^{\circ})$)
induces a natural homomorphism
\begin{equation}\label{equation:reduction homomorphism}
    B^{[n, \infty]}_S \to A[1/\pi]=(W_{\O_E}(R^+)/[\varpi^\flat])[1/\pi]
\end{equation}
such that the composition
$
W_{\O_E}(R^+)[1/\pi] \to B^{[n, \infty]}_S \to (W_{\O_E}(R^+)/[\varpi^\flat])[1/\pi]
$
coincides with the quotient map.

Let $\mathcal{P}_0$ be a $G$-Breuil--Kisin module over $(\O_{\breve{E}}, (\pi))$
and
let $\mathcal{P}$ be a $G$-Breuil--Kisin module over $(W_{\O_E}(R^+), I_{R^{\sharp+}})$.
We assume that $\mathcal{P}$ is trivial as a $G$-torsor.
We denote by $\mathcal{P}_A$
(resp.\ $(\mathcal{P}_0)_A$)
the base change of $\mathcal{P}$ (resp.\ $\mathcal{P}_0$) to $A$.
Let
$\widetilde{\mathcal{P}}_{\sht}$
be
the extended $G$-shtuka over $S$ with one leg at $S^\sharp$ associated with $\mathcal{P}$.
A quasi-isogeny
\[
\iota \colon (\widetilde{\mathcal{P}}_{\sht})_{\vert \mathcal{Y}^{[n, \infty]}_S} \overset{\sim}{\to} (\mathcal{P}_0)_{\vert \mathcal{Y}^{[n, \infty]}_S}
\]
from 
$\widetilde{\mathcal{P}}_{\sht}$
to $\mathcal{P}_0$ induces a quasi-isogeny
$
\eta_\iota \colon \mathcal{P}_A[1/\pi] \overset{\sim}{\to} (\mathcal{P}_0)_A[1/\pi]
$
by base change along $(\ref{equation:reduction homomorphism})$.
The following proposition is implicitly proved in \cite{Gleasongeometric}.

\begin{prop}[\cite{Gleasongeometric}]\label{Proposition:comparison of quasi-isogenies}
    The above construction $\iota \mapsto \eta_\iota$ induces a bijection between the set of
    quasi-isogenies $\iota$
    from 
    $\widetilde{\mathcal{P}}_{\sht}$
    to $\mathcal{P}_0$
    such that $(\eta_\iota)_{\red}$ is effective and the set of
    quasi-isogenies $\eta$
    from $\mathcal{P}_A$ to $(\mathcal{P}_0)_A$
    such that $\eta_{\red}$ is effective.
\end{prop}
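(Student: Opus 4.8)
The plan is to reduce the statement to an assertion about Frobenius‑equivariant sections of vector bundles on the annuli $\mathcal{Y}^{[n,\infty]}_S$, and then to exploit the identity $\Frob(\mathcal{Y}^{[n,\infty]}_S)=\mathcal{Y}^{[qn,\infty]}_S$ in order to identify such sections with data supported on the ``boundary point at infinity'' $V([\varpi^\flat])\cap\{\pi\neq 0\}$, which is precisely what the rings $A$, $A_m$, $A_{\red}$ describe. First I would invoke the Tannakian formalism exactly as in the reduction carried out for Lemma~\ref{Lemma:quasi-isogeny reduction equivalence}: applying everything to internal $\underline{\mathrm{Hom}}$‑bundles and fixing the trivialization $\mathcal{P}\simeq G_{W_{\O_E}(R^+)}$ supplied by the hypotheses together with a trivialization of $\mathcal{P}_0$, one rewrites a quasi-isogeny $\iota$ as an element $g_\iota\in G(B^{[n,\infty]}_S)$ (for some admissible $n$) intertwining the two Frobenius structures, and $\eta_\iota$ as its image under $G(B^{[n,\infty]}_S)\to G(A[1/\pi])$. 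Since $G$ is affine of finite type over $\O_E$, matters reduce, coordinate by coordinate, to understanding how a Frobenius-equivariant function on $\mathcal{Y}^{[n,\infty]}_S$ is recovered from, and constrained by, its reduction modulo $[\varpi^\flat]$ (with the effectivity conditions playing the role of $\pi$-integrality); here Remark~\ref{Remark:punctured curve, vector bundles} and the results of Kedlaya \cite{Kedlaya}, together with \cite[Proposition~8.5]{Anschutz} for the passage to $G$-torsors (see Proposition~\ref{Proposition:G-torsor extension}), let one move freely between analytic objects on the sousperfectoid space $\mathcal{Y}^{[n,\infty]}_S$ (Remark~\ref{Remark:punctured curve is sousperfectoid}) and honest algebraic objects over the relevant rings.

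Next I would set up the ``propagation'' mechanism. Because $\mathcal{Y}^{[n,\infty]}_S$ avoids $S^\sharp$ for $n\gg 0$, the structure maps $\phi_{\widetilde{\mathcal{P}}_{\sht}}$ and $\phi_{\mathcal{P}_0}$ are genuine isomorphisms there, and $\Frob$ restricts to an isomorphism $\mathcal{Y}^{[n,\infty]}_S\overset{\sim}{\to}\mathcal{Y}^{[qn,\infty]}_S$. Hence a Frobenius-equivariant isomorphism defined on $\mathcal{Y}^{[qn,\infty]}_S$ extends uniquely to $\mathcal{Y}^{[n,\infty]}_S$ by transporting it through $\phi_{\mathcal{P}_0}\circ\Frob^*(-)\circ\phi_{\widetilde{\mathcal{P}}_{\sht}}^{-1}$; iterating, the colimit defining a quasi-isogeny is insensitive to the choice of admissible $n$, and a quasi-isogeny is determined by its restriction to the neighborhoods $\mathcal{Y}^{[q^j n,\infty]}_S$ of the locus $V([\varpi^\flat])\cap\{\pi\neq 0\}$. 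Feeding this into the reduction map $B^{[n,\infty]}_S\to A[1/\pi]$ yields injectivity of $\iota\mapsto\eta_\iota$ at once: a Frobenius-equivariant automorphism of $(\mathcal{P}_0)_{\vert\mathcal{Y}^{[n,\infty]}_S}$ — which is pulled back from $\Spec\O_{\breve{E}}$ — that restricts to the identity along $V([\varpi^\flat])$ is forced to be the identity by a standard $\phi$-contraction argument in the $[\varpi^\flat]$-adic topology near infinity. Using Lemma~\ref{Lemma:quasi-isogeny reduction equivalence} to replace $A$ by $A_m$ or $A_{\red}$ freely, one also sees that the condition ``$(\eta_\iota)_{\red}$ effective'' is precisely the one picked out by the construction.

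For surjectivity I would start from $\eta$ with $\eta_{\red}$ effective. By Lemma~\ref{Lemma:quasi-isogeny reduction effective}, applied after the Tannakian reduction and using the triviality of $\mathcal{P}$ and $\mathcal{P}_0$, there is an $m$ with $\eta_m\colon\mathcal{P}_m\overset{\sim}{\to}(\mathcal{P}_0)_m$ an honest isomorphism of $G$-Breuil--Kisin modules over $(A_m,(\pi))=(W_{\O_E}(R^+)/[\varpi^\flat]^{1/q^m},(\pi))$. I would then spread $\eta_m$ out: approximate the $G(A_m[1/\pi])$-point $\eta_m$ by a nearby $G(B^{[n,\infty]}_S)$-point for $n$ large (possible since $[\varpi^\flat]$ is topologically nilpotent towards infinity and, by the description of $A_{\red}$, points over $A_m[1/\pi]$ are limits of points over the $B^{[n,\infty]}_S$), correct it to be Frobenius-equivariant by passing to the $\Frob$-limit via the propagation above, and obtain an isomorphism $\mathcal{P}_{\vert\mathcal{Y}^{[n,\infty]}_S}\overset{\sim}{\to}(\mathcal{P}_0)_{\vert\mathcal{Y}^{[n,\infty]}_S}$; propagating it back along $\Frob$ produces a quasi-isogeny $\iota$ with $\eta_\iota=\eta$, and $(\eta_\iota)_{\red}=\eta_{\red}$ is effective by construction.

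The hard part will be this last spreading-out step — upgrading an honest isomorphism defined only over the thickened point at infinity, i.e.\ over $(W_{\O_E}(R^+)/[\varpi^\flat]^{1/q^m})[1/\pi]$, to one over a genuine annulus $\mathcal{Y}^{[n,\infty]}_S$, compatibly with Frobenius. This is exactly the analytic input of \cite{Gleasongeometric} (resting on Kedlaya's gluing results and, for the $G$-torsor formulation, on \cite{Anschutz}); everything else — the Tannakian dictionary, the $\Frob$-propagation, the $\phi$-contraction argument for injectivity, and the bookkeeping matching the two effectivity conditions through Lemmas~\ref{Lemma:quasi-isogeny reduction equivalence} and~\ref{Lemma:quasi-isogeny reduction effective} — is formal. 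One should only take care that $n$ can always be chosen large enough that $\mathcal{Y}^{[n,\infty]}_S$ misses $S^\sharp$, but this is automatic.
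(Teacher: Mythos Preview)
Your plan follows the same overall shape as the paper's proof: trivialize both $G$-torsors, identify a quasi-isogeny $\iota$ with an element of $G(B^{[n,\infty]}_S)$, and defer the genuine analytic work to \cite{Gleasongeometric}. The surjectivity sketch is essentially correct, though more elaborate than needed: the paper simply pushes $\eta$ along $A[1/\pi]\to B^{[n,\infty]}_S/[\varpi^\flat]$ and applies \cite[Lemma~2.15]{Gleasongeometric} directly, without first invoking Lemma~\ref{Lemma:quasi-isogeny reduction effective}. Your references to Proposition~\ref{Proposition:G-torsor extension} and \cite{Anschutz} are red herrings here; no torsor-extension result is used.

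The real issue is your injectivity argument. You treat the ``$\phi$-contraction'' step as standard and formal, and you never use the hypothesis that $(\eta_\iota)_{\red}$ is effective. But what you need is: if $g,g'\in G(B^{[n,\infty]}_S)$ are Frobenius-equivariant and have the same image in $G(A[1/\pi])$, then $g=g'$ in $G(B^{[r,\infty]}_S)$ for some $r$. The $\phi$-contraction statement available from \cite[Lemma~2.15]{Gleasongeometric} takes as input equality in $G(B^{[r,\infty]}_S/[\varpi^\flat]^{1/q^m})$, not equality in $G(A[1/\pi])$; these are a priori different because the reduction map \eqref{equation:reduction homomorphism} is not simply the quotient by $[\varpi^\flat]$. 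The paper bridges this gap using \cite[Lemma~2.14]{Gleasongeometric}, which produces, precisely under the effectivity hypothesis, approximations $h,h'\in G(W_{\O_E}(R^+))$ with $h\equiv g$ and $h'\equiv g'$ in $G(B^{[r,\infty]}_S/[\varpi^\flat]^{1/q^m})$; one then compares $h$ and $h'$ inside $G(W_{\O_E}(R^+)/[\varpi^\flat]^{1/q^m})$ (using Lemma~\ref{Lemma:quasi-isogeny reduction equivalence} and $\pi$-torsion-freeness) before invoking Lemma~2.15. So the effectivity assumption is not mere bookkeeping for the set being described---it is what makes the injectivity argument go through. Your sketch misses this step.
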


\begin{proof}
We fix trivializations
$\mathcal{P} \simeq G_{W_{\O_E}(R^+)}$ and
$\mathcal{P}_0 \simeq G_{\O_{\breve{E}}}$.
We first prove the injectivity of the map.
Let $\iota$ and $\iota'$ be quasi-isogenies from 
$\widetilde{\mathcal{P}}_{\sht}$
to $\mathcal{P}_0$
defined over $\mathcal{Y}^{[n, \infty]}_S$
such that $(\eta_\iota)_{\red}$ and $(\eta_{\iota'})_{\red}$ are effective.
We assume that
$\eta_{\iota}=\eta_{\iota'}$.
We identify $\iota$ and $\iota'$ with elements $g, g' \in G(B^{[n, \infty]}_S)$, respectively.
Arguing as in \cite[Lemma 2.14]{Gleasongeometric},
we can find elements $h, h' \in G(W_{\O_E}(R^+))$ such that for some $r \geq n$ and $m \geq 1$, we have
$h=g$ and $h'=g'$ in
$G(B^{[r, \infty]}_S/[\varpi^\flat]^{1/q^m})$.
Since $\eta_{\iota}=\eta_{\iota'}$,
we see that
$h=h'$ in $G(W_{\O_E}(R^+)/[\varpi^\flat]^{1/q^{m}})$.
This implies that
$g=g'$ in $G(B^{[r, \infty]}_S/[\varpi^\flat]^{1/q^{m}})$.
It now follows from \cite[Lemma 2.15]{Gleasongeometric} (which also holds for general $\O_E$) that $g=g'$ in $G(B^{[r, \infty]}_S)$, whence $\iota=\iota'$.

Let $\eta \colon \mathcal{P}_A[1/\pi] \overset{\sim}{\to} (\mathcal{P}_0)_A[1/\pi]$
be a quasi-isogeny
such that $\eta_{\red}$ is effective.
By applying \cite[Lemma 2.15]{Gleasongeometric} to the base change of $\eta$
along
$A[1/\pi] \to B^{[n, \infty]}_S/[\varpi^\flat]$,
we can find a
quasi-isogeny
$\iota$
from
$\widetilde{\mathcal{P}}_{\sht}$
to $\mathcal{P}_0$
with $\eta_\iota=\eta$.
This proves that the map is surjective.
\end{proof}

\subsection{Formal completions of integral local Shimura varieties}\label{Subsection:Moduli spaces of $G$-shtukas}

Let
$\mu \colon \G_m \to G_{\O_{\breve{E}}}$
be a minuscule cocharacter.
Let
$B(G)$ be the set of $\phi$-conjugacy classes of $G(\breve{E})$ and let
$B(G, \mu^{-1}) \subset B(G)$ be the subset of 
neutral acceptable elements for $\mu^{-1}$; see \cite[Section 2.4.1]{PappasRapoport21}.
Let $b \in G(\breve{E})$ be an element such that its $\phi$-conjugacy class belongs to $B(G, \mu^{-1})$.
Let
$\mathcal{E}_b=G_{\O_{\breve{E}}}$ be the $G$-Breuil--Kisin module over
$(\O_{\breve{E}}, (\pi))$
with the Frobenius given by $g \mapsto bg$.

\begin{defn}[{\cite[Definition 25.1.1]{Scholze-Weinstein}, \cite[Definition 3.2.1]{PappasRapoport21}}]\label{Definition:integral model of local Shimura varieties}
    The \textit{integral moduli space of local shtukas}
    \[
    \mathcal{M}^{\int}_{G, b, \mu}
    \]
    associated with the triple
    $(G, b, \mu)$
    is the $v$-sheaf on $\Perf_k$
    which sends an affinoid perfectoid space
    $S=\Spa(R, R^+)$ over $k$ to the set of isomorphism classes of tuples
    \[
    (S^{\sharp}, \mathscr{P}, \iota)
    \]
    consisting of an untilt
    $S^{\sharp}=\Spa(R^{\sharp}, R^{\sharp+})$ of $S$ over $\O_{\breve{E}}$,
    a $G$-shtuka
    $\mathscr{P}$
    over $S$ with one leg at $S^\sharp$ which is bounded by $\mu$, and
    a quasi-isogeny $\iota$ from $\mathscr{P}$ to $\mathcal{E}_b$.
    We also call $\mathcal{M}^{\int}_{G, b, \mu}$
    the \textit{integral local Shimura variety} (with hyperspecial level structure).
    To see that $\mathcal{M}^{\int}_{G, b, \mu}$ is a $v$-sheaf, we can use \cite[Proposition 19.5.3]{Scholze-Weinstein}.
    (Although only the case of $\O_E=\Z_p$ is considered in \cite[Proposition 19.5.3]{Scholze-Weinstein}, the same argument works for general $\O_E$.)
\end{defn}

We have a morphism
$\mathcal{M}^{\int}_{G, b, \mu} \to \Spd \O_{\breve{E}}$
defined by $(S^{\sharp}, \mathscr{P}, \iota) \mapsto S^{\sharp}$.
Namely, $\mathcal{M}^{\int}_{G, b, \mu}$ is a $v$-sheaf over $\Spd \O_{\breve{E}}$.
The generic fiber of $\mathcal{M}^{\int}_{G, b, \mu}$ is (the $v$-sheaf associated with)
the local Shimura variety
$\mathcal{M}_{G_E, b, \mu, K}$ over $\Spa (\breve{E}, \O_{\breve{E}})$, where $K:=G(\O_E)$.

\begin{rem}\label{Remark:product of points cases}
    Let
    $S=\Spa(R, R^+)$
    be a product of points over $k$
    and let
    $(S^{\sharp}, \mathscr{P}, \iota) \in \mathcal{M}^{\int}_{G, b, \mu}(S)$.
    By Remark \ref{Remark:extending along quasi-isogenies}, we can uniquely extend $\mathscr{P}$
    to
    an extended $G$-shtuka
    $\widetilde{\mathscr{P}}$ over $S$ with one leg at $S^\sharp$
    and regard $\iota$
    as
    a quasi-isogeny
    from
    $\widetilde{\mathscr{P}}$ to $\mathcal{E}_b$.
    By Corollary \ref{Corollary:equivalence between G BK module and extended G shtuks},
    Proposition \ref{Proposition:bounded by mu and of type mu},
    and Proposition \ref{Proposition:G-displays to G-BK}, there is a unique (up to isomorphism) $G$-$\mu^{-1}$-display $\mathcal{Q}$ over
    $(W_{\O_E}(R^+), I_{R^{\sharp+}})$ such that
    $
    \widetilde{(\mathcal{Q}_{\mathrm{BK}})}_{\sht} \simeq \widetilde{\mathscr{P}}.
    $
    We note $\mathcal{Q}$ is banal by Proposition \ref{Proposition:G display with trivial Hodge filtration is banal}.
    (One can show that the Hodge filtration $P(\mathcal{Q})_{R^{\sharp+}}$ is trivial as a $P_\mu$-torsor over $\Spec R^{\sharp+}$ arguing as in the proof of \cite[Proposition 5.6.11]{Ito-K23}.)
    Let $\mathcal{Q}_{\red}$ be the base change of $\mathcal{Q}$ along
    $(W_{\O_E}(R^+), I_{R^{\sharp+}}) \to (W_{\O_E}(R^+_{\red}), (\pi))$.
    Then $\iota$ induces a quasi-isogeny
    \[
    (\eta_\iota)_{\red} \colon (\mathcal{Q}_{\red})_{\mathrm{BK}}[1/\pi] \overset{\sim}{\to} \mathcal{E}_b[1/\pi]
    \]
    as in Proposition \ref{Proposition:comparison of quasi-isogenies}.
    Here the base change of $\mathcal{E}_b$ to $(W_{\O_E}(R^+_{\red}), (\pi))$ is denoted by the same symbol.
\end{rem}

Let $\mathcal{Q}_0$ be a $G$-$\mu^{-1}$-display over $(\O_{\breve{E}}, (\pi))$ together with
a quasi-isogeny
\[
\iota_0 \colon (\mathcal{Q}_0)_{\mathrm{BK}}[1/\pi] \overset{\sim}{\to} \mathcal{E}_b[1/\pi].
\]
We note that $\mathcal{Q}_0$ is banal by Proposition \ref{Proposition:G display with trivial Hodge filtration is banal}.
In particular
$(\mathcal{Q}_0)_{\mathrm{BK}}$ is trivial as a $G$-torsor over $\Spec \O_{\breve{E}}$.
We fix
a trivialization
$(\mathcal{Q}_0)_{\mathrm{BK}} \simeq G_{\O_{\breve{E}}}$.
Then
$\iota_0$
can be regarded as an element $g \in G(\breve{E})$
such that
$g^{-1}b\phi(g) \in G(\O_{\breve{E}})\mu(\pi)^{-1} G(\O_{\breve{E}})$.
The image of $g$ in $G(\breve{E})/G(\O_{\breve{E}})$
does not depend on the choice of $(\mathcal{Q}_0)_{\mathrm{BK}} \simeq G_{\O_{\breve{E}}}$.
In this way, we obtain a point
\[
x \in X_{G}(b, \mu^{-1}):= \{ \, g \in G(\breve{E})/G(\O_{\breve{E}})   \, \vert \,  g^{-1}b\phi(g) \in G(\O_{\breve{E}})\mu(\pi)^{-1} G(\O_{\breve{E}})  \, \}
\]
of the affine Deligne--Lusztig variety $X_{G}(b, \mu^{-1})$.

\begin{defn}[{\cite{Gleason}}]\label{Definition:formal completion}
The \textit{formal completion}
\[
\widehat{\mathcal{M}^{\int}_{G, b, \mu}}_{/ x}
\]
of $\mathcal{M}^{\int}_{G, b, \mu}$ at $x$ is the subsheaf of $\mathcal{M}^{\int}_{G, b, \mu}$ which sends
a product of points $S=\Spa(R, R^+)$ over $k$ to the set of isomorphism classes of tuples
$
(S^{\sharp}, \mathscr{P}, \iota) \in \mathcal{M}^{\int}_{G, b, \mu}(S)
$
such that, for the associated $G$-$\mu^{-1}$-display $\mathcal{Q}$ over
$(W_{\O_E}(R^+), I_{R^{\sharp+}})$, there exists an isomorphism
$\mathcal{Q}_{\red} \overset{\sim}{\to} \mathcal{Q}_{0}$
of $G$-$\mu^{-1}$-displays over $(W_{\O_E}(R^+_{\red}), (\pi))$ which makes the following diagram commute:
 \[
\xymatrix{
(\mathcal{Q}_{\red})_{\mathrm{BK}}[1/\pi]   \ar[rr]^-{} \ar[rd]_-{(\eta_\iota)_{\red}} & &    \ar[ld]^-{\iota_0} (\mathcal{Q}_{0})_{\mathrm{BK}}[1/\pi] \\
    &  \mathcal{E}_b[1/\pi]. &
    }
\]
See Remark \ref{Remark:product of points cases} for the notation used here.
As in Remark \ref{Remark:product of points cases}, the base change of $(\mathcal{Q}_{0})_{\mathrm{BK}}$ to $(W_{\O_E}(R^+_{\red}), (\pi))$ is denoted by the same symbol.
\end{defn}

\begin{rem}\label{Remark:definition of formal completion makes sense}
It is easy to see that
the subsheaf $\widehat{\mathcal{M}^{\int}_{G, b, \mu}}_{/ x}$ exists and is uniquely determined by the above condition (since products of points over $k$ form a basis of $\Perf_k$ with respect to the $v$-topology).
In \cite{Gleason},
Gleason gives a more conceptual definition of $\widehat{\mathcal{M}^{\int}_{G, b, \mu}}_{/ x}$.
With the notation of \cite{Gleason},
$\widehat{\mathcal{M}^{\int}_{G, b, \mu}}_{/ x}$
is the formal neighborhood 
of $x \in (\mathcal{M}^{\int}_{G, b, \mu})^{\mathrm{red}}$
on the prekimberlite
$\mathcal{M}^{\int}_{G, b, \mu}$; see \cite[Definition 4.18]{Gleason}.
We note that $(\mathcal{M}^{\int}_{G, b, \mu})^{\mathrm{red}}$ can be identified with the affine Deligne--Lusztig variety $X_{G}(b, \mu^{-1})$ by \cite[Proposition 2.30]{Gleasongeometric}.
See also \cite[Lemma 2.31]{Gleasongeometric}
for the fact that $\mathcal{M}^{\int}_{G, b, \mu}$ is a prekimberlite (in the sense of \cite[Definition 4.15]{Gleason}).
\end{rem}

We consider
the local ring
$R_{G, \mu^{-1}}$
over $\O_{\breve{E}}$
defined as in Definition \ref{Definition:universal local ring}.
There is an isomorphism
$
R_{G, \mu^{-1}} \simeq \O_{\breve{E}}[[t_1, \dotsc, t_r]]
$
over $\O_{\breve{E}}$.
We endow $R_{G, \mu^{-1}}$ with the $\mathfrak{m}_{G, \mu^{-1}}$-adic topology.
Let
$
\Spd R_{G, \mu^{-1}}
$
be the $v$-sheaf (over $\Spd \O_{\breve{E}}$) on $\Perf_k$ which sends a perfectoid space $S$ over $k$ to the set of isomorphism classes of
untilts
$S^\sharp$ of $S$ over $\O_{\breve{E}}$
equipped with a morphism of adic spaces $S^\sharp \to \Spa R_{G, \mu^{-1}}$
over $\O_{\breve{E}}$; see \cite[Section 18.4]{Scholze-Weinstein}.

The main result of this section is the following theorem.

\begin{thm}\label{Theorem:conjecture of Pappas-Rapoport}
    There exists an isomorphism
    \[
    \widehat{\mathcal{M}^{\int}_{G, b, \mu}}_{/ x} \simeq \Spd R_{G, \mu^{-1}}
    \]
    of $v$-sheaves over $\Spd \O_{\breve{E}}$.
\end{thm}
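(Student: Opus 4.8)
The plan is to prove the theorem by exhibiting a morphism $\Spd R_{G, \mu^{-1}} \to \widehat{\mathcal{M}^{\int}_{G, b, \mu}}_{/ x}$ coming from a universal deformation of $\mathcal{Q}_0$, and then checking it is an isomorphism on $S$-points for every product of points $S$ over $k$ (which suffices, since products of points form a basis of $\Perf_k$ for the $v$-topology, as recalled in Example \ref{Example:product of points}). Concretely, since $\mu^{-1}$ is minuscule, hence $1$-bounded, Theorem \ref{Theorem:Existence of universal deformation} provides a universal deformation $\mathfrak{Q}^{\univ}$ of $\mathcal{Q}_0$ over $R_{G, \mu^{-1}}$, and by Theorem \ref{Theorem:characterization of universal family} it has the property $\mathrm{(Perfd)}$. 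First I would spell out the map: given an untilt $S^\sharp = \Spa(R^\sharp, R^{\sharp+})$ of a product of points $S$ together with a morphism $f \colon S^\sharp \to \Spa R_{G, \mu^{-1}}$, the induced $\O_{\breve{E}}$-algebra homomorphism $R_{G, \mu^{-1}} \to R^{\sharp+}$ factors, up to replacing $\varpi^\flat$ by a $q$-power root, through $R_{G, \mu^{-1}} \to R^{\sharp+}/\varpi^m$ for suitable $m$ (the topological nilpotence of $\mathfrak{m}_{G, \mu^{-1}}$), so that $(R^{\sharp+}, \varpi^\flat)$ is a perfectoid pair and $\mathfrak{Q}^{\univ}$ evaluates to a $G$-$\mu^{-1}$-display $\mathcal{Q}$ over $(W_{\O_E}(R^+)/[\varpi^\flat]^m, I_{R^{\sharp+}})$, hence (via Proposition \ref{Proposition:limit of G displays} / the $[\varpi^\flat]$-completeness of $W_{\O_E}(R^+)$) over $(W_{\O_E}(R^+), I_{R^{\sharp+}})$. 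Applying $\mathcal{Q} \mapsto \mathcal{Q}_{\mathrm{BK}} \mapsto \widetilde{(\mathcal{Q}_{\mathrm{BK}})}_{\sht}$ and Remark \ref{Remark:extending along quasi-isogenies}, together with the quasi-isogeny obtained by base-changing $\iota_0$ along $R_{G, \mu^{-1}} \to R^{\sharp+}_{\red} = R^+_{\red}$-Witt-vectors (using that $\mathfrak{m}_{G, \mu^{-1}}$ dies in $W_{\O_E}(R^+_{\red})$, since $R^{\sharp+}_{\red}$ is reduced and $\mathfrak{m}_{G,\mu^{-1}}$-power roots collapse), yields a point of $\widehat{\mathcal{M}^{\int}_{G, b, \mu}}_{/ x}(S)$.

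\textbf{Injectivity.} For injectivity of the induced map on $S$-points, I would argue as follows. Two morphisms $S^\sharp \to \Spa R_{G, \mu^{-1}}$ give rise to the same point of $\widehat{\mathcal{M}^{\int}_{G, b, \mu}}_{/ x}(S)$ iff the associated $G$-$\mu^{-1}$-displays over $(W_{\O_E}(R^+), I_{R^{\sharp+}})$ are isomorphic compatibly with the quasi-isogenies to $\mathcal{E}_b$. Reducing modulo $[\varpi^\flat]^m$ we land in $\Def(\mathcal{Q}_0)_{(W_{\O_E}(R^+)/[\varpi^\flat]^m, I_{R^{\sharp+}})}$ for each $m$. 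The key point is that the quasi-isogeny data rigidifies the deformation: an isomorphism of the underlying $G$-shtukas compatible with quasi-isogenies to $\mathcal{E}_b$ forces, by Proposition \ref{Proposition:comparison of quasi-isogenies} and Lemma \ref{Lemma:quasi-isogeny reduction effective}, an isomorphism of $G$-$\mu^{-1}$-displays that is the identity on the reduction, hence an isomorphism of \emph{deformations} in the sense of Definition \ref{Definition:deformation} (after passing to a high enough $q$-power root of $\varpi^\flat$). The property $\mathrm{(Perfd)}$ of $\mathfrak{Q}^{\univ}$ then says exactly that $\ev_{\mathfrak{Q}^{\univ}}\colon \Hom(R_{G,\mu^{-1}}, R^{\sharp+}/\varpi^m)_e \to \Def((\mathcal{Q}_0)_e)$ is bijective, giving injectivity (and also surjectivity onto those deformations arising from maps out of $R_{G,\mu^{-1}}$). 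One subtlety to handle carefully: the perfectoid pair $(R^{\sharp+}, \varpi^\flat)$ underlying a general product of points need not have $R^{\sharp+} \to R^{\sharp+}$ surjective on $p$-th powers, but $\mathrm{(Perfd)}$ is stated for arbitrary perfectoid pairs, so no flatness reduction is needed here.

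\textbf{Surjectivity.} For surjectivity, I would start from an arbitrary $(S^\sharp, \mathscr{P}, \iota) \in \widehat{\mathcal{M}^{\int}_{G, b, \mu}}_{/ x}(S)$ with $S$ a product of points. By Remark \ref{Remark:product of points cases} this produces a banal $G$-$\mu^{-1}$-display $\mathcal{Q}$ over $(W_{\O_E}(R^+), I_{R^{\sharp+}})$ with $\mathcal{Q}_{\red} \cong \mathcal{Q}_0$ compatibly with quasi-isogenies. For each $m$, $\mathcal{Q}$ modulo $[\varpi^\flat]^m$ is a deformation of $(\mathcal{Q}_0)_e$ over $(W_{\O_E}(R^+)/[\varpi^\flat]^m, I_{R^{\sharp+}})$, and $\mathrm{(Perfd)}$ produces a unique $g_m \in \Hom(R_{G,\mu^{-1}}, R^{\sharp+}/\varpi^m)_e$ with $\ev_{\mathfrak{Q}^{\univ}}(g_m) \cong \mathcal{Q} \bmod [\varpi^\flat]^m$; uniqueness forces the $g_m$ to be compatible, so they assemble (using $R^{\sharp+} = \varprojlim_m R^{\sharp+}/\varpi^m$ up to the $\varpi^\flat$-root bookkeeping) into an $\O_{\breve{E}}$-homomorphism $R_{G,\mu^{-1}} \to R^{\sharp+}$ with image in the ideal of topologically nilpotent elements, i.e.\ a morphism $S^\sharp \to \Spa R_{G,\mu^{-1}}$, whose associated point is $(S^\sharp, \mathscr{P}, \iota)$ (the quasi-isogeny matching being built into the definition of $\widehat{\mathcal{M}^{\int}_{G, b, \mu}}_{/ x}$). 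Finally I would check naturality in $S$ and that the isomorphism is compatible with the maps to $\Spd\O_{\breve{E}}$, which is immediate from the construction since the untilt $S^\sharp$ is carried along unchanged on both sides. \textbf{The main obstacle} I expect is the careful matching of the quasi-isogeny data with the deformation-theoretic bijection of $\mathrm{(Perfd)}$ — precisely, translating ``isomorphic as objects of $\mathcal{M}^{\int}_{G,b,\mu}$ modulo the quasi-isogeny to $\mathcal{E}_b$'' into ``isomorphic as deformations of $\mathcal{Q}_0$ in the sense of Definition \ref{Definition:deformation}'' — which requires combining Proposition \ref{Proposition:comparison of quasi-isogenies}, Lemma \ref{Lemma:quasi-isogeny reduction equivalence}, Lemma \ref{Lemma:quasi-isogeny reduction effective}, and the rigidity Corollary \ref{Corollary:isomorphism of deformatiions over prismatic site}, and keeping track of the passage to $q$-power roots of the pseudo-uniformizer throughout.
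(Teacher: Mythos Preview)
Your proposal is correct and follows essentially the same approach as the paper: you reduce to products of points, use the universal deformation $\mathfrak{Q}^{\univ}$ of $\mathcal{Q}_0$ with property $(\mathrm{Perfd})$ from Theorem \ref{Theorem:Existence of universal deformation}, and match the quasi-isogeny data with the deformation structure via the chain Proposition \ref{Proposition:G-displays to G-BK}, Proposition \ref{Proposition:bounded by mu and of type mu}, Corollary \ref{Corollary:equivalence between G BK module and extended G shtuks}, Lemma \ref{Lemma:quasi-isogeny reduction equivalence}, Lemma \ref{Lemma:quasi-isogeny reduction effective}, and Proposition \ref{Proposition:comparison of quasi-isogenies}. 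The only organizational difference is that the paper first packages this chain into a clean identification of the fiber of $\widehat{\mathcal{M}^{\int}_{G,b,\mu}}_{/x}(S)$ over $S^\sharp$ with isomorphism classes of pairs $(\mathcal{Q}, \eta)$, where $\mathcal{Q}$ is a banal $G$-$\mu^{-1}$-display over $(W_{\O_E}(R^+), I_{R^{\sharp+}})$ and $\eta$ is an isomorphism $\mathcal{Q}_m \overset{\sim}{\to} (\mathcal{Q}_0)_m$ over $(W_{\O_E}(R^+)/[\varpi^\flat]^{1/q^m}, (\pi))$ for some $m$, and then defines the map $g \mapsto (\mathfrak{Q}^{\univ}_g, \eta_g)$ in one stroke and checks bijectivity via $(\mathrm{Perfd})$ and Proposition \ref{Proposition:limit of G displays}; your separate injectivity/surjectivity arguments amount to the same thing. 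One small simplification available to you: since $(W_{\O_E}(R^+), I_{R^{\sharp+}})$ is already an object of $(R_{G,\mu^{-1}})_{\Prism}$ via $g$, you can evaluate $\mathfrak{Q}^{\univ}$ there directly rather than first evaluating modulo $[\varpi^\flat]^m$ and then invoking Proposition \ref{Proposition:limit of G displays} to lift.
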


\begin{proof}
Let
$S=\Spa(R, R^+)$
be a product of points over $k$
and let
$S^{\sharp}=\Spa(R^{\sharp}, R^{\sharp+})$
be
an untilt of $S$ over $\O_{\breve{E}}$.
Then the inverse image of
$S^{\sharp} \in (\Spd \O_{\breve{E}})(S)$
under the map
$(\Spd R_{G, \mu^{-1}})(S) \to (\Spd \O_{\breve{E}})(S)$
can be identified with the set of continuous homomorphisms $R_{G, \mu^{-1}} \to R^{\sharp+}$ over $\O_{\breve{E}}$.

Let $\varpi^{\flat} \in R^+$ be a pseudo-uniformizer such that $\pi \in (\varpi)$ in $R^{\sharp+}$, where $\varpi:=\theta([\varpi^\flat]) \in R^{\sharp+}$.
The inverse image of
$S^{\sharp} \in (\Spd \O_{\breve{E}})(S)$
under the map
$\widehat{\mathcal{M}^{\int}_{G, b, \mu}}_{/ x}(S) \to (\Spd \O_{\breve{E}})(S)$
can be identified with the set of isomorphism classes of pairs
$
(\mathcal{Q}, \eta)
$
consisting of
a banal $G$-$\mu^{-1}$-display $\mathcal{Q}$ over $(W_{\O_E}(R^+), I_{R^{\sharp+}})$
and an element $\eta$ of the set
\[
{\varinjlim}_m \{ \, \text{isomorphisms} \ \mathcal{Q}_m \overset{\sim}{\to} (\mathcal{Q}_0)_m \ \text{of} \ G\mathchar`-\mu^{-1}\mathchar`-\text{displays over} \ (A_m, (\pi)) \, \},
\]
where
$(A_m, (\pi))=(W_{\O_E}(R^+)/[\varpi^\flat]^{1/q^m}, (\pi))$,
and $\mathcal{Q}_m$ (resp.\ $(\mathcal{Q}_0)_m$) is the base change of $\mathcal{Q}$ (resp.\ $\mathcal{Q}_0$) to $(A_m, (\pi))$.
This follows by combining
Proposition \ref{Proposition:G-displays to G-BK},
Proposition \ref{Proposition:bounded by mu and of type mu},
Corollary \ref{Corollary:equivalence between G BK module and extended G shtuks},
Lemma \ref{Lemma:quasi-isogeny reduction equivalence},
Lemma \ref{Lemma:quasi-isogeny reduction effective}, and Proposition \ref{Proposition:comparison of quasi-isogenies}.

By Theorem \ref{Theorem:Existence of universal deformation},
there exists a universal deformation $\mathfrak{Q}^{\univ}$ of $\mathcal{Q}_0$ over $R_{G, \mu^{-1}}$.
Moreover $\mathfrak{Q}^{\univ}$ has the property $(\mathrm{Perfd})$.
Let
$g \colon R_{G, \mu^{-1}} \to R^{\sharp+}$
be a continuous homomorphism over $\O_{\breve{E}}$.
We regard $(W_{\O_E}(R^+), I_{R^{\sharp+}})$ as an object of $(R_{G, \mu^{-1}})_{\Prism, \O_E}$ via the homomorphism $g$, and let
$\mathfrak{Q}^{\univ}_g$ be the associated $G$-$\mu^{-1}$-display over $(W_{\O_E}(R^+), I_{R^{\sharp+}})$.
For some $m \geq 1$,
we have
$g(\mathfrak{m}_{G, \mu^{-1}}) \subset (\varpi^{1/q^m})$
in $R^{\sharp+}$, where $\varpi^{1/q^m}:=\theta([\varpi^\flat]^{1/q^m}) \in R^{\sharp+}$.
Hence there is a natural isomorphism
$
\eta_g \colon (\mathfrak{Q}^{\univ}_g)_m \overset{\sim}{\to} (\mathcal{Q}_0)_m
$
over $(A_m, (\pi))$.
We define a map
\[
(\Spd R_{G, \mu^{-1}})(S) \times_{(\Spd \O_{\breve{E}})(S)} \{ S^\sharp \} \to \widehat{\mathcal{M}^{\int}_{G, b, \mu}}_{/ x}(S) \times_{(\Spd \O_{\breve{E}})(S)} \{ S^\sharp \}
\]
by $g \mapsto (\mathfrak{Q}^{\univ}_g, \eta_g)$.
Using the property $(\mathrm{Perfd})$ and using Proposition \ref{Proposition:limit of G displays},
we see that this map is bijective.
Then we obtain a bijection
$
(\Spd R_{G, \mu^{-1}})(S) \overset{\sim}{\to} \widehat{\mathcal{M}^{\int}_{G, b, \mu}}_{/ x}(S)
$
over $(\Spd \O_{\breve{E}})(S)$,
which is independent of the choice of $\varpi^\flat$ and is functorial in $S$.
Since products of points over $k$ form a basis of $\Perf_k$,
we finally get the desired isomorphism
$
\Spd R_{G, \mu^{-1}} \overset{\sim}{\to} \widehat{\mathcal{M}^{\int}_{G, b, \mu}}_{/ x}
$
of $v$-sheaves over $\Spd \O_{\breve{E}}$.
\end{proof}


In the following, we assume that $\O_E=\Z_p$ for simplicity.

\begin{rem}\label{Remark:comparison with Bartling}
Theorem \ref{Theorem:conjecture of Pappas-Rapoport} implies that a conjecture of Pappas--Rapoport \cite[Conjecture 3.3.5]{PappasRapoport21}
in the hyperspecial case, which was originally proposed by Gleason \cite[Conjecture 1]{Gleasonthesis}, holds true.
This result was already known in the following cases.
\begin{enumerate}
\item In \cite{PappasRapoport22}, Pappas--Rapoport proved Theorem \ref{Theorem:conjecture of Pappas-Rapoport} under the assumption that $p \geq 3$ and the pair $(G_{\Q_p}, \mu)$ is of abelian type in the sense of \cite[Definition 2.1.3]{PappasRapoport22}.
In fact, the result is formulated and proved in the more general case where $G$ is parahoric.
If $p=2$, the same result is obtained under the additional assumption that $G_{\Q_p}$ is of type $A$ or $C$.

\item In \cite{Bartling},
Bartling proved Theorem \ref{Theorem:conjecture of Pappas-Rapoport} under the assumption that the element $b$ satisfies the \textit{adjoint nilpotent condition} introduced in \cite[Definition 3.4.2]{Bultel-Pappas}
and $p \geq 3$.
He studied the relation between  $\widehat{\mathcal{M}^{\int}_{G, b, \mu}}_{/ x}$
and the deformation space constructed by B\"ultel--Pappas \cite[3.5.9]{Bultel-Pappas}.
\end{enumerate}
\end{rem}

\begin{rem}\label{Remark:global representability}
Scholze conjectured that there exists a (unique) normal formal scheme $\mathscr{M}_{G, b, \mu}$ which is flat and locally formally of finite type over $\Spf W(k)$ such that the $v$-sheaf associated with $\mathscr{M}_{G, b, \mu}$ is isomorphic to $\mathcal{M}^{\int}_{G, b, \mu}$.
(This conjecture is stated in the more general case where $G$ is parahoric.)
If $(G, b, \mu)$ is of EL type or PEL type, then this is proved in \cite[Lecture 25]{Scholze-Weinstein}.
Moreover, in the case (1) of Remark \ref{Remark:comparison with Bartling}, this is proved in \cite{PappasRapoport22} using Theorem \ref{Theorem:conjecture of Pappas-Rapoport}.
Since we have proved Theorem \ref{Theorem:conjecture of Pappas-Rapoport} for any $p$ (including $p=2$), we can prove that
if $(G_{\Q_p}, \mu)$ is of abelian type,
then
$\mathscr{M}_{G, b, \mu}$
exists
for any $p$ in the same way as in \cite{PappasRapoport22}.

If $\mathscr{M}_{G, b, \mu}$ exists, then it is formally smooth over $\Spf W(k)$
by Theorem \ref{Theorem:conjecture of Pappas-Rapoport}.
\end{rem}

\section{Comparison with universal deformations of $p$-divisible groups}\label{Section:Comparison with universal deformations of $p$-divisible groups}

In this section, we assume that $\O_E=\Z_p$.
Let $k$ be a perfect field of characteristic $p$.
Let $\mathcal{G}$ be a $p$-divisible group over $\Spec k$
of height $N$ and of dimension $d$.
Let
$\mu \colon \G_m \to \GL_N$
be the cocharacter over $W(k)$
defined by
\[
t \mapsto \diag{(\underbrace{t, \dotsc, t}_{N-d}, \underbrace{1, \dotsc, 1}_{d})}.
\]
Let
$\mathcal{Q}$
be the $\GL_N$-$\mu$-display over the prism $(W(k), (p))$ associated with
the Dieudonn\'e module of $\mathcal{G}$.
For a universal deformation $\mathcal{G}^{\univ}$ of $\mathcal{G}$,
we prove that the prismatic Dieudonn\'e crystal of $\mathcal{G}^{\univ}$, as introduced in \cite{Anschutz-LeBras}, induces a universal deformation $\mathfrak{Q}^{\univ}$ of $\mathcal{Q}$; see Theorem \ref{Theorem:universal family of p-divisible groups gives a universal family of displays}.
Then, by relating the properties $\mathrm{(Perfd)}$ and $\mathrm{(BK)}$ of the universal deformation $\mathfrak{Q}^{\univ}$ to the universal property of $\mathcal{G}^{\univ}$, we establish some classification results of $p$-divisible groups; see Section \ref{Subsection:Classifications of p-divisible groups} for details.

\subsection{Prismatic Dieudonn\'e crystals of $p$-divisible groups}\label{Subsection:Prismatic Diedonn\'e modules of p-divisible groups}

In this section and Section \ref{Section:Consequences on prismatic F-gauges} below, we need
the notion of \textit{quasisyntomic rings}
introduced in \cite[Definition 4.10]{BMS2}.
We refer to \cite[Section 4]{BMS2} for basic properties of quasisyntomic rings.

\begin{ex}\label{Example:quasisyntomic rings}
\ 
\begin{enumerate}
    \item Let
    $R \in \mathcal{C}_{W(k)}$
    be a complete regular local ring over $W(k)$ with residue field $k$.
    Then $R$ is a quasisyntomic ring.
    Moreover $R/\mathfrak{m}^m_R$
    is a quasisyntomic ring if $\dim R \leq 1$.
    See \cite[Example 3.17]{Anschutz-LeBras}.
    \item Let $(S, a^\flat)$ be a perfectoid pair (Definition \ref{Definition:perfectoid pair}).
    Then $S$ and $S/a^m$ are quasisyntomic rings.
    More precisely, $S$ and $S/a^m$ are \textit{quasiregular semiperfectoid rings} in the sense of \cite[Definition 4.20]{BMS2}; see \cite[Example 3.20]{Anschutz-LeBras}.
\end{enumerate}
\end{ex}

Let $R$ be a $p$-adically complete ring.
Let $\mathscr{G}$ be a $p$-divisible group over $\Spec R$.
Following \cite{Anschutz-LeBras},
we consider the sheaf
\[
\mathcal{E}xt^1_{(R)_\Prism}(\underline{\mathscr{G}}, \O_\Prism)
\]
on the site $(R)^{\op}_\Prism$, where
$\underline{\mathscr{G}}$ is the sheaf defined by
$(A, I) \mapsto \mathscr{G}(A/I)$
and $\O_\Prism$ is defined by $(A, I) \mapsto A$.
In \cite[Proposition 4.69]{Anschutz-LeBras}, it is proved that 
$\mathcal{E}xt^1_{(R)_\Prism}(\underline{\mathscr{G}}, \O_\Prism)(A, I)$
is a finite projective $A$-module
for any $(A, I) \in (R)_\Prism$
and its formation commutes with base change along any morphism $(A, I) \to (A', I')$ in $(R)_\Prism$.
By \cite[Remark 7.3.1]{Ito-K23},
there is a canonical isomorphism
\[
\mathrm{Ext}^1_{(A, I)_\Prism}(\underline{\mathscr{G}}, \O_\Prism) \overset{\sim}{\to} \mathcal{E}xt^1_{(R)_\Prism}(\underline{\mathscr{G}}, \O_\Prism)(A, I)
\]
for any $(A, I) \in (R)_\Prism$, where we view the site 
$(A, I)^{\op}_\Prism$
as the localization of $(R)^{\op}_\Prism$ at $(A, I)$, and the restriction of $\underline{\mathscr{G}}$ to $(A, I)^{\op}_{\Prism}$ is denoted by the same symbol.
We set
\[
M_{\Prism}(\mathscr{G})(A, I):=\mathcal{E}xt^1_{(R)_\Prism}(\underline{\mathscr{G}}, \O_\Prism)(A, I).
\]

In \cite[Theorem 4.71]{Anschutz-LeBras}, it is proved that if $R$ is a quasisyntomic ring, then $\mathcal{E}xt^1_{(R)_\Prism}(\underline{\mathscr{G}}, \O_\Prism)$
is a \textit{prismatic Dieudonn\'e crystal} on $(R)^{\op}_\Prism$ in the sense that 
for any $(A, I) \in (R)_\Prism$,
the $A$-module
$M_{\Prism}(\mathscr{G})(A, I)=\mathcal{E}xt^1_{(R)_\Prism}(\underline{\mathscr{G}}, \O_\Prism)(A, I)$
with
the $A$-linear homomorphism
\[
\phi^*M_{\Prism}(\mathscr{G})(A, I) \to M_{\Prism}(\mathscr{G})(A, I)
\]
induced by the Frobenius $\phi \colon \O_\Prism \to \O_\Prism$
is a minuscule Breuil--Kisin module over $(A, I)$, and its formation commutes with base change along any morphism $(A, I) \to (A', I')$ in $(R)_\Prism$.
(See also \cite[Section 7.3]{Ito-K23}.)

\begin{rem}\label{Remark:definition of prismatic Dieudonne crystal in ALB}
We assume that $R$ is a quasisyntomic ring.
By \cite[Proposition 4.4]{Anschutz-LeBras} (see also \cite[Proposition 2.7]{BS2}),
the category of prismatic Dieudonn\'e crystals on $(R)^{\op}_\Prism$ is equivalent to that of prismatic Dieudonn\'e crystals over $R$ in the sense of \cite[Definition 4.5]{Anschutz-LeBras}, which are defined as sheaves on the quasisyntomic site of $R$.
Let $\mathrm{DM}(R)$ denote the category of prismatic Dieudonn\'e crystals on $(R)^{\op}_\Prism$.
We have an identification
\[
\mathrm{DM}(R) = {2-\varprojlim}_{(A, I) \in (R)_{\Prism}} \mathrm{BK}_{\mathrm{min}}(A, I)
\]
where
$
\mathrm{BK}_{\mathrm{min}}(A, I)
$
is the category of minuscule Breuil--Kisin modules over $(A, I)$.
\end{rem}

\begin{ex}\label{Example:comparison with crystalline cohomology}
    Let $\mathfrak{S}:=W(k)[[t_1, \dotsc, t_n]]$ and $
\mathfrak{S}_m:=W(k)[[t_1, \dotsc, t_n]]/(t_1, \dotsc, t_n)^{m}
$.
We assume that $n \leq 1$ (so that $\mathfrak{S}_m/p$ is a quasisyntomic ring by Example \ref{Example:quasisyntomic rings}).
Let $\mathscr{G}$ be a $p$-divisible group over $\Spec \mathfrak{S}_m/p$.
    Then there exists a natural isomorphism
    \[
        \phi^*M_\Prism(\mathscr{G})(\mathfrak{S}_m, (p)) \simeq \D(\mathscr{G})(\mathfrak{S}_m \to \mathfrak{S}_m/p)
    \]
    of minuscule Breuil--Kisin modules over $(\mathfrak{S}_m, (p))$, where
    $\D(\mathscr{G})(\mathfrak{S}_m \to \mathfrak{S}_m/p)$
    is the evaluation on the divided power extension
    $\mathfrak{S}_m \to \mathfrak{S}_m/p$
    of the contravariant Dieudonn\'e crystal $\mathbb{D}(\mathscr{G})$ defined in \cite[D\'efinition 3.3.6]{BBM}.
    This is a special case of \cite[Lemma 4.45]{Anschutz-LeBras}.
\end{ex}

\subsection{Universal deformations of $p$-divisible groups}\label{Subsection:Universal deformations of $p$-divisible groups}

Let $\mathcal{G}$ be a $p$-divisible group over $\Spec k$.
Let $N$ be the height of $\mathcal{G}$ and let $d$ be the dimension of $\mathcal{G}$.

Let $\Art_{W(k)}$ be the category of artinian local rings $R$ with a local homomorphism $W(k) \to R$ which induces an isomorphism on the residue fields.
By \cite[Corollaire 4.8]{Illusie} (or \cite[Proposition 3.11]{Lau14}),
the functor $\Art_{W(k)} \to \mathrm{Set}$
sending $R$ to the set of isomorphism classes of deformations of $\mathcal{G}$ over $\Spec R$ is pro-representable by
\[
R^{\univ}:=W(k)[[t_1, \dotsc, t_{d(N-d)}]].
\]
Let $\mathcal{G}^{\univ}$ be a universal deformation of $\mathcal{G}$ over $\Spec R^{\univ}$.

Let
$\mu \colon \G_m \to \GL_N$
be the cocharacter over $W(k)$
defined as in the beginning of this section.
The minuscule Breuil--Kisin module
$M_\Prism(\mathcal{G}^{\univ})(A, I)$ is of type $\mu$ for every $(A, I) \in (R^{\univ})_\Prism$.
Via the equivalence
$
\mathrm{BK}_\mu(A, I)^{\simeq} \overset{\sim}{\to} \GL_N\mathchar`-\mathrm{Disp}_\mu(A, I)
$
in Example \ref{Example:GLn displays},
the prismatic Dieudonn\'e crystal
$\mathcal{E}xt^1_{(R^{\univ})_\Prism}(\underline{\mathcal{G}}^{\univ}, \O_\Prism)$
induces a
prismatic $\GL_N$-$\mu$-display
$
\mathfrak{Q}(\mathcal{G}^{\univ})
$
over $R^{\univ}$.
We set $\mathcal{Q}:=\mathfrak{Q}(\mathcal{G}^{\univ})_{(W(k), (p))}$, which corresponds to $M_\Prism(\mathcal{G})(W(k), (p))$.

\begin{thm}\label{Theorem:universal family of p-divisible groups gives a universal family of displays}
    The prismatic $\GL_N$-$\mu$-display
    $
    \mathfrak{Q}(\mathcal{G}^{\univ})
    $
    over $R^{\univ}$ is a universal deformation of $\mathcal{Q}$.
    Moreover
    $
    \mathfrak{Q}(\mathcal{G}^{\univ})
    $
    has the properties $\mathrm{(Perfd)}$ and $\mathrm{(BK)}$.
\end{thm}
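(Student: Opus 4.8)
The plan is to deduce both statements from the characterization of universal deformations obtained in Theorem \ref{Theorem:characterization of universal family}, together with the comparison between prismatic Dieudonné crystals and the classical deformation theory of $p$-divisible groups. Since $R^{\univ} \simeq W(k)[[t_1, \dotsc, t_{d(N-d)}]]$ and $R_{\GL_N, \mu} \simeq W(k)[[t_1, \dotsc, t_{d(N-d)}]]$ (because $U^-_\mu$ for our $\mu$ is isomorphic to the $d(N-d)$-dimensional affine space of strictly lower-triangular block matrices), we have $\dim R^{\univ} = \dim R_{\GL_N, \mu}$ and $\dim_k \mathfrak{t}_{R^{\univ}} = \dim_k \mathfrak{t}_{\GL_N, \mu}$. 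Thus, by the implication $(5) \Rightarrow (1)$ of Theorem \ref{Theorem:characterization of universal family}, it suffices to prove that $\mathfrak{Q}(\mathcal{G}^{\univ})$ is versal, i.e.\ that its Kodaira--Spencer map
\[
\KS(\mathfrak{Q}(\mathcal{G}^{\univ})) \colon \mathfrak{t}_{R^{\univ}} \to \Def(\mathcal{Q})_{(W(k)[[t]]/t^2, (p))}
\]
is surjective. Once universality is established, Theorem \ref{Theorem:Existence of universal deformation} (which says that \emph{a} universal deformation has the properties $\mathrm{(Perfd)}$ and $\mathrm{(BK)}$) and Corollary \ref{Corollary:isomorphism of deformatiions over prismatic site} (uniqueness of isomorphisms of deformations) immediately give that $\mathfrak{Q}(\mathcal{G}^{\univ})$ itself has the properties $\mathrm{(Perfd)}$ and $\mathrm{(BK)}$, since any two universal deformations over $R_{\GL_N, \mu}$ are isomorphic as deformations.

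To prove versality, first I would translate both sides into $p$-divisible group language. On the source side, $\mathfrak{t}_{R^{\univ}} = \Hom(R^{\univ}, k[[t]]/t^2)_e$ classifies deformations of $\mathcal{G}$ over $\Spec k[[t]]/t^2$, by the pro-representability of the deformation functor of $\mathcal{G}$. On the target side, by Example \ref{Example:comparison with crystalline cohomology} (with $n = 1$, $m = 2$) the functor $M_\Prism$ evaluated at $(W(k)[[t]]/t^2, (p))$ recovers the evaluation of the contravariant Dieudonné crystal $\mathbb{D}(-)$ on the divided power thickening $W(k)[[t]]/t^2 \to k[[t]]/t^2$; hence, via the equivalence of Example \ref{Example:GLn displays}, deformations of $\mathcal{Q}$ over $(W(k)[[t]]/t^2, (p))$ are computed by Grothendieck--Messing theory for $p$-divisible groups over the square-zero thickening $k[[t]]/t^2 \to k$ in the classical sense. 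Under these two identifications the map $\KS(\mathfrak{Q}(\mathcal{G}^{\univ}))$ becomes the natural map from deformations of $\mathcal{G}$ over $k[[t]]/t^2$ to liftings of the Hodge filtration in $\mathbb{D}(\mathcal{G})$, which is exactly the classical Grothendieck--Messing period isomorphism for $p$-divisible groups, and in particular is bijective. This gives the surjectivity (indeed bijectivity) of $\KS(\mathfrak{Q}(\mathcal{G}^{\univ}))$, completing the verification of condition $(5)$.

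For this argument to be rigorous one needs the compatibility of the whole package: the isomorphism of Example \ref{Example:comparison with crystalline cohomology} must be compatible with the Frobenius structures and with the Hodge filtrations on both sides, so that the period map $\Per$ of Section \ref{Subsection:Period map} for $G$-$\mu$-displays corresponds to the Grothendieck--Messing period map for $p$-divisible groups; this is where I would spend the most care, invoking \cite[Lemma 4.45]{Anschutz-LeBras} and the functoriality of $M_\Prism$ under base change along $(W(k)[[t]]/t^2, (p)) \to (W(k), (p))$. The main obstacle is precisely this bookkeeping — matching the Hodge filtration $P(\mathcal{Q})_{k[[t]]/t^2}$ coming from the display formalism with the Hodge filtration on $\mathbb{D}(\mathcal{G}^{\univ})$ coming from $p$-divisible groups, and checking that the identification is natural enough to intertwine the two period constructions. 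An alternative, slightly more robust route would be to directly verify the property $\mathrm{(BK)}$ for $\mathfrak{Q}(\mathcal{G}^{\univ})$ using the classification of $p$-divisible groups over $\mathfrak{S}_m/p$ and $\mathfrak{S}/p$ via Example \ref{Example:comparison with crystalline cohomology} together with the deformation theory of $p$-divisible groups over these Artinian (or complete local) bases; by Corollary \ref{Corollary:(BK) implies universality} this again yields universality. Either way, the heart of the proof is the translation of the abstract period map into the classical Grothendieck--Messing map, after which everything follows formally from the results of Section \ref{Section:Universal deformations}.
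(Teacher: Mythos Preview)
Your proposal is correct and follows essentially the same route as the paper: reduce to versality via condition (5) of Theorem \ref{Theorem:characterization of universal family}, then identify the Kodaira--Spencer map with the classical Grothendieck--Messing period map using Example \ref{Example:comparison with crystalline cohomology}. The one point worth noting is that the paper resolves your ``bookkeeping'' obstacle not by directly checking compatibility of Hodge filtrations, but by a uniqueness argument: the canonical isomorphism $\psi$ of underlying $G$-$\phi$-modules supplied by Proposition \ref{Proposition:canonical isomorphism of deformations of phi G torsors} is compared with the crystal-structure isomorphism $\psi'$ (built from the PD thickening $W(k)[[t]]/t^2 \to k$), and since both are Frobenius-equivariant lifts of the identity, the uniqueness clause forces $\psi=\psi'$; this immediately identifies the display-theoretic period map with the classical one.
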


\begin{proof}
Since
$R_{\GL_N, \mu} \simeq R^{\univ}$,
it suffices to prove that $\mathfrak{Q}(\mathcal{G}^{\univ})$ is versal by Theorem \ref{Theorem:characterization of universal family}.
    We consider the period map
    \[
    \Per_{s^*\mathcal{Q}} \colon \Def(\mathcal{Q})_{(W(k)[[t]]/t^2, (p))} \to
    \Lift(P(\mathcal{Q})_{k}, (s^*\mathcal{Q})_{k[[t]]/t^2})
    \]
    where
    $s^*\mathcal{Q}$
    is the base change of $\mathcal{Q}$ along the natural map
    \[
    s \colon (W(k), (p)) \to (W(k)[[t]]/t^2, (p)).
    \]
    The period map $\Per_{s^*\mathcal{Q}}$ is bijective by Theorem \ref{Theorem:GM deformation}.
    Let $\Def(\mathcal{G})_{k[[t]]/t^2}$ be the set of isomorphism classes of deformations of $\mathcal{G}$ over $\Spec k[[t]]/t^2$.
    By the universal property of $\mathcal{G}^{\univ}$, 
    we may identify $\Def(\mathcal{G})_{k[[t]]/t^2}$ with $\mathfrak{t}_{R^{\univ}}$.
    It thus suffices to show that the following composition is bijective:
    \begin{equation}\label{equation:kodaira-spencer period map composition}
        \Def(\mathcal{G})_{k[[t]]/t^2} \to \Def(\mathcal{Q})_{(W(k)[[t]]/t^2, (p))} \overset{\sim}{\to} \Lift(P(\mathcal{Q})_{k}, (s^*\mathcal{Q})_{k[[t]]/t^2}).
    \end{equation}

    Let $\mathscr{G} \in \Def(\mathcal{G})_{k[[t]]/t^2}$.
    By Example \ref{Example:comparison with crystalline cohomology},
    we have
    \[
    \phi^*M_{\Prism}(\mathscr{G})(W(k)[[t]]/t^2, (p)) \simeq \D(\mathscr{G})(W(k)[[t]]/t^2 \to k[[t]]/t^2)
    \]
    and
    \[
    \phi^*M_{\Prism}(\mathcal{G})(W(k), (p)) \simeq \D(\mathcal{G})(W(k)):=\D(\mathcal{G})(W(k) \to k).
    \]
    It follows from Example \ref{Example:phi GLn torsor and Hodge filtration}
    and
    Proposition \ref{Proposition:canonical isomorphism of deformations of phi G torsors} that
    there exists a unique Frobenius equivariant isomorphism
    \[
    \psi \colon \D(\mathscr{G})(W(k)[[t]]/t^2 \to k[[t]]/t^2) \overset{\sim}{\to} \D(\mathcal{G})(W(k)) \otimes_{W(k)} W(k)[[t]]/t^2
    \]
    which is a lift of the identity of $\D(\mathcal{G})(W(k))$.
    We shall give a different description of $\psi$.
    The kernel of $W(k)[[t]]/t^2 \to k$ has a unique divided power structure which is compatible with
    the usual divided power structure on $(p)$ and the trivial divided power structure on $(t)$.
    Then we have a chain of canonical isomorphisms
    \begin{align*}
 \D(\mathscr{G})(W(k)[[t]]/t^2 \to k[[t]]/t^2) &\simeq \D(\mathscr{G})(W(k)[[t]]/t^2 \to k)  \\
         & \simeq \D(\mathcal{G})(W(k)[[t]]/t^2 \to k) \\
         & \simeq \D(\mathcal{G})(W(k)) \otimes_{W(k)} W(k)[[t]]/t^2,
         \end{align*}
    where the first and third isomorphisms are induced by the crystals $\D(\mathscr{G})$ and $\D(\mathcal{G})$ and the second one is induced by base change.
    The composition, denoted by $\psi'$, is Frobenius equivariant and is a lift of the identity of $\D(\mathcal{G})(W(k))$.
    Then the uniqueness part of Proposition \ref{Proposition:canonical isomorphism of deformations of phi G torsors} ensures that $\psi=\psi'$.
    In particular we see that the reduction modulo $p$ of $\psi$
    coincides with the composition of canonical isomorphisms
    \[
    \D(\mathscr{G})(k[[t]]/t^2 \to k) \simeq \D(\mathcal{G})(k[[t]]/t^2 \to k) \simeq \D(\mathcal{G})(k) \otimes_k k[[t]]/t^2,
    \]
    where $\D(\mathcal{G})(k):=\D(\mathcal{G})(k \to k)$.
    
    Let $P^1 \subset \D(\mathcal{G})(k)$ be the Hodge filtration.
    We may identify
    $\Lift(P(\mathcal{Q})_{k}, (s^*\mathcal{Q})_{k[[t]]/t^2})$ with the set of lifts $\mathscr{P} \subset \D(\mathcal{G})(k) \otimes_k k[[t]]/t^2$ of the Hodge filtration $P^1$.
    The above argument shows that, under this identification,
    the composition (\ref{equation:kodaira-spencer period map composition}) coincides with the usual period map
    in the Grothendieck--Messing deformation theory for $p$-divisible groups.
    In particular (\ref{equation:kodaira-spencer period map composition}) is bijective; see \cite[Chapter V, Theorem 1.6]{Messing}.

    The proof of Theorem \ref{Theorem:universal family of p-divisible groups gives a universal family of displays} is complete.
\end{proof}

\subsection{Classifications of $p$-divisible groups}\label{Subsection:Classifications of p-divisible groups}

For a ring $R$, let $\mathrm{BT}(R)$ be the category of $p$-divisible groups over $\Spec R$.
Recall that
$
\mathrm{BK}_{\mathrm{min}}(A, I)
$
is the category of minuscule Breuil--Kisin modules over a bounded prism $(A, I)$.

Let
$R \in \mathcal{C}_{W(k)}$
and let
$
(\mathfrak{S}, (\mathcal{E})):=(W(k)[[t_1, \dotsc, t_n]], (\mathcal{E}))
$
be a prism of Breuil--Kisin type
equipped with an isomorphism
$R \simeq \mathfrak{S}/\mathcal{E}$
over $W(k)$.
We set
$
\mathfrak{S}_m:=W(k)[[t_1, \dotsc, t_n]]/(t_1, \dotsc, t_n)^{m}
$
and
$R_m:=R/\mathfrak{m}^m_R$.
We remark that
$R_m$ is not a quasisyntomic ring if $m \geq 2$ and $\dim R \geq 2$.
However, we still have the following result.

\begin{lem}\label{Lemma: minuscule BK module for quotients of regular local rings}
    Let
    $R \in \mathcal{C}_{W(k)}$
    and let $\mathscr{G}$ be a $p$-divisible group over $\Spec R_m$ for some $m \geq 1$.
    Then $M_{\Prism}(\mathscr{G})(\mathfrak{S}_m, (\mathcal{E}))$
    is a minuscule Breuil--Kisin module over $(\mathfrak{S}_m, (\mathcal{E}))$.
\end{lem}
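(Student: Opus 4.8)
The plan is to reduce to the quasisyntomic case established by Ansch\"utz--Le Bras, by lifting $\mathscr{G}$ to a $p$-divisible group over the regular ring $R$ itself, to which \cite[Theorem 4.71]{Anschutz-LeBras} applies directly.

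First I would construct a $p$-divisible group $\widetilde{\mathscr{G}}$ over $\Spec R$ together with an isomorphism $\widetilde{\mathscr{G}} \times_{\Spec R} \Spec R_m \simeq \mathscr{G}$. The transition maps $R/\mathfrak{m}^{j+1}_R \to R/\mathfrak{m}^{j}_R$ are nilpotent thickenings of artinian local rings, and the deformation functor of a $p$-divisible group is formally smooth; hence one lifts $\mathscr{G}$ step by step to a compatible system of $p$-divisible groups over the rings $R/\mathfrak{m}^j_R$ for $j \geq m$. Since $R$ is Noetherian and $\mathfrak{m}_R$-adically complete, Grothendieck's existence theorem, applied to the finite flat group schemes $\mathscr{G}[p^n]$, algebraizes this system to a $p$-divisible group $\widetilde{\mathscr{G}}$ over $\Spec R$ with the required property (cf.\ \cite{Illusie}, \cite{Messing}, \cite{Lau14}).

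Next, $R$ is quasisyntomic by Example \ref{Example:quasisyntomic rings}. The surjection $R \twoheadrightarrow R_m = \mathfrak{S}_m/\mathcal{E}$ exhibits $(\mathfrak{S}_m, (\mathcal{E}))$, which is a bounded prism by Example \ref{Example:Breuil-Kisin type frame}, as an object of $(R)_{\Prism}$. Therefore \cite[Theorem 4.71]{Anschutz-LeBras} applies to $\widetilde{\mathscr{G}}$ and shows that the $\mathfrak{S}_m$-module $M_{\Prism}(\widetilde{\mathscr{G}})(\mathfrak{S}_m, (\mathcal{E}))$, equipped with its linearized Frobenius, is a minuscule Breuil--Kisin module over $(\mathfrak{S}_m, (\mathcal{E}))$ in the sense of Example \ref{Example:GLn displays}. (Alternatively, one may first apply the theorem at the Breuil--Kisin prism $(\mathfrak{S}, (\mathcal{E}))$ with $\mathfrak{S}/\mathcal{E} \simeq R$ and then invoke the base-change compatibility of $M_{\Prism}$ along the morphism $(\mathfrak{S}, (\mathcal{E})) \to (\mathfrak{S}_m, (\mathcal{E}))$ in $(R)_{\Prism}$, using that base change preserves finite projectivity and the property that the cokernel of the Frobenius is killed by $\mathcal{E}$.)

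It remains to identify $M_{\Prism}(\widetilde{\mathscr{G}})(\mathfrak{S}_m, (\mathcal{E}))$ with $M_{\Prism}(\mathscr{G})(\mathfrak{S}_m, (\mathcal{E}))$, compatibly with the Frobenius. By \cite[Remark 7.3.1]{Ito-K23}, both are computed on the localized site $(\mathfrak{S}_m, (\mathcal{E}))^{\op}_{\Prism}$, as $\mathrm{Ext}^1_{(\mathfrak{S}_m, (\mathcal{E}))_{\Prism}}(\underline{\widetilde{\mathscr{G}}}, \O_{\Prism})$ and $\mathrm{Ext}^1_{(\mathfrak{S}_m, (\mathcal{E}))_{\Prism}}(\underline{\mathscr{G}}, \O_{\Prism})$ respectively. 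For any $(B, J) \in (\mathfrak{S}_m, (\mathcal{E}))_{\Prism}$ the structure map $R \to B/J$ factors through $R_m$, so $\widetilde{\mathscr{G}}(B/J) = \mathscr{G}(B/J)$; hence $\underline{\widetilde{\mathscr{G}}}$ and $\underline{\mathscr{G}}$ coincide as sheaves on this site, and the two $\mathrm{Ext}$-groups, together with their Frobenius-linearized maps, agree. This yields the lemma. The main technical point is the lifting and algebraization step producing $\widetilde{\mathscr{G}}$ over $\Spec R$; everything else is a formal consequence of the results of Ansch\"utz--Le Bras and the prismatic base-change formalism.
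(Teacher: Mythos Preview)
Your proof is correct and follows essentially the same approach as the paper: lift $\mathscr{G}$ to a $p$-divisible group over $\Spec R$ (the paper simply cites \cite[Corollaire 4.8]{Illusie}), apply the Ansch\"utz--Le Bras result over the quasisyntomic ring $R$ at the Breuil--Kisin prism, and then base change along $(\mathfrak{S},(\mathcal{E})) \to (\mathfrak{S}_m,(\mathcal{E}))$. Your ``alternative'' route through $(\mathfrak{S},(\mathcal{E}))$ is in fact the one the paper takes, and your identification of the two $\mathrm{Ext}^1$ groups via the localized site is exactly the content of the base-change formalism recalled in Section~\ref{Subsection:Prismatic Diedonn\'e modules of p-divisible groups}.
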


\begin{proof}
    There is a $p$-divisible group $\mathscr{G}'$ over $\Spec R$ such that $\mathscr{G}' \times_{\Spec R} \Spec R_m \simeq \mathscr{G}$.
    (See \cite[Corollaire 4.8]{Illusie} for example.)
    Then
    $M_{\Prism}(\mathscr{G}')(\mathfrak{S}, (\mathcal{E}))$
    is a minuscule Breuil--Kisin module
    over $(\mathfrak{S}, (\mathcal{E}))$ since $R$ is quasisyntomic and the base change of $M_{\Prism}(\mathscr{G}')(\mathfrak{S}, (\mathcal{E}))$
    along $\mathfrak{S} \to \mathfrak{S}_m$
    is isomorphic to
    $M_{\Prism}(\mathscr{G})(\mathfrak{S}_m, (\mathcal{E}))$
    as explained in Section \ref{Subsection:Prismatic Diedonn\'e modules of p-divisible groups}.
    The assertion follows from this fact.
\end{proof}

We can deduce the following result from Theorem \ref{Theorem:universal family of p-divisible groups gives a universal family of displays}.

\begin{thm}\label{Theorem:classification of p-divisible group:torsion regular local ring}
For every integer $m \geq 1$, the contravariant functor
\[
\mathrm{BT}(R_m) \to \mathrm{BK}_{\mathrm{min}}(\mathfrak{S}_m, (\mathcal{E})), \quad \mathscr{G} \mapsto M_\Prism(\mathscr{G})(\mathfrak{S}_m, (\mathcal{E}))
\]
is an anti-equivalence of categories.
\end{thm}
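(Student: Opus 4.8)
The plan is to prove the statement by reducing it, via a deformation-theoretic patching argument, to the already-established classification results over the base prism $(W(k),(p))$ and over the Breuil--Kisin prism $(\mathfrak{S},(\mathcal{E}))$, together with the universal deformation theory of Section~\ref{Section:Universal deformations} as applied to $p$-divisible groups in Theorem~\ref{Theorem:universal family of p-divisible groups gives a universal family of displays}. First I would record that the functor is well-defined: by Lemma~\ref{Lemma: minuscule BK module for quotients of regular local rings}, $M_\Prism(\mathscr{G})(\mathfrak{S}_m,(\mathcal{E}))$ is indeed a minuscule Breuil--Kisin module over $(\mathfrak{S}_m,(\mathcal{E}))$, and its height and Hodge number are locally constant (determined by the height and dimension of $\mathscr{G}$), so it is of type $\mu$ for a suitable standard minuscule $\mu$ on $\GL_N$. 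Hence via Example~\ref{Example:GLn displays} one may freely pass between minuscule Breuil--Kisin modules over $(\mathfrak{S}_m,(\mathcal{E}))$ and $\GL_N$-$\mu$-displays over that prism, and I will work on the display side.

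The core of the argument is the case $m=1$, i.e.\ the anti-equivalence $\mathrm{BT}(k)\simeq\mathrm{BK}_{\mathrm{min}}(W(k),(p))$, which is the classical Dieudonn\'e classification over a perfect field; I would cite it (it is contained in \cite{Anschutz-LeBras} via $M_\Prism(\mathcal{G})(W(k),(p))$, or classically). For general $m$ I would then argue by induction, but in fact the cleanest route is to deduce the statement directly from the universal-deformation formalism. Fix a $p$-divisible group $\mathcal{G}$ over $k$ with its associated display $\mathcal{Q}=M_\Prism(\mathcal{G})(W(k),(p))$ (as a $\GL_N$-$\mu$-display). Theorem~\ref{Theorem:universal family of p-divisible groups gives a universal family of displays} says the prismatic $\GL_N$-$\mu$-display $\mathfrak{Q}(\mathcal{G}^{\univ})$ attached to a universal deformation $\mathcal{G}^{\univ}$ over $R^{\univ}\simeq R_{\GL_N,\mu}$ has the property $(\mathrm{BK})$. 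Unwinding Definition~\ref{Definition:property (Perfd) and (BK)} with $R=R^{\univ}$, $\widetilde{k}=k$, and the Breuil--Kisin prism $(\mathfrak{S},(\mathcal{E}))$ giving $R=\mathfrak{S}/\mathcal{E}$, property $(\mathrm{BK})$ gives for every $m$ a bijection
\[
\ev_{\mathfrak{Q}(\mathcal{G}^{\univ})}\colon \Hom(R^{\univ},R_m)_e \overset{\sim}{\to} \Def(\mathcal{Q})_{(\mathfrak{S}_m,(\mathcal{E}))}.
\]
On the other hand, by the universal property of $\mathcal{G}^{\univ}$ in $\Art_{W(k)}$ (and its extension to complete local rings via \cite{Illusie}), $\Hom(R^{\univ},R_m)_e$ is in bijection with isomorphism classes of deformations of $\mathcal{G}$ over $\Spec R_m$, compatibly with the construction $\mathscr{G}\mapsto M_\Prism(\mathscr{G})(\mathfrak{S}_m,(\mathcal{E}))$, since the latter commutes with base change along $R^{\univ}\to R_m$ and along $(\mathfrak{S},(\mathcal{E}))\to(\mathfrak{S}_m,(\mathcal{E}))$. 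Therefore $M_\Prism(-)(\mathfrak{S}_m,(\mathcal{E}))$ induces a bijection on isomorphism classes of objects lying over a fixed closed point; ranging over all $\mathcal{G}$ over $k$ (equivalently, over all objects of $\mathrm{BK}_{\mathrm{min}}(W(k),(p))$ via the $m=1$ case) gives essential surjectivity and bijectivity on objects for all of $\mathrm{BT}(R_m)$.

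It remains to upgrade this to full faithfulness. Here I would use that $\mathrm{BT}(R_m)$ and $\mathrm{BK}_{\mathrm{min}}(\mathfrak{S}_m,(\mathcal{E}))$ are both naturally enriched so that morphisms are controlled by internal homs / extensions of $p$-divisible groups, and that the prismatic Dieudonn\'e functor of \cite{Anschutz-LeBras} is fully faithful for quasisyntomic rings; for $R_m$ with $\dim R\le 1$ this applies directly, and for $\dim R\ge 2$ one reduces to it by a limit/quotient argument as in Lemma~\ref{Lemma: minuscule BK module for quotients of regular local rings}, writing morphisms of $p$-divisible groups over $R_m$ as sections of the $p$-divisible group $\underline{\Hom}$ and comparing with the corresponding internal hom of Breuil--Kisin modules, using that $M_\Prism$ is compatible with base change $R\to R_m$, $\mathfrak{S}\to\mathfrak{S}_m$ and that $M_\Prism$ over the quasisyntomic ring $R$ (resp.\ prism $(\mathfrak{S},(\mathcal{E}))$) is fully faithful. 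Concretely: for $\mathscr{G},\mathscr{H}\in\mathrm{BT}(R_m)$, lift to $\mathscr{G}',\mathscr{H}'$ over $R$, compute $\Hom_{R_m}(\mathscr{G},\mathscr{H})$ from $\Hom_{R}(\mathscr{G}',\mathscr{H}')$ and the filtration coming from $\mathfrak{m}_R^i/\mathfrak{m}_R^{i+1}$, and match term-by-term with the analogous computation for Breuil--Kisin modules over $(\mathfrak{S}_m,(\mathcal{E}))$ using Grothendieck--Messing-type exact sequences; the comparison of the graded pieces is exactly the injectivity statements packaged in the linearity results $(\mathrm{BK\text{-}lin})$, Proposition~\ref{Proposition:(Perfd-lin) implies (BK-lin)}. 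The main obstacle I anticipate is precisely this full-faithfulness step for $\dim R\ge 2$, since $R_m$ is then not quasisyntomic and one cannot invoke \cite[Theorem 4.74]{Anschutz-LeBras} directly; the resolution is to run everything through deformations over $R$ (which is quasisyntomic) and $(\mathfrak{S},(\mathcal{E}))$, using that both $\mathrm{BT}$ and $\mathrm{BK}_{\mathrm{min}}$ satisfy good base-change along $R\to R_m$, and that Corollary~\ref{Corollary:isomorphism of deformatiions over prismatic site} already gives uniqueness of isomorphisms of deformations on the display side, so the only content is existence of the required morphism, which follows from the object-level bijectivity applied to extensions and from the $m=1$ (perfect field) classification.
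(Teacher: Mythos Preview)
Your core argument—using that $\mathfrak{Q}(\mathcal{G}^{\univ})$ has property $(\mathrm{BK})$ to deduce that $M_\Prism(-)(\mathfrak{S}_m,(\mathcal{E}))$ induces a bijection on isomorphism classes of deformations of each $\mathcal{G}$ over $k$—is exactly what the paper does, and your treatment of the $m=1$ base case and of well-definedness via Lemma~\ref{Lemma: minuscule BK module for quotients of regular local rings} is fine.

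Where your proposal diverges from the paper is in the full-faithfulness step, and here your argument is both overcomplicated and partly incorrect. The attempt to ``compute $\Hom_{R_m}(\mathscr{G},\mathscr{H})$ from $\Hom_R(\mathscr{G}',\mathscr{H}')$ and the filtration coming from $\mathfrak{m}_R^i/\mathfrak{m}^{i+1}_R$'' does not work: a morphism over $R_m$ need not lift to one over $R$ between chosen lifts $\mathscr{G}',\mathscr{H}'$, so $\Hom_{R_m}$ is not controlled by $\Hom_R$ in this way. The invocation of $(\mathrm{BK\mathchar`-lin})$ is likewise beside the point here. Your final sentence, ``object-level bijectivity applied to extensions,'' is actually the correct idea, but you should make it the whole argument rather than a fallback.

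The paper's route is much cleaner: one first observes (citing the proof of \cite[Theorem 3.2]{Lau10}) that for an additive functor between $p$-divisible groups and minuscule Breuil--Kisin modules, being an equivalence of \emph{groupoids} already forces it to be an equivalence of categories. This is the formal trick you are circling: a morphism $f\colon\mathscr{G}\to\mathscr{H}$ is encoded by its graph inside $\mathscr{G}\oplus\mathscr{H}$, so morphisms are recovered from isomorphism classes of objects and subobjects. Once reduced to groupoid equivalence, one further reduces to deformation groupoids (over each $\mathcal{G}/k$ via the $m=1$ case), and then observes that on \emph{both} sides deformations have no nontrivial automorphisms—classically for $p$-divisible groups, and by Corollary~\ref{Corollary:Uniqueness of isomorphisms between deformations:BK and Perfd case} on the display side. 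Thus groupoid equivalence collapses to bijection on isomorphism classes, which is exactly your $(\mathrm{BK})$ argument. Replace your full-faithfulness discussion with this two-step reduction and the proof is complete.
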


\begin{proof}
If $m=1$, then the assertion follows from the classical Dieudonn\'e theory (see also Example \ref{Example:comparison with crystalline cohomology}).
For a general $m \geq 1$, it is enough to show that
the functor
$
\mathscr{G} \mapsto M_\Prism(\mathscr{G})(\mathfrak{S}_m, (\mathcal{E}))
$
induces an equivalence of \textit{groupoids}; see for example the proof of \cite[Theorem 3.2]{Lau10}.
Since the assertion holds for $m=1$, we are reduced to proving that
for any $p$-divisible group $\mathcal{G}$ over $\Spec k$ and
$M:=M_\Prism(\mathcal{G})(W(k), (p))$,
the functor 
$
\mathscr{G} \mapsto M_\Prism(\mathscr{G})(\mathfrak{S}_m, (\mathcal{E}))
$
induces an equivalence from the groupoid
of deformations of $\mathcal{G}$ over $\Spec R_m$
to that of deformations of $M$ over $(\mathfrak{S}_m, (\mathcal{E}))$.
By the fact that any deformation of $\mathcal{G}$ over $\Spec R_m$
has no nontrivial automorphisms and by Corollary \ref{Corollary:Uniqueness of isomorphisms between deformations:BK and Perfd case},
it is enough to check that
the construction
$
\mathscr{G} \mapsto M_\Prism(\mathscr{G})(\mathfrak{S}_m, (\mathcal{E}))
$
induces a bijection between the sets of isomorphism classes of objects.
Let $\mathcal{G}^{\univ}$ be a universal deformation of $\mathcal{G}$.
Since
$\mathfrak{Q}(\mathcal{G}^{\univ})$ has the property (BK) by Theorem \ref{Theorem:universal family of p-divisible groups gives a universal family of displays}, the assertion follows.
\end{proof}

As a consequence, we obtain an alternative proof of the following result of Ansch\"utz--Le Bras.
A detailed comparison will be discussed at the end of this subsection.

\begin{cor}[{\cite[Theorem 5.12]{Anschutz-LeBras}}]\label{Corollary:classification of p-divisible group:regular local ring}
The contravariant functor
\[
\mathrm{BT}(R) \to \mathrm{BK}_{\mathrm{min}}(\mathfrak{S}, (\mathcal{E})), \quad \mathscr{G} \mapsto M_\Prism(\mathscr{G})(\mathfrak{S}, (\mathcal{E}))
\]
is an anti-equivalence of categories.
\end{cor}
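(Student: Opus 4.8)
The plan is to deduce the Corollary from Theorem \ref{Theorem:classification of p-divisible group:torsion regular local ring} by passing to the inverse limit over $m$. Write $I:=(t_1,\dotsc,t_n) \subset \mathfrak{S}$, so that $\mathfrak{S}_m=\mathfrak{S}/I^m$, $R_m=\mathfrak{S}_m/\mathcal{E}=R/\mathfrak{m}^m_R$, and both $\mathfrak{S}=\varprojlim_m \mathfrak{S}_m$ and $R=\varprojlim_m R_m$. Since the formation of $M_\Prism(\mathscr{G})(-)$ commutes with base change along $(\mathfrak{S}, (\mathcal{E})) \to (\mathfrak{S}_m, (\mathcal{E}))$ and along the transition maps $(\mathfrak{S}_{m+1}, (\mathcal{E})) \to (\mathfrak{S}_m, (\mathcal{E}))$ (this is used already in the proof of Lemma \ref{Lemma: minuscule BK module for quotients of regular local rings}), the functor of the Corollary is the inverse limit of the anti-equivalences of Theorem \ref{Theorem:classification of p-divisible group:torsion regular local ring}. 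As a $2$-limit (along a tower) of anti-equivalences of categories is again an anti-equivalence, it suffices to establish the two identifications
\begin{equation*}
\mathrm{BT}(R) \overset{\sim}{\to} {2-\varprojlim}_m \mathrm{BT}(R_m), \qquad \mathrm{BK}_{\mathrm{min}}(\mathfrak{S}, (\mathcal{E})) \overset{\sim}{\to} {2-\varprojlim}_m \mathrm{BK}_{\mathrm{min}}(\mathfrak{S}_m, (\mathcal{E})).
\end{equation*}

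For the module side, I would argue as follows. Both $\mathfrak{S}$ and the $\mathfrak{S}_m$ are local, so finite projective modules are finite free; a finite free $\mathfrak{S}$-module $M$ satisfies $M=\varprojlim_m M/I^m M$ with each $M/I^m M$ finite free over $\mathfrak{S}_m$, and conversely a compatible system of finite free $\mathfrak{S}_m$-modules has finite free inverse limit over the $I$-adically complete Noetherian ring $\mathfrak{S}$. On morphisms, $\Hom_{\mathfrak{S}}(M,N)=\varprojlim_m \Hom_{\mathfrak{S}_m}(M/I^m M, N/I^m N)$ because $N=\varprojlim_m N/I^m N$. Hence finite free modules with Frobenius over $\mathfrak{S}$ form the $2$-limit of the corresponding categories over the $\mathfrak{S}_m$, and it remains only to check that the cokernel condition is detected at finite levels: this holds since $\Coker(F_M)$ is a finitely generated $\mathfrak{S}$-module with $\Coker(F_M)/I^m\Coker(F_M)\simeq \Coker(F_{M/I^m M})$, so if each of the latter is killed by $\mathcal{E}$ then so is $\Coker(F_M)$ by $I$-adic completeness. (For the underlying groupoids one may alternatively invoke Example \ref{Example:banal over finite etale covering} via the equivalence $\mathrm{BK}_\mu \simeq \GL_N\mathchar`-\mathrm{Disp}_\mu$ of Example \ref{Example:GLn displays}, the morphism statement being as above.)

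For the $p$-divisible group side I would use algebraization: since $R$ is a complete Noetherian local ring, Grothendieck's existence theorem identifies the category of finite locally free $R$-group schemes with the $2$-limit over $m$ of those over the $R_m$; a $p$-divisible group being the compatible system of its finite locally free $p^n$-torsion subgroup schemes, this gives $\mathrm{BT}(R)\overset{\sim}{\to} {2-\varprojlim}_m \mathrm{BT}(R_m)$. This is standard (cf.\ \cite{Illusie}). Assembling the two identifications with the inverse limit of the anti-equivalences of Theorem \ref{Theorem:classification of p-divisible group:torsion regular local ring}, the Corollary follows at once. The only ingredient that is not either a routine completeness argument on the module side or a direct consequence of Theorem \ref{Theorem:classification of p-divisible group:torsion regular local ring} is this last algebraization statement for $p$-divisible groups over the complete Noetherian local ring $R$; that is the step requiring care, though it is classical.
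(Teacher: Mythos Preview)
Your proposal is correct and follows exactly the same approach as the paper: the paper's proof consists of the single sentence that the assertion follows from Theorem \ref{Theorem:classification of p-divisible group:torsion regular local ring} together with the two identifications $\mathrm{BT}(R) \overset{\sim}{\to} {2-\varprojlim}_{m} \mathrm{BT}(R_m)$ and $\mathrm{BK}_{\mathrm{min}}(\mathfrak{S}, (\mathcal{E})) \overset{\sim}{\to} {2-\varprojlim}_{m} \mathrm{BK}_{\mathrm{min}}(\mathfrak{S}_m, (\mathcal{E}))$. You have simply supplied the details behind these two identifications, which the paper leaves implicit.
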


\begin{proof}
    The assertion follows from Theorem \ref{Theorem:classification of p-divisible group:torsion regular local ring}
    since
    $\mathrm{BT}(R) \overset{\sim}{\to} {2-\varprojlim}_{m} \mathrm{BT}(R_m)$
    and
    $
    \mathrm{BK}_{\mathrm{min}}(\mathfrak{S}, (\mathcal{E})) \overset{\sim}{\to} {2-\varprojlim}_{m} \mathrm{BK}_{\mathrm{min}}(\mathfrak{S}_m, (\mathcal{E})).
    $
\end{proof}

\begin{rem}\label{Remark:previous studies regular local ring DVR case}
Assume that $\mathfrak{S}=W(k)[[t]]$ and $\mathcal{E}$ is an Eisenstein polynomial.
In this case, a classification result as in Corollary \ref{Corollary:classification of p-divisible group:regular local ring}
was initiated by Breuil and Kisin.
More precisely, Breuil conjectured that
there exists an equivalence between the two categories in Corollary \ref{Corollary:classification of p-divisible group:regular local ring}, and Kisin proved this conjecture when $p \geq 3$; see \cite{Kisin06} and \cite{Kisin09}.

For general $R$ and
$(\mathfrak{S}, (\mathcal{E}))=(W(k)[[t_1, \dotsc, t_n]], (\mathcal{E}))$
as above,
an equivalence
between the two categories in Theorem \ref{Theorem:classification of p-divisible group:torsion regular local ring}
was previously obtained by Lau, using crystalline Dieudonn\'e theory and Dieudonn\'e displays; see \cite{Lau10} for $p \geq 3$ and \cite[Corollary 5.4, Theorem 6.6]{Lau14} for any $p$.
This result in particular implies that Breuil's conjecture holds true for any $p$; see also \cite{Kim12} and \cite{LiuTong}.
\end{rem}

Let $\O_C$ be a $p$-adically complete valuation ring 
of rank $1$ with algebraically closed fraction field $C$.
The ring $\O_C$ is a perfectoid ring.
We write $\O_{C^\flat}$ for the tilt of $\O_C$.
Let $\varpi^\flat \in \O_{C^\flat}$ be a pseudo-uniformizer such that $p=0$ in $\O_C/\varpi$, where $\varpi:=\theta([\varpi^\flat])$.
Assume that $k$ is the residue field of $\O_C$.
There exists a local homomorphism $s \colon W(k) \to \O_C$ which induces the identity $\id_k \colon k \to k$; this can be proved by the same argument as in the proof of \cite[Theorem 13.19]{BMS}.
We fix such a local homomorphism $s$.

\begin{prop}[{\cite[Th\'eor\`eme 11.1.7]{FarguesFontaine}}]\label{Proposition:isotrivial result for Breuil-Kisin module}
Let $M$ be a minuscule Breuil--Kisin module over $(W(\O_{C^\flat})/[\varpi^\flat], (p))$.
Let $M_{\red}$ be the base change of $M$ along the map
$(W(\O_{C^\flat})/[\varpi^\flat], (p)) \to (W(k), (p))$.
Then there exist an integer $n \geq 1$ and an isomorphism
\[
M_{\red} \otimes_{W(k)} W(\O_{C^\flat})/[\varpi^\flat]^{1/p^{n}} \simeq M \otimes_{W(\O_{C^\flat})/[\varpi^\flat]} W(\O_{C^\flat})/[\varpi^\flat]^{1/p^{n}}
\]
of minuscule Breuil--Kisin modules over
$(W(\O_{C^\flat})/[\varpi^\flat]^{1/p^{n}}, (p))$
which is a lift of the identity of $M_{\red}$.
\end{prop}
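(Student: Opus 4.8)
The plan is to transport the statement to the theory of $p$-divisible groups via prismatic Dieudonné theory, and then prove it by a spreading-out argument over the tower of rings $\O_C/\varpi^{1/p^n}$. First I would reduce to the banal case: since $\O_C/\varpi$ is local, every finite projective $\O_C/\varpi$-module is free, so the Hodge filtration of the $\GL_N$-$\mu$-display $\mathcal{Q}(M)$ attached to $M$ (Example \ref{Example:GLn displays}) is a trivial $P_\mu$-torsor, hence $\mathcal{Q}(M)$ is banal by Proposition \ref{Proposition:G display with trivial Hodge filtration is banal}. Writing $A := W(\O_{C^\flat})/[\varpi^\flat]$ and $A_n := W(\O_{C^\flat})/[\varpi^\flat]^{1/p^n}$, this lets one present $M$ by a matrix, so that the assertion becomes: there exist $n\geq 1$ and $g\in\GL_N(A_n)$ with image $1$ in $\GL_N(W(k))$ realizing an isomorphism between the images of $M$ and of $M_{\red}$ over $(A_n,(p))$. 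This reduction is optional for the argument below, but it makes the final step concrete.

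The main step is the following. The rings $\O_C/\varpi^{1/p^n}$ are quasiregular semiperfectoid and form a tower with surjective transition maps, and since $\O_C$ has rank $1$ one has $\bigcup_n \varpi^{1/p^n}\O_C=\mathfrak{m}_C$, so that $\varinjlim_n \O_C/\varpi^{1/p^n}=\O_C/\mathfrak{m}_C=k$; in other words $k$ is the (Frobenius-)perfection of $\O_C/\varpi$, reached as a filtered colimit whose stages are of finite presentation over $\O_C/\varpi$. Via the framework of Section \ref{Subsection:Prismatic Diedonn\'e modules of p-divisible groups}, minuscule Breuil--Kisin modules over $(A,(p))$ (resp.\ over $(A_n,(p))$) identify with prismatic Dieudonné crystals over $\O_C/\varpi$ (resp.\ over $\O_C/\varpi^{1/p^n}$), which by the anti-equivalence of Ansch\"utz--Le Bras (\cite[Theorem 4.74]{Anschutz-LeBras}) correspond to $p$-divisible groups over those rings, compatibly with base change. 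Under this, $M$ corresponds to a $p$-divisible group $\mathscr{G}$ over $\O_C/\varpi$, the module $M_{\red}$ to its special fibre $\mathscr{G}_k$, the module $M\otimes_A A_n$ to the pullback $\mathscr{G}_{\O_C/\varpi^{1/p^n}}$, and $M_{\red}\otimes_{W(k)}A_n$ to the constant $p$-divisible group $\mathscr{G}_k\times_k\O_C/\varpi^{1/p^n}$ (using the fixed section $s$). It therefore suffices to prove that $\mathscr{G}_{\O_C/\varpi^{1/p^n}}\simeq \mathscr{G}_k\times_k\O_C/\varpi^{1/p^n}$ for some $n$, via an isomorphism whose reduction to $k$ is the identity of $\mathscr{G}_k$.

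This last point is where the work lies. Over the colimit $k=\varinjlim_n \O_C/\varpi^{1/p^n}$ the two $p$-divisible groups in question both equal $\mathscr{G}_k$, and the identity is an isomorphism between them. Since $p$-divisible groups are ind-objects of finite flat, hence finitely presented, group schemes, morphisms between $p$-divisible groups over the filtered colimit $k$ are the colimit of the morphism sets over the $\O_C/\varpi^{1/p^n}$; hence the identity over $k$ descends to an isomorphism $\mathscr{G}_{\O_C/\varpi^{1/p^n}}\simeq\mathscr{G}_k\times_k\O_C/\varpi^{1/p^n}$ for some $n$. Its further base change along $\O_C/\varpi^{1/p^n}\to k$ is the original identity, so it lifts the identity of $\mathscr{G}_k$; translating back through the Dieudonné equivalence, and (if one started from the banal reduction) back to the matrix picture, yields the desired isomorphism of minuscule Breuil--Kisin modules.

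The main obstacle is precisely this spreading-out step, which rests on two inputs: that the residue field $k$ is already the filtered colimit of the rings $\O_C/\varpi^{1/p^n}$ (this is where the rank-$1$ hypothesis on $\O_C$ enters), and that isomorphisms of $p$-divisible groups — equivalently, of minuscule Breuil--Kisin modules together with their Frobenius structures — spread out to a finite level of that colimit. A purely Breuil--Kisin-theoretic proof is more delicate, because passing from $\O_C/\varpi^{1/p^n}$ to $A_n$ replaces the base by its ring of Witt vectors, and $W(-)$ does not commute with the relevant colimit (so that $\varinjlim_n A_n$ is only a quotient of $W(k)$); this is why it is cleanest either to route through $p$-divisible groups as above or to quote \cite[Th\'eor\`eme 11.1.7]{FarguesFontaine}, whose argument this reproduces.
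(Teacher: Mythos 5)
Your reduction steps (banality, the dictionary between minuscule Breuil--Kisin modules over $(W(\O_{C^\flat})/[\varpi^\flat]^{1/p^n},(p))$ and $p$-divisible groups over $\O_C/\varpi^{1/p^n}$, and the identification of the various base changes) are fine in spirit, although note that the identification of $\mathrm{BK}_{\mathrm{min}}(W(\O_{C^\flat})/[\varpi^\flat]^{1/p^n}, I_{\O_C})$ with $p$-divisible groups is not a formal consequence of \cite[Theorem 4.74]{Anschutz-LeBras}: the prism $W(\O_{C^\flat})/[\varpi^\flat]^{1/p^n}$ is \emph{not} the initial object of the prismatic site of the quasiregular semiperfectoid ring $\O_C/\varpi^{1/p^n}$, so ``evaluation at this one prism'' being an equivalence is exactly Lau's theorem (Theorem \ref{Theorem:Lau's classification} in the paper, applied with the pseudo-uniformizer $\varpi^{1/p^n}$); citing that directly is legitimate and not circular, but your phrasing hides a real input there.

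The genuine gap is the spreading-out step. You assert that because $p$-divisible groups are ind-objects of finitely presented group schemes and $k=\varinjlim_n \O_C/\varpi^{1/p^n}$, morphisms of $p$-divisible groups over $k$ are the colimit of the morphism sets over the $\O_C/\varpi^{1/p^n}$. This does not follow: a morphism of $p$-divisible groups is an inverse system of morphisms $f_m\colon G[p^m]\to H[p^m]$, and while each $f_m$ and each compatibility descends to some finite stage by finite presentation, the stage needed may grow with $m$, so one cannot interchange the inverse limit over levels with the filtered colimit over $n$. In other words, $\Hom$ of $p$-divisible groups does not commute with filtered colimits of rings in general, and the uniform descent you need here is precisely the nontrivial isotriviality content of the proposition. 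This is the point where the paper invokes a real theorem: Fargues--Fontaine's Th\'eor\`eme 11.1.7 trivializes the $\phi$-module rationally over $\overline{B}=(\varinjlim_n W(\O_{C^\flat})/[\varpi^\flat]^{1/p^n})[1/p]$; the descent to a finite stage is then legitimate because it concerns a \emph{single} isomorphism of finite projective modules with Frobenius (a finitely presented datum), and the resulting rational isomorphism is made integral at a possibly larger finite stage by Lemma \ref{Lemma:quasi-isogeny reduction effective}. Your proposal replaces both of these inputs (isotriviality and effectivity of the quasi-isogeny) by the unjustified colimit claim, so as written it essentially assumes the conclusion; to repair it along your route you would need an actual rigidity/isotriviality argument for $p$-divisible groups over the tower $\O_C/\varpi^{1/p^n}$ (e.g.\ Berthelot's classification over the perfect valuation ring $\O_{C^\flat}$ together with a quasi-isogeny rigidity and an effectivity bound), which amounts to the same work the paper delegates to \cite{FarguesFontaine} and Lemma \ref{Lemma:quasi-isogeny reduction effective}.
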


\begin{proof}
    We set $\overline{B}:= (\varinjlim_n W(\O_{C^\flat})/[\varpi^\flat]^{1/p^{n}})[1/p]$, which agrees with the ring defined in \cite[D\'efinition 1.10.14]{FarguesFontaine} (for $\O_F=\O_{C^\flat}$ and $E=\Q_p$).
    By \cite[Th\'eor\`eme 11.1.7]{FarguesFontaine}, there exists a finite dimensional $W(k)[1/p]$-vector space
    $N$ together with an isomorphism $\phi^*N \overset{\sim}{\to} N$ and a Frobenius equivariant isomorphism
    \[
    \eta \colon N \otimes_{W(k)[1/p]} \overline{B} \overset{\sim}{\to} M \otimes_{W(\O_{C^\flat})/[\varpi^\flat]} \overline{B}.
    \]
    Let
    $\eta_{\red} \colon  N \overset{\sim}{\to} M_{\red}[1/p]$
    be the base change of $\eta$ along $\overline{B} \to W(k)[1/p]$.
    By composing the base change of $\eta^{-1}_{\red}$ along $W(k)[1/p] \to \overline{B}$ with $\eta$,
    we then obtain a Frobenius equivariant isomorphism
    \[
    M_{\red} \otimes_{W(k)} \overline{B} \overset{\sim}{\to} M \otimes_{W(\O_{C^\flat})/[\varpi^\flat]} \overline{B}
    \]
    which is a lift of the identity of $M_{\red}[1/p]$.
    Since $\overline{B}= (\varinjlim_n W(\O_{C^\flat})/[\varpi^\flat]^{1/p^{n}})[1/p]$,
    there is a Frobenius equivariant isomorphism
    \[
    M_{\red} \otimes_{W(k)} (W(\O_{C^\flat})/[\varpi^\flat]^{1/p^{n}})[1/p] \overset{\sim}{\to} M \otimes_{W(\O_{C^\flat})/[\varpi^\flat]} (W(\O_{C^\flat})/[\varpi^\flat]^{1/p^{n}})[1/p]
    \]
    for some integer $n \geq 1$ which is a lift of the identity of $M_{\red}[1/p]$.
    Now the assertion follows from Lemma \ref{Lemma:quasi-isogeny reduction effective}.
\end{proof}

\begin{thm}[{\cite[Theorem 5.7]{Lau18}}]\label{Theorem:Lau's classification}
The contravariant functor
\[
\mathrm{BT}(\O_C/\varpi) \to \mathrm{BK}_{\mathrm{min}}(W(\O_{C^\flat})/[\varpi^\flat], I_{\O_C}), \quad \mathscr{G} \mapsto M_\Prism(\mathscr{G})(W(\O_{C^\flat})/[\varpi^\flat], I_{\O_C})
\]
is an anti-equivalence of categories.
\end{thm}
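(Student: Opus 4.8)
The plan is to factor the functor as $\mathrm{BT}(\O_C/\varpi)\to \mathrm{DM}(\O_C/\varpi)\xrightarrow{\ \ev\ }\mathrm{BK}_{\mathrm{min}}(W(\O_{C^\flat})/[\varpi^\flat],I_{\O_C})$, where the first arrow is the prismatic Dieudonn\'e functor (note $\O_C/\varpi$ is quasiregular semiperfectoid, so this is defined) and the second is evaluation at the object $(W(\O_{C^\flat})/[\varpi^\flat],I_{\O_C})$ of $(\O_C/\varpi)_{\Prism}$; recall $I_{\O_C}=(p)$ here by \cite[Lemma 2.3.3]{Ito-K23}. Following the reduction used in the proof of Theorem \ref{Theorem:classification of p-divisible group:torsion regular local ring} (which rests on the argument of \cite[Theorem 3.2]{Lau10}), I would first reduce to showing that the composite functor is an anti-equivalence of groupoids, and then, using the classical Dieudonn\'e theory over $k$ and the compatibility of $M_\Prism$ with base change, reduce further to the following: for every $p$-divisible group $\mathcal{G}$ over $\Spec k$, writing $M_{\mathrm{red}}:=M_\Prism(\mathcal{G})(W(k),(p))$, the functor $\mathscr{G}\mapsto M_\Prism(\mathscr{G})(W(\O_{C^\flat})/[\varpi^\flat],I_{\O_C})$ induces an equivalence from the groupoid of deformations of $\mathcal{G}$ over $\Spec \O_C/\varpi$ along $\O_C/\varpi\to k$ to the groupoid of deformations of $M_{\mathrm{red}}$ over $(W(\O_{C^\flat})/[\varpi^\flat],I_{\O_C})$ along $W(\O_{C^\flat})/[\varpi^\flat]\to W(k)$.

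The heart of the argument is essential surjectivity, which I would obtain by feeding the isotriviality statement Proposition \ref{Proposition:isotrivial result for Breuil-Kisin module} into property $\mathrm{(Perfd)}$ of the universal deformation. Let $M$ be a deformation of $M_{\mathrm{red}}$ over $(W(\O_{C^\flat})/[\varpi^\flat],I_{\O_C})$. By Proposition \ref{Proposition:isotrivial result for Breuil-Kisin module} there is an integer $n\ge 1$ and an isomorphism $M_{\mathrm{red}}\otimes_{W(k)}W(\O_{C^\flat})/[\varpi^\flat]^{1/p^{n}}\xrightarrow{\sim} M\otimes_{W(\O_{C^\flat})/[\varpi^\flat]}W(\O_{C^\flat})/[\varpi^\flat]^{1/p^{n}}$ lifting the identity of $M_{\mathrm{red}}$; thus $M$ is a deformation of the constant module $M_{\mathrm{red}}\otimes_{W(k)}W(\O_{C^\flat})/[\varpi^\flat]^{1/p^{n}}$ along the nilpotent thickening $W(\O_{C^\flat})/[\varpi^\flat]\twoheadrightarrow W(\O_{C^\flat})/[\varpi^\flat]^{1/p^{n}}$. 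Now let $\mathcal{G}^{\univ}$ be a universal deformation of $\mathcal{G}$ over $R^{\univ}=R_{\GL_N,\mu}$, and recall from Theorem \ref{Theorem:universal family of p-divisible groups gives a universal family of displays} that the associated prismatic display $\mathfrak{Q}(\mathcal{G}^{\univ})$ has property $\mathrm{(Perfd)}$. Applying $\mathrm{(Perfd)}$ to the perfectoid pair $(\O_C,(\varpi^\flat)^{1/p^{n}})$ and the integer $m=p^{n}$ — so that $W(\O_{C^\flat})/[(\varpi^\flat)^{1/p^{n}}]^{p^{n}}=W(\O_{C^\flat})/[\varpi^\flat]$ and $\O_C/(\varpi^{1/p^{n}})^{p^{n}}=\O_C/\varpi$ — the evaluation map $\ev_{\mathfrak{Q}(\mathcal{G}^{\univ})}\colon \Hom(R^{\univ},\O_C/\varpi)_{e_{n}}\to \Def((\mathfrak{Q}(\mathcal{G}^{\univ}))_{e_{n}})_{(W(\O_{C^\flat})/[\varpi^\flat],I_{\O_C})}$ is bijective, where $e_{n}\colon R^{\univ}\to k\hookrightarrow \O_C/\varpi^{1/p^{n}}$ is the canonical map and $(\mathfrak{Q}(\mathcal{G}^{\univ}))_{e_{n}}$ is the display attached to $M_{\mathrm{red}}\otimes_{W(k)}W(\O_{C^\flat})/[\varpi^\flat]^{1/p^{n}}$. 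Hence the class of $M$ is hit by some $g\colon R^{\univ}\to \O_C/\varpi$, and $\mathscr{G}:=g^{*}\mathcal{G}^{\univ}$ satisfies $M_\Prism(\mathscr{G})(W(\O_{C^\flat})/[\varpi^\flat],I_{\O_C})\cong M$ as deformations of $M_{\mathrm{red}}$ (the isomorphism being compatible with reduction, first to $W(\O_{C^\flat})/[\varpi^\flat]^{1/p^{n}}$ and then to $W(k)$). Running this over all $n$ establishes essential surjectivity: every deformation of $M_{\mathrm{red}}$ is isotrivial at some level $n$, and $\Hom(R^{\univ},\O_C/\varpi)_{e_{n}}$ exhausts the homomorphisms giving a deformation of $\mathcal{G}$ as $n\to\infty$, since $\mathfrak{m}_{R^{\univ}}$ is finitely generated.

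For full faithfulness I would use that $M_\Prism$ is fully faithful on the quasisyntomic ring $\O_C/\varpi$ (\cite[Theorem 4.74]{Anschutz-LeBras}), so that it remains to show the evaluation $\ev$ above is faithful on morphisms and that every morphism of the Breuil--Kisin modules in question over $(W(\O_{C^\flat})/[\varpi^\flat],I_{\O_C})$ extends to a morphism of prismatic Dieudonn\'e crystals over $\O_C/\varpi$. This I would deduce by again reducing, via Proposition \ref{Proposition:isotrivial result for Breuil-Kisin module} and the fact that $W(\O_{C^\flat})/[\varpi^\flat]\twoheadrightarrow W(\O_{C^\flat})/[\varpi^\flat]^{1/p^{n}}$ is a composite of special nilpotent thickenings of perfectoid type, to the uniqueness and extension statements available in that setting (Corollary \ref{Corollary:Uniqueness of isomorphisms between deformations:BK and Perfd case} and Theorem \ref{Theorem:GM deformation}), together with a passage to the limit over the tower $(W(\O_{C^\flat})/[\varpi^\flat]^{1/p^{n}})_{n}$; this part overlaps with \cite{Lau18}. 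Combined with essential surjectivity this gives the anti-equivalence of groupoids, hence the theorem.

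The step I expect to be the main obstacle is exactly this last one. The difficulty is that $W(\O_{C^\flat})/[\varpi^\flat]\to W(k)$ is not a nilpotent thickening and $\O_C/\varpi$ has a non-nilpotent nilradical, so neither the Grothendieck--Messing deformation theory nor the rigidity of Corollary \ref{Corollary:Uniqueness of isomorphisms between deformations:BK and Perfd case} applies to it directly; in particular deformations here can carry nontrivial infinitesimal automorphisms, so the clean reduction used for Theorem \ref{Theorem:classification of p-divisible group:torsion regular local ring} is not available, and one must pass through the intermediate rings $\O_C/\varpi^{1/p^{n}}$ (equivalently $W(\O_{C^\flat})/[\varpi^\flat]^{1/p^{n}}$) and check that the identifications produced by isotriviality and by $\mathrm{(Perfd)}$ are compatible with $M_\Prism$ all the way down. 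By contrast, essential surjectivity falls out cleanly once Proposition \ref{Proposition:isotrivial result for Breuil-Kisin module} is combined with $\mathrm{(Perfd)}$ for the pairs $(\O_C,(\varpi^\flat)^{1/p^{n}})$.
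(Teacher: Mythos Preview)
The paper does not prove this statement at all: it simply cites \cite[Theorem 5.7]{Lau18} and then checks that Lau's functor $\Phi_A$ for $A=W(\O_{C^\flat})/[\varpi^\flat]$ agrees with $\mathscr{G}\mapsto M_\Prism(\mathscr{G})(A,I_{\O_C})$, via \cite[Lemma 4.45]{Anschutz-LeBras} and \cite[Lemma 2.1.16]{Cais-Lau}. In the paper's logical architecture this theorem is an \emph{external input}; it is precisely the base case $m=1$ on which the paper's own Theorem \ref{Theorem:classification of p-divisible group:torsion valuation ring} rests. Lau's argument in turn relies on Berthelot's (or Gabber's) classification over perfect valuation rings of characteristic $p$, not on the deformation theory developed here.

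Your proposal is therefore a genuinely different route: you try to reprove Lau's theorem from inside the paper's framework. Your essential-surjectivity argument is correct and elegant, and is in fact the same mechanism the paper uses to go from $m=1$ to $m\geq 1$ in Theorem \ref{Theorem:classification of p-divisible group:torsion valuation ring}: isotriviality (Proposition \ref{Proposition:isotrivial result for Breuil-Kisin module}) pushes an arbitrary minuscule Breuil--Kisin module down to the constant one over some $W(\O_{C^\flat})/[\varpi^\flat]^{1/p^n}$, and then $(\mathrm{Perfd})$ applied to the pair $(\O_C,(\varpi^\flat)^{1/p^n})$ with $m=p^n$ produces the required $p$-divisible group. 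That part stands.

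The gap you correctly identify is full faithfulness. Two remarks. First, invoking \cite[Theorem 4.74]{Anschutz-LeBras} is already a heavier hammer than Lau's theorem, and it only reduces you to showing that evaluation $\mathrm{DM}(\O_C/\varpi)\to \mathrm{BK}_{\mathrm{min}}(W(\O_{C^\flat})/[\varpi^\flat],I_{\O_C})$ is fully faithful---which the paper only knows \emph{a posteriori} (Corollary \ref{Corollary:admissible prismatic dieudonne crystal evaluation}), using $p$-divisible groups. Second, your tower strategy does not close the gap: the ideals $([\varpi^\flat]^{1/p^n})\subset W(\O_{C^\flat})$ do \emph{not} exhaust $\ker(W(\O_{C^\flat})\to W(k))$ (e.g.\ the element with Witt components $x_i=(\varpi^\flat)^{1/p^i}$ lies in the kernel but in no $([\varpi^\flat]^{1/p^n})$), so reducing an automorphism to the identity over $W(k)$ does not force it to become the identity over any $W(\O_{C^\flat})/[\varpi^\flat]^{1/p^n}$, and Corollary \ref{Corollary:Uniqueness of isomorphisms between deformations:BK and Perfd case} cannot be invoked. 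This is exactly why the paper treats Theorem \ref{Theorem:Lau's classification} as input rather than output.
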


\begin{proof}
This is a special case of \cite[Theorem 5.7]{Lau18}.
Indeed, the contravariant functor $\Phi_A$ associated with $A=W(\O_{C^\flat})/[\varpi^\flat]$ in \cite[(5.2)]{Lau18} can be identified with $\mathscr{G} \mapsto M_\Prism(\mathscr{G})(W(\O_{C^\flat})/[\varpi^\flat], I_{\O_C})$
by \cite[Lemma 4.45]{Anschutz-LeBras} and \cite[Lemma 2.1.16]{Cais-Lau}.
(The proof of \cite[Theorem 5.7]{Lau18} relies on Gabber's classification of $p$-divisible groups over perfect rings of characteristic $p$, which is also proved in \cite[Theorem D]{Lau13}.
In fact, Berthelot's classification \cite[Corollaire 3.4.3]{Berthelot}
of $p$-divisible groups over perfect valuation rings of characteristic $p$ is enough for the proof of Theorem \ref{Theorem:Lau's classification}.)
\end{proof}

By combining Theorem \ref{Theorem:universal family of p-divisible groups gives a universal family of displays}, Proposition \ref{Proposition:isotrivial result for Breuil-Kisin module}, and
Theorem \ref{Theorem:Lau's classification}, we obtain the following result:

\begin{thm}\label{Theorem:classification of p-divisible group:torsion valuation ring}
For every integer $m \geq 1$, the contravariant functor
\[
\mathrm{BT}(\O_C/\varpi^m) \to \mathrm{BK}_{\mathrm{min}}(W(\O_{C^\flat})/[\varpi^\flat]^{m}, I_{\O_C}), \quad \mathscr{G} \mapsto M_\Prism(\mathscr{G})(W(\O_{C^\flat})/[\varpi^\flat]^{m}, I_{\O_C})
\]
is an anti-equivalence of categories.
\end{thm}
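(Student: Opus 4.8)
The plan is to follow closely the proof of Theorem \ref{Theorem:classification of p-divisible group:torsion regular local ring}, with two modifications: the role of the property $(\mathrm{BK})$ will be played by the property $(\mathrm{Perfd})$ of the universal prismatic $\GL_N$-$\mu$-display attached to a universal deformation of a $p$-divisible group (Theorem \ref{Theorem:universal family of p-divisible groups gives a universal family of displays}), and Proposition \ref{Proposition:isotrivial result for Breuil-Kisin module} will be used to circumvent the facts that $\O_C/\varpi^m$ is not artinian (so the universal property of a deformation ring does not apply to it directly) and that a minuscule Breuil--Kisin module over $(W(\O_{C^\flat})/[\varpi^\flat],(p))$ need not itself be ``constant'', i.e.\ defined over $W(k)$.

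As in the proof of Theorem \ref{Theorem:classification of p-divisible group:torsion regular local ring}, the case $m=1$ is Theorem \ref{Theorem:Lau's classification}, and for a general $m$ it suffices to prove that $\mathscr{G}\mapsto M_\Prism(\mathscr{G})(W(\O_{C^\flat})/[\varpi^\flat]^m, I_{\O_C})$ induces an equivalence of \emph{groupoids}; this is reduced to a bijection on isomorphism classes of objects by the classical rigidity of $p$-divisible groups along the nilpotent thickening $\O_C/\varpi^m\to\O_C/\varpi$, together with Corollary \ref{Corollary:Uniqueness of isomorphisms between deformations:BK and Perfd case}~(2) applied to the perfectoid pair $(\O_C,\varpi^\flat)$. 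Since both sides are compatible with reduction to the special fibre over $k$ and classical Dieudonn\'e theory matches those special fibres, I may fix a $p$-divisible group $\mathcal{G}$ over $\Spec k$, set $M:=M_\Prism(\mathcal{G})(W(k),(p))$, fix a universal deformation $\mathcal{G}^{\univ}$ of $\mathcal{G}$ over $R^{\univ}\simeq R_{\GL_N,\mu}$, and prove that the functor induces a bijection between the isomorphism classes of $p$-divisible groups over $\O_C/\varpi^m$ with special fibre $\simeq\mathcal{G}$ and the isomorphism classes of minuscule Breuil--Kisin modules $N$ over $(W(\O_{C^\flat})/[\varpi^\flat]^m, I_{\O_C})$ whose reduction to $(W(k),(p))$ is $\simeq M$.

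For surjectivity, given such an $N$ I would apply Proposition \ref{Proposition:isotrivial result for Breuil-Kisin module} to its reduction $N_1$ modulo $[\varpi^\flat]$: for some $n\geq 1$ there is an isomorphism $N_1\otimes W(\O_{C^\flat})/[(\varpi^\flat)^{1/p^n}]\simeq M\otimes_{W(k)}W(\O_{C^\flat})/[(\varpi^\flat)^{1/p^n}]$ lifting the identity of $M$. Using the identifications $W(\O_{C^\flat})/[\varpi^\flat]^m=W(\O_{C^\flat})/[(\varpi^\flat)^{1/p^n}]^{mp^n}$ and $\O_C/\varpi^m=\O_C/(\varpi^{1/p^n})^{mp^n}$, this realizes $N$ as a deformation of $\mathfrak{Q}(\mathcal{G}^{\univ})_{e_n}$ over $(W(\O_{C^\flat})/[(\varpi^\flat)^{1/p^n}]^{mp^n}, I_{\O_C})$, where $e_n\colon R^{\univ}\to\O_C/\varpi^{1/p^n}$ is the composition $R^{\univ}\to k\to\O_C/\varpi^{1/p^n}$. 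Invoking the property $(\mathrm{Perfd})$ of $\mathfrak{Q}(\mathcal{G}^{\univ})$ (Theorem \ref{Theorem:universal family of p-divisible groups gives a universal family of displays}) for the perfectoid pair $(\O_C,(\varpi^\flat)^{1/p^n})$ and the integer $mp^n$ produces a unique local homomorphism $g\colon R^{\univ}\to\O_C/\varpi^m$ lifting $e_n$ with $M_\Prism(g^*\mathcal{G}^{\univ})(W(\O_{C^\flat})/[\varpi^\flat]^m, I_{\O_C})\simeq N$ as deformations, and then $\mathscr{G}:=g^*\mathcal{G}^{\univ}$ has special fibre $\simeq\mathcal{G}$ and solves the problem. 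Injectivity on isomorphism classes runs along the same lines: after reducing modulo $\varpi$ (using the $m=1$ case) one may assume $\mathscr{G}$ and $\mathscr{G}'$ are deformations of a common $\mathscr{G}_1$ over $\O_C/\varpi$, which becomes constant over $\O_C/\varpi^{1/p^n}$ by Proposition \ref{Proposition:isotrivial result for Breuil-Kisin module}; the bijection furnished by $(\mathrm{Perfd})$ is equivariant for the natural action of $\Aut(\mathscr{G}_1)$, matched (via the $m=1$ case) with that of $\Aut(M_\Prism(\mathscr{G}_1)(W(\O_{C^\flat})/[\varpi^\flat], I_{\O_C}))$, and hence descends to the required bijection.

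I expect the only genuine difficulty to be bookkeeping rather than a new idea: one must keep track of the identifications $W(\O_{C^\flat})/[\varpi^\flat]^m=W(\O_{C^\flat})/[(\varpi^\flat)^{1/p^n}]^{mp^n}$ and $\O_C/\varpi^m=\O_C/(\varpi^{1/p^n})^{mp^n}$, apply $(\mathrm{Perfd})$ to the correct perfectoid pair and level, and verify that the two relevant rigidity statements---Corollary \ref{Corollary:Uniqueness of isomorphisms between deformations:BK and Perfd case}~(2) on the Breuil--Kisin side and the classical rigidity of $p$-divisible groups on the other---apply to the nilpotent thickenings in play. Granting these compatibilities, the theorem is, as announced, a formal consequence of Theorem \ref{Theorem:universal family of p-divisible groups gives a universal family of displays}, Proposition \ref{Proposition:isotrivial result for Breuil-Kisin module}, and Theorem \ref{Theorem:Lau's classification}.
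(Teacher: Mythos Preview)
Your overall strategy is right and invokes the same three ingredients as the paper (Theorem~\ref{Theorem:universal family of p-divisible groups gives a universal family of displays}, Proposition~\ref{Proposition:isotrivial result for Breuil-Kisin module}, Theorem~\ref{Theorem:Lau's classification}), but there is a genuine gap in the injectivity argument. The property $(\mathrm{Perfd})$ furnishes a bijection
\[
\Hom(R^{\univ},\O_C/\varpi^m)_e \overset{\sim}{\longrightarrow} \Def(M)_{(W(\O_{C^\flat})/[\varpi^\flat]^m,I_{\O_C})}
\]
on the Breuil--Kisin side, but you never establish the companion bijection $\Hom(R^{\univ},\O_C/\varpi^m)_e \overset{\sim}{\to} \Def(\mathcal{G})_{\O_C/\varpi^m}$ on the $p$-divisible group side. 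Without it you cannot conclude that $\mathscr{G}\simeq\mathscr{G}'$ from $M_\Prism(\mathscr{G})\simeq M_\Prism(\mathscr{G}')$: your $\Aut(\mathscr{G}_1)$-equivariance remark only reduces ``isomorphic as Breuil--Kisin modules'' to ``isomorphic as deformations of $M_1$'', after which one still needs injectivity of $\Def(\mathscr{G}_1)_m\to\Def(M_1)_m$ on deformation classes---and that is exactly the statement in question. The surjectivity half does not help here, since it only shows every $N$ is hit by some $g^*\mathcal{G}^{\univ}$, not that every $\mathscr{G}$ is of this form.

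The paper closes this gap by citing \cite[Remark~3.12]{Lau14}: the universal deformation $\mathcal{G}^{\univ}_0$ of $\mathcal{G}_0$ over $k$ remains universal for deformations over local $W(k)$-algebras with residue field $k$ in which $p$ is nilpotent, in particular over $\O_C/\varpi^m$. This supplies $\Hom(R^{\univ},\O_C/\varpi^m)_e\overset{\sim}{\to}\Def(\mathcal{G}_0)_{\O_C/\varpi^m}$ directly, and composing with $(\mathrm{Perfd})$ finishes. The paper also organizes the reduction more simply: it compares deformations of a fixed $\mathcal{G}$ over $\O_C/\varpi$ with deformations of $M_\Prism(\mathcal{G})$ at level~$1$ (rather than stratifying by special fibre over $k$), which avoids the $\Aut(\mathscr{G}_1)$-quotient bookkeeping altogether.
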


\begin{proof}
    If $m=1$, then this is Theorem \ref{Theorem:Lau's classification}.
    For a general $m \geq 1$, arguing as in the proof of Theorem \ref{Theorem:classification of p-divisible group:torsion regular local ring},
    we are reduced to proving that
    for any $p$-divisible group $\mathcal{G}$ over $\Spec \O_C/\varpi$ and
    \[
    M:=M_\Prism(\mathcal{G})(W(\O_{C^\flat})/[\varpi^\flat], I_{\O_C}),
    \]
    the construction
    $
    \mathscr{G} \mapsto M_\Prism(\mathscr{G})(W(\O_{C^\flat})/[\varpi^\flat]^m, I_{\O_C})
    $
    induces a bijection between the set
    $\Def(\mathcal{G})_{\O_C/\varpi^m}$
    of isomorphism classes of deformations of 
    $\mathcal{G}$
    over $\Spec \O_C/\varpi^m$
    and the set of isomorphism classes of deformations of 
    $M$
    over $(W(\O_{C^\flat})/[\varpi^\flat]^{m}, I_{\O_C})$.

    By Proposition \ref{Proposition:isotrivial result for Breuil-Kisin module}, Theorem \ref{Theorem:Lau's classification}, and the classical Dieudonn\'e theory,
    we may assume that
    $\mathcal{G}$
    is the base change of a $p$-divisible group
    $\mathcal{G}_0$ over $\Spec k$ along the section $k \to \O_C/\varpi$
    after replacing $\varpi^\flat$ by $(\varpi^\flat)^{1/p^n}$ for some $n$.
    Let $\mathcal{G}^{\univ}_0$ be a universal deformation of $\mathcal{G}_0$ over $\Spec R$ for some $R \in \mathcal{C}_{W(k)}$.
    Let $e \colon R \to \O_C/\varpi$ be the composition $R \to k \to \O_C/\varpi$.
    By \cite[Remark 3.12]{Lau14}, the following map is bijective:
    \[
    \Hom(R, \O_C/\varpi^m)_{e} \to \Def(\mathcal{G})_{\O_C/\varpi^m}, \quad g \mapsto g^*\mathcal{G}^{\univ}_0.
    \]
    Then the assertion follows
    since $\mathfrak{Q}(\mathcal{G}^{\univ}_0)$ has the property (Perfd) by Theorem \ref{Theorem:universal family of p-divisible groups gives a universal family of displays}.
\end{proof}

To the best of our knowledge, Theorem \ref{Theorem:classification of p-divisible group:torsion valuation ring} is a new result.
As a consequence, we can give an alternative proof of (the first part of) the following result.

\begin{cor}[{\cite{Berthelot}, \cite{Lau18}, \cite{Scholze-Weinstein}, \cite{Anschutz-LeBras}}]\label{Corollary:classification of p-divisible group:valuation ring}
\ 
\begin{enumerate}
    \item The contravariant functor
    \[
\mathrm{BT}(\O_C) \to \mathrm{BK}_{\mathrm{min}}(W(\O_{C^\flat}), I_{\O_C}), \quad \mathscr{G} \mapsto M_\Prism(\mathscr{G})(W(\O_{C^\flat}), I_{\O_C})
\]
is an anti-equivalence of categories.
\item Let $S$ be a perfectoid ring.
The contravariant functor
\[
\mathrm{BT}(S) \to \mathrm{BK}_{\mathrm{min}}(W(S^\flat), I_{S}), \quad \mathscr{G} \mapsto M_\Prism(\mathscr{G})(W(S^\flat), I_{S})
\]
is an anti-equivalence of categories.
\end{enumerate}
\end{cor}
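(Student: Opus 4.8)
The plan is to deduce part (1) from its truncated version, Theorem \ref{Theorem:classification of p-divisible group:torsion valuation ring}, by passing to the limit over $m$; part (2) I would simply take from \cite[Theorem 4.74]{Anschutz-LeBras} (in the case $S=\O_C$ this also recovers (1), and \cite{Scholze-Weinstein}, \cite{Lau18}, and \cite{Berthelot} cover further special cases). So the actual work is to show that $\mathscr{G}\mapsto M_\Prism(\mathscr{G})(W(\O_{C^\flat}),I_{\O_C})$ is an anti-equivalence $\mathrm{BT}(\O_C)\overset{\sim}{\to}\mathrm{BK}_{\mathrm{min}}(W(\O_{C^\flat}),I_{\O_C})$.

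First I would record the two relevant $2$-limit descriptions, one along the tower $\{\O_C/\varpi^m\}_m$ and one along $\{W(\O_{C^\flat})/[\varpi^\flat]^m\}_m$. Since $\O_C$ is $\varpi$-adically complete, a $p$-divisible group over $\O_C$ is the colimit of its $p^n$-torsion subgroups, each finite locally free, and finite locally free group schemes over $\O_C$ form the $2$-limit of those over the $\O_C/\varpi^m$; hence base change gives an equivalence $\mathrm{BT}(\O_C)\overset{\sim}{\to}{2-\varprojlim}_m\mathrm{BT}(\O_C/\varpi^m)$ (cf.\ \cite{Messing}). On the other side, $W(\O_{C^\flat})$ is $[\varpi^\flat]$-torsion free and $[\varpi^\flat]$-adically complete, and $\delta([\varpi^\flat])=0$, so the ideal $([\varpi^\flat])\subset W(\O_{C^\flat})$ satisfies conditions (a)--(c) of Proposition \ref{Proposition:limit of G displays} for the bounded prism $(W(\O_{C^\flat}),I_{\O_C})$, with (c) supplied by Proposition \ref{Proposition:perfectoid type} (2). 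Moreover $W(\O_{C^\flat})$, $\O_C$, $W(\O_{C^\flat})/[\varpi^\flat]$, and $\O_{C^\flat}$ are all local rings, over which every $\GL_N$-torsor and every $P_\mu$-torsor is trivial; thus by Proposition \ref{Proposition:G display with trivial Hodge filtration is banal} every minuscule Breuil--Kisin module here is banal and is of type $\mu_s$ for some $0\le s\le N$, where $\mu_s(t)=\diag(t,\dotsc,t,1,\dotsc,1)$ has $s$ entries equal to $t$. Applying Proposition \ref{Proposition:limit of G displays} (2) to $\GL_N$ and the cocharacters $\mu_s$ through the equivalence of Example \ref{Example:GLn displays}, and adding a direct limit computation for the (non-invertible) morphisms of $\mathrm{BK}_{\mathrm{min}}$, I would get $\mathrm{BK}_{\mathrm{min}}(W(\O_{C^\flat}),I_{\O_C})\overset{\sim}{\to}{2-\varprojlim}_m\mathrm{BK}_{\mathrm{min}}(W(\O_{C^\flat})/[\varpi^\flat]^m,I_{\O_C})$.

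The corollary then follows formally: Theorem \ref{Theorem:classification of p-divisible group:torsion valuation ring} gives anti-equivalences $\mathrm{BT}(\O_C/\varpi^m)\overset{\sim}{\to}\mathrm{BK}_{\mathrm{min}}(W(\O_{C^\flat})/[\varpi^\flat]^m,I_{\O_C})$ for every $m$, and these are compatible with the transition functors because the formation of $M_\Prism(-)$ commutes with base change (\cite[Proposition 4.69]{Anschutz-LeBras}); taking $2$-limits over $m$ and inserting the two equivalences just described yields the asserted anti-equivalence over $\O_C$.

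The step I expect to require the most care is the Breuil--Kisin side $2$-limit. One must first be sure that $W(\O_{C^\flat})$ is genuinely $[\varpi^\flat]$-adically complete (it is $[\varpi^\flat]$-adically separated, as $\bigcap_m[\varpi^\flat]^mW(\O_{C^\flat})=0$ by a Witt-component computation using that $\O_{C^\flat}$ is $\varpi^\flat$-adically separated, and it is derived $[\varpi^\flat]$-complete since it is derived $(p,[\varpi^\flat])$-complete, whence classical completeness follows). And, since Proposition \ref{Proposition:limit of G displays} is phrased for groupoids of $G$-$\mu$-displays while $\mathrm{BK}_{\mathrm{min}}$ is a genuine category, one must separately verify that $\mathrm{Hom}$ in $\mathrm{BK}_{\mathrm{min}}(W(\O_{C^\flat}),I_{\O_C})$ is the inverse limit of the $\mathrm{Hom}$'s of the reductions---immediate from $[\varpi^\flat]$-adic completeness, as those $\mathrm{Hom}$-modules are finite projective over $W(\O_{C^\flat})$. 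Everything beyond this is routine bookkeeping with $2$-limits of anti-equivalences.
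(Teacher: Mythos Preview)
Your argument for part (1) is essentially the paper's own: both deduce the statement from Theorem \ref{Theorem:classification of p-divisible group:torsion valuation ring} by passing to the $2$-limit over $m$, using $\mathrm{BT}(\O_C)\overset{\sim}{\to}{2-\varprojlim}_m\mathrm{BT}(\O_C/\varpi^m)$ and $\mathrm{BK}_{\mathrm{min}}(W(\O_{C^\flat}),I_{\O_C})\overset{\sim}{\to}{2-\varprojlim}_m\mathrm{BK}_{\mathrm{min}}(W(\O_{C^\flat})/[\varpi^\flat]^m,I_{\O_C})$. You have just made the verifications for the second limit more explicit.

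For part (2), however, you take a genuinely different route from the paper. You invoke \cite[Theorem 4.74]{Anschutz-LeBras} directly (the precise reference for perfectoid rings would be \cite[Corollary 4.49]{Anschutz-LeBras}); the paper instead deduces (2) from (1) together with classical Dieudonn\'e theory by $p$-complete $\arc$-descent for finite projective modules over perfectoid rings $S$ and over $W(S^\flat)$, following the strategy of \cite[Theorem 17.5.2]{Scholze-Weinstein} and \cite[Section 5]{Ito-K21}. Your citation is logically valid, but it undercuts the purpose of this section: as the paper explains in Remark \ref{Remark:Anschutz-LeBras main result}, the proof of \cite[Theorem 4.74]{Anschutz-LeBras} already relies on \cite[Theorem 17.5.2]{Scholze-Weinstein}, which in turn (in characteristic $0$) rests on \cite[Theorem B]{ScholzeWeinsteinModuli}. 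The value of the paper's argument is that, with (1) established independently via the deformation theory developed here, the $\arc$-descent reduction yields (2) without this input---and hence, combined with Fargues' theorem, gives an alternative proof of \cite[Theorem B]{ScholzeWeinsteinModuli} itself (Remark \ref{Remark:previous studies more details valuation ring}).
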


\begin{proof}
    (1) As in the proof of Corollary \ref{Corollary:classification of p-divisible group:regular local ring}, 
    the assertion follows from Theorem \ref{Theorem:classification of p-divisible group:torsion valuation ring}.

    (2) As explained in \cite[Theorem 17.5.2]{Scholze-Weinstein} and \cite[Section 5]{Ito-K21},
    the assertion follows from (1) (and the classical Dieudonn\'e theory) by using $p$-complete $\arc$-descent for finite projective modules over perfectoid rings $S$ and over $W(S^\flat)$.
\end{proof}

We compare our results with previous studies in more detail.

\begin{rem}\label{Remark:previsou studies more detail perfectoid}
Let the notation be as in Corollary \ref{Corollary:classification of p-divisible group:valuation ring}.
An equivalence
\[
\mathrm{BT}(\O_C) \overset{\sim}{\to} \mathrm{BK}_{\mathrm{min}}(W(\O_{C^\flat}), I_{\O_C}) \quad (\text{resp.\ } \mathrm{BT}(S) \overset{\sim}{\to} \mathrm{BK}_{\mathrm{min}}(W(S^\flat), I_{S}))
\]
was originally constructed by Scholze--Weinstein
in \cite[Theorem 14.4.1]{Scholze-Weinstein}
(resp.\ \cite[Theorem 17.5.2]{Scholze-Weinstein}, see also \cite[Section 5]{Ito-K21}).
If $p \geq 3$, then such equivalences were also obtained by Lau \cite[Theorem 9.8]{Lau18} independently.
Corollary \ref{Corollary:classification of p-divisible group:valuation ring} itself was proved in \cite[Corollary 4.49]{Anschutz-LeBras} by showing that the dual of
    the contravariant functor
    $
    \mathscr{G} \mapsto \phi^*M_\Prism(\mathscr{G})(W(S^\flat), I_{S})
    $
    coincides with the equivalence constructed in \cite{Scholze-Weinstein}.
\end{rem}

\begin{rem}\label{Remark:previous studies more details valuation ring}
Assume that $C$ is of characteristic $0$.
We consider the following categories:
    \begin{itemize}
    \item $\mathcal{C}_1:=\mathrm{BT}(\O_C)$.
    \item $\mathcal{C}_2:=\mathrm{BK}_{\mathrm{min}}(W(\O_{C^\flat}), I_{\O_C})$.
    \item The category $\mathcal{C}_3$ of free $\Z_p$-modules $T$ of finite rank together with a $C$-subspace of $T \otimes_{\Z_p} C$.
    \end{itemize}
    It is a theorem of Fargues that
    $\mathcal{C}_2$
    is equivalent to $\mathcal{C}_3$; see \cite[Theorem 14.1.1]{Scholze-Weinstein}.
    Scholze--Weinstein proved that
    $\mathcal{C}_1$
    is equivalent to $\mathcal{C}_3$ in \cite[Theorem B]{ScholzeWeinsteinModuli}.
    Then, by using Fargues' theorem, they obtained that $\mathcal{C}_1$
    is equivalent to $\mathcal{C}_2$; see \cite[Theorem 14.4.1]{Scholze-Weinstein}.
    Our proof of Corollary \ref{Corollary:classification of p-divisible group:valuation ring} (1) does not use \cite[Theorem B]{ScholzeWeinsteinModuli}.
    Consequently, together with Fargues' theorem, we obtain an alternative proof of \cite[Theorem B]{ScholzeWeinsteinModuli}.
    If $C$ is of characteristic $p$, then Corollary \ref{Corollary:classification of p-divisible group:valuation ring} (1)
    follows from \cite[Corollaire 3.4.3]{Berthelot} (and \cite[Lemma 4.45]{Anschutz-LeBras}), and our proof does not give any new information.
\end{rem}

\begin{rem}\label{Remark:Anschutz-LeBras main result}
    Let $S$ be a quasisyntomic ring.
    Recall the category $\mathrm{DM}(S)$ of prismatic Dieudonn\'e crystals on $(S)^{\op}_\Prism$ from Remark \ref{Remark:definition of prismatic Dieudonne crystal in ALB}.
    Let
    \[
        \mathrm{DM}^{\mathrm{adm}}(S) \subset \mathrm{DM}(S)
    \]
    be the full subcategory of \textit{admissible} prismatic Dieudonn\'e crystals over $S$ in the sense of \cite[Definition 4.5]{Anschutz-LeBras}.
    By \cite[Theorem 4.74]{Anschutz-LeBras},
    we have the following anti-equivalence:
    \begin{equation}\label{equation:prismatic Dieudonne main theorem}
        \mathrm{BT}(S) \overset{\sim}{\to} \mathrm{DM}^{\mathrm{adm}}(S), \quad \mathscr{G} \mapsto \mathcal{E}xt^1_{(S)_\Prism}(\underline{\mathscr{G}}, \O_\Prism).
    \end{equation}
    Its proof uses \cite[Theorem 17.5.2]{Scholze-Weinstein} mentioned in Remark \ref{Remark:previsou studies more detail perfectoid}.
\end{rem}

    In \cite[Section 5.2]{Anschutz-LeBras} and \cite[Proposition 7.1.1]{Ito-K23},
    the following equivalences are obtained (without using $p$-divisible groups):
    \[
    \mathrm{DM}^{\mathrm{adm}}(R) \overset{\sim}{\to} \mathrm{DM}(R) \overset{\sim}{\to} \mathrm{BK}_{\mathrm{min}}(\mathfrak{S}, (\mathcal{E}))
    \]
    where
    the second functor is defined by
    $\mathcal{M} \mapsto \mathcal{M}(\mathfrak{S}, (\mathcal{E}))$.
    In \cite[Theorem 5.12]{Anschutz-LeBras},
    Ansch\"utz--Le Bras proved Corollary \ref{Corollary:classification of p-divisible group:regular local ring}
    by combining this result and $(\ref{equation:prismatic Dieudonne main theorem})$.
    
On the other hand, by using the classification results of $p$-divisible groups obtained in \cite{Anschutz-LeBras} and in this paper, one can show the following:
   
\begin{cor}\label{Corollary:admissible prismatic dieudonne crystal evaluation}
Let $m \geq 1$ be an integer.
We assume that $\dim R =1$.
We set $S:=R_m$ (resp.\ $S:=\O_C/\varpi^m$)
and
$(A, I):=(\mathfrak{S}_m, (\mathcal{E}))$
(resp.\ $(A, I):=(W(\O_{C^\flat})/[\varpi^\flat]^{m}, I_{\O_C})$).
Then the functor
\[
\mathrm{DM}^{\mathrm{adm}}(S) \to \mathrm{BK}_{\mathrm{min}}(A, I), \quad 
\mathcal{M} \mapsto \mathcal{M}(A, I)
\]
is an equivalence.
\end{cor}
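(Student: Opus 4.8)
The plan is to deduce the statement from the classification results already in hand, by factoring the evaluation functor through the category $\mathrm{BT}(S)$ of $p$-divisible groups. First I would record that $S$ is a quasisyntomic ring in both cases: for $S = R_m$ this uses the hypothesis $\dim R = 1$ (Example \ref{Example:quasisyntomic rings} (1)), and for $S = \O_C/\varpi^m$ it is Example \ref{Example:quasisyntomic rings} (2). One also checks that $(A, I)$ is naturally an object of $(S)_{\Prism}$ — via the isomorphism $\mathfrak{S}_m/\mathcal{E} \simeq R_m$ in the first case and via $\theta$ in the second. Hence Remark \ref{Remark:Anschutz-LeBras main result} (that is, \cite[Theorem 4.74]{Anschutz-LeBras}) applies and gives an anti-equivalence
\[
\mathrm{BT}(S) \overset{\sim}{\to} \mathrm{DM}^{\mathrm{adm}}(S), \quad \mathscr{G} \mapsto \mathcal{E}xt^1_{(S)_\Prism}(\underline{\mathscr{G}}, \O_\Prism).
\]

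Next I would invoke Theorem \ref{Theorem:classification of p-divisible group:torsion regular local ring} (resp.\ Theorem \ref{Theorem:classification of p-divisible group:torsion valuation ring}), which provides an anti-equivalence
\[
\mathrm{BT}(S) \overset{\sim}{\to} \mathrm{BK}_{\mathrm{min}}(A, I), \quad \mathscr{G} \mapsto M_\Prism(\mathscr{G})(A, I).
\]
The key observation is a straightforward compatibility: by the definitions recalled in Section \ref{Subsection:Prismatic Diedonn\'e modules of p-divisible groups}, one has $M_\Prism(\mathscr{G})(A, I) = \mathcal{E}xt^1_{(S)_\Prism}(\underline{\mathscr{G}}, \O_\Prism)(A, I)$; in other words, the functor $\mathscr{G} \mapsto M_\Prism(\mathscr{G})(A, I)$ is the composition of the Ansch\"utz--Le Bras anti-equivalence above with the evaluation functor $\mathcal{M} \mapsto \mathcal{M}(A, I)$ (which, by Remark \ref{Remark:definition of prismatic Dieudonne crystal in ALB} and the definition of a prismatic Dieudonn\'e crystal, does land in $\mathrm{BK}_{\mathrm{min}}(A, I)$). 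It follows that the evaluation functor $\mathrm{DM}^{\mathrm{adm}}(S) \to \mathrm{BK}_{\mathrm{min}}(A, I)$ coincides with the composition of the inverse of the anti-equivalence of Remark \ref{Remark:Anschutz-LeBras main result} with the anti-equivalence of Theorem \ref{Theorem:classification of p-divisible group:torsion regular local ring} (resp.\ Theorem \ref{Theorem:classification of p-divisible group:torsion valuation ring}). As a composition of an inverse anti-equivalence with an anti-equivalence, it is an equivalence of categories, which is exactly the assertion.

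There is no serious obstacle here: the content is entirely absorbed into Theorem \ref{Theorem:classification of p-divisible group:torsion regular local ring}, Theorem \ref{Theorem:classification of p-divisible group:torsion valuation ring}, and the Ansch\"utz--Le Bras anti-equivalence, and what remains is a short diagram chase through $\mathrm{BT}(S)$. The one point requiring a little care — rather than a difficulty — is the on-the-nose identification $M_\Prism(\mathscr{G})(A, I) = \mathcal{E}xt^1_{(S)_\Prism}(\underline{\mathscr{G}}, \O_\Prism)(A, I)$, together with the fact that the evaluation of an admissible prismatic Dieudonn\'e crystal at $(A, I)$ is a minuscule Breuil--Kisin module; both follow immediately from \cite[Proposition 4.69, Theorem 4.71]{Anschutz-LeBras}, \cite[Remark 7.3.1]{Ito-K23}, and the conventions fixed in Section \ref{Subsection:Prismatic Diedonn\'e modules of p-divisible groups}.
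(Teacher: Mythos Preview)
Your proposal is correct and follows essentially the same approach as the paper's own proof, which simply cites Example \ref{Example:quasisyntomic rings}, the Ansch\"utz--Le Bras anti-equivalence $(\ref{equation:prismatic Dieudonne main theorem})$, and Theorems \ref{Theorem:classification of p-divisible group:torsion regular local ring} and \ref{Theorem:classification of p-divisible group:torsion valuation ring}. You have spelled out the compatibility $M_\Prism(\mathscr{G})(A, I) = \mathcal{E}xt^1_{(S)_\Prism}(\underline{\mathscr{G}}, \O_\Prism)(A, I)$ and the two-out-of-three argument more explicitly than the paper does, but the content is the same.
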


\begin{proof}
    We note that $S$ is a quasisyntomic ring by Example \ref{Example:quasisyntomic rings}.
    The assertion then follows from $(\ref{equation:prismatic Dieudonne main theorem})$, Theorem \ref{Theorem:classification of p-divisible group:torsion regular local ring}, and
    Theorem \ref{Theorem:classification of p-divisible group:torsion valuation ring}.
\end{proof}

It would be interesting to give a direct proof of Corollary \ref{Corollary:admissible prismatic dieudonne crystal evaluation}, without using $p$-divisible groups.
This will be achieved in \cite{Gardner-Madapusi}.
See also Conjecture \ref{Conjecture:evaluation prismatic G-F-gauge} below.

\section{Consequences on prismatic $F$-gauges}\label{Section:Consequences on prismatic F-gauges}

In this section, we assume that $\O_E=\Z_p$.
Let $G$ be a smooth affine group scheme over $\Z_p$
and
let
$
\mu \colon \G_m \to G_{W(k)}
$
be a cocharacter where $k$ is a perfect field of characteristic $p$.
Let $R$ be a quasisyntomic ring over $W(k)$.
In \cite[Section 8.2]{Ito-K23},
we introduced the groupoid
\[
G\mathchar`-F\mathchar`-\mathrm{Gauge}_\mu(R)
\]
of \textit{prismatic $G$-$F$-gauges of type $\mu$} over $R$,
following the theory of prismatic $F$-gauges introduced by Drinfeld and Bhatt--Lurie (cf.\ \cite{Drinfeld22}, \cite{BL}, \cite{BL2}, \cite{BhattGauge}) and the work of Guo--Li \cite{Guo-Li}.
We refer to \cite[Section 8.2]{Ito-K23} for details.

Here we discuss some consequences of our deformation theory on prismatic $G$-$F$-gauges of type $\mu$, and make some conjectures.
These conjectures should follow from the results in \cite{Gardner-Madapusi}; see also Remark \ref{Remark:GMM}.

\subsection{Prismatic $G$-$F$-gauges of type $\mu$}\label{Subsection:Prismatic G-F-gauges of type mu}

Let $R$ be a quasisyntomic ring over $W(k)$.
The relation between prismatic $G$-$F$-gauges of type $\mu$ over $R$ and prismatic $G$-$\mu$-displays over $R$ can be described as follows:

\begin{prop}\label{Proposition:G-F-gauges to G-displays}
    There exists a fully faithful functor
    \begin{equation}\label{equation:functor from G-gauge to G-display}
        G\mathchar`-F\mathchar`-\mathrm{Gauge}_\mu(R) \to G\mathchar`-\mathrm{Disp}_{\mu}((R)_{\Prism}).
    \end{equation}
    This functor is an equivalence if $R$ is a perfectoid ring over $W(k)$ or a complete regular local ring over $W(k)$ with residue field $k$.
    In particular, we have
    \[
    G\mathchar`-F\mathchar`-\mathrm{Gauge}_\mu(k) \overset{\sim}{\to} G\mathchar`-\mathrm{Disp}_{\mu}((k)_{\Prism}) \overset{\sim}{\to} G\mathchar`-\mathrm{Disp}_\mu(W(k), (p)).
    \]
\end{prop}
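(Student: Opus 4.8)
The plan is to reduce, in each case, to a statement over a single prism. The functor $(\ref{equation:functor from G-gauge to G-display})$ is constructed in \cite[Section 8.2]{Ito-K23}, where it is also shown to be fully faithful for every quasisyntomic ring $R$ over $W(k)$. As $k$, every perfectoid ring over $W(k)$, and every $R \in \mathcal{C}_{W(k)}$ are quasisyntomic (Example \ref{Example:quasisyntomic rings}; note that $k$ is itself perfectoid), only essential surjectivity in the two asserted cases remains. The last assertion then follows by applying the perfectoid case to $S = k$ and composing with the equivalence of Example \ref{Example:prismatic display over k}.

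For $R \in \mathcal{C}_{W(k)}$, fix a prism $(\mathfrak{S}, (\mathcal{E})) = (W(k)[[t_1, \dotsc, t_n]], (\mathcal{E}))$ of Breuil--Kisin type with $R \simeq \mathfrak{S}/\mathcal{E}$ (Remark \ref{Remark:regular local ring and Breuil-Kisin module}). By Theorem \ref{Theorem:main result on G displays over complete regular local rings}, evaluation at $(\mathfrak{S}, (\mathcal{E}))$ identifies $G\mathchar`-\mathrm{Disp}_\mu((R)_{\Prism})$ with $G\mathchar`-\mathrm{Disp}_\mu(\mathfrak{S}, (\mathcal{E}))$; since $(\ref{equation:functor from G-gauge to G-display})$ is fully faithful, it is enough to show that every $G$-$\mu$-display $\mathcal{Q}$ over $(\mathfrak{S}, (\mathcal{E}))$ --- equivalently, every $G$-Breuil--Kisin module of type $\mu$ --- is the image of a prismatic $G$-$F$-gauge of type $\mu$ over $R$. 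The datum carried by such an $F$-gauge beyond its underlying prismatic $G$-$\mu$-display is a filtered, Nygaard-type refinement of the associated $G$-$\phi$-module, in the sense of the formalism of Drinfeld and Bhatt--Lurie and of \cite[Section 8.2]{Ito-K23} (see also \cite{BL}, \cite{BhattGauge}, \cite{Guo-Li}). The key observation is that a $G$-Breuil--Kisin module of type $\mu$ carries a \emph{canonical} such refinement, determined by the cocharacter: $(p, \mathcal{E})$-completely \'etale locally on $\mathfrak{S}$, where the Frobenius is given by an element of $G(\mathfrak{S})\mu(\mathcal{E})G(\mathfrak{S})$, the relevant filtration is the one cut out by $\mu(\mathcal{E})$, exactly in the spirit of the display group $G_\mu(\mathfrak{S}, (\mathcal{E}))$ and of $\Fil^1(\phi^*M)$ in Example \ref{Example:GLn displays}. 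One then checks that this refinement satisfies the $F$-gauge axioms --- in particular that it is compatible with the actual Nygaard filtration of $\mathfrak{S}$ --- that it is independent of the chosen trivialization and hence descends, and that it is functorial for the transition maps of the quasisyntomic site of $R$. The resulting $G$-$F$-gauge of type $\mu$ over $R$ maps to $\mathcal{Q}$ under $(\ref{equation:functor from G-gauge to G-display})$, which gives essential surjectivity.

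For a perfectoid ring $S$ over $W(k)$ the argument is the same, with $(\mathfrak{S}, (\mathcal{E}))$ replaced by the prism $(W(S^\flat), I_S)$. This prism is the initial object of $(S)_{\Prism}$ --- the analogue for perfectoid rings of the fact, used in Example \ref{Example:prismatic display over k}, that $(W(k), (p))$ is initial in $(k)_{\Prism}$ (see \cite{BS} and \cite{Ito-K23}) --- so that $G\mathchar`-\mathrm{Disp}_\mu((S)_{\Prism}) \simeq G\mathchar`-\mathrm{Disp}_\mu(W(S^\flat), I_S)$, while $G\mathchar`-F\mathchar`-\mathrm{Gauge}_\mu(S)$ is described directly in terms of filtered objects over this prism (since $S$ is quasiregular semiperfectoid) by \cite[Section 8.2]{Ito-K23}. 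As the Frobenius of $W(S^\flat)$ is an isomorphism, the filtered refinement needed for the $F$-gauge structure is again recovered canonically from a $G$-Breuil--Kisin module of type $\mu$ over $(W(S^\flat), I_S)$ by the $\mu(\mathcal{E})$-recipe of the previous paragraph (with $\mathcal{E}$ replaced by a generator of $I_S$), yielding essential surjectivity here too.

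The hard part will be the compatibility verifications of the middle steps: showing that the canonical $\mu$-filtration, built over the single prism $(\mathfrak{S}, (\mathcal{E}))$ (respectively $(W(S^\flat), I_S)$), genuinely satisfies every axiom defining a prismatic $G$-$F$-gauge of type $\mu$ --- in particular its interaction with the true Nygaard filtration of $\mathfrak{S}$, which is $\phi^{-1}(\mathcal{E}^\bullet\mathfrak{S})$ rather than the $\mathcal{E}$-adic filtration --- and that it is functorial for the quasisyntomic covers. This is where the bulk of the work lies and where one relies most on \cite[Section 8.2]{Ito-K23} and on \cite{BL}, \cite{Guo-Li}.
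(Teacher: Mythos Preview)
The paper does not give a proof here at all: it simply cites \cite[Proposition 8.2.11, Corollary 8.2.12]{Ito-K23}. Your write-up is therefore not so much an alternative proof as an attempt to reconstruct what that reference presumably does. A detailed comparison with the paper's argument is impossible, since the paper has none.

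That said, there is one concrete issue with your outline. In the complete regular local ring case you invoke Theorem~\ref{Theorem:main result on G displays over complete regular local rings} to identify $G\text{-}\mathrm{Disp}_\mu((R)_{\Prism})$ with $G\text{-}\mathrm{Disp}_\mu(\mathfrak{S},(\mathcal{E}))$. But that theorem has the hypothesis that $\mu$ is 1-bounded, while Proposition~\ref{Proposition:G-F-gauges to G-displays} is stated for an arbitrary cocharacter $\mu$. (Indeed, the paragraph immediately after the proposition explicitly combines it with Theorem~\ref{Theorem:main result on G displays over complete regular local rings} only ``when $\mu$ is 1-bounded''.) So as written your argument for $R\in\mathcal{C}_{W(k)}$ only establishes the equivalence under an additional hypothesis not present in the statement. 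The argument in \cite{Ito-K23} must proceed differently, presumably by working directly with the quasisyntomic/Nygaard description of $F$-gauges rather than passing through the single Breuil--Kisin prism via Theorem~\ref{Theorem:main result on G displays over complete regular local rings}.

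Your perfectoid case is on firmer ground: the initiality of $(W(S^\flat),I_S)$ does not need 1-boundedness, and for perfectoid rings the Nygaard filtration is the $I_S$-adic one, so reconstructing the $F$-gauge data from the display is plausible as you say. But as you yourself note, you have not actually carried out the compatibility checks in either case; you have only indicated where they should go. So the proposal is an honest sketch with a real gap in the regular local case, not a complete proof.
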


\begin{proof}
    See \cite[Proposition 8.2.11, Corollary 8.2.12]{Ito-K23}.
\end{proof}

Let
$R \in \mathcal{C}_{W(k)}$
be a complete regular local ring over $W(k)$ with residue field $k$.
Let
$
(\mathfrak{S}, (\mathcal{E})):=(W(k)[[t_1, \dotsc, t_n]], (\mathcal{E}))
$
be a prism of Breuil--Kisin type with an isomorphism
$R \simeq \mathfrak{S}/\mathcal{E}$
over $W(k)$.
By Theorem \ref{Theorem:main result on G displays over complete regular local rings} and Proposition \ref{Proposition:G-F-gauges to G-displays},
we have the following equivalences
    \[
    G\mathchar`-F\mathchar`-\mathrm{Gauge}_\mu(R) \overset{\sim}{\to} G\mathchar`-\mathrm{Disp}_{\mu}((R)_{\Prism}) \overset{\sim}{\to}  G\mathchar`-\mathrm{Disp}_\mu(\mathfrak{S}, (\mathcal{E}))
    \]
when $\mu$ is 1-bounded.
Similarly to what we have seen in Section \ref{Section:Universal deformations},
these equivalences enable us to study the groupoid
$G\mathchar`-F\mathchar`-\mathrm{Gauge}_\mu(R)$
using $G\mathchar`-\mathrm{Disp}_\mu(\mathfrak{S}, (\mathcal{E}))$.
The former is conceptually important, while the latter is useful from a practical viewpoint.
The following analogous statement should hold true.

\begin{conj}\label{Conjecture:evaluation prismatic G-F-gauge}
We assume that $\mu$ is 1-bounded.
\begin{enumerate}
    \item
    We assume that $\dim R =1$.
For every integer $m \geq 1$, the natural functor
\[
    G\mathchar`-F\mathchar`-\mathrm{Gauge}_\mu(R/\mathfrak{m}^{m}_R) \to  G\mathchar`-\mathrm{Disp}_\mu(\mathfrak{S}_m, (\mathcal{E}))
\]
is an equivalence.
    \item
    Let $(S, a^\flat)$ be a perfectoid pair over $W(k)$.
    For every integer $m \geq 1$, the natural functor
\[
    G\mathchar`-F\mathchar`-\mathrm{Gauge}_\mu(S/a^m) \to  G\mathchar`-\mathrm{Disp}_\mu(W(S^\flat)/[a^\flat]^m, I_S)
\]
is an equivalence.
\end{enumerate}
\end{conj}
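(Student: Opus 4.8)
The plan is to prove both parts by the same dévissage, reducing each to a statement about deformations that matches the display-side results of Section~\ref{Section:Universal deformations}. First, pulling back along the map from the distinguished prism to $(W(k),(p))$ (i.e. $\mathfrak{S}_m\to W(k)$, $t_i\mapsto 0$, resp. $W(S^\flat)/[a^\flat]^m\to W(k)$), both $G\mathchar`-F\mathchar`-\mathrm{Gauge}_\mu(T)$ and $G\mathchar`-\mathrm{Disp}_\mu$ of the distinguished prism decompose over $G\mathchar`-F\mathchar`-\mathrm{Gauge}_\mu(k)=G\mathchar`-\mathrm{Disp}_\mu(W(k),(p))$, and the fibre over a fixed $\mathscr{Q}_0$ (with underlying display $\mathcal{Q}_0$) is the groupoid of deformations of $\mathscr{Q}_0$ over $T$, resp. of $\mathcal{Q}_0$ over the distinguished prism. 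By Corollary~\ref{Corollary:Uniqueness of isomorphisms between deformations:BK and Perfd case} (and its $F$-gauge shadow via Proposition~\ref{Proposition:G-F-gauges to G-displays}) these deformations have no nontrivial automorphisms, so it suffices to compare sets of isomorphism classes. On the display side, Theorem~\ref{Theorem:Existence of universal deformation} identifies those isomorphism classes with $\Hom(R_{G,\mu},T)_e$: property $(\mathrm{BK})$ (with $\widetilde{k}=k$) handles $T=R/\mathfrak{m}^m_R$ and property $(\mathrm{Perfd})$ handles $T=S/a^m$. Thus the Conjecture is equivalent to the assertion, anticipated in Remark~\ref{Remark:prismatic F-gauge intro}, that the universal $F$-gauge $\mathscr{Q}^{\mathrm{univ}}\in G\mathchar`-F\mathchar`-\mathrm{Gauge}_\mu(R_{G,\mu})$ of Theorem~\ref{Theorem:prismatic G-F-gauge of type mu intro} is universal among deformations of $\mathscr{Q}_0$ over $T=R/\mathfrak{m}^m_R$ ($\dim R=1$) and over $T=S/a^m$; equivalently, $h\mapsto h^*\mathscr{Q}^{\mathrm{univ}}$ is a bijection from $\Hom(R_{G,\mu},T)_e$ onto isomorphism classes of $F$-gauge deformations of $\mathscr{Q}_0$ over $T$.

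Injectivity of this map is the easy direction: it follows from full faithfulness of $G\mathchar`-F\mathchar`-\mathrm{Gauge}_\mu(T)\to G\mathchar`-\mathrm{Disp}_\mu((T)_{\Prism})$ (Proposition~\ref{Proposition:G-F-gauges to G-displays}) together with the display-side computation just recalled. The substance is surjectivity: every deformation of $\mathcal{Q}_0$ over the distinguished prism must underlie an $F$-gauge deformation. For $T=S/a^m$ I would handle this through the stacky formalism of Drinfeld and Bhatt--Lurie (\cite{Drinfeld22}, \cite{BL}, \cite{BL2}, \cite{BhattGauge}): $G\mathchar`-F\mathchar`-\mathrm{Gauge}_\mu(T)$ is the groupoid of $G$-bundles of type $\mu$ on the syntomification $T^{\mathrm{syn}}$, which sits over the Hodge--Tate and prismatic strata of the prismatization $T^{\Prism}$. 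Since $T=S/a^m$ is quasiregular semiperfectoid (Example~\ref{Example:quasisyntomic rings}) and $(W(S^\flat)/[a^\flat]^m,I_S)$ is, by a complete-intersection computation $\Prism_{S/a^m}=\Prism_S/([a^\flat]^m)=W(S^\flat)/[a^\flat]^m$ using that $[a^\flat]$ is a nonzerodivisor (cf. Lemma~\ref{Lemma:pi torsion free of witt vectors}, Proposition~\ref{Proposition:perfectoid type}), the initial object of $(T)_{\Prism}$, the prismatic stratum is explicit and its Nygaard filtration is the one induced from $W(S^\flat)$. The key computation is then that for $1$-bounded $\mu$ (Definition~\ref{Definition:1-bounded}) the Hodge filtration has a single nontrivial step, so a prismatic $G$-$\mu$-display over $(W(S^\flat)/[a^\flat]^m,I_S)$ admits a unique Nygaard-filtered, i.e. syntomic, enhancement — the Rees construction on the Hodge filtration satisfies the syntomic axioms. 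This is the $G$-equivariant analogue of the fact used (via $p$-divisible groups) in deriving Theorem~\ref{Theorem:classification of p-divisible group:torsion valuation ring} and Corollary~\ref{Corollary:admissible prismatic dieudonne crystal evaluation}, and can be organised as an induction on $m$ along the perfectoid-type special nilpotent thickenings $(W(S^\flat)/[a^\flat]^{m+1},I_S)\to(W(S^\flat)/[a^\flat]^m,I_S)$, matching the Grothendieck--Messing description on the display side (Theorem~\ref{Theorem:GM deformation}) with the compatible behaviour of the Nygaard filtration under these thickenings.

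For part (1), with $T=R/\mathfrak{m}^m_R$ and $\dim R=1$ (so $\mathfrak{S}=W(k)[[t]]$), $T$ is quasisyntomic but not quasiregular semiperfectoid, so I would descend. Since $\dim R=1$, $R$ is a complete DVR and admits a $p$-completely faithfully flat perfectoid cover $R\to R_\infty$ (adjoin $p$-power roots of a uniformizer), inducing a cover of $R/\mathfrak{m}^m_R$ whose $\check{\mathrm{C}}$ech nerve consists of quasiregular semiperfectoid rings falling under the scope of part (2). Both functors in question satisfy descent along this cover — $G\mathchar`-F\mathchar`-\mathrm{Gauge}_\mu(-)$ because it is defined as a sheaf on the quasisyntomic site, and $(\mathfrak{S}_m,(\mathcal{E}))\mapsto G\mathchar`-\mathrm{Disp}_\mu(\mathfrak{S}_m,(\mathcal{E}))$ via flat descent for $G$-$\mu$-displays together with the explicit description of the prisms attached to the terms of the nerve (cf. Proposition~\ref{Proposition:limit of G displays} and the finite étale descent arguments of Section~\ref{Section:The Grothendieck--Messing deformation theory for G displays}). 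Applying part (2) termwise then propagates the equivalence to $R/\mathfrak{m}^m_R$; one also uses the elementary identification $\mathfrak{S}_m/\mathcal{E}\simeq R/\mathfrak{m}^m_R$ and the compatibility of $(\mathfrak{S}_m,(\mathcal{E}))$ with the distinguished prisms of the cover.

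The main obstacle is the quasiregular semiperfectoid input feeding both parts: the existence and uniqueness of the syntomic (Nygaard-filtered) enhancement of a $1$-bounded prismatic $G$-$\mu$-display over $(W(S^\flat)/[a^\flat]^m,I_S)$. For $G=\GL_N$ and minuscule $\mu$ this is essentially Bhatt--Lurie's prismatization/syntomification theory, and for general $G$ it is precisely what the stacky approach of Gardner--Madapusi supplies (cf. Remark~\ref{Remark:GMM}); short of importing that, one would have to redevelop a $G$-equivariant theory of the syntomification for quasiregular semiperfectoid rings. A more routine secondary point is the descent bookkeeping in part (1) — verifying that the $\check{\mathrm{C}}$ech nerve of the perfectoid cover of $R/\mathfrak{m}^m_R$ indeed stays within the scope of part (2) — and confirming that the (a priori derived) prismatic cohomology $\Prism_{S/a^m}$ is discrete and equals $W(S^\flat)/[a^\flat]^m$, which the nonzerodivisor computations of Section~\ref{Subsection:OE prisms} provide.
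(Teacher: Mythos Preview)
The statement you are addressing is Conjecture~\ref{Conjecture:evaluation prismatic G-F-gauge}, and the paper does \emph{not} prove it. It is stated as a conjecture, with the comment (Remark~\ref{Remark:GMM}) that it should follow from the forthcoming work of Gardner--Madapusi, and with partial evidence in the $\GL_N$ case (Remark~\ref{Remark:F-gauge evaluation conjecture known case}, via Corollary~\ref{Corollary:admissible prismatic dieudonne crystal evaluation}). So there is no ``paper's own proof'' to compare against.

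Your reduction in the first paragraph is correct and is essentially what the paper records in Remark~\ref{Remark:deformation conjecture known case}: using full faithfulness (Proposition~\ref{Proposition:G-F-gauges to G-displays}) and the properties $(\mathrm{BK})$ and $(\mathrm{Perfd})$ of $\mathfrak{Q}^{\univ}$ (Theorem~\ref{Theorem:Existence of universal deformation}), the conjecture is equivalent to the universality of $\mathscr{Q}^{\univ}$ for $F$-gauge deformations over $R/\mathfrak{m}^m_R$ and $S/a^m$ (Conjecture~\ref{Conjecture:deformation over artinian rings}). You also correctly isolate the substantive step as essential surjectivity, i.e.\ producing a syntomic/Nygaard-filtered enhancement of a given $G$-$\mu$-display over the distinguished prism.

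However, what you present is a strategy, not a proof. The central claim --- that for $1$-bounded $\mu$ a prismatic $G$-$\mu$-display over $(W(S^\flat)/[a^\flat]^m, I_S)$ admits a unique syntomic enhancement --- is precisely the input the paper lacks and defers to \cite{Gardner-Madapusi}. You acknowledge this yourself (``short of importing that, one would have to redevelop a $G$-equivariant theory of the syntomification''). The descent argument you sketch for part~(1) also has a real gap: the \v{C}ech nerve of a perfectoid cover of $R/\mathfrak{m}^m_R$ consists of quasiregular semiperfectoid rings, but these are not of the form $S'/a^m$ for a perfectoid pair $(S', a^\flat)$ in any evident way, so part~(2) as stated does not directly apply to them; one needs the qrsp statement in full generality, which again is exactly the missing ingredient. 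In short, your outline matches the paper's expectations, but it does not close the gap that makes this a conjecture rather than a theorem.
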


We assume that $G=\GL_{N}$.
Let $R$ be a quasisyntomic ring over $W(k)$.
A prismatic 
$\GL_{N}$-$F$-gauge of type $\mu$
over $R$ can be viewed as a prismatic $F$-gauge in vector bundles over $R$ (with some additional condition) in the sense of Drinfeld and Bhatt--Lurie.
By \cite[Theorem 2.54]{Guo-Li},
the category
$\mathrm{DM}^{\mathrm{adm}}(R)$
of admissible prismatic Dieudonn\'e crystals over $R$ (see Remark \ref{Remark:Anschutz-LeBras main result})
is equivalent to that of prismatic $F$-gauges in vector bundles ``of weight $[0, 1]$'' over $R$.
In particular, we have the following conclusion:

\begin{ex}\label{Example:prismatic GL_N-F-gauges}
Let
$\mu \colon \G_m \to \GL_{N}$
be a cocharacter
defined as in Example \ref{Example:GLn displays}.
We define
\[
\mathrm{DM}_\mu(R) := {2-\varprojlim}_{(A, I) \in (R)_{\Prism}} \mathrm{BK}_{\mu}(A, I)
\]
which is a full subcategory of
the category
$\mathrm{DM}(R)$
of prismatic Dieudonn\'e crystals on $(R)^{\op}_\Prism$.
By Example \ref{Example:GLn displays}, we have
$
\mathrm{DM}_\mu(R)^{\simeq} \overset{\sim}{\to} \GL_N\mathchar`-\mathrm{Disp}_{\mu}((R)_{\Prism}).
$
Let
\[
\mathrm{DM}^{\mathrm{adm}}_\mu(R):=\mathrm{DM}_\mu(R) \cap \mathrm{DM}^{\mathrm{adm}}(R) \subset \mathrm{DM}_\mu(R)
\]
be the full subcategory of those objects that are admissible.
Then we have
\[
\mathrm{DM}^{\mathrm{adm}}_\mu(R)^{\simeq} \simeq \GL_N\mathchar`-F\mathchar`-\mathrm{Gauge}_\mu(R),
\]
and the functor $(\ref{equation:functor from G-gauge to G-display})$ can be identified with the inclusion
\[
\mathrm{DM}^{\mathrm{adm}}_\mu(R)^{\simeq} \hookrightarrow \mathrm{DM}_\mu(R)^{\simeq}.
\]
This follows from \cite[Example 8.2.9]{Ito-K23} and \cite[Theorem 2.54]{Guo-Li}.
\end{ex}

\begin{rem}\label{Remark:F-gauge evaluation conjecture known case}
    Assume that $G=\GL_N$. Then the following assertions hold:
    \begin{itemize}
        \item Conjecture \ref{Conjecture:evaluation prismatic G-F-gauge} (1) is true.
        \item Conjecture \ref{Conjecture:evaluation prismatic G-F-gauge} (2) is true for a $p$-adically complete valuation ring $S=\O_C$ of rank $1$ over $W(k)$ with algebraically closed fraction field.
    \end{itemize}
    This follows from Corollary \ref{Corollary:admissible prismatic dieudonne crystal evaluation} and Example \ref{Example:prismatic GL_N-F-gauges}.
\end{rem}

\subsection{Deformations of prismatic $G$-$F$-gauges of type $\mu$}

We conclude this paper by discussing how our deformation theory should be interpreted in terms of deformations of prismatic $G$-$F$-gauges of type $\mu$.
We assume that $\mu$ is 1-bounded.

Let
\[
\mathscr{Q} \in G\mathchar`-F\mathchar`-\mathrm{Gauge}_\mu(k)
\]
be a prismatic $G$-$F$-gauge of type $\mu$ over $k$.
For a complete noetherian local ring $R$ over $W(k)$ with residue field $k$ which is quasisyntomic,
the set of isomorphism classes of deformations $\mathscr{Q}' \in G\mathchar`-F\mathchar`-\mathrm{Gauge}_\mu(R)$ of $\mathscr{Q}$ over $R$ (defined in the usual way) is denoted by
$
\Def(\mathscr{Q})_R.
$
Recall that
$\mathcal{C}_{W(k)}$
is the category of complete regular local rings over $W(k)$ with residue field $k$.
The following result can be deduced from our deformation theory.

\begin{thm}\label{Theorem:deformation theory for prismatic G-F-gauges of type mu}
Assume that $\mu$ is 1-bounded.
Then the functor
\[
\Def(\mathscr{Q}) \colon \mathcal{C}_{W(k)} \to \mathrm{Set}, \quad R \mapsto \Def(\mathscr{Q})_R
\]
is representable by $R_{G, \mu}$.
\end{thm}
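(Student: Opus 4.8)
The plan is to reduce the statement to Theorem \ref{Theorem:Existence of universal deformation} via the comparison between prismatic $G$-$F$-gauges of type $\mu$ and prismatic $G$-$\mu$-displays recorded in Proposition \ref{Proposition:G-F-gauges to G-displays} (here $\O_E=\Z_p$, so $\O=W(k)$ and $(\O,(\pi))=(W(k),(p))$). First I would fix a complete regular local ring $R\in\mathcal{C}_{W(k)}$, which is quasisyntomic by Example \ref{Example:quasisyntomic rings}, so that $G\mathchar`-F\mathchar`-\mathrm{Gauge}_\mu(R)$ is defined. By Proposition \ref{Proposition:G-F-gauges to G-displays} the comparison functor $G\mathchar`-F\mathchar`-\mathrm{Gauge}_\mu(R)\to G\mathchar`-\mathrm{Disp}_\mu((R)_{\Prism})$ is an equivalence of groupoids, and for $R=k$ it identifies $\mathscr{Q}$ with a $G$-$\mu$-display $\mathcal{Q}$ over $(W(k),(p))$ (using $G\mathchar`-F\mathchar`-\mathrm{Gauge}_\mu(k)\overset{\sim}{\to}G\mathchar`-\mathrm{Disp}_\mu((k)_{\Prism})\overset{\sim}{\to}G\mathchar`-\mathrm{Disp}_\mu(W(k),(p))$, cf.\ Example \ref{Example:prismatic display over k}). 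Since this comparison is compatible with base change along maps of quasisyntomic $W(k)$-algebras — in particular with the structure map $W(k)\to R$ and the projection $R\to k$ — it carries deformations of $\mathscr{Q}$ over $R$ bijectively (on isomorphism classes) to deformations of $\mathcal{Q}$ over $R$ in the sense of Definition \ref{Definition:deformation absolute prismatic crysta}. Hence there is a natural identification $\Def(\mathscr{Q})_R\cong\Def(\mathcal{Q})_R$, functorial in $R\in\mathcal{C}_{W(k)}$.

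Next I would invoke Theorem \ref{Theorem:Existence of universal deformation}, which applies since $\mu$ is 1-bounded: there is a universal deformation $(\mathfrak{Q}^{\univ},\gamma)$ of $\mathcal{Q}$ over $R_{G,\mu}$, and $R_{G,\mu}\simeq W(k)[[t_1,\dotsc,t_r]]$ lies in $\mathcal{C}_{W(k)}$. Unwinding Definition \ref{Definition:universal deformation}, the universal property says precisely that for each $R'\in\mathcal{C}_{W(k)}$ the assignment $h\mapsto(h^*\mathfrak{Q}^{\univ},\gamma)$ defines a bijection
\[
\Hom_{\mathcal{C}_{W(k)}}(R_{G,\mu},R')\overset{\sim}{\to}\Def(\mathcal{Q})_{R'},
\]
where $\Hom_{\mathcal{C}_{W(k)}}$ denotes local homomorphisms over $W(k)$; naturality in $R'$ is immediate from $(h'\circ h)^*\mathfrak{Q}^{\univ}=h'^*(h^*\mathfrak{Q}^{\univ})$ together with Corollary \ref{Corollary:isomorphism of deformatiions over prismatic site} (uniqueness of isomorphisms of deformations of $\mathcal{Q}$ over rings in $\mathcal{C}_{W(k)}$), which guarantees that $\Def(\mathcal{Q})$ is a genuine set-valued functor and that the identifications compose coherently. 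Composing with the identification of the previous paragraph gives a natural isomorphism $\Hom_{\mathcal{C}_{W(k)}}(R_{G,\mu},-)\overset{\sim}{\to}\Def(\mathscr{Q})$, i.e.\ $\Def(\mathscr{Q})$ is representable by $R_{G,\mu}$.

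The one point requiring genuine care, rather than the routine translation above, is the compatibility asserted in the first paragraph: one must check that the equivalence of Proposition \ref{Proposition:G-F-gauges to G-displays} matches the restriction functor $G\mathchar`-F\mathchar`-\mathrm{Gauge}_\mu(R)\to G\mathchar`-F\mathchar`-\mathrm{Gauge}_\mu(k)$ with passage to the value at the base point $(W(k),(p))\in(R)_{\Prism}$, so that an isomorphism $\mathfrak{Q}'_{(W(k),(p))}\overset{\sim}{\to}\mathcal{Q}$ of $G$-$\mu$-displays corresponds to an isomorphism of $G$-$F$-gauges after base change to $k$. This is part of the functoriality built into the construction of the comparison functor in \cite{Ito-K23}, but it should be spelled out, along with the observation — again a consequence of the full faithfulness in Proposition \ref{Proposition:G-F-gauges to G-displays} combined with Corollary \ref{Corollary:isomorphism of deformatiions over prismatic site} — that deformations of $\mathscr{Q}$ over $R\in\mathcal{C}_{W(k)}$ admit no nontrivial automorphisms, which is what lets the set $\Def(\mathscr{Q})_R$ of isomorphism classes serve as a bona fide representable functor. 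I do not expect any serious difficulty beyond this bookkeeping, since all the substantive content is already contained in Theorem \ref{Theorem:Existence of universal deformation} and Proposition \ref{Proposition:G-F-gauges to G-displays}.
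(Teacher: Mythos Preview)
Your proposal is correct and follows exactly the paper's approach: the paper's proof is the single sentence ``This follows from Theorem \ref{Theorem:Existence of universal deformation} and Proposition \ref{Proposition:G-F-gauges to G-displays},'' and your argument simply unpacks this reduction, supplying the routine compatibility and rigidity checks (via Corollary \ref{Corollary:isomorphism of deformatiions over prismatic site}) that justify passing between the two notions of deformation.
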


\begin{proof}
    This follows from Theorem \ref{Theorem:Existence of universal deformation} and Proposition \ref{Proposition:G-F-gauges to G-displays}.
\end{proof}

Let $\mathscr{Q}^{\mathrm{univ}} \in G\mathchar`-F\mathchar`-\mathrm{Gauge}_\mu(R_{G, \mu})$ be a deformation of $\mathscr{Q}$ which represents the functor
$
\Def(\mathscr{Q}).
$
We expect that $\mathscr{Q}^{\mathrm{univ}}$ is universal in a larger class of deformations of $\mathscr{Q}$.
For example, we have the following conjecture:

\begin{conj}\label{Conjecture:deformation over artinian rings}
\ 
\begin{enumerate}
    \item Let $R$ be a complete noetherian local ring over $W(k)$ with residue field $k$ which is quasisyntomic.
    Let $\Hom(R_{G, \mu}, R)_{e}$ be the set of local homomorphisms $R_{G, \mu} \to R$ over $W(k)$.
    Then the map
    \[
    \Hom(R_{G, \mu}, R)_{e} \to \Def(\mathscr{Q})_R
    \]
    induced by $\mathscr{Q}^{\mathrm{univ}}$ is bijective.
    \item
    Let $(S, a^\flat)$ be a perfectoid pair over $W(k)$.
    We regard $\mathscr{Q}$ as a prismatic $G$-$F$-gauge of type $\mu$ over $S/a$ by base change.
    Then the map
    \[
        \Hom(R_{G, \mu}, S/a^m)_{e}
    \to
    \Def(\mathscr{Q})_{S/a^m}
    \]
    induced by $\mathscr{Q}^{\mathrm{univ}}$ is bijective for every $m \geq 1$.
    Here $\Def(\mathscr{Q})_{S/a^m}$ is the set of isomorphism classes of deformations $\mathscr{Q}' \in G\mathchar`-F\mathchar`-\mathrm{Gauge}_\mu(S/a^m)$ of $\mathscr{Q}$.
\end{enumerate}
\end{conj}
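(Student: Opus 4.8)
The plan is to reduce both assertions to the deformation theory for prismatic $G$-$\mu$-displays already established in Theorem \ref{Theorem:Existence of universal deformation}, by means of the fully faithful comparison functor $(\ref{equation:functor from G-gauge to G-display})$ of Proposition \ref{Proposition:G-F-gauges to G-displays}. Let $\mathcal{Q}$ be the $G$-$\mu$-display over $(W(k), (p))$ corresponding to $\mathscr{Q}$, and fix a universal deformation $\mathfrak{Q}^{\univ}$ of $\mathcal{Q}$ over $R_{G, \mu}$ as in Theorem \ref{Theorem:Existence of universal deformation}; since $R_{G, \mu} \in \mathcal{C}_{W(k)}$, it corresponds under $(\ref{equation:functor from G-gauge to G-display})$ to $\mathscr{Q}^{\mathrm{univ}}$. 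For any quasisyntomic ring $R$ over $W(k)$ with residue field $k$, the functor $(\ref{equation:functor from G-gauge to G-display})$ identifies the groupoid of deformations of $\mathscr{Q}$ over $R$ with a full subgroupoid of the groupoid of deformations of $\mathcal{Q}$ as a prismatic $G$-$\mu$-display over $R$, compatibly with $\mathscr{Q}^{\mathrm{univ}} \mapsto \mathfrak{Q}^{\univ}$; hence the maps of Conjecture \ref{Conjecture:deformation over artinian rings} factor through the corresponding evaluation maps of $\mathfrak{Q}^{\univ}$ on $G$-$\mu$-display deformations. It therefore suffices to prove: (A) that this full subgroupoid is in fact the whole deformation groupoid of $\mathcal{Q}$, that is, a relative form of Conjecture \ref{Conjecture:evaluation prismatic G-F-gauge}; and (B) that the evaluation maps $\Hom(R_{G, \mu}, R)_{e} \to \Def(\mathcal{Q})_R$ and $\Hom(R_{G, \mu}, S/a^m)_{e} \to \Def(\mathcal{Q})_{S/a^m}$ induced by $\mathfrak{Q}^{\univ}$ are bijective, where on the right one takes deformations as prismatic $G$-$\mu$-displays.

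For (B), consider first the perfectoid-quotient case (part~(2)). The prism $(W(S^\flat)/[a^\flat]^m, I_S)$ is of perfectoid type, and $\mathfrak{Q}^{\univ}$ has the property $\mathrm{(Perfd)}$ by Theorem \ref{Theorem:Existence of universal deformation}, so $\Hom(R_{G, \mu}, S/a^m)_{e} \overset{\sim}{\to} \Def(\mathfrak{Q}^{\univ}_e)_{(W(S^\flat)/[a^\flat]^m, I_S)}$; it then remains to show that the base-change functor $G\text{-}\mathrm{Disp}_\mu((S/a^m)_{\Prism}) \to G\text{-}\mathrm{Disp}_\mu(W(S^\flat)/[a^\flat]^m, I_S)$ is an equivalence. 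I would prove this by flat descent in the absolute prismatic site, reducing via a $p$-completely faithfully flat cover $S \to S'$ of perfectoid rings with $S' \to S'$, $x \mapsto x^p$, surjective (Remark \ref{Remark:Andre lemma}, Lemma \ref{Lemma:perfectoid ring etale morphism:torsion case}) and a passage to the limit in $m$ (Proposition \ref{Proposition:limit of G displays}) to the perfectoid case, where $(W(S^\flat), I_S)$ is initial in $(S)_{\Prism}$. For part~(1), present the quasisyntomic complete noetherian local ring $R$ as a quotient of some $\widetilde R \in \mathcal{C}_{W(k)}$ (Cohen's theorem) and argue by d\'evissage along small surjections: the Grothendieck--Messing theory of Theorem \ref{Theorem:GM deformation}, combined with $\mathrm{(BK)}$ for $\mathfrak{Q}^{\univ}$, controls the regular steps, while the general step uses that $\Def(\mathcal{Q})$ of a prismatic $G$-$\mu$-display is a torsor under a module (Proposition \ref{Proposition:existence of deformations for non-banal cases}) which one matches with the analogous torsor structure on the relevant $\Hom$-sets, together with the limit statement of Proposition \ref{Proposition:limit of G displays} to descend from the artinian quotients of $R$ to $R$ itself.

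For (A), the answer depends on $G$. When $G = \GL_N$ and $\mu$ is minuscule it is unconditional: by Example \ref{Example:prismatic GL_N-F-gauges} a prismatic $\GL_N$-$F$-gauge of type $\mu$ over a quasisyntomic ring is an admissible object of $\mathrm{DM}_\mu$, so the prismatic Dieudonn\'e equivalence of Ansch\"utz--Le Bras \cite[Theorem 4.74]{Anschutz-LeBras} identifies deformations of $\mathscr{Q}$ over $R$ (resp.\ over $S/a^m$) with deformations of the associated $p$-divisible group, whose deformation space is $R^{\univ} \simeq R_{\GL_N, \mu}$ --- over complete noetherian local rings by the classical theory together with the identification $\mathfrak{Q}^{\univ} \simeq \mathfrak{Q}(\mathcal{G}^{\univ})$ of Theorem \ref{Theorem:universal family of p-divisible groups gives a universal family of displays}, and over $S/a^m$ by Theorem \ref{Theorem:classification of p-divisible group:torsion valuation ring} and $p$-complete $\arc$-descent. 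For general $G$, however, (A) is precisely Conjecture \ref{Conjecture:evaluation prismatic G-F-gauge} in relative form, and this is the main obstacle: the prism-theoretic methods of this paper do not reach it, because prismatic $G$-$F$-gauges of type $\mu$ are defined only over quasisyntomic rings, so the non-l.c.i.\ artinian intermediate rings occurring in the d\'evissage for (B)(1) carry no $F$-gauge structure, and there is no direct Tannakian reduction to $\GL_N$ compatible with the type-$\mu$ and admissibility conditions. The natural remedy is the stacky reformulation of Drinfeld and Bhatt--Lurie, in which $G$-$F$-gauges of type $\mu$ are defined over an arbitrary $p$-adically complete ring as sections over the syntomification and the period-map computations of Section \ref{Section:The Grothendieck--Messing deformation theory for G displays} carry over verbatim; this is the route taken in \cite{Gardner-Madapusi}, from which Conjecture \ref{Conjecture:deformation over artinian rings} follows.
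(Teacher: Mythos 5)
This statement is a conjecture, and the paper deliberately does not prove it: its own discussion consists of the $\GL_N$ case (Remark \ref{Remark:deformation conjecture GLN case}, via \cite[Theorem 4.74]{Anschutz-LeBras} and the deformation theory of $p$-divisible groups), the conditional implications of Remark \ref{Remark:deformation conjecture known case} relating it to Conjecture \ref{Conjecture:evaluation prismatic G-F-gauge} through the properties $(\mathrm{BK})$ and $(\mathrm{Perfd})$ of $\mathfrak{Q}^{\univ}$, and a pointer to the stacky approach of Gardner--Madapusi. Your proposal ultimately lands in the same place --- you handle $\GL_N$ the same way, reduce the general case to a relative form of Conjecture \ref{Conjecture:evaluation prismatic G-F-gauge}, and concede that this step is out of reach of the prism-theoretic methods of the paper --- so you do not obtain a proof either, which is consistent with the status of the statement.

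However, your intermediate step (B) overclaims beyond what the paper establishes, and the sketches given would not go through. For part (2) you assert that the evaluation functor $G\mathchar`-\mathrm{Disp}_\mu((S/a^m)_{\Prism}) \to G\mathchar`-\mathrm{Disp}_\mu(W(S^\flat)/[a^\flat]^m, I_S)$ is an equivalence and propose to prove it by Andr\'e's lemma, flat descent and a limit in $m$, ``reducing to the perfectoid case where $(W(S^\flat), I_S)$ is initial.'' But $S/a^m$ is never perfectoid, no $p$-completely faithfully flat cover of $S$ changes that, and $(W(S^\flat)/[a^\flat]^m, I_S)$ is not initial in $(S/a^m)_{\Prism}$ (the initial object is the prismatic cohomology of the quasiregular semiperfectoid ring $S/a^m$, which is strictly larger); Proposition \ref{Proposition:limit of G displays} compares quotients of a single prism and says nothing about site-level displays over $S/a^m$. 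The paper's conditional route in Remark \ref{Remark:deformation conjecture known case} avoids this entirely by invoking Conjecture \ref{Conjecture:evaluation prismatic G-F-gauge}(2) directly for the composite functor, rather than factoring through site-level displays. Likewise, for part (1) your d\'evissage for a general (non-regular) complete noetherian local quasisyntomic $R$ has no support in the paper: Theorem \ref{Theorem:main result on G displays over complete regular local rings} and the property $(\mathrm{BK})$ only control $R \in \mathcal{C}_{W(k)}$ and its Breuil--Kisin-type quotients, and the paper's Remark \ref{Remark:deformation conjecture known case} relates Conjecture \ref{Conjecture:deformation over artinian rings}(1) to Conjecture \ref{Conjecture:evaluation prismatic G-F-gauge}(1) only for quotients $R/\mathfrak{m}^m_R$ with $\dim R = 1$; the torsor-matching and limit steps you invoke are not justified for the intermediate artinian rings that occur. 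So the parts of your argument that go beyond the paper's remarks are exactly the parts that fail, and the honest content of the proposal coincides with the paper's own (conjectural) discussion.
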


\begin{rem}\label{Remark:deformation conjecture GLN case}
It follows from
\cite[Theorem 4.74]{Anschutz-LeBras} (see (\ref{equation:prismatic Dieudonne main theorem})) and the deformation theory for $p$-divisible groups that Conjecture \ref{Conjecture:deformation over artinian rings} holds true for $G=\GL_N$.
\end{rem}

\begin{rem}\label{Remark:deformation conjecture known case}
\ 
\begin{enumerate}
    \item Let $R \in \mathcal{C}_{W(k)}$ and we assume that $\dim R=1$.
     Since the prismatic $G$-$\mu$-display $\mathfrak{Q}^{\mathrm{univ}}$ over $R_{G, \mu}$ corresponding to $\mathscr{Q}^{\mathrm{univ}}$
     has the property $(\mathrm{BK})$
     by Theorem \ref{Theorem:Existence of universal deformation}, we see that
     Conjecture \ref{Conjecture:evaluation prismatic G-F-gauge} (1) holds true for $R/\mathfrak{m}^{m}_R$ if and only if Conjecture \ref{Conjecture:deformation over artinian rings} (1) holds true
     for all
     $\mathscr{Q} \in G\mathchar`-F\mathchar`-\mathrm{Gauge}_\mu(k)$
     and their deformations over $R/\mathfrak{m}^{m}_R$.
     \item 
     Similarly, since $\mathfrak{Q}^{\mathrm{univ}}$ has the property $(\mathrm{Perfd})$
     by Theorem \ref{Theorem:Existence of universal deformation}, we see that Conjecture \ref{Conjecture:evaluation prismatic G-F-gauge} (2) implies Conjecture \ref{Conjecture:deformation over artinian rings} (2).
\end{enumerate}
\end{rem}


\subsection*{Acknowledgements}
The author would like to thank Zachary Gardner, Kentaro Inoue, Tetsushi Ito, Teruhisa Koshikawa, Arthur-C\'esar Le Bras, Yuta Takaya, and Alex Youcis for helpful discussions and comments.
The author also thanks the referee for many helpful comments and for pointing out a mistake in an earlier version of this paper.
The work of the author was supported by JSPS KAKENHI Grant Numbers 22K20332, 24K16887, and 24H00015.

\bibliographystyle{abbrvsort}
\bibliography{bibliography.bib}

\begin{thebibliography}{KMP16}
\providecommand{\url}[1]{\texttt{#1}}
\providecommand{\urlprefix}{URL }
\providecommand{\eprint}[2][]{\url{#2}}

\bibitem[Ans19]{Anschutz19}
J.~Ansch\"{u}tz, Reductive group schemes over the {F}argues-{F}ontaine curve,
  Math. Ann. 374 (2019), no. 3-4, 1219--1260.

\bibitem[Ans22]{Anschutz}
J.~Ansch\"{u}tz, Extending torsors on the punctured {${\rm Spec}(A_{\inf})$},
  J. Reine Angew. Math. 783 (2022), 227--268.

\bibitem[ALB23]{Anschutz-LeBras}
J.~Ansch\"{u}tz and A.-C. Le~Bras, Prismatic {D}ieudonn\'{e} theory, Forum
  Math. Pi 11 (2023), Paper No. e2, 92.

\bibitem[Bar22]{Bartling}
S.~Bartling, $\mathcal{G}$-$\mu$-displays and local shtuka, 2022,
  \texttt{arXiv:2206.13194}.

\bibitem[Ber80]{Berthelot}
P.~Berthelot, Th\'{e}orie de {D}ieudonn\'{e} sur un anneau de valuation
  parfait, Ann. Sci. \'{E}cole Norm. Sup. (4) 13 (1980), no.~2, 225--268.

\bibitem[BBM82]{BBM}
P.~Berthelot, L.~Breen and W.~Messing, Th\'{e}orie de {D}ieudonn\'{e}
  cristalline. {II}, vol. 930 of \emph{Lecture Notes in Mathematics},
  Springer-Verlag, Berlin, 1982.

\bibitem[Bha22]{BhattGauge}
B.~Bhatt, Prismatic $F$-gauges, 2022, \\
  \url{https://www.math.ias.edu/~bhatt/teaching/mat549f22/lectures.pdf}.

\bibitem[BL22a]{BL}
B.~Bhatt and J.~Lurie, Absolute prismatic cohomology, 2022,
  \texttt{arXiv:2201.06120}.

\bibitem[BL22b]{BL2}
B.~Bhatt and J.~Lurie, The prismatization of $p$-adic formal schemes, 2022,
  \texttt{arXiv:2201.06124}.

\bibitem[BMS18]{BMS}
B.~Bhatt, M.~Morrow and P.~Scholze, Integral {$p$}-adic {H}odge theory, Publ.
  Math. Inst. Hautes \'{E}tudes Sci. 128 (2018), 219--397.

\bibitem[BMS19]{BMS2}
B.~Bhatt, M.~Morrow and P.~Scholze, Topological {H}ochschild homology and
  integral {$p$}-adic {H}odge theory, Publ. Math. Inst. Hautes \'{E}tudes Sci.
  129 (2019), 199--310.

\bibitem[BS22]{BS}
B.~Bhatt and P.~Scholze, Prisms and prismatic cohomology, Ann. of Math. (2) 196
  (2022), no.~3, 1135--1275.

\bibitem[BS23]{BS2}
B.~Bhatt and P.~Scholze, Prismatic {{\(F\)}}-crystals and crystalline {Galois}
  representations, Camb. J. Math. 11 (2023), no.~2, 507--562.

\bibitem[B{\"{u}}l08]{Bultel}
O.~B{\"{u}}ltel, PEL moduli spaces without $\mathbb C$-valued points, 2008,
  \texttt{arXiv:0808.4091}.

\bibitem[BP20]{Bultel-Pappas}
O.~B{\"{u}}ltel and G.~Pappas, {$(G,\mu)$}-displays and {R}apoport-{Z}ink
  spaces, J. Inst. Math. Jussieu 19 (2020), no.~4, 1211--1257.

\bibitem[CL17]{Cais-Lau}
B.~Cais and E.~Lau, Dieudonn\'{e} crystals and {W}ach modules for
  {$p$}-divisible groups, Proc. Lond. Math. Soc. (3) 114 (2017), no.~4,
  733--763.

\bibitem[{\v{C}}S24]{CS}
K.~{\v{C}}esnavi\v{c}ius and P.~Scholze, Purity for flat cohomology, Ann. of
  Math. (2) 199 (2024), no.~1, 51--180.

\bibitem[Che18]{ChengChuangxun}
C.~Cheng, Breuil {$\mathcal{O}$}-windows and {$\pi$}-divisible
  {$\mathcal{O}$}-modules, Trans. Amer. Math. Soc. 370 (2018), no.~1, 695--726.

\bibitem[DG70]{SGA3-1}
M.~Demazure and A.~Grothendieck, Sch\'{e}mas en groupes. {I}:
  {P}ropri\'{e}t\'{e}s g\'{e}n\'{e}rales des sch\'{e}mas en groupes, Lecture
  Notes in Mathematics, Vol. 151, Springer-Verlag, Berlin-New York, 1970,
  s\'{e}minaire de G\'{e}om\'{e}trie Alg\'{e}brique du Bois Marie 1962/64 (SGA
  3).

\bibitem[Dri22]{Drinfeld22}
V.~Drinfeld, Prismatization, 2022, \texttt{arXiv:2005.04746}.

\bibitem[Fal99]{Faltings99}
G.~Faltings, Integral crystalline cohomology over very ramified valuation
  rings, J. Am. Math. Soc. 12 (1999), no.~1, 117--144.

\bibitem[Far20]{Fargues20}
L.~Fargues, {$G$}-torseurs en th\'{e}orie de {H}odge {$p$}-adique, Compos.
  Math. 156 (2020), no.~10, 2076--2110.

\bibitem[FF18]{FarguesFontaine}
L.~Fargues and J.-M. Fontaine, Courbes et fibr\'{e}s vectoriels en th\'{e}orie
  de {H}odge {$p$}-adique, Ast\'{e}risque  (2018), no. 406, xiii+382, with a
  preface by Pierre Colmez.

\bibitem[FS21]{FS}
L.~Fargues and P.~Scholze, Geometrization of the local Langlands
  correspondence, 2021, \texttt{arXiv:2102.13459}.

\bibitem[GM24]{Gardner-Madapusi}
Z.~Gardner and K.~Madapusi, An algebraicity conjecture of Drinfeld and the
  moduli of $p$-divisible groups, 2024, \texttt{arXiv:2412.10226}.

\bibitem[Gle21]{Gleasonthesis}
I.~Gleason, Specialization maps for Scholze's category of diamonds, 2021,
  $\mathrm{P}$hD thesis, University of California, Berkeley.

\bibitem[Gle22a]{Gleasongeometric}
I.~Gleason, On the geometric connected components of moduli spaces of p-adic
  shtukas and local Shimura varieties, 2022, \texttt{arXiv:2107.03579}.

\bibitem[Gle22b]{Gleason}
I.~Gleason, Specialization maps for Scholze's category of diamonds, 2022,
  \texttt{arXiv:2102.05483}.

\bibitem[GL23]{Guo-Li}
H.~Guo and S.~Li, Frobenius height of prismatic cohomology with coefficients,
  2023, \texttt{arXiv:2309.06663}.

\bibitem[Ill85]{Illusie}
L.~Illusie, D\'{e}formations de groupes de {B}arsotti-{T}ate (d'apr\`es {A}.
  {G}rothendieck), Ast\'{e}risque  (1985), no. 127, 151--198, seminar on
  arithmetic bundles: the Mordell conjecture (Paris, 1983/84).

\bibitem[IKY23]{IKY}
N.~Imai, H.~Kato and A.~Youcis, The prismatic realization functor for Shimura
  varieties of abelian type, 2023, \texttt{arXiv:2310.08472}.

\bibitem[Ito21]{Ito-K21}
K.~Ito, $p$-complete arc-descent for perfect complexes over integral perfectoid
  rings, 2021, \texttt{arXiv:2112.02443}, to appear in Mathematical Research
  Letters.

\bibitem[Ito23]{Ito-K23}
K.~Ito, Prismatic $G$-displays and descent theory, 2023,
  \texttt{arXiv:2303.15814}, to appear in Algebra and Number Theory.

\bibitem[Ked20]{Kedlaya}
K.~S. Kedlaya, Some ring-theoretic properties of {${\rm A_{\rm inf}}$}, in
  {$p$}-adic hodge theory, Simons Symp., pp. 129--141, Springer, Cham, [2020]
  \copyright 2020.

\bibitem[Kim12]{Kim12}
W.~Kim, The classification of {$p$}-divisible groups over 2-adic discrete
  valuation rings, Math. Res. Lett. 19 (2012), no.~1, 121--141.

\bibitem[KMP16]{Kim-MadapusiPera}
W.~Kim and K.~Madapusi~Pera, 2-adic integral canonical models, Forum Math.
  Sigma 4 (2016), Paper No. e28, 34.

\bibitem[Kis06]{Kisin06}
M.~Kisin, Crystalline representations and {$F$}-crystals, in Algebraic geometry
  and number theory, vol. 253 of \emph{Progr. Math.}, pp. 459--496,
  Birkh\"{a}user Boston, Boston, MA, 2006.

\bibitem[Kis09]{Kisin09}
M.~Kisin, Moduli of finite flat group schemes, and modularity, Ann. of Math.
  (2) 170 (2009), no.~3, 1085--1180.

\bibitem[Lau10]{Lau10}
E.~Lau, Frames and finite group schemes over complete regular local rings, Doc.
  Math. 15 (2010), 545--569.

\bibitem[Lau13]{Lau13}
E.~Lau, Smoothness of the truncated display functor, J. Amer. Math. Soc. 26
  (2013), no.~1, 129--165.

\bibitem[Lau14]{Lau14}
E.~Lau, Relations between {D}ieudonn\'{e} displays and crystalline
  {D}ieudonn\'{e} theory, Algebra Number Theory 8 (2014), no.~9, 2201--2262.

\bibitem[Lau18]{Lau18}
E.~Lau, Dieudonn\'{e} theory over semiperfect rings and perfectoid rings,
  Compos. Math. 154 (2018), no.~9, 1974--2004.

\bibitem[Lau21]{Lau21}
E.~Lau, Higher frames and {$G$}-displays, Algebra Number Theory 15 (2021),
  no.~9, 2315--2355.

\bibitem[Liu13]{LiuTong}
T.~Liu, The correspondence between {B}arsotti-{T}ate groups and {K}isin modules
  when {$p=2$}, J. Th\'{e}or. Nombres Bordeaux 25 (2013), no.~3, 661--676.

\bibitem[Mar23]{Marks}
S.~Marks, Prismatic $F$-crystals and Lubin-Tate $(\phi_q, \Gamma)$-modules,
  2023, \texttt{arXiv:2303.07620}.

\bibitem[Mes72]{Messing}
W.~Messing, The crystals associated to {B}arsotti-{T}ate groups: with
  applications to abelian schemes, Lecture Notes in Mathematics, Vol. 264,
  Springer-Verlag, Berlin-New York, 1972.

\bibitem[PR22]{PappasRapoport22}
G.~Pappas and M.~Rapoport, On integral local Shimura varieties, 2022,
  \texttt{arXiv:22204.02829}.

\bibitem[PR24]{PappasRapoport21}
G.~Pappas and M.~Rapoport, {$p$}-adic shtukas and the theory of global and
  local {S}himura varieties, Camb. J. Math. 12 (2024), no.~1, 1--164.

\bibitem[Sch22]{Scholzediamond}
P.~Scholze, Etale cohomology of diamonds, 2022, \texttt{arXiv:1709.07343}.

\bibitem[SW13]{ScholzeWeinsteinModuli}
P.~Scholze and J.~Weinstein, Moduli of {$p$}-divisible groups, Camb. J. Math. 1
  (2013), no.~2, 145--237.

\bibitem[SW20]{Scholze-Weinstein}
P.~Scholze and J.~Weinstein, Berkeley lectures on $p$-adic geometry, vol. 389
  of \emph{Annals of Mathematics Studies}, Princeton University Press,
  Princeton, NJ, 2020.

\end{thebibliography}
\end{document}